\documentclass[11pt,a4paper]{amsart}
\usepackage{amssymb,amsthm}

\usepackage[truedimen,margin=30truemm]{geometry}

\usepackage[pdftex,colorlinks=true,bookmarks=true,citecolor=blue,linkcolor=blue,pdfauthor={},pdftitle={}]{hyperref}

\theoremstyle{plain}
\newtheorem{theorem}{Theorem}[section]
\newtheorem{lemma}[theorem]{Lemma}

\theoremstyle{definition}
\newtheorem{definition}[theorem]{Definition}

\theoremstyle{remark}
\newtheorem{remark}[theorem]{Remark}

\numberwithin{equation}{section}

\begin{document}

\title[Gunzburg--Landau heat flow in a curved thin domain]{Thin-film limit of the Ginzburg--Landau heat flow in a curved thin domain}

\author[T.-H. Miura]{Tatsu-Hiko Miura}
\address{Graduate School of Science and Technology, Hirosaki University, 3, Bunkyo-cho, Hirosaki-shi, Aomori, 036-8561, Japan}
\email{thmiura623@hirosaki-u.ac.jp}

\subjclass[2020]{35B25, 35Q56, 35R01}

\keywords{Gunzburg--Landau heat flow, thin-film limit, weighted average}

\begin{abstract}
  We consider the Ginzburg--Landau heat flow without magnetic effect in a curved thin domain under the Naumann boundary condition.
  When the curved thin domain shrinks to a given closed hypersurface as the thickness of the thin domain tends to zero, we show that the weighted average of a weak solution to the thin-domain problem converges weakly on the limit surface under the assumption that the initial data is of class $L^\infty$ and satisfies some conditions.
  Moreover, under the same assumption, we derive a limit equation by characterizing the limit function as a weak solution, and prove a difference estimate on the limit surface of an averaged weak solution to the thin-domain problem and a weak solution to the limit problem explicitly in terms of the thickness of the thin domain.
  We also derive a difference estimate in the curved thin domain of weak solutions to the thin-domain problem and to the limit problem, but without requiring that the initial data of the thin-domain problem is of class $L^\infty$.
\end{abstract}

\maketitle

\section{Introduction} \label{S:Intro}

\subsection{Motivation} \label{SS:In_Mot}
In this paper, we consider the Ginzburg--Landau heat flow without magnetic effect in a curved thin domain around a given closed hypersurface under the Neumann boundary condition.
The purposes of this paper are to derive a thin-film limit equation rigorously by convergence of a solution and characterization of the limit, and to estimate the difference of a solution to the thin-domain problem and a solution to the limit problem explicitly in terms of the thickness of the thin domain.

Partial differential equations (PDEs) in thin domains arise in many problems of natural sciences like elasticity, lubrication, and geophysical fluid dynamics (see e.g. \cite{Cia97,Cia00,OckOck95,Ped87}).
When a thin domain shrinks to a lower dimensional limit set as the thickness of the thin domain tends to zero, it is natural to try to derive a limit equation on the limit set from a PDE in the thin domain and to compare a thin-domain problem and a limit problem.
Such a thin-film limit problem is important in view of reduction of dimension and of modelling various phenomena in thin domains as PDEs on limit sets.
It is also important to analyze the effects of the thin direction(s) and the geometry of limit sets on PDEs in thin domains, which often appear as coefficients and differential operators of limit equations obtained by the thin-film limit.

Our main motivation for considering the Ginzburg--Landau heat flow without magnetic effect is the study of nematic liquid crystals in thin domains.
The motion of nematic liquid crystals is described by the Ericksen--Leslie equations proposed in their works \cite{Eri62,Les68}, which is a system of equations of fluids and the director of molecules.
Since the original system is highly involved, some simplified systems have been proposed for a mathematical study.
One of simplified systems we are interested in is the one introduced by Lin \cite{Lin89} and studied by Lin and Liu \cite{LinLiu95}.
It is a coupled system of the incompressible Navier--Stokes equations and the Ginzburg--Landau heat flow.
Recently, Nitschke, Reuther, and Voigt \cite{NiReVo19} carried out a formal thin-film limit of the simplified Ericksen--Leslie equations in a curved thin domain around a moving surface in order to model and analyze the motion of surface nematic liquid crystals.
We also refer to \cite{GoMoSt17,NiNePrLoVo18} for other models of surface liquid crystals obtained by the thin-film limit.
However, the formal thin-film limit carried out in \cite{NiReVo19} has not been mathematically justified.
For the fluid part, we rigorously justified the thin-film limit of the Navier--Stokes equations in a curved thin domain around a stationary surface in \cite{Miu20_03} by convergence of a solution and characterization of the limit.
Hence, it is natural for us to study the director part separately.
Our aims are to justify the thin-film limit of the Ginzburg--Landau heat flow in a curved thin domain as well as to develop analytical methods for a future study of the full system of the (simplified) Ericksen--Leslie equations in thin domains.

It should be also mentioned that the Ginzburg--Landau equations under magnetic forces in thin domains have been studied extensively.
The thin-film limit of the Ginzburg--Landau equations with magnetic effect is an important problem for modelling and analysis of superconductivity of thin films.
We refer to \cite{AlBrGa10,ChDuGu96,ChElQi98,Con11,ConSte10,DuGun93,Glo11,JimMor02,Mor04,RicRub00,RubSch98} and the references cited therein.
It is a possible future work to extend the results of this paper to the case of the Ginzburg--Landau equations with magnetic effect.

\subsection{Problem settings and main results} \label{S:In_Main}
We fix the settings of our problem and state main results of this paper.
For details of notations, we refer to Section \ref{S:Prelim}.

Let $\Gamma$ be a $C^3$ closed hypersurface in $\mathbb{R}^n$ with $n\in\mathbb{N}$, and let $\nu$ be the unit outward normal vector field of $\Gamma$.
Also, let $g_0$ and $g_1$ be $C^1$ functions on $\Gamma$ such that $g=g_1-g_0\geq c$ on $\Gamma$ with some constant $c>0$.
For a small $\varepsilon>0$, we define a curved thin domain
\begin{align} \label{E:Def_CTD}
  \Omega_\varepsilon = \{y+r\nu(y) \mid y\in\Gamma, \, \varepsilon g_0(y)<r<\varepsilon g_1(y)\} \subset \mathbb{R}^n
\end{align}
and consider the Neumann problem of the Ginzburg--Landau heat flow
\begin{align} \label{E:GL_CTD}
  \left\{
  \begin{alignedat}{3}
    \partial_tu^\varepsilon-\Delta u^\varepsilon+\lambda(|u^\varepsilon|^2-1)u^\varepsilon &= 0 &\quad &\text{in} &\quad & \Omega_\varepsilon\times(0,\infty), \\
    \partial_{\nu_\varepsilon}u^\varepsilon &= 0 &\quad &\text{on} &\quad &\partial\Omega_\varepsilon\times(0,\infty), \\
    u^\varepsilon|_{t=0} &= u_0^\varepsilon &\quad &\text{in} &\quad &\Omega_\varepsilon.
  \end{alignedat}
  \right.
\end{align}
Here, the unknown function $u^\varepsilon$ and the given initial data $u_0^\varepsilon$ are $\mathbb{R}^N$-valued functions with $N\in\mathbb{N}$.
Also, $\lambda>0$ is a constant independent of $\varepsilon$.
The symbol $\partial_{\nu_\varepsilon}$ stands for the outer normal derivative on the boundary $\partial\Omega_\varepsilon$.
It seems that the most physically related case is $n=2,3$ and $N=1,2,3$, where \eqref{E:GL_CTD} reduces to the Allen--Cahn equation when $N=1$, but our analysis does not require such a restriction.

For each initial data $u_0^\varepsilon\in L^2(\Omega_\varepsilon)^N$, it can be shown by the Galerkin and energy methods that there exists a global-in-time unique weak solution $u^\varepsilon$ to \eqref{E:GL_CTD} (see Theorem \ref{T:GCT_Exi}).
Our aims is to analyze the behavior of $u^\varepsilon$ as $\varepsilon\to0$.

To state the main results, let us fix some notations (see Sections \ref{S:Prelim} and \ref{S:Ave} for details).
Let $\nabla_\Gamma$ and $\mathrm{div}_\Gamma$ be the tangential gradient and the surface divergence on $\Gamma$, respectively.
Also, for a function $\varphi$ on $\Omega_\varepsilon$, we define the weighted average of $\varphi$ in the thin direction by
\begin{align*}
  \mathcal{M}_\varepsilon\varphi(y) = \frac{1}{\varepsilon g(y)}\int_{\varepsilon g_0(y)}^{\varepsilon g_1(y)}\varphi\bigl(y+r\nu(y)\bigr)J(y,r)\,dr, \quad y\in\Gamma,
\end{align*}
where $J(y,r)=\prod_{\alpha=1}^{n-1}\{1-r\kappa_\alpha(y)\}$.
Here, $\kappa_1,\dots,\kappa_{n-1}$ are the principal curvatures of $\Gamma$, and the function $J(y,r)$ is the Jacobian appearing in the change of variables of an integral over $\Omega_\varepsilon$ (see \eqref{E:Ch_Va}).
The weighted average is useful when we take the average in the thin direction of a weak form of \eqref{E:GL_CTD}.
It was effectively used in the study of the thin-film limit of the heat equation in a moving thin domain \cite{Miu17}.

Our first result is the weak convergence of the weighted average $\mathcal{M}_\varepsilon u^\varepsilon$ of a weak solution $u^\varepsilon$ to \eqref{E:GL_CTD} as $\varepsilon\to0$ under the assumption that the initial data $u_0^\varepsilon$ is of class $L^\infty$ and satisfies some conditions.
Moreover, we derive a limit equation on $\Gamma$ by characterizing the weak limit of $\mathcal{M}_\varepsilon u^\varepsilon$ as a weak solution to the limit equation.

\begin{theorem} \label{T:Weak}
  Suppose that $u_0^\varepsilon\in L^\infty(\Omega_\varepsilon)^N$ and
  \begin{itemize}
    \item[(a)] there exist constants $c_1\geq1$, $\alpha\in(0,1/3]$, and $\varepsilon_0\in(0,1)$ such that
    \begin{align*}
      \|u_0^\varepsilon\|_{L^\infty(\Omega_\varepsilon)} \leq c_1\varepsilon^{-1/3+\alpha} \quad\text{for all}\quad \varepsilon\in(0,\varepsilon_0),
    \end{align*}
    \item[(b)] there exists a function $v_0\in L^2(\Gamma)^N$ such that
    \begin{align*}
      \lim_{\varepsilon\to0}\mathcal{M}_\varepsilon u_0^\varepsilon = v_0 \quad\text{weakly in}\quad L^2(\Gamma)^N.
    \end{align*}
  \end{itemize}
  Then, there exists a unique global weak solution
  \begin{align*}
    u^\varepsilon \in C([0,\infty);L^2(\Omega_\varepsilon)^N)\cap L_{loc}^2([0,T);H^1(\Omega_\varepsilon)^N)\cap L_{loc}^4([0,\infty);L^4(\Omega_\varepsilon)^N)
  \end{align*}
  to \eqref{E:GL_CTD} for each $\varepsilon\in(0,\varepsilon_0)$.
  Moreover, there exists a function
  \begin{align*}
    v \in C([0,\infty);L^2(S)^N)\cap L_{loc}^2([0,T);H^1(S)^N)\cap L_{loc}^4([0,\infty);L^4(S)^N)
  \end{align*}
  such that
  \begin{align*}
    \lim_{\varepsilon\to0}\mathcal{M}_\varepsilon u^\varepsilon = v \quad\text{weakly in}\quad L^2(0,T;H^1(S)^N)\cap L^4(0,T;L^4(\Gamma)^N)
  \end{align*}
  for each $T>0$ and $v$ is a unique global weak solution to the limit problem
  \begin{align} \label{E:GL_Lim}
    \left\{
    \begin{alignedat}{3}
      \partial_tv-\frac{1}{g}\,\mathrm{div}_\Gamma(g\nabla_\Gamma v)+\lambda(|v|^2-1)v &= 0 &\quad &\text{on} &\quad &\Gamma\times(0,\infty), \\
      v|_{t=0} &= v_0 &\quad &\text{on} &\quad &\Gamma.
    \end{alignedat}
    \right.
  \end{align}
\end{theorem}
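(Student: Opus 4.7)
The plan is to combine $\varepsilon$-uniform (properly scaled) a priori estimates with a thin-direction averaging of the weak formulation of \eqref{E:GL_CTD} in order to reduce the problem to an evolution equation for $\mathcal{M}_\varepsilon u^\varepsilon$ on $\Gamma$, extract a weak limit by compactness, and then identify it as the unique weak solution of \eqref{E:GL_Lim}. Existence of $u^\varepsilon$ is provided by Theorem \ref{T:GCT_Exi}. Multiplying \eqref{E:GL_CTD} by $u^\varepsilon$, integrating by parts via the Neumann condition, and then dividing by $\varepsilon$ to account for $|\Omega_\varepsilon|\sim\varepsilon$, I obtain $\varepsilon$-uniform bounds for $u^\varepsilon$ in $L^\infty(0,T;L^2(\Omega_\varepsilon))\cap L^2(0,T;H^1(\Omega_\varepsilon))\cap L^4(0,T;L^4(\Omega_\varepsilon))$. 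Moreover, since $(|u|^2-1)u$ points outward whenever $|u|\geq 1$, the Neumann maximum principle yields
\[
  \|u^\varepsilon(t)\|_{L^\infty(\Omega_\varepsilon)} \leq \max\bigl(1,\|u_0^\varepsilon\|_{L^\infty(\Omega_\varepsilon)}\bigr) \leq c_1\,\varepsilon^{-1/3+\alpha}
\]
for every $t\geq 0$.

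Next I would test the weak formulation against functions of the form $\eta\circ\pi$, where $\pi\colon\Omega_\varepsilon\to\Gamma$ is the nearest-point projection and $\eta\in C^1(\Gamma)^N$, and use the change of variables $dx=J(y,r)\,dr\,d\mathcal{H}^{n-1}(y)$. The time-derivative and cubic terms produce $\mathcal{M}_\varepsilon$ automatically, while the gradient term, after tangential integration by parts, produces $-\frac{1}{g}\mathrm{div}_\Gamma(g\nabla_\Gamma \mathcal{M}_\varepsilon u^\varepsilon)$ up to commutator errors of order $O(\varepsilon)$ arising from the Taylor expansion of $J(y,r)$ in $r$ and from replacing $u^\varepsilon$ by its lift $\overline{\mathcal{M}_\varepsilon u^\varepsilon}$ along normal lines. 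The key tool controlling these commutators is the Poincar\'e-type estimate along normal segments,
\[
  \|u^\varepsilon-\overline{\mathcal{M}_\varepsilon u^\varepsilon}\|_{L^2(\Omega_\varepsilon)} \lesssim \varepsilon\,\|\nabla u^\varepsilon\|_{L^2(\Omega_\varepsilon)},
\]
which follows from the Neumann condition. These ingredients give $\varepsilon$-uniform estimates for $\mathcal{M}_\varepsilon u^\varepsilon$ in $L^\infty(0,T;L^2(\Gamma))\cap L^2(0,T;H^1(\Gamma))\cap L^4(0,T;L^4(\Gamma))$, and the averaged equation itself yields an Aubin--Lions-type bound on $\partial_t\mathcal{M}_\varepsilon u^\varepsilon$ in a negative Sobolev space. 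Since $H^1(\Gamma)$ embeds compactly into $L^2(\Gamma)$ on the compact surface $\Gamma$, this yields a subsequence $\mathcal{M}_\varepsilon u^\varepsilon \to v$ strongly in $L^2((0,T)\times\Gamma)$ and weakly in the function spaces claimed.

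The hard part will be passing to the limit in the cubic nonlinearity, where weak convergence alone is insufficient. For this I would decompose
\[
  \mathcal{M}_\varepsilon\bigl(|u^\varepsilon|^2u^\varepsilon\bigr)-|\mathcal{M}_\varepsilon u^\varepsilon|^2\mathcal{M}_\varepsilon u^\varepsilon
\]
as a sum of terms each carrying a factor $u^\varepsilon-\overline{\mathcal{M}_\varepsilon u^\varepsilon}$ multiplied by two factors of order $\|u^\varepsilon\|_{L^\infty(\Omega_\varepsilon)}$. The Poincar\'e estimate above then bounds this difference, in the appropriately scaled norms on $\Gamma$, by a constant times $\|u^\varepsilon\|_{L^\infty(\Omega_\varepsilon)}^3\,\varepsilon$. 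Under assumption (a) this is $O(\varepsilon^{3\alpha})$, which vanishes as $\varepsilon\to 0$ precisely because $\alpha>0$; this is the sharp cubic-scaling threshold encoded in the exponent $-1/3$. Consequently $\mathcal{M}_\varepsilon(|u^\varepsilon|^2u^\varepsilon)$ and $|\mathcal{M}_\varepsilon u^\varepsilon|^2\mathcal{M}_\varepsilon u^\varepsilon$ share the same distributional limit, and the strong $L^2((0,T)\times\Gamma)$ convergence of $\mathcal{M}_\varepsilon u^\varepsilon$ identifies it as $|v|^2v$. The remaining linear terms pass to the limit through the earlier tangential computation. Finally, standard uniqueness for \eqref{E:GL_Lim}, obtained by a Gronwall argument on the difference of two weak solutions in the energy space, pins down the limit $v$ independently of the subsequence, so the whole family converges; the initial condition $v|_{t=0}=v_0$ follows from assumption (b) together with the time-derivative estimate and continuity in time.
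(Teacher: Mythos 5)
Your sketch follows the paper's strategy essentially step for step: construct $u^\varepsilon$ by Galerkin, invoke the maximum principle (Lemma~\ref{L:GCT_Linf}) under the $\varepsilon^{-1/3+\alpha}$ scaling, average the weak form against constant-extended test functions with the Jacobian $J(y,r)$, control the Dirichlet-form and cubic residuals via the normal-segment Poincar\'e estimate of Lemma~\ref{L:Ave_Dif}, apply the Aubin--Lions lemma for strong $L^2(\Gamma\times(0,T))$ compactness of $\mathcal{M}_\varepsilon u^\varepsilon$, identify the cubic limit via a.e.\ convergence plus a uniform $L^{4/3}$ bound (the weak dominated convergence Lemma~\ref{L:AC_Non}), and upgrade subsequence convergence to full convergence by uniqueness (Lemma~\ref{L:GLim_Uni}). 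Two small bookkeeping slips are worth correcting when fleshing this out: the Poincar\'e-type estimate $\|u^\varepsilon-\overline{\mathcal{M}_\varepsilon u^\varepsilon}\|_{L^2(\Omega_\varepsilon)}\lesssim\varepsilon\,\|\partial_\nu u^\varepsilon\|_{L^2(\Omega_\varepsilon)}$ holds for any $H^1$ function and does not use the Neumann condition; and the claimed cubic-commutator bound $\varepsilon\,\|u^\varepsilon\|_{L^\infty(\Omega_\varepsilon)}^3$ does not follow directly, since the $u^\varepsilon-\overline{\mathcal{M}_\varepsilon u^\varepsilon}$ factor yields $\varepsilon\|\nabla u^\varepsilon\|_{L^2(\Omega_\varepsilon)}$ rather than $\varepsilon\|u^\varepsilon\|_{L^\infty}$, so you must also feed in the $\varepsilon$-scaled energy estimate $\|u^\varepsilon\|_{L^2(0,T;H^1(\Omega_\varepsilon))}\lesssim\varepsilon^{1/6+\alpha}$ (itself a consequence of assumption (a) and $|\Omega_\varepsilon|\sim\varepsilon$); the resulting combination $\varepsilon^{1/2}\cdot\varepsilon^{-2/3+2\alpha}\cdot\varepsilon^{1/6+\alpha}=\varepsilon^{3\alpha}$ then coincidentally reproduces your heuristic exponent, which is exactly what the paper obtains in Lemma~\ref{L:Av_Weak}.
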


Theorem \ref{T:Weak} also gives the global existence of a weak solution to \eqref{E:GL_Lim} for a certain class of initial data.
For a general initial data $v_0\in L^2(\Gamma)^N$, the global existence and uniqueness of a weak solution to \eqref{E:GL_Lim} can be shown by the Galerkin and energy methods as in the case of the thin-domain problem \eqref{E:GL_CTD} (see also Section \ref{SS:WS_Lim}).

Next, we get a difference estimate on $\Gamma$ of the weighted average $\mathcal{M}_\varepsilon u^\varepsilon$ of a weak solution $u^\varepsilon$ to \eqref{E:GL_CTD} and a weak solution $v$ to \eqref{E:GL_Lim} explicitly in terms of $\varepsilon$.
This difference estimate gives the strong convergence of $\mathcal{M}_\varepsilon u^\varepsilon$ towards $v$ as $\varepsilon\to0$.

\begin{theorem} \label{T:Df_Sur}
  Suppose that $u_0^\varepsilon\in L^\infty(\Omega_\varepsilon)^N$ and the condition \emph{(a)} of Theorem \ref{T:Weak} is satisfied.
  Let $u^\varepsilon$ be a global weak solution to \eqref{E:GL_CTD}.
  Also, let $v_0\in L^2(\Gamma)^N$ and $v$ be a global weak solution to \eqref{E:GL_Lim}.
  Then,
  \begin{multline} \label{E:Df_Sur}
    \|\mathcal{M}_\varepsilon u^\varepsilon-v\|_{C([0,T];L^2(\Gamma))}+\|\nabla_\Gamma\mathcal{M}_\varepsilon u^\varepsilon-\nabla_\Gamma v\|_{L^2(0,T;L^2(\Gamma))} \\
    \leq ce^{c(1+\lambda)T}\Bigl\{\|\mathcal{M}_\varepsilon u_0^\varepsilon-v_0\|_{L^2(\Gamma)}+\varepsilon^{3\alpha}(1+\lambda)\Bigr\}
  \end{multline}
  for all $T>0$, where $c>0$ is a constant independent of $\varepsilon$, $\lambda$, and $T$.
  In particular,
  \begin{align*}
    \lim_{\varepsilon\to0}\mathcal{M}_\varepsilon u^\varepsilon = v \quad\text{strongly in} \quad C([0,T];L^2(\Gamma)^N)\cap L^2(0,T;H^1(\Gamma)^N)
  \end{align*}
  provided that $\mathcal{M}_\varepsilon u_0^\varepsilon\to v_0$ strongly in $L^2(\Gamma)^N$ as $\varepsilon\to0$.
\end{theorem}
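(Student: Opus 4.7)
The plan is an energy argument for the error $w^\varepsilon:=\mathcal{M}_\varepsilon u^\varepsilon-v$ on $\Gamma$. The main steps are: derive an equation satisfied by $\mathcal{M}_\varepsilon u^\varepsilon$ on $\Gamma$, up to computable remainders, by testing the weak form of \eqref{E:GL_CTD} against a normally constant extension of a surface test function; subtract the weak form of \eqref{E:GL_Lim} tested against $g$ times the same test function; take $\eta=w^\varepsilon$; isolate the dissipation, control the remainders, and apply Gronwall. A crucial preliminary is the uniform-in-time bound
\[
\|u^\varepsilon(\cdot,t)\|_{L^\infty(\Omega_\varepsilon)}\leq\max\bigl\{c_1\varepsilon^{-1/3+\alpha},\,1\bigr\}\quad(t\geq 0),
\]
obtained by a maximum principle applied to $|u^\varepsilon|^2$: the Neumann condition and the sign $-\lambda(|u^\varepsilon|^2-1)u^\varepsilon\cdot u^\varepsilon\leq 0$ when $|u^\varepsilon|\geq 1$ forbid growth past $\max\{\|u_0^\varepsilon\|_{L^\infty},1\}$. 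This bound is the lever that will eventually tame the cubic nonlinearity.

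Substituting the normally constant extension $\tilde\eta(y+r\nu(y)):=\eta(y)$ of $\eta\in H^1(\Gamma)^N$ into the weak form of \eqref{E:GL_CTD} and applying the change of variables with Jacobian $J(y,r)$, the time-derivative and reaction terms transfer cleanly to $\Gamma$ weighted by $g$, while the diffusion term becomes $\int_\Gamma g\,\nabla_\Gamma\mathcal{M}_\varepsilon u^\varepsilon\cdot\nabla_\Gamma\eta\,d\mathcal{H}^{n-1}$ plus a linear remainder $R_\varepsilon^{\mathrm{lin}}(\eta)$ that vanishes when $u^\varepsilon$ is normally constant. Using the normal Poincar\'e inequality $\|u^\varepsilon-\overline{\mathcal{M}_\varepsilon u^\varepsilon}\|_{L^2(\Omega_\varepsilon)}\lesssim\varepsilon\|\nabla u^\varepsilon\|_{L^2(\Omega_\varepsilon)}$ and the scaling $\|\mathcal{M}_\varepsilon(\cdot)\|_{L^2(\Gamma)}\lesssim\varepsilon^{-1/2}\|\cdot\|_{L^2(\Omega_\varepsilon)}$, one controls $|R_\varepsilon^{\mathrm{lin}}(\eta)|\lesssim\varepsilon^{1/2}\|\nabla u^\varepsilon\|_{L^2(\Omega_\varepsilon)}\|\nabla_\Gamma\eta\|_{L^2(\Gamma)}$. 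Subtracting the weak form of \eqref{E:GL_Lim} tested with $g\eta$ and taking $\eta=w^\varepsilon$, I split the cubic contribution as $\mathcal{M}_\varepsilon(|u^\varepsilon|^2 u^\varepsilon)-|v|^2 v=E_\varepsilon+(|\mathcal{M}_\varepsilon u^\varepsilon|^2\mathcal{M}_\varepsilon u^\varepsilon-|v|^2 v)$ with $E_\varepsilon:=\mathcal{M}_\varepsilon(|u^\varepsilon|^2 u^\varepsilon)-|\mathcal{M}_\varepsilon u^\varepsilon|^2\mathcal{M}_\varepsilon u^\varepsilon$; the second bracket, paired with $w^\varepsilon$, is nonnegative by the algebraic inequality $(|a|^2 a-|b|^2 b)\cdot(a-b)\geq 0$ and is discarded.

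The hard part is the sharp estimate of the commutator $E_\varepsilon$. With $\overline M:=\overline{\mathcal{M}_\varepsilon u^\varepsilon}$ the normally constant extension, write
\[
|u^\varepsilon|^2 u^\varepsilon-|\overline M|^2\overline M=|u^\varepsilon|^2(u^\varepsilon-\overline M)+\bigl((u^\varepsilon+\overline M)\cdot(u^\varepsilon-\overline M)\bigr)\overline M,
\]
so that each summand is pointwise bounded by $\|u^\varepsilon\|_{L^\infty}^2|u^\varepsilon-\overline M|\lesssim\varepsilon^{-2/3+2\alpha}|u^\varepsilon-\overline M|$. Combining the normal Poincar\'e inequality, the scaling above, and the global energy estimate
\[
\|\nabla u^\varepsilon\|_{L^2(0,T;L^2(\Omega_\varepsilon))}^2\lesssim\|u_0^\varepsilon\|_{L^2(\Omega_\varepsilon)}^2+\lambda|\Omega_\varepsilon|T\lesssim\varepsilon^{1/3+2\alpha}+\varepsilon\lambda T,
\]
a short calculation yields $\|E_\varepsilon\|_{L^2(0,T;L^2(\Gamma))}\lesssim\varepsilon^{3\alpha}(1+\sqrt{\lambda T})$; the hypothesis $\alpha\leq 1/3$ is exactly what keeps the resulting power of $\varepsilon$ nonnegative, so this is the step where that assumption is consumed. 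A Cauchy--Schwarz/Young split then absorbs half of $\|\nabla_\Gamma w^\varepsilon\|_{L^2(\Gamma)}^2$ into the dissipation on the left and leaves terms of the form $c(1+\lambda)\|w^\varepsilon\|_{L^2(\Gamma)}^2+c\,\varepsilon^{6\alpha}(1+\lambda)^2$, so that Gronwall's lemma delivers \eqref{E:Df_Sur}. The strong convergence statement is then immediate from \eqref{E:Df_Sur} together with $\|\mathcal{M}_\varepsilon u_0^\varepsilon-v_0\|_{L^2(\Gamma)}\to 0$.
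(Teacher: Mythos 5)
Your proposal is correct and follows essentially the same route as the paper: the $L^\infty$ maximum principle (Lemma~\ref{L:GCT_Linf}), the averaged weak form on $\Gamma$ with residuals from the Dirichlet form (Lemma~\ref{L:AT_Diri}) and the cubic term (Lemma~\ref{L:Ave_Non}, with your identity $|a|^2a-|b|^2b=|a|^2(a-b)+((a+b)\cdot(a-b))b$ being a minor algebraic variant of the paper's two-step split via Lemmas~\ref{L:ASq_Dif}), the monotonicity of the cubic nonlinearity, and Gronwall. One small slip: your intermediate energy bound $\int_0^T\lambda\|u^\varepsilon\|_{L^2}^2\,dt\lesssim\varepsilon\lambda T$ should read $\lesssim\varepsilon^{1/3+2\alpha}\lambda T$ (since $\|u^\varepsilon\|_{L^2}^2\lesssim\|u^\varepsilon\|_{L^\infty}^2|\Omega_\varepsilon|\lesssim\varepsilon^{1/3+2\alpha}$), but the exponent $3\alpha$ in your commutator estimate, and hence the conclusion, are unaffected.
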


Note that we assume that the constant $\lambda$ is independent of $\varepsilon$.
If $\alpha=1/3$ in the condition (a) of Theorem \ref{T:Weak}, then $\varepsilon^{3\alpha}=\varepsilon$ in \eqref{E:Df_Sur} and thus $v$ approximates $\mathcal{M}_\varepsilon u^\varepsilon$ of order $\varepsilon$.
This is the case when $\|u_0^\varepsilon\|_{L^\infty(\Omega_\varepsilon)}$ is uniformly bounded in $\varepsilon$ (e.g. $|u_0^\varepsilon|\leq1$ a.e. in $\Omega_\varepsilon$).

We also estimate the difference of $u^\varepsilon$ and $v$ in $\Omega_\varepsilon$.
It seems that the difference estimate \eqref{E:Df_Sur} on $\Gamma$ directly gives a difference estimate in $\Omega_\varepsilon$.
However, as we explain in Section \ref{SS:In_Idea}, the use of \eqref{E:Df_Sur} requires a higher order regularity of $u^\varepsilon$.
To avoid such a requirement, we take another and somewhat new approach here.
Surprisingly, this approach enables us to remove the assumption $u_0^\varepsilon\in L^\infty(\Omega_\varepsilon)^N$.
For a function $\eta$ on $\Gamma$, we write $\bar{\eta}$ for the constant extension of $\eta$ in the normal direction of $\Gamma$ (see Section \ref{SS:Pr_Sur} for details).

\begin{theorem} \label{T:Df_CTD}
  Let $u_0^\varepsilon\in L^2(\Omega_\varepsilon)^N$ and $v_0\in L^2(\Gamma)^N$, let $u^\varepsilon$ and $v$ be global weak solutions to \eqref{E:GL_CTD} and \eqref{E:GL_Lim}, respectively.
  Then,
  \begin{multline} \label{E:Df_CTD}
    \varepsilon^{-1/2}\Bigl(\|u^\varepsilon-\bar{v}\|_{C([0,T];L^2(\Omega_\varepsilon))}+\|\nabla u^\varepsilon-\nabla\bar{v}\|_{L^2(0,T;L^2(\Omega_\varepsilon))}\Bigr) \\
    \leq ce^{c(1+\lambda)T}\Bigl(\varepsilon^{-1/2}\|u_0^\varepsilon-\bar{v}_0\|_{L^2(\Omega_\varepsilon)}+\varepsilon\|v_0\|_{L^2(\Gamma)}\Bigr)
  \end{multline}
  for all $T>0$, where $c>0$ is a constant independent of $\varepsilon$, $\lambda$, and $T$.
\end{theorem}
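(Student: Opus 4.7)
My plan is to work entirely with weak formulations, thereby avoiding any second-order regularity on $u^\varepsilon$ or $v$, and to exploit the monotonicity of the Ginzburg--Landau cubic to dispense with every pointwise bound on $u^\varepsilon$; this is what removes the $L^\infty$ hypothesis on $u_0^\varepsilon$.

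First I would construct a companion weak identity for $\bar v$ on $\Omega_\varepsilon$. For $\varphi\in H^1(\Omega_\varepsilon)^N$, testing the limit equation \eqref{E:GL_Lim} against $\mathcal{M}_\varepsilon\varphi\in H^1(\Gamma)^N$ and applying the change-of-variables identity
\begin{equation*}
\int_{\Omega_\varepsilon}\bar h\cdot\varphi\,dx = \varepsilon\int_\Gamma g\,h\cdot\mathcal{M}_\varepsilon\varphi\,d\mathcal{H}^{n-1},
\end{equation*}
which is immediate from the definition of $\mathcal{M}_\varepsilon$, to $h=\partial_t v$ and $h=(|v|^2-1)v$ converts the time-derivative and cubic terms of the tested limit equation exactly into the thin-domain integrals of $\partial_t\bar v\cdot\varphi$ and $(|\bar v|^2-1)\bar v\cdot\varphi$. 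Rearranging yields
\begin{equation*}
\int_{\Omega_\varepsilon}\partial_t\bar v\cdot\varphi\,dx + \int_{\Omega_\varepsilon}\nabla\bar v:\nabla\varphi\,dx + \lambda\int_{\Omega_\varepsilon}(|\bar v|^2-1)\bar v\cdot\varphi\,dx = R^\varepsilon(\varphi),
\end{equation*}
where the residual functional is
\begin{equation*}
R^\varepsilon(\varphi) := \int_{\Omega_\varepsilon}\nabla\bar v:\nabla\varphi\,dx - \varepsilon\int_\Gamma g\,\nabla_\Gamma v\cdot\nabla_\Gamma\mathcal{M}_\varepsilon\varphi\,d\mathcal{H}^{n-1}.
\end{equation*}
Subtracting this from the weak form of \eqref{E:GL_CTD} and choosing $\varphi = w^\varepsilon := u^\varepsilon-\bar v$ produces an identity whose principal part $\tfrac{1}{2}\tfrac{d}{dt}\|w^\varepsilon\|_{L^2(\Omega_\varepsilon)}^2 + \|\nabla w^\varepsilon\|_{L^2(\Omega_\varepsilon)}^2$ is controlled by $\lambda\|w^\varepsilon\|_{L^2(\Omega_\varepsilon)}^2 + |R^\varepsilon(w^\varepsilon)|$: indeed, the cubic contribution $\int_{\Omega_\varepsilon}(|u^\varepsilon|^2 u^\varepsilon-|\bar v|^2\bar v)\cdot w^\varepsilon\,dx$ is nonnegative by the elementary identity $(|a|^2 a-|b|^2 b)\cdot(a-b)\geq 0$ for $a,b\in\mathbb{R}^N$ and may simply be discarded. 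This discard is precisely what frees the argument from any pointwise bound on $u^\varepsilon$.

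The hard part will be to prove the sharp residual estimate
\begin{equation*}
|R^\varepsilon(\varphi)| \leq C\varepsilon^{3/2}\|\nabla_\Gamma v\|_{L^2(\Gamma)}\|\nabla\varphi\|_{L^2(\Omega_\varepsilon)}.
\end{equation*}
Each of the two terms defining $R^\varepsilon$ is on its own only of size $\varepsilon^{1/2}\|\nabla_\Gamma v\|_{L^2(\Gamma)}\|\nabla\varphi\|_{L^2(\Omega_\varepsilon)}$, so the full extra $\varepsilon$ must come out of a cancellation between them. In tube coordinates $x = y+r\nu(y)$, the gradient $\nabla\bar v(x)$ is a purely geometric transform of $\nabla_\Gamma v(y)$; expanding this together with the Jacobian $J(y,r)=\prod_\alpha(1-r\kappa_\alpha(y))$ in powers of $r\in(\varepsilon g_0,\varepsilon g_1)$, and commuting $\nabla_\Gamma$ past the weighted average $\mathcal{M}_\varepsilon$ (which generates curvature commutators and boundary terms involving $\nabla_\Gamma g_0,\nabla_\Gamma g_1$), one should see the $r=0$ parts match exactly against $\varepsilon g(y)\,\nabla_\Gamma v(y)\cdot\nabla_\Gamma\mathcal{M}_\varepsilon\varphi(y)$, while every remainder carries an explicit factor $r=O(\varepsilon)$ on top of the natural $\varepsilon^{1/2}$ scaling of $L^2$ norms in the thin direction. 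Granted this bound, Young's inequality absorbs $\|\nabla w^\varepsilon\|_{L^2(\Omega_\varepsilon)}$ into the left-hand side, the standard energy estimate $\int_0^T\|\nabla_\Gamma v\|_{L^2(\Gamma)}^2\,dt \leq Ce^{C\lambda T}\|v_0\|_{L^2(\Gamma)}^2$ for \eqref{E:GL_Lim} controls the source, and Gronwall's inequality followed by multiplication through by $\varepsilon^{-1/2}$ yields \eqref{E:Df_CTD}.
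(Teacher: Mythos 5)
Your plan is precisely the paper's Section~\ref{S:Diff_CT} argument: test \eqref{E:GL_Lim} with $\mathcal{M}_\varepsilon\varphi$, ``unfold'' via \eqref{E:Ave_Pair} to obtain a thin-domain weak form for $\bar v$ whose only residual is the Dirichlet-form mismatch, discard the cubic by the monotonicity \eqref{Pf_GCU:Mono}, invoke Lemma~\ref{L:AT_Diri} for the $\varepsilon^{3/2}$ residual bound, and close with Young's inequality, the energy estimate \eqref{E:GLim_Ena}, and Gronwall. The one small slip is that Lemma~\ref{L:AT_Diri} gives the residual bound with $\|\varphi\|_{H^1(\Omega_\varepsilon)}$ rather than $\|\nabla\varphi\|_{L^2(\Omega_\varepsilon)}$ (the $L^2$ part enters through the curvature/Jacobian commutators in $\nabla_\Gamma\mathcal{M}_\varepsilon\varphi$), but this is harmless since you already route the non-absorbed $L^2$ piece through Gronwall.
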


Note that the $L^2(\Omega_\varepsilon)$-norm is divided by $\varepsilon^{1/2}$ in \eqref{E:Df_CTD}, since it involves the square root of the thickness of $\Omega_\varepsilon$ which is of order $\varepsilon$.
Since $\|v_0\|_{L^2(\Gamma)}$ is independent of $\varepsilon$, we can say by \eqref{E:Df_CTD} that $v$ approximates $u^\varepsilon$ of order $\varepsilon$.

\subsection{Idea of proof} \label{SS:In_Idea}
We prove Theorems \ref{T:Weak} and \ref{T:Df_Sur} in Section \ref{S:TFL}.
A basic idea is similar to the cases of the heat equation \cite{Miu17} and the Navier--Stokes equations \cite{Miu20_03}.
We start form a weak form of the thin-domain problem \eqref{E:GL_CTD} and take the constant extension of a function on $\Gamma$ as a test function.
Then, we take the average in the thin direction of integrals over $\Omega_\varepsilon$ by using a change of variables formula that involves the Jacobian $J(y,r)$ used in the definition of the weighted average (see \eqref{E:Ch_Va} for the formula), and derive a weak form on $\Gamma$ satisfied by the weighted average $\mathcal{M}_\varepsilon u^\varepsilon$ of a weak solution $u^\varepsilon$ to \eqref{E:GL_CTD}.
The weak form of $\mathcal{M}_\varepsilon u^\varepsilon$ is similar to a weak form of the limit problem \eqref{E:GL_Lim}, but it contains a residual term coming from errors in the averages of the Dirichlet form corresponding to the Laplacian in \eqref{E:GL_CTD} and of the cubic term $|u^\varepsilon|^2u^\varepsilon$.
We estimate the residual term properly to show that it is sufficiently small as $\varepsilon\to0$.
After that, we derive an energy estimate for $\mathcal{M}_\varepsilon u^\varepsilon$ by using the weak form of $\mathcal{M}_\varepsilon u^\varepsilon$, which gives the weak convergence of $\mathcal{M}_\varepsilon u^\varepsilon$ (up to a subsequence) as $\varepsilon\to0$.
We also estimate the time derivative of $\mathcal{M}_\varepsilon u^\varepsilon$ and apply the Aubin--Lions lemma (see Lemma \ref{L:ET_AuLi}) to get the strong convergence of $\mathcal{M}_\varepsilon u^\varepsilon$.
This strong convergence result is essential for the weak convergence of the cubic term $|\mathcal{M}_\varepsilon u^\varepsilon|^2\mathcal{M}_\varepsilon u^\varepsilon$, which is shown by a weak version of the dominated convergence theorem (see Lemma \ref{L:AC_Non}).
Using the convergence results of $\mathcal{M}_\varepsilon u^\varepsilon$, we send $\varepsilon\to0$ in the weak form of $\mathcal{M}_\varepsilon u^\varepsilon$ and find that the limit function $v$ of $\mathcal{M}_\varepsilon u^\varepsilon$ is indeed a weak solution to the limit problem \eqref{E:GL_Lim}.
We also estimate the difference of $\mathcal{M}_\varepsilon u^\varepsilon$ and $v$ by taking the difference of the weak forms of $\mathcal{M}_\varepsilon u^\varepsilon$ and $v$ and by using an energy method.
Note that, in calculations of the energy method, the difference of the cubic terms can be neglected since
\begin{align} \label{E:In_Mono}
  (|a|^2a-|b|^2b)\cdot(a-b) \geq (|a|^3-|b|^3)(|a|-|b|) \geq 0
\end{align}
for all $a,b\in\mathbb{R}^N$ (see \eqref{Pf_GCU:Mono} for details).
This is also used in the proof of the uniqueness of weak solutions to \eqref{E:GL_CTD} and \eqref{E:GL_Lim} (see Lemmas \ref{L:GCT_Uni} and \ref{L:GLim_Uni}).

In the above arguments, the main difficulty arises in the estimate for the residual term appearing in the weak form of $\mathcal{M}_\varepsilon u^\varepsilon$.
As mentioned above, the residual term comes from errors in the averages of the Dirichlet form, i.e. the $L^2$-inner product of the gradients, and of the cubic term.
For the Dirichlet form, we can compute the tangential gradient of $\mathcal{M}_\varepsilon u^\varepsilon$ explicitly in terms of the gradient of $u^\varepsilon$ (see Lemma \ref{L:Ave_TGr}), so we can easily estimate the error in the average of the Dirichlet form (see Lemma \ref{L:AT_Diri}).
In fact, the error estimate for the average of the Dirichlet form in a curved thin domain was already shown in the study of the heat equation \cite{Miu17}, but the proof given there is based on somewhat complicated calculations under local coordinates of $\Gamma$ and $\Omega_\varepsilon$.
Here, we give a more direct proof without using local coordinates.

A more careful analysis is required in the error estimate for the average of the cubic term.
In the study of the thin-film limit of the Navier--Stokes eqautions \cite{Miu20_03}, we estimated the error of the average of a nonlinear term (the convection term) by using the Sobolev embedding with explicit dependence of constants on $\varepsilon$.
This may be a possible approach for our case, but we consider a weak solution $u^\varepsilon$ to \eqref{E:GL_CTD} which does not have an enough regularity.
Hence, instead, we use the (weak) maximum principle
\begin{align*}
  \|u^\varepsilon\|_{L^\infty(\Omega_\varepsilon\times(0,\infty))} \leq \max\{1,\|u_0^\varepsilon\|_{L^\infty(\Omega_\varepsilon)}\}
\end{align*}
under the assumption $u_0^\varepsilon\in L^\infty(\Omega_\varepsilon)^N$ (see Lemma \ref{L:GCT_Linf}).
With this in mind, we estimate the error of the average of the cubic term as
\begin{multline*}
    |(|u^\varepsilon|^2u^\varepsilon,\bar{\zeta})_{L^2(\Omega_\varepsilon)}-\varepsilon(g|\mathcal{M}_\varepsilon u^\varepsilon|^2\mathcal{M}_\varepsilon u^\varepsilon,\zeta)_{L^2(\Gamma)}| \leq c\varepsilon^{3/2}\|u^\varepsilon\|_{L^\infty(\Omega_\varepsilon)}^2\|u^\varepsilon\|_{H^1(\Omega_\varepsilon)}\|\zeta\|_{L^2(\Gamma)},
\end{multline*}
where $\zeta$ is a test function on $\Gamma$ and $\bar{\zeta}$ is the constant extension of $\zeta$ in the normal direction of $\Gamma$, and show that the right-hand side is sufficiently small as $\varepsilon\to0$ by using an energy estimate for $u^\varepsilon$, the maximum principle, and the assumption (a) of Theorem \ref{T:Weak}.
The proof of the above error estimate is based on the change of variables formula \eqref{E:Ch_Va} and an estimate for the difference of $|u^\varepsilon|^2$ and $|\mathcal{M}_\varepsilon u^\varepsilon|^2$.
For details, we refer to Section \ref{SS:Ave_Sq}.
We note that the use of the maximum principle enables us to avoid any restriction on the dimension of the curved thin domain, which is usually necessary if one uses the Sobolev embedding.
Also, we mention that the maximum principle was used in \cite{Miu23} for the proof of a difference estimate in the sup-norm of classical solutions to the heat equation in a moving thin domain and to a limit equation.

Let us also explain the idea of the proof of Theorem \ref{T:Df_CTD} (see Section \ref{S:Diff_CT} for the actual proof).
To derive the difference estimate \eqref{E:Df_CTD} in $\Omega_\varepsilon$, one may naturally consider the use of the difference estimate \eqref{E:Df_Sur} on $\Gamma$, since the difference of a weak solution $u^\varepsilon$ to \eqref{E:GL_CTD} and its weighted average $\mathcal{M}_\varepsilon u^\varepsilon$ is expected to be small as $\varepsilon\to0$.
This idea seems to work well, but the difference estimate requires a higher regularity of $u^\varepsilon$ like
\begin{align*}
  \bigl\|u^\varepsilon-\overline{\mathcal{M}_\varepsilon u^\varepsilon}\bigr\|_{L^2(\Omega_\varepsilon)} \leq c\varepsilon\|u^\varepsilon\|_{H^1(\Omega_\varepsilon)},
\end{align*}
where $\overline{\mathcal{M}_\varepsilon u^\varepsilon}$ is the constant extension of $\mathcal{M}_\varepsilon u^\varepsilon$ (see Lemma \ref{L:Ave_Dif} for the above estimate).
In particular, if we would like to derive \eqref{E:Df_CTD} from \eqref{E:Df_Sur} by using the difference estimate of $u^\varepsilon$ and $\mathcal{M}_\varepsilon u^\varepsilon$, then the $H^2$-regularity of $u^\varepsilon$ is required, which is not the case for a weak solution.
In fact, this idea was successful in the study of the Navier--Stokes equations \cite{Miu20_03}, since a strong solution to the thin-domain problem was used in that study.
To circumvent the above issue, we instead consider a weak solution $v$ to the limit problem \eqref{E:GL_Lim} as a weak solution to the thin-domain problem \eqref{E:GL_CTD} with some error.
More precisely, we take a test function $\psi$ defined on $\Omega_\varepsilon$ and substitute its weighted average $\mathcal{M}_\varepsilon\psi$ for a test function of \eqref{E:GL_Lim}.
Then, we ``unfold'' the weighted average $\mathcal{M}_\varepsilon$, i.e. compute like
\begin{align*}
  &\varepsilon\int_\Gamma g(y)v(y)\cdot\mathcal{M}_\varepsilon\psi(y)\,d\mathcal{H}^{n-1}(y) \\
  &= \int_\Gamma\left(\int_{\varepsilon g_0(y)}^{\varepsilon g_1(y)}v(y)\cdot\psi\bigl(y+r\nu(y)\bigr)J(y,r)\,dr\right)\,d\mathcal{H}^{n-1}(y) \\
  &= \int_{\Omega_\varepsilon}\bar{v}(x)\cdot\psi(x)\,dx
\end{align*}
by the change of variables formula \eqref{E:Ch_Va}.
Here, $\mathcal{H}^{n-1}$ is the Hausdorff measure of dimension $n-1$ and $\bar{v}$ is the constant extension of $v$ in the normal direction of $\Gamma$.
As a result, we get a weak form in $\Omega_\varepsilon$ satisfied by $\bar{v}$ that is similar to a weak form of \eqref{E:GL_CTD} but has a residual term.
Now, the residual term consists only of an error in the unfolding of the Dirichlet form, since all other terms are linear and of zeroth order in $\psi$.
In particular, we can recover the cubic term of \eqref{E:GL_CTD} from that of \eqref{E:GL_Lim} in a weak form without any error.
This enables us to remove the assumption $u_0^\varepsilon\in L^\infty(\Omega_\varepsilon)^N$ in Theorem \ref{T:Df_CTD}.
Moreover, the residual term is estimated in terms of $\|\psi\|_{H^1(\Omega_\varepsilon)}$ and $\|v\|_{H^1(\Gamma)}$ by Lemma \ref{L:AT_Diri}, and the latter norm is further estimated by an energy estimate for $v$ (when integrated in time).
Hence, we can get the difference estimate \eqref{E:Df_CTD} in $\Omega_\varepsilon$ by taking the difference of the weak forms of $u^\varepsilon$ and $\bar{v}$ and by applying an energy method with the aid of \eqref{E:In_Mono}.

The notion of ``unfolding'' has been used in the study of homogenization (see e.g. \cite{CiDaGr18}), but the idea of unfolding the (weighted) average in a weak form seems to be somewhat new in the study of the thin-film limit of PDEs.
Also, we point out that the idea explained here is applicable to the problem
\begin{align*}
  \partial_tu^\varepsilon-\Delta u^\varepsilon+f(u^\varepsilon) = 0 \quad\text{in}\quad \Omega_\varepsilon\times(0,\infty)
\end{align*}
under the Neumann boundary condition, where $f\colon\mathbb{R}^N\to\mathbb{R}^N$ is any nonlinearity satisfying the monotonicity $\{f(a)-f(b)\}\cdot(a-b)\geq0$ for all $a,b\in\mathbb{R}^N$ as in \eqref{E:In_Mono}.

Lastly, we note that we do not rescale the thickness of $\Omega_\varepsilon$ in all of the proofs in this paper.
This makes the proofs a quite readable, since the rescaling argument in the case of a curved thin domain results in highly involved expressions of rescaled equations due to the nonconstant curvatures of the limit hypersurface $\Gamma$.

\subsection{Organization of the paper} \label{SS:In_Org}
The rest of this paper is organized as follows.
We fix notations on a closed hypersurface and a curved thin domain in Section \ref{S:Prelim}.
Section \ref{S:Func_Sp} provides some results of function spaces used in analysis of \eqref{E:GL_CTD} and \eqref{E:GL_Lim}.
In Section \ref{S:Weak_Sol}, we define weak solutions to \eqref{E:GL_CTD} and \eqref{E:GL_Lim} and give basic results on weak solutions.
Section \ref{S:Ave} is devoted to analysis of the weighted average operator.
In Section \ref{S:TFL}, we study the thin-film limit problem of \eqref{E:GL_CTD} and establish Theorems \ref{T:Weak} and \ref{T:Df_Sur}.
Also, we prove Theorem \ref{T:Df_CTD} in Section \ref{S:Diff_CT}.
Section \ref{S:Galerkin} gives the outline of the Galerkin method for construction of a weak solution to \eqref{E:GL_CTD}.
The proofs of Lemmas \ref{L:ET_Den}, \ref{L:Trn_Sp}, \ref{L:Trn_Ti} are given in Section \ref{S:Pf_Aux}.

\section{Preliminaries} \label{S:Prelim}
We fix notations on a closed hypersurface and a curved thin domain.
Throughout this paper, the symbol $c$ denotes a general positive constant independent of the parameter $\varepsilon$.

\subsection{Basic notations} \label{SS:Pr_Nota}
We fix a coordinate system of $\mathbb{R}^n$ with $n\in\mathbb{N}$ and write $x_i$ for the $i$-th component of a point $x\in\mathbb{R}^n$ under the fixed coordinate system.
A vector $a\in\mathbb{R}^n$ and a matrix $A\in\mathbb{R}^{n\times N}$ are written as
\begin{align*}
  a =
  \begin{pmatrix}
    a_1 \\ \vdots \\ a_n
  \end{pmatrix}
  = (a_1,\dots,a_n)^T, \quad
  A = (A_{ij})_{i,j} =
  \begin{pmatrix}
    A_{11} & \cdots & A_{1N} \\
    \vdots & & \vdots \\
    A_{n1} & \cdots & A_{nN}
  \end{pmatrix}.
\end{align*}
We denote by $A^T$ for the transpose of $A$ and by $I_n$ the $n\times n$ identity matrix.
Let $a\cdot b$ be the inner product of $a,b\in\mathbb{R}^n$ and $|a|=\sqrt{a\cdot a}$ be the Euclidean norm of $a$.
We set the inner product $A:B=\mathrm{tr}[A^TB]$ of $A,B\in\mathbb{R}^{n\times N}$ and the Frobenius norm $|A|=\sqrt{A:A}$.

For a scalar-valued function $\varphi$ on $\mathbb{R}^n$, let
\begin{align*}
  \nabla\varphi = (\partial_1\varphi,\dots,\partial_n\varphi)^T, \quad \nabla^2\varphi = (\partial_i\partial_j\varphi)_{i,j}
\end{align*}
be the gradient and the Hessian matrix of $f$, respectively, where $\partial_i=\partial/\partial x_i$.
Also, when $u=(u_1,\dots,u_N)^T$ is an $\mathbb{R}^N$-valued function on $\mathbb{R}^n$, we write
\begin{align*}
  \nabla u =
  \begin{pmatrix}
    \nabla u_1 & \cdots & \nabla u_N
  \end{pmatrix}
  =
  \begin{pmatrix}
    \partial_1u_1 & \cdots & \partial_1u_N \\
    \vdots & & \vdots \\
    \partial_nu_1 & \cdots & \partial_nu_N
  \end{pmatrix}.
\end{align*}
Note that $\nabla(u\circ\Phi)=(\nabla\Phi)\nabla u$ for $\Phi\colon\mathbb{R}^n\to\mathbb{R}^n$ under our notation.

We write $X'$ and $\langle\cdot,\cdot\rangle_X$ for the dual space of a Banach space $X$ and the duality product between $X'$ and $X$.
For spaces $\mathcal{X}(S)$ and $\mathcal{Y}(S)$ of scalar-valued functions on a set $S$, let
\begin{align*}
  (\mathcal{X}\cap\mathcal{Y})(S) = \mathcal{X}(S)\cap\mathcal{Y}(S), \quad \mathcal{X}(S)^N = \{u=(u_1,\dots,u_N)^T \mid u_1,\dots,u_N\in\mathcal{X}(S)\}.
\end{align*}
When we use the norm and the inner product on $\mathcal{X}(S)^N$, we suppress the superscript $N$ and write $\|\cdot\|_{\mathcal{X}(S)}$ and $(\cdot,\cdot)_{\mathcal{X}(S)}$, respectively.
Similarly, we denote by $\langle\cdot,\cdot\rangle_{\mathcal{X}(S)}$ the duality product between $[\mathcal{X}(S)^N]'$ and $\mathcal{X}(S)^N$.

\subsection{Closed hypersurface} \label{SS:Pr_Sur}
Let $\Gamma$ be a $C^3$ closed (i.e., compact and without boundary), connected, and oriented hypersurface in $\mathbb{R}^n$.
We assume that $\Gamma$ is the boundary of a bounded domain $\Omega$ in $\mathbb{R}^n$ and write $\nu$ for the unit outward normal vector field of $\Gamma$ which points from $\Omega$ into $\mathbb{R}^n\setminus\Omega$.
Let $d$ be the signed distance function from $\Gamma$ increasing in the direction of $\nu$.
Also, let $\kappa_1,\dots,\kappa_{n-1}$ be the principal curvatures of $\Gamma$.
Then, $\nu$ is of class $C^2$ and $\kappa_1,\dots,\kappa_{n-1}$ are of class $C^1$ on $\Gamma$ by the regularity of $\Gamma$, and they are bounded on $\Gamma$ since $\Gamma$ is compact.
Hence, there exists a tubular neighborhood $\mathcal{N}_\delta=\{x\in\mathbb{R}^n \mid -\delta<d(x)<\delta\}$ of $\Gamma$ with radius $\delta>0$ such that each $x\in\overline{\mathcal{N}_\delta}$ has a unique $\pi(x)\in\Gamma$ satisfying
\begin{align} \label{E:Fermi}
  x = \pi(x)+d(x)\nu(\pi(x)), \quad \nabla d(x) = \nu(\pi(x)).
\end{align}
Moreover, $d$ and $\pi$ are of class $C^2$ and $C^1$ on $\overline{\mathcal{N}_\delta}$, respectively (see \cite[Section 14.6]{GilTru01}).
Taking $\delta>0$ sufficiently small, we may also assume that
\begin{align} \label{E:Curv}
  c^{-1} \leq 1-r\kappa_\alpha(y) \leq c \quad\text{for all}\quad y\in\Gamma, \, r\in[-\delta,\delta], \, \alpha=1,\dots,n-1
\end{align}
with some constant $c>0$.

Let $P=I_n-\nu\otimes\nu$ be the orthogonal projection onto the tangent plane of $\Gamma$, where $\nu\otimes\nu=(\nu_i\nu_j)_{i,j}$ is the tensor product of $\nu$ with itself.
We define the tangential gradient of $\eta\in C^1(\Gamma)$ by $\nabla_\Gamma\eta=P\nabla\tilde{\eta}$ on $\Gamma$, where $\tilde{\eta}$ is an extension of $\eta$ to $\mathcal{N}_\delta$.
Also, we write $\underline{D}_i\eta$ for the $i$-th component of $\nabla_\Gamma\eta$ and call it the $i$-th tangential derivative of $\eta$ for $i=1,\dots,n$.
Here, the value of $\nabla_\Gamma\eta$ is independent of the choice of the extension $\tilde{\eta}$.
Moreover,
\begin{align} \label{E:TG_Pnu}
  P\nabla_\Gamma\eta = \nabla_\Gamma\eta, \quad \nu\cdot\nabla_\Gamma\eta = 0 \quad\text{on}\quad \Gamma.
\end{align}
Let $\bar{\eta}=\eta\circ\pi$ be the constant extension of $\eta$ in the normal direction of $\Gamma$.
Then,
\begin{align} \label{E:CoDe_Sur}
  \nabla\bar{\eta}(y) = \nabla_\Gamma\eta(y), \quad \partial_i\bar{\eta}(y) = \underline{D}_i\eta(y), \quad y\in\Gamma,
\end{align}
since $\nabla\pi(y)=P(y)$ for $y\in\Gamma$ by \eqref{E:Fermi} and $d(y)=0$.
In what follows, we always write $\bar{\eta}$ for the constant extension of a function $\eta$ on $\Gamma$ in the normal direction of $\Gamma$.

When $v=(v_1,\dots,v_N)^T$ is an $\mathbb{R}^N$-valued function on $\Gamma$, we write
\begin{align*}
  \nabla_\Gamma v =
  \begin{pmatrix}
    \nabla_\Gamma v_1 & \cdots & \nabla_\Gamma v_N
  \end{pmatrix}
  =
  \begin{pmatrix}
    \underline{D}_1v_1 & \cdots & \underline{D}_1v_N \\
    \vdots & & \vdots \\
    \underline{D}_nv_1 & \cdots & \underline{D}_nv_N
  \end{pmatrix}.
\end{align*}
Some authors define $\nabla_\Gamma v$ as the transpose of the above matrix.
Under our notation, we have $\nabla_\Gamma v=P\nabla\tilde{v}$ on $\Gamma$ for any extension $\tilde{v}$ of $v$ to $\mathcal{N}_\delta$.
When $N=n$, we define the surface divergence of $v=(v_1,\dots,v_n)^T$ by $\mathrm{div}_\Gamma v=\mathrm{tr}[\nabla_\Gamma v]$.
Moreover, for
\begin{align*}
  A =
  \begin{pmatrix}
    A_1 & \cdots & A_N
  \end{pmatrix}\colon\Gamma\to\mathbb{R}^{n\times N},
  \quad\text{where}\quad A_1,\dots,A_N\colon\Gamma\to\mathbb{R}^n,
\end{align*}
we define the $\mathbb{R}^N$-valued function $\mathrm{div}_\Gamma A=(\mathrm{div}_\Gamma A_1,\dots,\mathrm{div}_\Gamma A_N)^T$ on $\Gamma$.

Let $W=-\nabla_\Gamma\nu$ and $H=\mathrm{tr}[W]$ on $\Gamma$.
We call $W$ and $H$ the Weingarten map (or the shape operator) and the mean curvature of $\Gamma$, respectively.
Since $\Gamma$ is of class $C^2$, the functions $W$ and $H$ are of class $C^1$ and thus bounded on $\Gamma$.
Moreover, it follows from  \eqref{E:Fermi} and \eqref{E:CoDe_Sur} that $W=-\nabla\bar{\nu}=-\nabla^2d$ on $\Gamma$.
Hence, $W$ is symmetric.
We also have
\begin{align*}
  W\nu = -\nabla_\Gamma(|\nu|^2/2) = 0, \quad PW = WP = W \quad\text{on}\quad \Gamma
\end{align*}
by $|\nu|=1$.
The first relation shows that $W$ has the eigenvalue zero.
It is also known (see e.g. \cite{Lee18}) that the other eigenvalues of $W$ are $\kappa_1,\dots,\kappa_{n-1}$.
Hence, taking an orthonormal basis of $\mathbb{R}^n$ which consists of eigenvectors of $W$, and using the boundedness of $\kappa_1,\dots,\kappa_{n-1}$ on $\Gamma$ and \eqref{E:Curv}, we easily observe that $I_n-rW(y)$ is invertible and
\begin{align} \label{E:W_Inv}
  \bigl|\{I_n-rW(y)\}^{-1}\bigr| \leq c, \quad \bigl|I_n-\{I_n-rW(y)\}^{-1}\bigr| \leq c|r|
\end{align}
for all $y\in\Gamma$ and $r\in[-\delta,\delta]$ with some constant $c>0$.

\begin{lemma} \label{L:Pro_Gr}
  Let $\pi$ be the mapping given in \eqref{E:Fermi}.
  Then,
  \begin{align} \label{E:Pro_Gr}
    \nabla\pi(x) = \bigl\{I_n-d(x)\overline{W}(x)\bigr\}^{-1}\overline{P}(x), \quad x\in\overline{\mathcal{N}_\delta}.
  \end{align}
  Let $\eta\in C^1(\Gamma)$ and $\bar{\eta}=\eta\circ\pi$ be the constant extension of $\eta$.
  Then,
  \begin{align} \label{E:CoDr_N}
    \nabla\bar{\eta}(x) = \bigl\{I_n-d(x)\overline{W}(x)\bigr\}^{-1}\overline{\nabla_\Gamma\eta}(x), \quad x\in\overline{\mathcal{N}_\delta}.
  \end{align}
  Moreover, there exists a constant $c>0$ independent of $\eta$ such that
  \begin{align} \label{E:CDN_Ineq}
    |\nabla\bar{\eta}(x)| \leq c\bigl|\overline{\nabla_\Gamma\eta}(x)\bigr|, \quad \bigl|\nabla\bar{\eta}(x)-\overline{\nabla_\Gamma\eta}(x)\bigr| \leq c|d(x)|\bigl|\overline{\nabla_\Gamma\eta}(x)\bigr|, \quad x\in\overline{\mathcal{N}_\delta}.
  \end{align}
\end{lemma}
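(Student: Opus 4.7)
The plan is to extract both identities from \eqref{E:Fermi} by differentiation and the chain rule; the inequalities \eqref{E:CDN_Ineq} then fall out of \eqref{E:W_Inv}. The main logical sequence I would follow is: (1) differentiate \eqref{E:Fermi} to express $\nabla\pi$ in terms of $\nabla\bar{\nu}$; (2) prove the annihilation identity $(\nabla\pi)\bar{\nu}=0$; (3) use it to establish the extension-independent chain rule $\nabla\bar\eta = (\nabla\pi)\,\overline{\nabla_\Gamma\eta}$; (4) specialize to $\eta=\nu_k$ to compute $\nabla\bar{\nu}$; (5) substitute back and invert to obtain \eqref{E:Pro_Gr}; and (6) combine (3) with (5) to get \eqref{E:CoDr_N}.

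For (1), differentiating $x=\pi(x)+d(x)\bar{\nu}(x)$ entrywise (with the paper's convention $(\nabla F)_{ij}=\partial_i F_j$) and using $\nabla d=\bar{\nu}$ gives
\begin{equation*}
  I_n = \nabla\pi(x) + \bar{\nu}(x)\otimes\bar{\nu}(x) + d(x)\nabla\bar{\nu}(x),
\end{equation*}
i.e.\ $\nabla\pi(x)=\overline{P}(x)-d(x)\nabla\bar{\nu}(x)$, using $I_n-\bar{\nu}\otimes\bar{\nu}=\overline{P}$. For (2), the constraint $|\bar{\nu}|^2\equiv1$ forces $(\nabla\bar{\nu})\bar{\nu}=\tfrac12\nabla|\bar{\nu}|^2=0$, which together with $\overline{P}\,\bar{\nu}=0$ yields $(\nabla\pi)\bar{\nu}=0$. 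For (3), applying the chain rule to $\bar\eta=\tilde\eta\circ\pi$ for any smooth extension $\tilde\eta$ of $\eta$ gives $\nabla\bar\eta(x)=(\nabla\pi(x))\nabla\tilde\eta(\pi(x))$; decomposing $\nabla\tilde\eta(\pi(x))=\overline{\nabla_\Gamma\eta}(x)+\bar{\nu}(x)(\bar{\nu}\cdot\nabla\tilde\eta)(\pi(x))$ and invoking (2) kills the normal part, leaving $\nabla\bar\eta(x)=(\nabla\pi(x))\,\overline{\nabla_\Gamma\eta}(x)$. For (4), applying (3) to each component $\eta=\nu_k$ and stacking, together with $\nabla_\Gamma\nu=-W$, produces $\nabla\bar{\nu}(x)=-(\nabla\pi(x))\,\overline{W}(x)$. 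Step (5) then reads $(\nabla\pi)(I_n-d\overline{W})=\overline{P}$, so $\nabla\pi=\overline{P}\{I_n-d\overline{W}\}^{-1}$, where invertibility is supplied by \eqref{E:W_Inv}. Since $W=PW=WP$ on $\Gamma$ gives $\overline{P}\,\overline{W}=\overline{W}\,\overline{P}=\overline{W}$, the matrices $\overline{P}$ and $I_n-d\overline{W}$ commute, hence $\overline{P}$ also commutes with the inverse, yielding \eqref{E:Pro_Gr}. For (6), combining (3) with \eqref{E:Pro_Gr} and using $\overline{P}\,\overline{\nabla_\Gamma\eta}=\overline{\nabla_\Gamma\eta}$ from \eqref{E:TG_Pnu} produces \eqref{E:CoDr_N}. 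The bounds \eqref{E:CDN_Ineq} are then immediate from $|\{I_n-d\overline{W}\}^{-1}|\le c$ and $|I_n-\{I_n-d\overline{W}\}^{-1}|\le c|d|$, which are exactly the estimates \eqref{E:W_Inv} at $y=\pi(x)$, $r=d(x)$.

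The entire proof is essentially careful bookkeeping in Fermi coordinates, so I do not foresee a substantive analytic obstacle. The one point requiring genuine care is the commutativity of $\overline{P}$ with $\{I_n-d\overline{W}\}^{-1}$: without invoking the tangentiality identities $\overline{W}\,\overline{P}=\overline{P}\,\overline{W}=\overline{W}$, one would only obtain $\nabla\pi=\overline{P}\{I_n-d\overline{W}\}^{-1}$ rather than the stated form \eqref{E:Pro_Gr}, and the placement of the projection on the left side of the inverse is what matches the rest of the paper's conventions.
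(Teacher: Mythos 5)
Your proposal is correct and follows essentially the same route as the paper: differentiate the Fermi relation \eqref{E:Fermi}, obtain the fixed-point identity $\nabla\pi = \overline{P}+d(\nabla\pi)\overline{W}$, invert, and use $PW=WP=W$ to place the inverse on the left. The only variation is in the intermediate bookkeeping — you establish the annihilation identity $(\nabla\pi)\bar{\nu}=0$ and the extension-independent chain rule $\nabla\bar\eta=(\nabla\pi)\overline{\nabla_\Gamma\eta}$ as standalone steps and then specialize to $\eta=\nu_k$, whereas the paper reaches the same fixed-point equation in one shot by treating $\bar{\nu}(\pi(x))$ as a composition and invoking the previously recorded identity \eqref{E:CoDe_Sur} at the surface point $y=\pi(x)$; this is a cosmetic rather than substantive difference.
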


\begin{proof}
  By \eqref{E:Fermi}, we have $\pi(x)=x-d(x)\bar{\nu}(\pi(x))$.
  We differentiate both sides and use
  \begin{align*}
    \nabla d(x) = \bar{\nu}(\pi(x)) = \bar{\nu}(x), \quad \nabla\bar{\nu}(\pi(x)) = \nabla_\Gamma\nu(\pi(x)) = -W(\pi(x)) = -\overline{W}(x),
  \end{align*}
  which follow from $\pi(x)\in\Gamma$, \eqref{E:Fermi}, and \eqref{E:CoDe_Sur}.
  Then, we get
  \begin{align*}
    \nabla\pi(x) = I_n-\bar{\nu}(x)\otimes\bar{\nu}(x)+d(x)\nabla\pi(x)\overline{W}(x) = \overline{P}(x)+d(x)\nabla\pi(x)\overline{W}(x).
  \end{align*}
  By this equality and $PW=WP$ on $\Gamma$, we have \eqref{E:Pro_Gr}.
  Also, we differentiate $\bar{\eta}(x)=\bar{\eta}(\pi(x))$, use \eqref{E:Pro_Gr}, and apply \eqref{E:TG_Pnu} and \eqref{E:CoDe_Sur} with $y=\pi(x)\in\Gamma$ to get \eqref{E:CoDr_N}.
  We also have \eqref{E:CDN_Ineq} by \eqref{E:W_Inv} and \eqref{E:CoDr_N} with $y=\pi(x)$ and $r=d(x)$.
\end{proof}

\begin{lemma} \label{L:Pull}
  For a function $\varphi$ on $\mathcal{N}_\delta$, we define
  \begin{align} \label{E:Def_Pull}
    \varphi^\sharp(y,r) = \varphi\bigl(y+r\nu(y)\bigr), \quad y\in\Gamma,\,r\in(-\delta,\delta).
  \end{align}
  Then, the tangential gradient $\nabla_\Gamma\varphi^\sharp(y,r)$ with respect to the variable $y$ is of the form
  \begin{align} \label{E:TG_Pull}
    \nabla_\Gamma\varphi^\sharp(y,r) = \{P(y)-rW(y)\}(\nabla\varphi)^\sharp(y,r).
  \end{align}
\end{lemma}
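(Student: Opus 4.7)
The plan is to realize $y \mapsto \varphi^\sharp(y,r)$ (with $r$ fixed) as the restriction to $\Gamma$ of an ambient function on $\mathcal{N}_\delta$, and then apply the definition $\nabla_\Gamma = P\nabla$ together with the chain rule. Concretely, I would extend the normal vector field to $\bar{\nu} = \nu\circ\pi$ on $\mathcal{N}_\delta$ and introduce the auxiliary map
\begin{equation*}
  \Phi_r(x) := x + r\bar{\nu}(x), \qquad \tilde{\varphi}_r(x) := \varphi\bigl(\Phi_r(x)\bigr), \qquad x\in\mathcal{N}_\delta,
\end{equation*}
so that $\tilde{\varphi}_r|_\Gamma$ coincides with $y\mapsto \varphi^\sharp(y,r)$. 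Since the tangential gradient does not depend on the choice of extension, we have $\nabla_\Gamma\varphi^\sharp(\cdot,r) = P\,\nabla\tilde{\varphi}_r$ on $\Gamma$.

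Next, I would compute $\nabla\tilde{\varphi}_r$ by the Euclidean chain rule. Under the convention $\nabla(u\circ\Phi)=(\nabla\Phi)\nabla u$ fixed in Section \ref{SS:Pr_Nota}, this yields
\begin{equation*}
  \nabla\tilde{\varphi}_r(x) = \bigl(I_n + r\nabla\bar{\nu}(x)\bigr)(\nabla\varphi)\bigl(\Phi_r(x)\bigr).
\end{equation*}
Evaluating at $y\in\Gamma$, I use the identities already recorded in Section \ref{SS:Pr_Sur}: $\nabla\bar{\nu}(y)=\nabla_\Gamma\nu(y)=-W(y)$ (from \eqref{E:CoDe_Sur} and the definition $W=-\nabla_\Gamma\nu$), and $\Phi_r(y)=y+r\nu(y)$, so
\begin{equation*}
  \nabla\tilde{\varphi}_r(y) = \bigl(I_n - rW(y)\bigr)(\nabla\varphi)^\sharp(y,r).
\end{equation*}

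Applying $P(y)$ on the left and using $P(y)W(y)=W(y)$ on $\Gamma$ (which is recorded just before \eqref{E:W_Inv}) gives
\begin{equation*}
  \nabla_\Gamma\varphi^\sharp(y,r) = P(y)\bigl(I_n - rW(y)\bigr)(\nabla\varphi)^\sharp(y,r) = \bigl(P(y)-rW(y)\bigr)(\nabla\varphi)^\sharp(y,r),
\end{equation*}
which is \eqref{E:TG_Pull}. There is no real obstacle here; the only point to be careful about is invoking the extension-independence of $\nabla_\Gamma$ so that one is allowed to differentiate the specific ambient extension $\tilde{\varphi}_r$ rather than an arbitrary extension of $\varphi^\sharp(\cdot,r)$.
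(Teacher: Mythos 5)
Your proof is correct and follows essentially the same route as the paper: extend $\varphi^\sharp(\cdot,r)$ to the tubular neighborhood via $x\mapsto\varphi(x+r\bar{\nu}(x))$, differentiate by the chain rule (under the paper's convention $\nabla(u\circ\Phi)=(\nabla\Phi)\nabla u$), evaluate on $\Gamma$ using $\nabla\bar{\nu}=-W$, and project with $P$ via $PW=W$. The remark about extension-independence of $\nabla_\Gamma$ is a nice explicit justification that the paper leaves implicit.
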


\begin{proof}
  We extend $\Gamma\ni y\mapsto\varphi^\sharp(y,r)$ to $N$ by $\tilde{\varphi}^\sharp(x,r)=\varphi(x+r\bar\nu(x))$.
  Then, we differentiate both sides with respect to $x$ and set $x=y\in\Gamma$ to get
  \begin{align*}
    \nabla\tilde{\varphi}^\sharp(y,r) = \{I_n+r\nabla\bar{\nu}(y)\}(\nabla\varphi)\bigl(y+r\bar{\nu}(y)\bigr) = \{I_n-rW(y)\}(\nabla\varphi)^\sharp(y,r)
  \end{align*}
  by \eqref{E:CoDe_Sur}.
  By this equality and $PW=W$ on $\Gamma$, we obtain \eqref{E:TG_Pull}.
\end{proof}

We write $\int_\Gamma\eta\,d\mathcal{H}^{n-1}$ for the integral of a function $\eta$ on $\Gamma$, where $\mathcal{H}^{n-1}$ is the Hausdorff measure of dimension $n-1$.
Also, we denote by $\|\cdot\|_{L^p(\Gamma)}$ and $(\cdot,\cdot)_{L^2(\Gamma)}$ the $L^p$-norm and the $L^2$-inner product on $\Gamma$, respectively.
For $\eta,\zeta\in C^1(\Gamma)$, it is known (see \cite[Lemma 16.1]{GilTru01} and \cite[Theorem 2.10]{DziEll13}) that the integration by parts formula
\begin{align} \label{E:IBP_Sur}
  \int_\Gamma\zeta\underline{D}_i\eta\,d\mathcal{H}^{n-1} = -\int_\Gamma\eta(\underline{D}_i\zeta+\zeta H\nu_i)\,d\mathcal{H}^{n-1}, \quad i=1,\dots,n
\end{align}
holds.
Based on this formula, we say that $\eta\in L^2(\Gamma)$ has the $i$-th weak tangential derivative if there exists a function $\underline{D}_i\eta\in L^2(\Gamma)$ such that \eqref{E:IBP_Sur} holds for all $\zeta\in C^1(\Gamma)$.
Moreover, we define the Sobolev space $H^1(\Gamma)$ and its inner product by
\begin{gather*}
  H^1(\Gamma) = \{\eta\in L^2(\Gamma) \mid \nabla_\Gamma\eta=(\underline{D}_1\eta,\dots,\underline{D}_n\eta)^T\in L^2(\Gamma)^n\}, \\
  (\eta,\zeta)_{H^1(\Gamma)} = (\eta,\zeta)_{L^2(\Gamma)}+(\nabla_\Gamma\eta,\nabla_\Gamma\zeta)_{L^2(\Gamma)}.
\end{gather*}
Note that $C^1(\Gamma)$ is dense in $H^1(\Gamma)$ by localization and mollification arguments.

\subsection{Curved thin domain} \label{SS:Pr_CTD}
Let $g_0,g_1\in C^1(\Gamma)$ and $g=g_1-g_0$.
We assume that there exists a constant $c>0$ such that
\begin{align} \label{E:G_Bdd}
  c^{-1} \leq g(y) \leq c, \quad y\in\Gamma.
\end{align}
For a small $\varepsilon>0$, we define the curved thin domain $\Omega_\varepsilon$ by \eqref{E:Def_CTD}.
Its boundary is
\begin{align*}
  \partial\Omega_\varepsilon = \Gamma_\varepsilon^0\cup\Gamma_\varepsilon^1, \quad \Gamma_\varepsilon^i = \{y+\varepsilon g_i(y)\nu(y) \mid y\in\Gamma\}, \quad i=0,1,
\end{align*}
so we see that $\partial\Omega_\varepsilon$ is of class $C^1$ by the regularity of $\Gamma$, $g_0$, and $g_1$.

Since $g_0$ and $g_1$ are bounded on $\Gamma$, we can take a constant $\tilde{\varepsilon}\in(0,1)$ such that $\varepsilon|g_i|<\delta$ on $\Gamma$ for all $\varepsilon\in(0,\tilde{\varepsilon})$ and $i=0,1$, where $\delta$ is the radius of the tubular neighborhood $\mathcal{N}_\delta$ of $\Gamma$ given in Section \ref{SS:Pr_Sur}.
Then, $\overline{\Omega}_\varepsilon\subset\mathcal{N}_\delta$ and we can use the results of Section \ref{SS:Pr_Sur} in $\overline{\Omega}_\varepsilon$ for all $\varepsilon\in(0,\tilde{\varepsilon})$.
In what follows, we always assume that $\varepsilon\in(0,\tilde{\varepsilon})$.
Also, sometimes we use the relation $0<\varepsilon<1$ without mention.

For $y\in\Gamma$ and $r\in[-\delta,\delta]$, we define
\begin{align} \label{E:Def_J}
  J(y,r) = \det[I_n-rW(y)] = \prod_{\alpha=1}^{n-1}\{1-r\kappa_\alpha(y)\}.
\end{align}
Since \eqref{E:Curv} holds and $\kappa_1,\dots,\kappa_{n-1}$ are bounded on $\Gamma$, we see that
\begin{align} \label{E:J_Bdd}
  c^{-1} \leq J(y,r) \leq c, \quad |\partial_rJ(y,r)| \leq c \quad y\in\Gamma, \, r\in[-\delta,\delta].
\end{align}
Moreover, since $g_0$ and $g_1$ are bounded on $\Gamma$ and $\kappa_1,\dots,\kappa_{n-1}\in C^1(\Gamma)$, we have
\begin{align} \label{E:J_TGr}
  |J(y,r)-1| \leq c\varepsilon, \quad |\nabla_\Gamma J(y,r)| \leq c\varepsilon, \quad y\in\Gamma, \, r\in[\varepsilon g_0(y),\varepsilon g_1(y)],
\end{align}
where $\nabla_\Gamma J$ is the tangential gradient of $J$ with respect to the variable $y\in\Gamma$.
The function $J$ appears as the Jacobian of the change of variables formula
\begin{align} \label{E:Ch_Va}
  \int_{\Omega_\varepsilon}\varphi(x)\,dx = \int_\Gamma\int_{\varepsilon g_0(y)}^{\varepsilon g_1(y)}\varphi^\sharp(y,r)J(y,r)\,dr\,d\mathcal{H}^{n-1}(y)
\end{align}
for a function $\varphi$ on $\Omega_\varepsilon$, where $\varphi^\sharp$ is given by \eqref{E:Def_Pull} (see \cite[Section 14.6]{GilTru01}).
When $p\in[1,\infty)$ and $\varphi\in L^p(\Omega_\varepsilon)$, we see by \eqref{E:J_Bdd} and \eqref{E:Ch_Va} that
\begin{align} \label{E:CVF_Lp}
  c^{-1}\|\varphi\|_{L^p(\Omega_\varepsilon)}^p \leq \int_\Gamma\int_{\varepsilon g_0(y)}^{\varepsilon g_1(y)}|\varphi^\sharp(y,r)|^p\,dr\,d\mathcal{H}^{n-1}(y) \leq c\|\varphi\|_{L^p(\Omega_\varepsilon)}^p.
\end{align}
For $\eta\in L^p(\Gamma)$, it follows from \eqref{E:G_Bdd}, \eqref{E:J_Bdd}, and \eqref{E:CVF_Lp} that
\begin{align} \label{E:CoEx_Lp}
  c^{-1}\varepsilon^{1/p}\|\eta\|_{L^p(\Gamma)} \leq \|\bar{\eta}\|_{L^p(\Omega_\varepsilon)} \leq c\varepsilon^{1/p}\|\eta\|_{L^p(\Gamma)},
\end{align}
where $\bar{\eta}=\eta\circ\pi$ is the constant extension of $\eta$.
Also, when $\eta\in H^1(\Gamma)$, we use \eqref{E:CDN_Ineq} and $|d|\leq c\varepsilon$ in $\Omega_\varepsilon$, and then apply \eqref{E:CoEx_Lp} with $\eta$ replaced by $\nabla_\Gamma\eta$ to get
\begin{align} \label{E:CoDe_L2}
  \|\nabla\bar{\eta}\|_{L^2(\Omega_\varepsilon)} \leq c\varepsilon^{1/2}\|\nabla_\Gamma\eta\|_{L^2(\Gamma)}, \quad \bigl\|\nabla\bar{\eta}-\overline{\nabla_\Gamma\eta}\bigr\|_{L^2(\Omega_\varepsilon)} \leq c\varepsilon^{3/2}\|\nabla_\Gamma\eta\|_{L^2(\Gamma)}.
\end{align}

\section{Results of function spaces} \label{S:Func_Sp}
We give some results of function spaces used in analysis of \eqref{E:GL_CTD} and \eqref{E:GL_Lim}.

\subsection{Abstract theory} \label{SS:FS_Ab}
Let $X_0$ and $X_1$ be Banach spaces such that both $X_0$ and $X_1$ are continuously embedded into a Hausdorff topological vector space $\mathcal{V}$.
Then, the intersection $X_0\cap X_1$ and the sum $X_0+X_1=\{u_0+u_1\mid u_0\in X_0, \, u_1\in X_1\}$ equipped with norms
\begin{align} \label{E:Def_X0X1}
  \begin{aligned}
    \|u\|_{X_0\cap X_1} &= \max\{\|u\|_{X_0},\|u\|_{X_1}\}, \\
    \|u\|_{X_0+X_1} &= \inf\{\|u_0\|_{X_0}+\|u_1\|_{X_1} \mid u = u_0+u_1, \, u_0\in X_0, \, u_1\in X_1\}
  \end{aligned}
\end{align}
are Banach spaces (see e.g. \cite[Chapter 3, Theorem 1.3]{BenSha88}).
For $i=0,1$, let $X_i'$ be the dual space of $X_i$.
If $X_0\cap X_1$ is dense in both $X_0$ and $X_1$, then $X_0'$ and $X_1'$ are continuously embedded into $[X_0\cap X_1]'$, which is a Banach space and thus a Hausdorff topological vector space.
Hence, we can consider $X_0'+X_1'$ as a Banach space equipped with norm $\|\cdot\|_{X_0'+X_1'}$ defined as above.

\begin{lemma} \label{L:XY_Dual}
  Suppose that $X_0\cap X_1$ is dense in $X_0$ and $X_1$.
  Then,
  \begin{align*}
    [X_0\cap X_1]' = X_0'+X_1' = \{f=f_0+f_1 \mid f_0\in X_0', f_1\in X_1'\}.
  \end{align*}
  More precisely, $f\in[X_0\cap X_1]'$ if and only if there exist $f_0\in X_0$ and $f_1\in X_1$ such that
  \begin{align*}
    \langle f,u \rangle_{X_0\cap X_1} = \langle f_0,u\rangle_{X_0}+\langle f_1,u\rangle_{X_1} \quad\text{for all}\quad u\in X_0\cap X_1.
  \end{align*}
  Moreover, $\|f\|_{[X_0\cap X_1]'}=\|f\|_{X_0'+X_1'}$ for all $f\in [X_0\cap X_1]'$.
\end{lemma}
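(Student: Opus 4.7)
The plan is to prove the identity by establishing both inclusions as norm equalities, using the diagonal embedding $\Delta\colon u \mapsto (u,u)$ of $X_0\cap X_1$ into the product space $Z = X_0\times X_1$ equipped with the maximum norm $\|(u_0,u_1)\|_Z = \max(\|u_0\|_{X_0},\|u_1\|_{X_1})$. The dual of $Z$ under this norm is $X_0'\times X_1'$ equipped with the sum norm $\|(f_0,f_1)\|_{Z'} = \|f_0\|_{X_0'} + \|f_1\|_{X_1'}$, under the pairing $\langle (f_0,f_1),(u_0,u_1)\rangle = \langle f_0,u_0\rangle_{X_0} + \langle f_1,u_1\rangle_{X_1}$. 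Because of the choice of max norm on $X_0\cap X_1$ in \eqref{E:Def_X0X1}, the map $\Delta$ is an isometry onto its image, which is the key to reducing the claim to a single Hahn--Banach extension.

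First I would dispose of the easy inclusion: for any $f_0\in X_0'$ and $f_1\in X_1'$, the functional $u \mapsto \langle f_0,u\rangle_{X_0} + \langle f_1,u\rangle_{X_1}$ is bounded on $X_0\cap X_1$, and the estimate $|\langle f_0,u\rangle_{X_0}+\langle f_1,u\rangle_{X_1}| \leq (\|f_0\|_{X_0'}+\|f_1\|_{X_1'})\|u\|_{X_0\cap X_1}$ followed by an infimum yields $\|f\|_{[X_0\cap X_1]'} \leq \|f\|_{X_0'+X_1'}$. For the reverse inclusion, given $f\in [X_0\cap X_1]'$, I would define a linear functional $F$ on the diagonal subspace $\Delta(X_0\cap X_1)\subset Z$ by $F(\Delta u) = \langle f,u\rangle_{X_0\cap X_1}$; since $\Delta$ is an isometry, $F$ is bounded with $\|F\| = \|f\|_{[X_0\cap X_1]'}$. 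Applying Hahn--Banach to extend $F$ to some $\tilde F\in Z'$ of the same norm, and writing $\tilde F$ as a pair $(f_0,f_1)\in X_0'\times X_1'$, gives both the required decomposition $\langle f,u\rangle_{X_0\cap X_1} = \langle f_0,u\rangle_{X_0}+\langle f_1,u\rangle_{X_1}$ for $u\in X_0\cap X_1$ and the bound $\|f_0\|_{X_0'}+\|f_1\|_{X_1'} = \|\tilde F\|_{Z'} = \|f\|_{[X_0\cap X_1]'}$. Combining the two inequalities yields $\|f\|_{[X_0\cap X_1]'} = \|f\|_{X_0'+X_1'}$, and shows in passing that the infimum defining the sum norm is attained.

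I do not expect any serious obstacle here: the only role of the density hypothesis is to make the statement meaningful, by ensuring the restriction maps $X_i' \to [X_0\cap X_1]'$ are injective so that the elements of $X_0'+X_1'$ are unambiguously defined as functionals on $X_0\cap X_1$. Density is not needed inside the Hahn--Banach step itself. The only place one must be careful is to use the max norm (rather than the sum norm) on the product $Z$, so that $\Delta$ is a genuine isometry and the dual picks up the sum norm, which is precisely the norm of $X_0'+X_1'$.
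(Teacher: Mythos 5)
Your argument is correct and is the standard proof of this duality: represent $X_0\cap X_1$ isometrically via the diagonal in the product $Z=X_0\times X_1$ endowed with the max norm, extend the induced functional on the diagonal by Hahn--Banach, and identify $Z'$ with $X_0'\times X_1'$ under the sum norm. The paper does not supply its own argument for this lemma but simply cites the reference (Bergh--L\"ofstr\"om, Theorem 2.7.1), where essentially the same Hahn--Banach argument appears, so your proposal fills in the proof that the paper leaves implicit; your side remark about the role of density (making the restriction maps $X_i'\to[X_0\cap X_1]'$ injective so that $X_0'+X_1'$ is unambiguously a subspace of $[X_0\cap X_1]'$) also agrees with the paper's own discussion preceding the lemma.
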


\begin{proof}
  We refer to \cite[Theorem 2.7.1]{BerLof76} for the proof.
\end{proof}

\subsection{Function spaces} \label{SS:FS_TS}
For $S=\Omega_\varepsilon$ or $S=\Gamma$, let
\begin{align*}
  X_0 = H^1(S)^N, \quad X_1 = L^4(S)^N, \quad \mathcal{V} = L^2(S)^N.
\end{align*}
Clearly, $X_0$ is continuously embedded into $\mathcal{V}$.
Since $S$ is bounded, $X_1$ is also continuously embedded into $\mathcal{V}$ by H\"{o}lder's inequality.
Also, $X_0\cap X_1$ is dense in both $X_0$ and $X_1$, since it contains the dense subspace $C^1(\overline{S})^N$ of $X_0$ and $X_1$.
Hence, by Lemma \ref{L:XY_Dual},
\begin{align} \label{E:Dual_Sp}
  [(H^1\cap L^4)(S)^N]' = [H^1(S)^N]'+L^{4/3}(S)^N \quad (\text{$S=\Omega_\varepsilon$ or $S=\Gamma$}).
\end{align}
Moreover, since $S$ is bounded, we can consider
\begin{align*}
  (H^1\cap L^4)(S)^N \hookrightarrow L^2(S)^N \hookrightarrow [(H^1\cap L^4)(S)^N]' = [H^1(S)^N]'+L^{4/3}(S)^N
\end{align*}
by setting $\langle u,v\rangle_{(H^1\cap L^4)(S)}=(u,v)_{L^2(S)}$ for $u\in L^2(S)^N$ and $v\in(H^1\cap L^4)(S)^N$.

Similarly, if we set
\begin{align} \label{E:Def_ZT}
  \begin{aligned}
    Z_T(S) &= L^2(0,T;H^1(S)^N)\cap L^4(0,T;L^4(S)^N), \\
    \|u\|_{Z_T(S)} &= \max\{\|u\|_{L^2(0,T;H^1(S))},\|u\|_{L^4(0,T;L^4(S))}\}
  \end{aligned}
\end{align}
for $S=\Omega_\varepsilon$ or $S=\Gamma$, and for each fixed $T>0$, then
\begin{align} \label{E:Dual_ZT}
  [Z_T(S)]' = L^2(0,T;[H^1(S)^N]')+L^{4/3}(0,T;L^{4/3}(S)^N).
\end{align}
In particular,
\begin{align*}
  [Z_T(S)]' \subset L^1(0,T;[H^1(S)^N]'+L^{4/3}(S)^N) = L^1(0,T;[(H^1\cap L^4)(S)^N]'),
\end{align*}
and $f(t)$ makes sense in $[(H^1\cap L^4)(S)^N]'$ for $f\in[Z_T(S)]'$ and a.a. $t\in(0,T)$.

Let $u,v\in L^1(0,T;[(H^1\cap L^4)(S)^N]')$.
We write $v=\partial_tu$ if
\begin{align*}
  \int_0^T\langle u(t),\partial_t\psi(t)\rangle_{(H^1\cap L^4)(S)}\,dt = -\int_0^T\langle v(t),\psi(t)\rangle_{(H^1\cap L^4)(S)}\,dt
\end{align*}
for all $\psi\in C_c^1(0,T;(H^1\cap L^4)(S)^N)$.
Moreover, we define
\begin{align} \label{E:Def_ET}
  \begin{aligned}
    E_T(S) &= \{u\in Z_T(S) \mid \partial_tu\in[Z_T(S)]'\}, \\
    \|u\|_{E_T(S)} &= \|u\|_{Z_T(S)}+\|\partial_tu\|_{[Z_T(S)]'}.
  \end{aligned}
\end{align}
Note that $\partial_tu\in [Z_T(S)]'$ if and only if there exists a constant $c>0$ such that
\begin{align*}
  \left|\int_0^T\langle u(t),\partial_t\psi(t)\rangle_{(H^1\cap L^4)(S)}\,dt\right| \leq c\|\psi\|_{Z_T(S)} \quad\text{for all}\quad \psi \in C_c^1(0,T;(H^1\cap L^4)(S)^N),
\end{align*}
since $C_c^1(0,T;(H^1\cap L^4)(S)^N)$ is dense in $Z_T(S)$ by the next lemma.
In what follows, we use this fact without mention.

\begin{lemma} \label{L:ET_Den}
  Let $S=\Omega_\varepsilon$ or $S=\Gamma$.
  Then,
  \begin{enumerate}
    \item $C_c^\infty(0,T;(H^1\cap L^4)(S)^N)$ is dense in $Z_T(S)$, and
    \item $C^\infty([0,T];(H^1\cap L^4)(S)^N)$ is dense in $E_T(S)$.
  \end{enumerate}
\end{lemma}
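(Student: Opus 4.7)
The plan is to prove both parts by mollification in the time variable, differing only in how the endpoint behaviour is handled.

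For (i), given $u\in Z_T(S)$, I would first multiply by a smooth time cutoff $\chi_\delta\in C_c^\infty(0,T)$ with $0\le\chi_\delta\le 1$ and $\chi_\delta\to 1$ pointwise on $(0,T)$. Dominated convergence applied to the $L^1(0,T)$ functions $t\mapsto\|u(t)\|_{H^1(S)}^2$ and $t\mapsto\|u(t)\|_{L^4(S)}^4$ yields $\chi_\delta u\to u$ in $Z_T(S)$. Next, extending $\chi_\delta u$ by zero to $\mathbb{R}$ and convolving in time with a standard Friedrichs mollifier $\rho_\eta$ supported in $(-\eta,\eta)$, for $\eta$ smaller than the distance from $\operatorname{supp}\chi_\delta$ to $\{0,T\}$, produces a function in $C_c^\infty(0,T;(H^1\cap L^4)(S)^N)$ that converges back to $\chi_\delta u$ in $Z_T(S)$ by the standard Bochner-space mollification result, applied separately in $L^2(0,T;H^1(S)^N)$ and $L^4(0,T;L^4(S)^N)$. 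A diagonal extraction then completes (i).

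For (ii), a time cutoff would destroy the endpoint values of $u$, so I instead extend. Define $\tilde u$ on $(-T,2T)$ by even reflection across $t=0$ and $t=T$; then $\tilde u$ lies in $L^2((-T,2T);H^1(S)^N)\cap L^4((-T,2T);L^4(S)^N)$ with norm bounded by $3\|u\|_{Z_T(S)}$. Using \eqref{E:Dual_ZT}, decompose $\partial_tu=f_0+f_1$ with $f_0\in L^2(0,T;[H^1(S)^N]')$ and $f_1\in L^{4/3}(0,T;L^{4/3}(S)^N)$, and extend each by odd reflection to $\tilde f_0$ and $\tilde f_1$. The crucial distributional identity $\partial_t\tilde u=\tilde f_0+\tilde f_1$ on $(-T,2T)$ then places $\partial_t\tilde u$ in the dual of the extended intersection space. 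Convolving $\tilde u$ with $\rho_\eta$ produces, after restriction to $[0,T]$, a function in $C^\infty([0,T];(H^1\cap L^4)(S)^N)$ whose time derivative is $\rho_\eta*\tilde f_0+\rho_\eta*\tilde f_1$; Bochner mollification applied to each component gives $u_\eta\to u$ in $Z_T(S)$ and $\partial_tu_\eta\to\partial_tu$ in $[Z_T(S)]'$ simultaneously.

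The main obstacle is justifying the distributional identity $\partial_t\tilde u=\tilde f_0+\tilde f_1$: one must check that no concentrated contributions arise at the reflection points $t=0,T$. This reduces to the Gelfand-triple-type time continuity embedding $E_T(S)\hookrightarrow C([0,T];L^2(S)^N)$, so that $u$ has well-defined $L^2(S)^N$-valued traces at the endpoints and $\tilde u$ is continuous there. I would establish this embedding independently (so as to avoid circularity with (ii)) by approximating $u$ with Galerkin expansions $u^{(K)}=\sum_{k\le K}\langle u(\cdot),e_k\rangle_{L^2(S)}e_k$ in a countable basis $\{e_k\}\subset(H^1\cap L^4)(S)^N$ that is orthonormal in $L^2(S)^N$; at the Galerkin level the energy identity $\tfrac12\frac{d}{dt}\|u^{(K)}(t)\|_{L^2(S)}^2=\langle\partial_tu^{(K)}(t),u^{(K)}(t)\rangle_{(H^1\cap L^4)(S)}$ holds classically, and passing $K\to\infty$ yields continuity of $t\mapsto\|u(t)\|_{L^2(S)}^2$, which combined with the weak continuity of $u$ into $[(H^1\cap L^4)(S)^N]'$ obtained by integrating $\partial_tu$ gives continuity into $L^2(S)^N$.
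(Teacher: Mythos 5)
Your reflection route differs from the paper's (which decomposes $u$ by a smooth time partition of unity into two pieces, extends each by zero, translates in time, and then mollifies), and in principle reflection can also work. The genuine gap is in your proposed independent proof of $E_T(S)\hookrightarrow C([0,T];L^2(S)^N)$ via Galerkin truncation. The $L^2$-orthogonal projections $\mathcal{P}_Ku=\sum_{k\le K}(u,e_k)_{L^2(S)}e_k$ are not bounded on $(H^1\cap L^4)(S)^N$; the paper emphasizes exactly this point in Section \ref{S:Galerkin} (after Lemma \ref{L:Basis} and in the proof of Theorem \ref{T:GCT_Exi}), since $\{e_k\}$ is not taken to be an eigenbasis of a nice elliptic operator. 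Consequently $u^{(K)}(t)\to u(t)$ only in $L^2(S)^N$, not in $(H^1\cap L^4)(S)^N$, while $\partial_tu(t)$ lives in $[(H^1\cap L^4)(S)^N]'$ rather than in $L^2(S)^N$, so you cannot pass $K\to\infty$ in the pairing $\langle\partial_tu(t),u^{(K)}(t)\rangle_{(H^1\cap L^4)(S)}$ to obtain the energy identity and continuity of $t\mapsto\|u(t)\|_{L^2(S)}^2$. The standard way to get that identity is precisely the density of smooth functions in $E_T(S)$, i.e. part (ii) itself, so the circularity you set out to avoid reappears.

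In fact the detour through the $L^2$-valued trace embedding is unnecessary. To justify $\partial_t\tilde u=\tilde f_0+\tilde f_1$ across the reflection points, it suffices that $u$ have a continuous representative with values in $[(H^1\cap L^4)(S)^N]'$, and this is immediate from $\partial_tu\in[Z_T(S)]'\subset L^1(0,T;[(H^1\cap L^4)(S)^N]')$. Equivalently, the integration-by-parts identity against test functions $\chi\in C^1([0,T];(H^1\cap L^4)(S)^N)$ vanishing at $t=0$ and $t=T$ (which is what the reflection computation produces) follows by cutting $\chi$ off near the endpoints and combining dominated convergence with Lemma \ref{L:ET_Pair}, which does not depend on the present lemma. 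Repaired along these lines, your reflection argument would give a valid alternative to the paper's decomposition-and-translation method; as written it is not complete.
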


The statement (i) can be shown by standard cut-off and mollification arguments, so we omit the proof.
Also, the proof of (ii) is similar to the one of a density result for
\begin{align*}
  \{u\in L^p(0,T;\mathcal{X}) \mid \partial_tu\in L^q(0,T;\mathcal{Y})\}, \quad \mathcal{X}\hookrightarrow\mathcal{Y}, \quad p,q\in(1,\infty)
\end{align*}
with Banach spaces $\mathcal{X}$ and $\mathcal{Y}$ (see e.g. \cite[Lemma II.5.10]{BoyFab13}), but we need to carefully deal with $\partial_tu$ for $u\in E_T(S)$ in our case because of the structure \eqref{E:Dual_ZT} of $[Z_T(S)]'$.
We give the proof of (ii) in Section \ref{S:Pf_Aux} for the completeness.

\begin{lemma} \label{L:ET_Con}
  Let $S=\Omega_\varepsilon$ or $S=\Gamma$.
  Then, the continuous embedding
  \begin{align*}
    E_T(S) \hookrightarrow C([0,T];L^2(S)^N)
  \end{align*}
  holds.
  Moreover, for all $u_1,u_2\in E_T(S)$, we have
  \begin{align} \label{E:ET_IbP}
    \frac{d}{dt}\bigl(u_1(t),u_2(t)\bigr)_{L^2(S)} = \langle\partial_tu_1(t),u_2(t)\rangle_{(H^1\cap L^4)(S)}+\langle\partial_tu_2(t),u_1(t)\rangle_{(H^1\cap L^4)(S)}
  \end{align}
  in $\mathcal{D}'(0,T)$, the space of distributions on $(0,T)$.
\end{lemma}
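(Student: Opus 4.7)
My approach would follow the classical Lions--Magenes strategy for evolution triples, adapted to the sum-space structure of $[Z_T(S)]'$ recorded in \eqref{E:Dual_ZT}. Invoking the density statement of Lemma \ref{L:ET_Den}(ii), I would first establish everything for the smooth class $C^\infty([0,T];(H^1\cap L^4)(S)^N)$ and then extend by a uniform estimate. For a smooth $u$ in this class, both $u(t)$ and $\partial_tu(t)$ lie in $L^2(S)^N$, and the $L^2$-pairing agrees with $\langle\cdot,\cdot\rangle_{(H^1\cap L^4)(S)}$ via the continuous inclusion $L^2(S)^N\hookrightarrow[(H^1\cap L^4)(S)^N]'$, so the classical chain rule yields
\[
  \|u(t)\|_{L^2(S)}^2-\|u(s)\|_{L^2(S)}^2 = 2\int_s^t\langle\partial_tu(r),u(r)\rangle_{(H^1\cap L^4)(S)}\,dr
\]
for all $s,t\in[0,T]$.

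The second step is to bound $\|u\|_{C([0,T];L^2(S))}$ by $\|u\|_{E_T(S)}$ uniformly for smooth $u$. The key inequality is that, by the definition of the sum norm on $[Z_T(S)]'$ from \eqref{E:Def_X0X1}--\eqref{E:Dual_ZT} together with H\"{o}lder's inequality in each summand,
\[
  \left|\int_0^T\langle\partial_tu(r),v(r)\rangle_{(H^1\cap L^4)(S)}\,dr\right| \leq \|\partial_tu\|_{[Z_T(S)]'}\|v\|_{Z_T(S)}
\]
for every $v\in Z_T(S)$. Applied with $v=u$, the previous identity gives $\|u(t)\|_{L^2(S)}^2\leq\|u(s)\|_{L^2(S)}^2+2\|u\|_{E_T(S)}^2$. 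Averaging over $s\in[0,T]$ and using the continuous inclusion $L^2(0,T;H^1(S)^N)\hookrightarrow L^2(0,T;L^2(S)^N)$ to estimate $T^{-1}\int_0^T\|u(s)\|_{L^2(S)}^2\,ds$ by $C\|u\|_{Z_T(S)}^2$ yields $\|u\|_{C([0,T];L^2(S))}\leq C(T)\|u\|_{E_T(S)}$.

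Given any $u\in E_T(S)$, Lemma \ref{L:ET_Den}(ii) produces a smooth approximating sequence $\{u_k\}$; the previous bound applied to $u_k-u_l$ shows that $\{u_k\}$ is Cauchy in $C([0,T];L^2(S)^N)$, so its limit lies in this space and coincides a.e.\ with $u$, since $u_k\to u$ in $Z_T(S)\hookrightarrow L^2(0,T;L^2(S)^N)$. This produces the required continuous representative and the continuous embedding. For the distributional identity \eqref{E:ET_IbP}, I would approximate $u_1,u_2$ in $E_T(S)$ by smooth sequences $u_1^k,u_2^k$, test the classical pointwise product rule against $\phi\in C_c^\infty(0,T)$, and send $k\to\infty$. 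The left-hand side converges by $C([0,T];L^2)$-convergence. For each pairing term on the right, I would split
\[
  \langle\partial_tu_i^k,u_j^k\phi\rangle-\langle\partial_tu_i,u_j\phi\rangle = \langle\partial_tu_i^k-\partial_tu_i,u_j^k\phi\rangle+\langle\partial_tu_i,(u_j^k-u_j)\phi\rangle
\]
and control both pieces with the duality bound of the second paragraph, using that $\phi u_j^k\to\phi u_j$ in $Z_T(S)$.

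The principal technical obstacle I anticipate is the non-canonical decomposition of elements of $[Z_T(S)]'=L^2(0,T;[H^1(S)^N]')+L^{4/3}(0,T;L^{4/3}(S)^N)$ when trying to argue pointwise in time. The clean resolution is to never fix a decomposition: all convergence and estimation is carried out at the level of the global duality pairing against $Z_T(S)$, which is well-defined via the infimum norm and passes cleanly through limits by linearity. This sum-space feature is the only real deviation from the standard single-space Lions--Magenes proof, and it does not create substantive difficulty once one commits to working with the pairing rather than with representative summands.
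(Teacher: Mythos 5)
Your proposal is correct and follows essentially the same route as the paper, which adapts the Boyer--Fabrie argument (Theorems II.5.12--5.13) using the density statement of Lemma \ref{L:ET_Den}(ii) and the pairing bound of Lemma \ref{L:ET_Pair}; the duality estimate you derive from the sum-norm structure and H\"older's inequality is precisely the content of Lemma \ref{L:ET_Pair}, so you are inlining rather than citing it, but the substance is identical.
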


\begin{proof}
  The proof is the same as in the Hilbertian case shown in \cite[Theorems II.5.12 and II.5.13]{BoyFab13}, if we apply Lemma \ref{L:ET_Den}, (ii) and the next lemma.
  We omit details.
\end{proof}

\begin{lemma} \label{L:ET_Pair}
  Let $S=\Omega_\varepsilon$ or $S=\Gamma$.
  For $f\in[Z_T(S)]'$ and $u\in Z_T(S)$, let
  \begin{align*}
    [\Phi(f,u)](t) = \langle f(t),u(t)\rangle_{(H^1\cap L^4)(S)}, \quad t\in(0,T).
  \end{align*}
  Then, $\Phi$ is a bilinear continuous mapping from $[Z_T(S)]'\times Z_T(S)$ into $L^1(0,T)$.
\end{lemma}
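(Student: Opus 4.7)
My plan is to prove the lemma by decomposing $f$ according to the structure of $[Z_T(S)]'$ given in \eqref{E:Dual_ZT}, and then pairing each piece with $u$ using H\"{o}lder's inequality in the appropriate exponents. Concretely, I would fix $f\in[Z_T(S)]'$ and $u\in Z_T(S)$ and, using \eqref{E:Dual_ZT}, write
\begin{align*}
  f = f_0+f_1, \quad f_0\in L^2(0,T;[H^1(S)^N]'), \quad f_1\in L^{4/3}(0,T;L^{4/3}(S)^N).
\end{align*}
For almost every $t\in(0,T)$, the element $f(t)\in[(H^1\cap L^4)(S)^N]'$ then acts on $\varphi\in(H^1\cap L^4)(S)^N$ by $\langle f(t),\varphi\rangle_{(H^1\cap L^4)(S)}=\langle f_0(t),\varphi\rangle_{H^1(S)}+(f_1(t),\varphi)_{L^2(S)}$, where the last identification uses the natural embedding $L^{4/3}(S)^N\hookrightarrow[L^4(S)^N]'$ (this compatibility of pointwise pairings with the decomposition \eqref{E:Dual_ZT} is the Bochner-space counterpart of Lemma \ref{L:XY_Dual}).

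Given this, the plan is to take $\varphi=u(t)$ and check that each summand of
\begin{align*}
  [\Phi(f,u)](t) = \langle f_0(t),u(t)\rangle_{H^1(S)}+(f_1(t),u(t))_{L^2(S)}
\end{align*}
is measurable and integrable on $(0,T)$. Measurability of the first term follows from Bochner measurability of $f_0$ and $u$ with values in $[H^1(S)^N]'$ and $H^1(S)^N$, respectively, combined with the continuity of the duality pairing; the second term is handled analogously. Then H\"{o}lder's inequality applied twice yields
\begin{align*}
  \|\Phi(f,u)\|_{L^1(0,T)} &\leq \int_0^T\|f_0(t)\|_{[H^1(S)]'}\|u(t)\|_{H^1(S)}\,dt+\int_0^T\|f_1(t)\|_{L^{4/3}(S)}\|u(t)\|_{L^4(S)}\,dt \\
  &\leq \|f_0\|_{L^2(0,T;[H^1(S)]')}\|u\|_{L^2(0,T;H^1(S))}+\|f_1\|_{L^{4/3}(0,T;L^{4/3}(S))}\|u\|_{L^4(0,T;L^4(S))}.
\end{align*}

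Using the definitions of the norms in \eqref{E:Def_X0X1} and \eqref{E:Def_ZT}, each term on the right is bounded by the product of the relevant summand of the decomposition norm with $\|u\|_{Z_T(S)}$. Taking the infimum over all admissible decompositions $f=f_0+f_1$ gives $\|\Phi(f,u)\|_{L^1(0,T)}\leq\|f\|_{[Z_T(S)]'}\|u\|_{Z_T(S)}$, which simultaneously establishes that $\Phi(f,u)\in L^1(0,T)$ and that $\Phi$ is continuous; bilinearity is immediate. The main (admittedly mild) obstacle is the first paragraph: rigorously justifying that the decomposition of $f\in[Z_T(S)]'$ provided by \eqref{E:Dual_ZT} implements the pointwise duality pairing $\langle f(t),u(t)\rangle_{(H^1\cap L^4)(S)}$ for almost every $t$. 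Once this identification is in place, the remainder is a routine two-fold application of H\"{o}lder's inequality.
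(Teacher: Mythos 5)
Your argument is correct and follows the same route as the paper: decompose $f$ according to \eqref{E:Dual_ZT}, split the pointwise pairing into the $[H^1]'$-$H^1$ and $L^{4/3}$-$L^4$ parts, apply H\"older twice, and pass to the infimum over decompositions (the paper phrases the last step as choosing a sequence of decompositions $f=F_k+G_k$ whose norm sum approaches $\|f\|_{[Z_T(S)]'}$, which is the same thing). The point you flag — that a Bochner-space decomposition of $f$ induces the pointwise decomposition of $f(t)$ for a.e.\ $t$ — is indeed the only place requiring care; the paper implicitly relies on the inclusion $[Z_T(S)]'\subset L^1(0,T;[(H^1\cap L^4)(S)^N]')$ noted just after \eqref{E:Dual_ZT}, so your identification is consistent with the paper's set-up.
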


Note that this lemma is not obvious because of the structure \eqref{E:Dual_ZT} of $[Z_T(S)]'$.

\begin{proof}
  Clearly, $\Phi$ is bilinear.
  Let us show
  \begin{align} \label{Pf_EP:Bound}
    \|\Phi(f,u)\|_{L^1(0,T)} \leq \|f\|_{[Z_T(S)]'}\|u\|_{Z_T(S)} \quad\text{for all}\quad f\in [Z_T(S)]', \, u\in Z_T(S).
  \end{align}
  Let $f\in[Z_T(S)]'$.
  Since $[Z_T(S)]'$ is of the form \eqref{E:Dual_ZT} and the norm $\|\cdot\|_{X_0+X_1}$ is given by \eqref{E:Def_X0X1} for Banach spaces $X_0$ and $X_1$, we can take $F_k$ and $G_k$ such that
  \begin{gather*}
    f = F_k+G_k, \quad F_k\in L^2(0,T;[H^1(S)^N]'), \quad G_k\in L^{4/3}(0,T;L^{4/3}(S)^N), \\
    \lim_{k\to\infty}\Bigl(\|F_k\|_{L^2(0,T;[H^1(S)]')}+\|G_k\|_{L^{4/3}(0,T;L^{4/3}(S))}\Bigr) = \|f\|_{[Z_T(S)]'}.
  \end{gather*}
  Then, since $f(t)=F_k(t)+G_k(t)$ in $[(H^1\cap L^4)(S)^N]'$ for a.a. $t\in(0,T)$, we have
  \begin{align*}
    |[\Phi(f,u)](t)| &= \Bigl|\langle F_k(t),u(t)\rangle_{H^1(S)}+\bigl(G_k(t),u(t)\bigr)_{L^2(S)}\Bigr| \\
    &\leq \|F_k(t)\|_{[H^1(S)]'}\|u(t)\|_{H^1(S)}+\|G_k(t)\|_{L^{4/3}(S)}\|u(t)\|_{L^4(S)}
  \end{align*}
  for $u\in Z_T(S)$ and a.a. $t\in(0,T)$.
  Hence,
  \begin{align*}
    \|\Phi(f,u)\|_{L^1(0,T)} \leq \Bigl(\|F_k\|_{L^2(0,T;[H^1(S)]')}+\|G_k\|_{L^{4/3}(0,T;L^{4/3}(S))}\Bigr)\|u\|_{Z_T(S)}
  \end{align*}
  by H\"{o}lder's inequality and \eqref{E:Def_ZT}, and we get \eqref{Pf_EP:Bound} by sending $k\to\infty$.
\end{proof}

\begin{lemma} \label{L:ET_AuLi}
  Let $S=\Omega_\varepsilon$ or $S=\Gamma$.
  Then, the embedding
  \begin{align*}
    E_T(S) \hookrightarrow L^2(0,T;L^2(S)^N)
  \end{align*}
  is compact.
\end{lemma}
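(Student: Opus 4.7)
The plan is to invoke the classical Aubin--Lions compactness lemma applied to the triple
\[
  H^1(S)^N \hookrightarrow L^2(S)^N \hookrightarrow [(H^1\cap L^4)(S)^N]',
\]
where the first embedding is compact (by Rellich--Kondrachov, applicable both to the bounded $C^1$-domain $\Omega_\varepsilon$ and to the closed $C^3$-hypersurface $\Gamma$) and the second is the continuous embedding already recorded at the end of Section~\ref{SS:FS_TS} via the $L^2$-pairing. Given a bounded sequence $(u_k)\subset E_T(S)$, the definitions \eqref{E:Def_ZT} and \eqref{E:Def_ET} yield at once that $(u_k)$ is bounded in $L^2(0,T;H^1(S)^N)$, so the remaining issue is to produce a suitable time-integrability bound for $(\partial_tu_k)$ in the negative-order space.

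The main technical step is to establish the embedding
\[
  [Z_T(S)]' \hookrightarrow L^{4/3}\bigl(0,T;[(H^1\cap L^4)(S)^N]'\bigr).
\]
To this end, I would use the characterization \eqref{E:Dual_ZT} to decompose an arbitrary $\partial_tu\in[Z_T(S)]'$ as $\partial_tu=f_0+f_1$ with $f_0\in L^2(0,T;[H^1(S)^N]')$ and $f_1\in L^{4/3}(0,T;L^{4/3}(S)^N)$, whose norms together approximate $\|\partial_tu\|_{[Z_T(S)]'}$ arbitrarily well by \eqref{E:Def_X0X1}. By Lemma~\ref{L:XY_Dual} and \eqref{E:Dual_Sp}, both $[H^1(S)^N]'$ and $L^{4/3}(S)^N$ embed continuously into $[(H^1\cap L^4)(S)^N]'$; moreover, since the interval $(0,T)$ is bounded, H\"older's inequality gives $L^2(0,T;Y)\hookrightarrow L^{4/3}(0,T;Y)$ for any Banach space $Y$. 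Combining these, each of $f_0$ and $f_1$ lies in $L^{4/3}(0,T;[(H^1\cap L^4)(S)^N]')$ with the required norm control, and taking the infimum over all admissible decompositions yields the displayed embedding.

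Once this embedding is in hand, $(\partial_tu_k)$ is automatically bounded in $L^{4/3}(0,T;[(H^1\cap L^4)(S)^N]')$, and the classical Aubin--Lions lemma with exponents $p=2$ and $q=4/3$ produces a subsequence converging strongly in $L^2(0,T;L^2(S)^N)$, which is precisely the claimed compactness. I expect the only nontrivial step to be the $L^{4/3}$-in-time bound for $\partial_tu$ above, since it requires the precise sum-space structure of $[Z_T(S)]'$ recorded in \eqref{E:Dual_ZT}; everything else is a direct application of standard functional-analytic tools. As an alternative route, one could bypass the exponent optimization entirely by invoking Simon's refinement of Aubin--Lions, which only needs an $L^1$-in-time bound on the time derivative.
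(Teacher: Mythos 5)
Your proposal is correct and follows essentially the same route as the paper: the same triple of embeddings $H^1(S)^N\hookrightarrow L^2(S)^N\hookrightarrow[(H^1\cap L^4)(S)^N]'$, the same decomposition of $\partial_tu$ via \eqref{E:Dual_ZT}, and an application of Aubin--Lions. The only divergence is that you derive a bound for $\partial_tu$ in $L^{4/3}(0,T;[(H^1\cap L^4)(S)^N]')$ and invoke the classical $p=2$, $q=4/3$ version, whereas the paper simply records $\partial_tu\in L^1(0,T;[(H^1\cap L^4)(S)^N]')$ (already observed just after \eqref{E:Dual_ZT}) and applies the Simon-type Aubin--Lions lemma from Boyer--Fabrie; this is exactly the ``alternative route'' you mention in your closing sentence, and it is marginally shorter since no exponent bookkeeping is needed.
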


\begin{proof}
  We see that $E_T(S)$ is continuously embedded into
  \begin{align*}
    \widetilde{E}_T(S) = \{u\in L^2(0,T;H^1(S)^N) \mid \partial_tu\in L^1(0,T;[(H^1\cap L^4)(S)^N]')\}
  \end{align*}
  by the definition of $E_T(S)$ and \eqref{E:Dual_ZT}.
  Moreover, since the embeddings
  \begin{align*}
    H^1(S)^N \hookrightarrow L^2(S)^N \hookrightarrow [(H^1\cap L^4)(S)^N]'
  \end{align*}
  are continuous and the first embedding is compact, the embedding
  \begin{align*}
    \widetilde{E}_T(S) \hookrightarrow L^2(0,T;L^2(S)^N)
  \end{align*}
  is compact by the Aubin--Lions lemma (see \cite[Theorem II.5.16]{BoyFab13}).
  Hence, the embedding
  \begin{align*}
    E_T(S) \hookrightarrow \widetilde{E}_T(S) \hookrightarrow L^2(0,T;L^2(S)^N)
  \end{align*}
  is also compact.
\end{proof}

Now, let $S=\Omega_\varepsilon$.
For $z\in\mathbb{R}$, we define $z_+=\max\{z,0\}$.

\begin{lemma} \label{L:Trn_Sp}
  Let $C_0>0$ be a constant.
  We define a mapping $F\colon\mathbb{R}^N\to\mathbb{R}^N$ by
  \begin{align} \label{E:Def_Trn}
    F(a) = \frac{(|a|-C_0)_+}{|a|}\,a, \quad a\in\mathbb{R}^N.
  \end{align}
  Then, for all $u\in H^1(\Omega_\varepsilon)^N$, we have $F(u)\in H^1(\Omega_\varepsilon)^N$ and
  \begin{align} \label{E:Trn_Dxi}
    \frac{\partial}{\partial x_i}\Bigl(F_j(u)\Bigr) = \frac{(|u|-C_0)_+}{|u|}\frac{\partial u_j}{\partial x_i}+\sum_{k=1}^N\chi_{(C_0,\infty)}(|u|)\frac{C_0u_ju_k}{|u|^3}\frac{\partial u_k}{\partial x_i} \quad\text{a.e. in}\quad \Omega_\varepsilon
  \end{align}
  for $i=1,\dots,n$ and $j=1,\dots,N$.
  Here, $F_j$ is the $j$-th component of $F$ and $\chi_{(C_0,\infty)}(z)$ is the characteristic function of the interval $(C_0,\infty)$.
\end{lemma}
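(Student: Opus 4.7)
The plan is to carry out the standard chain rule for Sobolev functions composed with Lipschitz maps, by regularizing $F$ to a family of $C^1$ maps and passing to the limit. First I would factor $F(a) = \psi(|a|)\,a$, where $\psi(r) := (r-C_0)_+/r$ for $r > 0$ and $\psi(0) := 0$. The scalar $\psi$ is Lipschitz on $[0,\infty)$ with $0 \le \psi \le 1$ and $\psi'(r) = \chi_{(C_0,\infty)}(r)\,C_0/r^2$ a.e., so $F : \mathbb{R}^N \to \mathbb{R}^N$ is globally Lipschitz. I would then approximate $\psi$ by a family $\psi_\delta \in C^1([0,\infty))$ that vanishes on $[0,C_0]$ (so that $F_\delta(a) := \psi_\delta(|a|)\,a$ is of class $C^1(\mathbb{R}^N;\mathbb{R}^N)$, including at the origin), satisfies $0 \le \psi_\delta \le 1$ and $r|\psi_\delta'(r)| \le 1$ uniformly in $\delta$, and has $\psi_\delta \to \psi$ and $\psi_\delta' \to \psi'$ pointwise a.e.

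Next, I would use that $|u| \in H^1(\Omega_\varepsilon)$ for $u \in H^1(\Omega_\varepsilon)^N$, with $\partial_{x_i}|u| = |u|^{-1}\sum_{k=1}^N u_k\,\partial_{x_i}u_k$ a.e.\ on $\{|u|>0\}$ and $\partial_{x_i}|u| = 0$ a.e.\ on $\{|u|=0\}$; this is a consequence of Stampacchia's theorem applied to $|u|^2 = \sum_k u_k^2 \in W^{1,1}(\Omega_\varepsilon)$ together with the usual $L^2$-approximation $|u| = \lim_{\eta\to 0}(\sqrt{|u|^2+\eta^2}-\eta)$. The classical chain rule for the $C^1$ map $F_\delta$ composed with $u$ then yields $F_\delta(u) \in H^1(\Omega_\varepsilon)^N$ together with the identity $\partial_{x_i}(F_{\delta,j}(u)) = \psi_\delta(|u|)\,\partial_{x_i} u_j + \psi_\delta'(|u|)\,u_j\,\partial_{x_i}|u|$ almost everywhere.

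The final step is a dominated convergence argument as $\delta \to 0$ in $L^2(\Omega_\varepsilon)$. Both terms on the right are uniformly controlled by a multiple of $|\nabla u|$ in absolute value: the first via $\psi_\delta \le 1$, and the second via the pointwise bound $|\psi_\delta'(|u|)\,u_j\,\partial_{x_i}|u|| \le (1/|u|)\cdot|u|\cdot|\nabla u| = |\nabla u|$ valid on $\{|u|>C_0\}$. They converge pointwise a.e.\ to the two terms on the right-hand side of \eqref{E:Trn_Dxi} once the explicit expression for $\partial_{x_i}|u|$ is substituted. Coupled with the $L^2$-convergence $F_\delta(u) \to F(u)$, which is immediate from the uniform Lipschitz bound on $F_\delta$, this identifies $F(u) \in H^1(\Omega_\varepsilon)^N$ with weak derivative given by \eqref{E:Trn_Dxi}.

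The main subtle point will be reconciling \eqref{E:Trn_Dxi} with the true weak derivative on the singular set $\{|u| = C_0\}$ where $F$ fails to be $C^1$. Here the formula returns zero from both terms, since $(|u|-C_0)_+ = 0$ and $\chi_{(C_0,\infty)}(|u|)=0$ there. This matches reality by a second appeal to Stampacchia: $\sum_k u_k\,\partial_{x_i}u_k = \tfrac12\partial_{x_i}(|u|^2) = 0$ almost everywhere on the level set $\{|u|^2 = C_0^2\}$, so no concentration on this set is lost when interchanging limit and derivative.
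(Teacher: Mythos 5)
Your proposal is correct, and it uses the same basic regularize-then-pass-to-the-limit strategy as the paper, but organized differently. The paper builds its $C^1$ approximant $F_\gamma(a)=\Lambda_\gamma(\sigma_\gamma(a))\,a$ by smoothing \emph{both} the radial profile $z\mapsto(z-C_0)_+/z$ (via $\Lambda_\gamma$, analogous to your $\psi_\delta$) \emph{and} the Euclidean norm (via $\sigma_\gamma(a)=\sqrt{|a|^2+\gamma^2}-\gamma$), and then applies the Sobolev chain rule to the globally $C^1$ map $F_\gamma$. You avoid the second regularization by noting that once $\psi_\delta$ vanishes on $[0,C_0]$, the composite $F_\delta(a)=\psi_\delta(|a|)\,a$ vanishes in a full neighborhood of the origin and is therefore $C^1$ on all of $\mathbb{R}^N$ with no smoothing of $|\cdot|$ required. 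That is a genuine streamlining; the separate fact $|u|\in H^1(\Omega_\varepsilon)$ that you invoke to package the chain-rule output via $\partial_{x_i}|u|$ is convenient but not essential (the chain rule for $F_\delta$ already gives $\sum_k\partial_{a_k}F_{\delta,j}(u)\,\partial_{x_i}u_k$ directly, which is what the paper manipulates). What the paper's extra $\sigma_\gamma$ buys is that, since $\sigma_\gamma(a)<|a|$ strictly for $a\neq0$, one has $\sigma_\gamma(a)<C_0$ on $\{|a|=C_0\}$ and hence $\Lambda_\gamma'(\sigma_\gamma(a))\equiv0$ there, so the pointwise convergence of $\partial_{a_k}F_{\gamma,j}(a)$ as $\gamma\to0$ holds for \emph{every} $a$ with no need to worry about the possibly-positive-measure level set $\{|u|=C_0\}$. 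You correctly flag that set as the delicate point, but your Stampacchia appeal is in fact superfluous: any $C^1$ function $\psi_\delta$ vanishing on $[0,C_0]$ automatically satisfies $\psi_\delta'(C_0)=0$, so $\psi_\delta'(|u|)\to0=\psi'(C_0)$ on $\{|u|=C_0\}$ by construction, and the pointwise convergence of your two terms already holds a.e.\ without the level-set observation. In short: a correct and slightly more economical variant of the paper's proof, with one inessential extra ingredient.
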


\begin{lemma} \label{L:Trn_Ti}
  Let $C_0>0$ be a constant and $F$ be given by \eqref{E:Def_Trn}.
  Then, $F(u)\in Z_T(\Omega_\varepsilon)$ for all $u\in Z_T(\Omega_\varepsilon)$.
  Moreover, if $u\in E_T(\Omega_\varepsilon)$, then
  \begin{align} \label{E:Trn_Dt}
    \frac{1}{2}\frac{d}{dt}\Bigl(\|(|u(t)|-C_0)_+\|_{L^2(\Omega_\varepsilon)}^2\Bigr) = \bigl\langle\partial_tu(t),F\bigl(u(t)\bigr)\bigr\rangle_{(H^1\cap L^4)(\Omega_\varepsilon)} \quad\text{in}\quad \mathcal{D}'(0,T).
  \end{align}
\end{lemma}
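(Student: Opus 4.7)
The plan is to prove the two claims in turn. For $F(u)\in Z_T(\Omega_\varepsilon)$, the pointwise bound $|F(a)|\leq|a|$, immediate from \eqref{E:Def_Trn}, gives $\|F(u)\|_{L^p(0,T;L^p(\Omega_\varepsilon))}\leq\|u\|_{L^p(0,T;L^p(\Omega_\varepsilon))}$ for $p=2,4$, while the explicit formula \eqref{E:Trn_Dxi} of Lemma \ref{L:Trn_Sp} shows $|\nabla F(u)|\leq c|\nabla u|$ a.e.\ for an absolute constant $c>0$, since the coefficients in \eqref{E:Trn_Dxi} are uniformly bounded. Combining these bounds yields $F(u)\in Z_T(\Omega_\varepsilon)$.

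For the chain rule \eqref{E:Trn_Dt}, the key observation is that $F$ is the gradient of the $C^{1,1}$ convex function $G(a):=\tfrac{1}{2}(|a|-C_0)_+^2$ on $\mathbb{R}^N$, so that $\tfrac{1}{2}\|(|u(t)|-C_0)_+\|_{L^2(\Omega_\varepsilon)}^2=\int_{\Omega_\varepsilon}G(u(t,x))\,dx$. I would pick a sequence $u_m\in C^\infty([0,T];(H^1\cap L^4)(\Omega_\varepsilon)^N)$ with $u_m\to u$ in $E_T(\Omega_\varepsilon)$, provided by Lemma \ref{L:ET_Den}(ii). For each smooth $u_m$, applying the fundamental theorem of calculus to $G$ pointwise in $x$ yields
\[
G(u_m(t,x))-G(u_m(s,x))=(u_m(t,x)-u_m(s,x))\cdot\int_0^1 F\bigl(u_m(s,x)+\sigma(u_m(t,x)-u_m(s,x))\bigr)\,d\sigma;
\]
integrating over $\Omega_\varepsilon$, dividing by $t-s$, and sending $s\to t$ (using the global Lipschitz continuity of $F$ on $\mathbb{R}^N$ and the $L^2$-differentiability of $t\mapsto u_m(t)$) gives $\phi_m'(t)=(F(u_m(t)),\partial_t u_m(t))_{L^2(\Omega_\varepsilon)}=\langle\partial_t u_m(t),F(u_m(t))\rangle_{(H^1\cap L^4)(\Omega_\varepsilon)}$ pointwise in $t$, where $\phi_m(t):=\int_{\Omega_\varepsilon}G(u_m(t,x))\,dx$.

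To pass to the limit in the distributional identity $\phi_m'=\langle\partial_t u_m,F(u_m)\rangle$ on $(0,T)$, I would verify (a) $\phi_m\to\phi:=\tfrac{1}{2}\|(|u|-C_0)_+\|_{L^2(\Omega_\varepsilon)}^2$ in $L^1(0,T)$, and (b) $\langle\partial_t u_m,F(u_m)\rangle\to\langle\partial_t u,F(u)\rangle$ in $L^1(0,T)$. Step (a) follows from the mean-value bound $|G(a)-G(b)|\leq(|a|+|b|)|a-b|$ together with Cauchy--Schwarz in time and $u_m\to u$ in $L^2(0,T;L^2(\Omega_\varepsilon))$. By Lemma \ref{L:ET_Pair}, step (b) reduces to $\partial_t u_m\to\partial_t u$ in $[Z_T(\Omega_\varepsilon)]'$ (immediate from $u_m\to u$ in $E_T(\Omega_\varepsilon)$) together with $F(u_m)\to F(u)$ in $Z_T(\Omega_\varepsilon)$.

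The main obstacle is the $L^2(0,T;H^1(\Omega_\varepsilon))$-convergence of $F(u_m)$. The $L^4(0,T;L^4)$ and $L^2(0,T;L^2)$ parts follow from the global Lipschitz continuity of $F\colon\mathbb{R}^N\to\mathbb{R}^N$ (with constant at most $3$, verified by a short case analysis on whether $|a|,|b|$ lie above or below $C_0$). For the gradient, writing $\nabla F(u)=B(u)\nabla u$ with $B$ read off from \eqref{E:Trn_Dxi} and uniformly bounded, I decompose
\[
\nabla F(u_m)-\nabla F(u)=B(u_m)(\nabla u_m-\nabla u)+(B(u_m)-B(u))\nabla u.
\]
The first term tends to $0$ in $L^2(0,T;L^2)$ directly. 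For the second, pass to a subsequence with $u_m\to u$ a.e.; then $B(u_m)\to B(u)$ a.e.\ on $\{|u|\neq C_0\}$, where $B$ is continuous. On the level set $\{|u|=C_0\}$, $B(u)$ vanishes, and the explicit form of $B(u_m)\nabla u$ (whose two contributions involve the scalar $(|u_m|-C_0)_+/|u_m|\to 0$ a.e.\ there, and inner products of the shape $u_m\cdot\partial_i u$) together with the Stampacchia-type identity $(\nabla u)u=\tfrac{1}{2}\nabla|u|^2=0$ a.e.\ on $\{|u|=C_0\}$ (which converts $u_m\cdot\partial_i u$ into $(u_m-u)\cdot\partial_i u\to 0$ a.e.) forces $B(u_m)\nabla u\to 0$ a.e.\ on this level set as well. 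Dominated convergence with $c|\nabla u|^2\in L^1$ then yields the required $L^2$-convergence along the subsequence, and uniqueness of the limit upgrades this to full convergence, completing the proof.
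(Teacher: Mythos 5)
Your proof is correct, and Step~3 — the passage from smooth approximants to general $u\in E_T(\Omega_\varepsilon)$ — takes a genuinely different route from the paper's. Both arguments approximate $u$ by $u_m\in C^\infty([0,T];(H^1\cap L^4)(\Omega_\varepsilon)^N)$ via Lemma~\ref{L:ET_Den}(ii), but they handle the limit of $F(u_m)$ in $Z_T(\Omega_\varepsilon)$ differently. The paper explicitly declares strong $L^2(Q_{\varepsilon,T})$ convergence of $\nabla F(u_m)$ to be ``not possible'' because $\chi_{(C_0,\infty)}(|u_m|)$ need not converge pointwise, and instead settles for \emph{weak} convergence of $F(u_m)$ in $Z_T(\Omega_\varepsilon)$ (obtained from the uniform bound $|\nabla F(u_m)|\le c|\nabla u_m|$ plus an integration-by-parts density argument), which, paired with the strong convergence $\partial_tu_m\to\partial_tu$ in $[Z_T(\Omega_\varepsilon)]'$, suffices in Lemma~\ref{L:ET_Pair}. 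You show the paper's pessimism is unnecessary: after splitting $\nabla F(u_m)-\nabla F(u)=B(u_m)(\nabla u_m-\nabla u)+(B(u_m)-B(u))\nabla u$, the Stampacchia identity $\sum_k u_k\partial_iu_k=0$ a.e.\ on $\{|u|=C_0\}$ eliminates the only contribution that the discontinuity of $\chi_{(C_0,\infty)}$ could feed into, so $(B(u_m)-B(u))\nabla u\to0$ a.e.\ and dominated convergence (with dominant $c|\nabla u|\in L^2(Q_{\varepsilon,T})$) recovers \emph{strong} convergence after the usual subsequence-to-full-sequence upgrade. You also bypass the paper's Steps~1--2 entirely: recognizing $F=\nabla G$ with $G(a)=\tfrac12(|a|-C_0)_+^2$ of class $C^{1,1}$ lets you apply the fundamental theorem of calculus directly to smooth $u_m$, eliminating the $\gamma$-regularizations $\lambda_\gamma,\sigma_\gamma$ and the intermediate approximation in $H^1(Q_{\varepsilon,T})$. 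One point worth spelling out in a polished write-up: the Stampacchia identity is applied slice-wise — for a.e.\ $t$, $|u(t)|^2\in W^{1,1}(\Omega_\varepsilon)$ and hence $\nabla_x(|u(t)|^2)=0$ a.e.\ on $\{x\colon|u(t,x)|=C_0\}$, and Fubini then gives the identity a.e.\ on the space-time level set.
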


Lemmas \ref{L:Trn_Sp} and \ref{L:Trn_Ti} can be shown by approximation of $|a|$ and $z_+$ by $C^1$ functions.
We give the proofs in Section \ref{S:Pf_Aux} for the completeness.

Lastly, we recall the weak dominated convergence theorem, which is used to show the weak convergence of the cubic term in \eqref{E:GL_CTD} and \eqref{E:GL_Lim} in approximation of solutions.

\begin{lemma} \label{L:AC_Non}
  Let $S=\Omega_\varepsilon$ or $S=\Gamma$.
  Also, let $p\in(1,\infty)$ and $T\in(0,\infty)$.
  Suppose that
  \begin{align*}
    \psi_k,\psi\in L^p(0,T;L^p(S)^N), \quad \sup_{k\in\mathbb{N}}\|\psi_k\|_{L^p(0,T;L^p(S))} < \infty,
  \end{align*}
  and $\psi_k\to\psi$ a.e. on $S\times(0,T)$ as $k\to\infty$.
  Then,
  \begin{align*}
    \lim_{k\to\infty}\psi_k = \psi \quad\text{weakly in}\quad L^p(0,T;L^p(S)^N).
  \end{align*}
\end{lemma}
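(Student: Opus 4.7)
The plan is to identify the Bochner space $L^p(0,T;L^p(S)^N)$ with $L^p\bigl(S\times(0,T)\bigr)^N$ (equipped with the product measure $d\mathcal{H}^{n-1}\otimes dt$ when $S=\Gamma$, or Lebesgue measure when $S=\Omega_\varepsilon$) via Fubini--Tonelli, and to reduce to the scalar case $N=1$ by working componentwise. The dual of $L^p(0,T;L^p(S)^N)$ is $L^{p'}(0,T;L^{p'}(S)^N)$ (since $p\in(1,\infty)$ and $L^p(S)$ is reflexive), which under the same identification coincides with $L^{p'}\bigl(S\times(0,T)\bigr)^N$, so weak convergence in the Bochner sense coincides with weak convergence on the product space. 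The strategy is then a subsequence argument: since $L^p$ is reflexive, the boundedness assumption $\sup_k\|\psi_k\|_{L^p}<\infty$ combined with the Eberlein--\v{S}mulian theorem implies that every subsequence of $\{\psi_k\}$ has a further weakly convergent subsubsequence with some limit $\widetilde{\psi}\in L^p$. If I can show that every such $\widetilde{\psi}$ must equal $\psi$, then uniqueness of the weak limit forces the entire sequence $\psi_k$ to converge weakly to $\psi$.

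To identify the weak limit as $\psi$, I would test against $\phi\in L^{p'}\bigl(S\times(0,T)\bigr)$. By density, it suffices to take $\phi$ bounded by some constant $M$ and supported on a set $E\subset S\times(0,T)$ of finite measure. For a given $\delta>0$, Egorov's theorem applied to the pointwise a.e. convergence $\psi_{k_j}\to\psi$ on $E$ provides a measurable subset $E_\delta\subset E$ with $|E\setminus E_\delta|<\delta$ on which the convergence is uniform. Splitting and applying H\"older's inequality,
\begin{align*}
  \left|\int_{S\times(0,T)}(\psi_{k_j}-\psi)\phi\right|
  &\leq M\,|E_\delta|\sup_{E_\delta}|\psi_{k_j}-\psi|
    +\|\psi_{k_j}-\psi\|_{L^p}\,M\,\delta^{1/p'} \\
  &\leq M\,|E_\delta|\sup_{E_\delta}|\psi_{k_j}-\psi|
    +CM\delta^{1/p'},
\end{align*}
where $C=\sup_j\|\psi_{k_j}-\psi\|_{L^p}<\infty$ by the boundedness hypothesis and $\psi\in L^p$. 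Letting $j\to\infty$ with $\delta$ fixed kills the first term, and then letting $\delta\to0$ kills the second, yielding $\int(\psi_{k_j}-\psi)\phi\to 0$. Hence $\widetilde{\psi}=\psi$.

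The argument is essentially routine, and the only genuinely delicate point is the identification of weak convergence in the Bochner space with weak convergence in the product $L^p$ space; this I would settle at the outset via Fubini and the standard duality $L^p(0,T;L^p(S))^*\cong L^{p'}(0,T;L^{p'}(S))$ for reflexive $L^p(S)$. After that, the combination of reflexivity (to extract a weak limit) with Egorov (to identify it) is standard. Note that the hypothesis $p>1$ is essential both for the reflexivity argument and because the conclusion is false at $p=1$ (concentration phenomena preclude weak $L^1$ convergence in general).
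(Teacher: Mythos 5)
The paper does not give a proof of this lemma; it simply cites \cite[Lemma 8.3]{Rob01}, so there is no in-paper argument to compare against. Your proof is correct and self-contained, and the main ingredients (identification of the Bochner space with the product $L^p$ space, duality with $L^{p'}$ since $p\in(1,\infty)$, density of bounded functions, Egorov on the finite-measure set $S\times(0,T)$, and H\"older on the exceptional set) are all sound.

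One structural remark: the detour through reflexivity and the Eberlein--\v{S}mulian subsequence trick is not actually needed here. Your Egorov estimate
\begin{align*}
  \left|\int_{S\times(0,T)}(\psi_k-\psi)\phi\right|
  \leq M\,|E_\delta|\sup_{E_\delta}|\psi_k-\psi|+CM\delta^{1/p'}
\end{align*}
applies directly to the full sequence (Egorov only requires a.e.\ convergence, which you have for the whole sequence), and letting $k\to\infty$ and then $\delta\to0$ already yields $\int(\psi_k-\psi)\phi\to0$ for bounded $\phi$. Extending to general $\phi\in L^{p'}$ by density, using the uniform bound $\sup_k\|\psi_k-\psi\|_{L^p}\leq C$, finishes the proof without ever extracting a subsequence or identifying any putative weak limit. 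The subsequence/uniqueness-of-limit pattern is the right tool when one cannot test the limit candidate directly, but here you can, so it only adds length. That said, nothing in the reflexivity route is wrong, and the hypothesis $p>1$ is indeed used exactly where you say (duality, density of bounded functions in $L^{p'}$), so the proof stands.
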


\begin{proof}
  We refer to \cite[Lemma 8.3]{Rob01} for the proof.
\end{proof}

\section{Definition and properties of weak solutions} \label{S:Weak_Sol}
This section provides the definition and some properties of weak solutions to the thin-domain problem \eqref{E:GL_CTD} and the limit problem \eqref{E:GL_Lim}.
For $S=\Omega_\varepsilon$ or $S=\Gamma$, let $Z_T(S)$ and $E_T(S)$ be given by \eqref{E:Def_ZT} and \eqref{E:Def_ET}, respectively.
We abuse the notation
\begin{align*}
  (\varphi_1,\varphi_2)_{L^2(S)} = \int_S\varphi_1\varphi_2\,d\mu, \quad d\mu =
  \begin{cases}
    dx &(S=\Omega_\varepsilon), \\
    d\mathcal{H}^{n-1} &(S=\Gamma)
  \end{cases}
\end{align*}
for $\varphi_1\in L^p(S)$ and $\varphi_2\in L^q(S)$ with $p,q\in[1,\infty]$ satisfying $1/p+1/q=1$.

\subsection{Thin-domain problem} \label{SS:WS_Thin}
First we consider \eqref{E:GL_CTD}.
Based on integration by parts and the Neumann boundary condition, we define a weak solution to \eqref{E:GL_CTD} as follows.

\begin{definition} \label{D:WSC_Loc}
  For $T>0$ and a given $u_0^\varepsilon\in L^2(\Omega_\varepsilon)^N$, we say that a function $u^\varepsilon$ is a weak solution to \eqref{E:GL_CTD} on $[0,T)$ if $u^\varepsilon\in E_T(\Omega_\varepsilon)$ and it satisfies
  \begin{multline} \label{E:WF_GCT}
    \int_0^T\langle\partial_tu^\varepsilon(t),\psi(t)\rangle_{(H^1\cap L^4)(\Omega_\varepsilon)}\,dt+\int_0^T\bigl(\nabla u^\varepsilon(t),\nabla\psi(t)\bigr)_{L^2(\Omega_\varepsilon)}\,dt \\
    +\lambda\int_0^T\bigl((|u^\varepsilon(t)|^2-1)u^\varepsilon(t),\psi(t)\bigr)_{L^2(\Omega_\varepsilon)}\,dt = 0
  \end{multline}
  for all $\psi\in Z_T(\Omega_\varepsilon)$ and $u^\varepsilon(0)=u_0^\varepsilon$ in $L^2(\Omega_\varepsilon)^N$.
\end{definition}

\begin{definition} \label{D:WSC_Glo}
  For a given $u_0^\varepsilon\in L^2(\Omega_\varepsilon)^N$, we say that a function $u^\varepsilon$ is a global weak solution to \eqref{E:GL_CTD} if it is a weak solution to \eqref{E:GL_CTD} on $[0,T)$ for all $T>0$.
\end{definition}

Note that the weak formulation \eqref{E:WF_GCT} makes sense, since
\begin{align*}
  \left|\int_0^T\bigl(|u^\varepsilon(t)|^2u^\varepsilon(t),\psi(t)\bigr)_{L^2(\Omega_\varepsilon)}\,dt\right| \leq \|u^\varepsilon\|_{L^4(0,T;L^4(\Omega_\varepsilon))}^3\|\psi\|_{L^4(0,T;L^4(\Omega_\varepsilon))}
\end{align*}
by H\"{o}lder's inequality.
The initial condition also makes sense by Lemma \ref{L:ET_Con}.

Let us give basic results on a weak solution to \eqref{E:GL_CTD}.

\begin{lemma} \label{L:GCT_Ena}
  Let $u^\varepsilon$ be a weak solution to \eqref{E:GL_CTD} on $[0,T)$ with initial data $u_0^\varepsilon\in L^2(\Omega_\varepsilon)^N$.
  Then, for all $t\in[0,T]$, we have
  \begin{align} \label{E:GCT_Ena}
    \|u^\varepsilon(t)\|_{L^2(\Omega_\varepsilon)}^2+2\int_0^t\|\nabla u^\varepsilon(s)\|_{L^2(\Omega_\varepsilon)}^2\,ds+2\lambda\int_0^t\|u^\varepsilon(s)\|_{L^4(\Omega_\varepsilon)}^4\,ds \leq e^{2\lambda t}\|u_0^\varepsilon\|_{L^2(\Omega_\varepsilon)}^2.
  \end{align}
\end{lemma}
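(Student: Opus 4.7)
The plan is to obtain the energy identity by testing the weak formulation \eqref{E:WF_GCT} against $u^\varepsilon$ itself, which is admissible because $u^\varepsilon \in Z_T(\Omega_\varepsilon)$ by the definition of a weak solution, and then to close the bound via Gronwall's inequality. The bridge between the time-integrated weak form and a pointwise-in-time identity is the integration-by-parts formula \eqref{E:ET_IbP} of Lemma \ref{L:ET_Con}; applied with $u_1 = u_2 = u^\varepsilon$, it yields
\begin{align*}
  \frac{d}{dt}\|u^\varepsilon(t)\|_{L^2(\Omega_\varepsilon)}^2 = 2\langle\partial_tu^\varepsilon(t),u^\varepsilon(t)\rangle_{(H^1\cap L^4)(\Omega_\varepsilon)} \quad \text{in}\quad \mathcal{D}'(0,T).
\end{align*}

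To implement this, I would first localize \eqref{E:WF_GCT} by inserting test functions of the form $\psi(t) = \varphi(t)v$ with $\varphi\in C_c^\infty(0,T)$ and $v\in(H^1\cap L^4)(\Omega_\varepsilon)^N$ fixed; this gives, for a.a. $t\in(0,T)$,
\begin{align*}
  \langle\partial_tu^\varepsilon(t),v\rangle_{(H^1\cap L^4)(\Omega_\varepsilon)} + \bigl(\nabla u^\varepsilon(t),\nabla v\bigr)_{L^2(\Omega_\varepsilon)} + \lambda\bigl((|u^\varepsilon(t)|^2-1)u^\varepsilon(t),v\bigr)_{L^2(\Omega_\varepsilon)} = 0.
\end{align*}
Specializing to $v = u^\varepsilon(t)$ (which is in $(H^1\cap L^4)(\Omega_\varepsilon)^N$ for a.a. $t$) and combining with the IbP formula, one obtains the energy equation
\begin{align*}
  \frac{1}{2}\frac{d}{dt}\|u^\varepsilon\|_{L^2(\Omega_\varepsilon)}^2 + \|\nabla u^\varepsilon\|_{L^2(\Omega_\varepsilon)}^2 + \lambda\|u^\varepsilon\|_{L^4(\Omega_\varepsilon)}^4 = \lambda\|u^\varepsilon\|_{L^2(\Omega_\varepsilon)}^2 \quad \text{in}\quad \mathcal{D}'(0,T),
\end{align*}
noting that each term is $L^1$ in time (for the duality pairing this is Lemma \ref{L:ET_Pair}; the cubic term is estimated by $\|u^\varepsilon\|_{L^4(0,T;L^4(\Omega_\varepsilon))}^4$).

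Since $u^\varepsilon \in C([0,T];L^2(\Omega_\varepsilon)^N)$ by Lemma \ref{L:ET_Con}, the function $y(t) = \|u^\varepsilon(t)\|_{L^2(\Omega_\varepsilon)}^2$ is continuous and I may integrate the energy equation over $[0,t]$ to obtain
\begin{align*}
  y(t) + 2\int_0^t\|\nabla u^\varepsilon(s)\|_{L^2(\Omega_\varepsilon)}^2\,ds + 2\lambda\int_0^t\|u^\varepsilon(s)\|_{L^4(\Omega_\varepsilon)}^4\,ds = y(0) + 2\lambda\int_0^ty(s)\,ds.
\end{align*}
Dropping the two nonnegative left-hand terms gives $y(t) \le y(0) + 2\lambda\int_0^t y(s)\,ds$, to which the integral form of Gronwall's inequality applies, producing $y(s) \le e^{2\lambda s}y(0)$. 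Inserting this into the right-hand side of the displayed identity yields
\begin{align*}
  y(0) + 2\lambda\int_0^te^{2\lambda s}y(0)\,ds = e^{2\lambda t}y(0),
\end{align*}
which is precisely \eqref{E:GCT_Ena}. The only delicate point is the justification of the substitution $v=u^\varepsilon(t)$ and the pointwise-in-time identity for $t\in[0,T]$; this is underwritten by Lemmas \ref{L:ET_Con} and \ref{L:ET_Pair} together with the density statement Lemma \ref{L:ET_Den}, so once these structural lemmas are in hand, the proof is routine.
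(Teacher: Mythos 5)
Your proof is correct and follows essentially the same approach as the paper: test \eqref{E:WF_GCT} with $\psi=u^\varepsilon$, apply the chain-rule formula \eqref{E:ET_IbP} to handle the duality pairing, drop the nonnegative terms, and close with Gronwall. The paper substitutes $\psi=u^\varepsilon$ directly into the time-integrated weak form (with $T$ replaced by $t$) and therefore skips your intermediate pointwise-in-time localization and the associated separability argument; that detour is logically sound but unnecessary here, since the weak formulation is already phrased in the time-integrated form that you ultimately need.
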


Note that the factor $e^{2\lambda t}$ in the right-hand side of \eqref{E:GCT_Ena} is independent of $\varepsilon$.

\begin{proof}
  Let $\psi=u^\varepsilon$ in \eqref{E:WF_GCT} with $T$ replaced by each $t\in[0,T]$.
  Then, by \eqref{E:ET_IbP},
  \begin{multline*}
    \frac{1}{2}\|u^\varepsilon(t)\|_{L^2(\Omega_\varepsilon)}^2+\int_0^t\|\nabla u^\varepsilon(s)\|_{L^2(\Omega_\varepsilon)}^2\,ds+\lambda\int_0^t\|u^\varepsilon(s)\|_{L^4(\Omega_\varepsilon)}^4\,ds \\
    = \frac{1}{2}\|u_0^\varepsilon\|_{L^2(\Omega_\varepsilon)}^2+\lambda\int_0^t\|u^\varepsilon(s)\|_{L^2(\Omega_\varepsilon)}^2\,ds
  \end{multline*}
  and thus $\|u^\varepsilon(t)\|_{L^2(\Omega_\varepsilon)}^2\leq\|u_0^\varepsilon\|_{L^2(\Omega_\varepsilon)}^2+2\lambda\int_0^t\|u^\varepsilon(s)\|_{L^2(\Omega_\varepsilon)}^2\,ds$ by $\lambda>0$.
  Hence,
  \begin{align*}
    \|u^\varepsilon(t)\|_{L^2(\Omega_\varepsilon)}^2 \leq e^{2\lambda t}\|u_0^\varepsilon\|_{L^2(\Omega_\varepsilon)}^2 \quad\text{for all}\quad t\in[0,T]
  \end{align*}
  by Gronwall's inequality, and we get \eqref{E:GCT_Ena} by the above relations.
\end{proof}

\begin{lemma} \label{L:GCT_Uni}
  For each $T>0$ and $u_0^\varepsilon\in L^2(\Omega_\varepsilon)^N$, there exists at most one weak solution to \eqref{E:GL_CTD} on $[0,T)$.
\end{lemma}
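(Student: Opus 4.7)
The plan is the standard monotonicity-plus-Gronwall argument foreshadowed by \eqref{E:In_Mono}. Let $u_1^\varepsilon, u_2^\varepsilon \in E_T(\Omega_\varepsilon)$ be two weak solutions to \eqref{E:GL_CTD} sharing the same initial datum $u_0^\varepsilon$, and set $w = u_1^\varepsilon - u_2^\varepsilon \in E_T(\Omega_\varepsilon)$. Since $E_T(\Omega_\varepsilon) \hookrightarrow Z_T(\Omega_\varepsilon)$, the difference $w$ is itself admissible as a test function in \eqref{E:WF_GCT}.

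Subtracting the two weak formulations and using the algebraic identity
\begin{align*}
(|u_1^\varepsilon|^2-1)u_1^\varepsilon - (|u_2^\varepsilon|^2-1)u_2^\varepsilon = \bigl(|u_1^\varepsilon|^2 u_1^\varepsilon - |u_2^\varepsilon|^2 u_2^\varepsilon\bigr) - w,
\end{align*}
I would test with $\psi = w$ and invoke \eqref{E:ET_IbP} from Lemma \ref{L:ET_Con}, which recasts the $\partial_t$-pairing as $\tfrac{1}{2}\tfrac{d}{dt}\|w\|_{L^2(\Omega_\varepsilon)}^2$ in $\mathcal{D}'(0,T)$. This yields
\begin{align*}
\tfrac{1}{2}\tfrac{d}{dt}\|w\|_{L^2(\Omega_\varepsilon)}^2 + \|\nabla w\|_{L^2(\Omega_\varepsilon)}^2 + \lambda\bigl(|u_1^\varepsilon|^2 u_1^\varepsilon - |u_2^\varepsilon|^2 u_2^\varepsilon,\, w\bigr)_{L^2(\Omega_\varepsilon)} = \lambda \|w\|_{L^2(\Omega_\varepsilon)}^2.
\end{align*}
All terms make sense: by H\"older's inequality, the cubic pairing lies in $L^1(0,T)$ because $u_1^\varepsilon, u_2^\varepsilon, w \in L^4(0,T;L^4(\Omega_\varepsilon)^N)$.

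The decisive step is the pointwise monotonicity \eqref{E:In_Mono}, $(|a|^2 a - |b|^2 b)\cdot(a-b) \geq 0$ for every $a,b \in \mathbb{R}^N$; applied with $a = u_1^\varepsilon(\cdot,t)$, $b = u_2^\varepsilon(\cdot,t)$ and integrated over $\Omega_\varepsilon$, it makes the cubic contribution nonnegative and thus discardable. Dropping it and the gradient term leaves
\begin{align*}
\frac{d}{dt}\|w(t)\|_{L^2(\Omega_\varepsilon)}^2 \leq 2\lambda \|w(t)\|_{L^2(\Omega_\varepsilon)}^2 \quad\text{in}\quad \mathcal{D}'(0,T),
\end{align*}
while Lemma \ref{L:ET_Con} ensures that $t \mapsto \|w(t)\|_{L^2(\Omega_\varepsilon)}^2$ is continuous on $[0,T]$ with $w(0)=0$. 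Gronwall's inequality then forces $w \equiv 0$, which is the desired uniqueness.

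The main obstacle, such as it is, lies not in the PDE analysis but in verifying that $w$ is a legitimate test function and that the distributional time-identity integrates against the various $L^1(0,T)$ pairings; this is handled cleanly by the functional setup of Section \ref{SS:FS_TS} together with Lemma \ref{L:ET_Pair}. The genuine mathematical content is the monotonicity \eqref{E:In_Mono}, which is what allows the cubic nonlinearity to be absorbed without any smallness or dimensional restriction.
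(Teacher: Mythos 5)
Your proposal is correct and follows essentially the same route as the paper's proof: testing the difference of the two weak forms with $\psi = u_1^\varepsilon - u_2^\varepsilon$, invoking \eqref{E:ET_IbP}, discarding the cubic term via the monotonicity \eqref{E:In_Mono} (equivalently \eqref{Pf_GCU:Mono}), and closing with Gronwall's inequality. The only cosmetic difference is that you phrase the final step as a distributional differential inequality, whereas the paper writes the time-integrated inequality directly; the substance is identical.
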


\begin{proof}
  Let $u_1^\varepsilon$ and $u_2^\varepsilon$ be weak solutions to \eqref{E:GL_CTD} on $[0,T)$ with same initial data $u_0^\varepsilon$, and let $U^\varepsilon=u_1^\varepsilon-u_2^\varepsilon$.
  Then, for each $t\in[0,T]$ and $\psi\in Z_t(\Omega_\varepsilon)$, we take the difference of \eqref{E:WF_GCT} satisfied by $u_1^\varepsilon$ and $u_2^\varepsilon$ to get
  \begin{multline*}
    \int_0^t\langle\partial_sU^\varepsilon(s),\psi(s)\rangle_{(H^1\cap L^4)(\Omega_\varepsilon)}\,ds+\int_0^t\bigl(\nabla U^\varepsilon(s),\nabla\psi(s)\bigr)_{L^2(\Omega_\varepsilon)}\,ds \\
    +\lambda\int_0^t\bigl(|u_1^\varepsilon(s)|^2u_1^\varepsilon(s)-|u_2^\varepsilon(s)|^2u_2^\varepsilon(s),\psi(s)\bigr)_{L^2(\Omega_\varepsilon)}\,ds = \lambda\int_0^t\bigl(U^\varepsilon(s),\psi(s)\bigr)_{L^2(\Omega_\varepsilon)}\,ds.
  \end{multline*}
  Let $\psi=U^\varepsilon=u_1^\varepsilon-u_2^\varepsilon$ in this equality.
  Then, since $\lambda>0$ and
  \begin{align} \label{Pf_GCU:Mono}
    \begin{aligned}
      (|a|^2a-|b|^2b)\cdot(a-b) &= |a|^4-(|a|^2+|b|^2)(a\cdot b)+|b|^4 \\
      &\geq |a|^4-(|a|^2+|b|^2)|a||b|+|b|^4 \\
      &= (|a|^3-|b|^3)(|a|-|b|) \geq 0
    \end{aligned}
  \end{align}
  for all $a,b\in\mathbb{R}^N$, we see by \eqref{E:ET_IbP} and $U^\varepsilon(0)=u_1^\varepsilon(0)-u_2^\varepsilon(0)=0$ that
  \begin{align*}
    \frac{1}{2}\|U^\varepsilon(t)\|_{L^2(\Omega_\varepsilon)}^2 \leq \lambda\int_0^t\|U^\varepsilon(s)\|_{L^2(\Omega_\varepsilon)}^2\,ds.
  \end{align*}
  Thus, $\|U^\varepsilon(t)\|_{L^2(\Omega_\varepsilon)}^2=0$ for all $t\in[0,T]$ by Gronwall's inequality, i.e. $u_1^\varepsilon=u_2^\varepsilon$.
\end{proof}

\begin{theorem} \label{T:GCT_Exi}
  For each $u_0^\varepsilon\in L^2(\Omega_\varepsilon)^N$, there exists a unique global weak solution to \eqref{E:GL_CTD}.
\end{theorem}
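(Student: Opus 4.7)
The plan is to construct a global weak solution via the Galerkin method; uniqueness is already handled by Lemma~\ref{L:GCT_Uni}. Let $\{w_k\}_{k=1}^\infty$ be a smooth $L^2(\Omega_\varepsilon)^N$-orthonormal basis contained in $(H^1\cap L^4)(\Omega_\varepsilon)^N$ whose finite spans are dense in $(H^1\cap L^4)(\Omega_\varepsilon)^N$ (for instance, the eigenfunctions of the Neumann Laplacian on $\Omega_\varepsilon$, which are smooth up to the boundary by elliptic regularity and the $C^1$-smoothness of $\partial\Omega_\varepsilon$). For each $m$, I seek $u_m^\varepsilon(t)=\sum_{k=1}^m c_k^m(t)w_k$ solving the projected system obtained by testing \eqref{E:GL_CTD} against $w_1,\dots,w_m$, with $c_k^m(0)=(u_0^\varepsilon,w_k)_{L^2(\Omega_\varepsilon)}$. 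The right-hand side is a polynomial (degree three) in the coefficients $c_k^m$, so Picard--Lindel\"of gives a unique local-in-time solution.

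The next step is the energy estimate. Multiplying the $k$-th projected equation by $c_k^m$ and summing reproduces the computation of Lemma~\ref{L:GCT_Ena} at the Galerkin level, yielding
\begin{align*}
  \|u_m^\varepsilon(t)\|_{L^2(\Omega_\varepsilon)}^2+2\int_0^t\|\nabla u_m^\varepsilon(s)\|_{L^2(\Omega_\varepsilon)}^2\,ds+2\lambda\int_0^t\|u_m^\varepsilon(s)\|_{L^4(\Omega_\varepsilon)}^4\,ds \leq e^{2\lambda t}\|u_0^\varepsilon\|_{L^2(\Omega_\varepsilon)}^2,
\end{align*}
uniformly in $m$. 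This bound is global in time, so the Galerkin ODE solutions are global, and $\{u_m^\varepsilon\}_m$ is bounded in $Z_T(\Omega_\varepsilon)\cap L^\infty(0,T;L^2(\Omega_\varepsilon)^N)$ for every $T>0$. Writing the projected equation as $\partial_tu_m^\varepsilon=P_m\Delta u_m^\varepsilon-\lambda P_m\bigl((|u_m^\varepsilon|^2-1)u_m^\varepsilon\bigr)$ with $P_m$ the $L^2$-orthogonal projection onto $\mathrm{span}\{w_1,\dots,w_m\}$, the Laplacian term is bounded in $L^2(0,T;[H^1(\Omega_\varepsilon)^N]')$ and the cubic term is bounded in $L^{4/3}(0,T;L^{4/3}(\Omega_\varepsilon)^N)$ by H\"older's inequality. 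Together with the decomposition \eqref{E:Dual_ZT}, this gives a uniform bound on $\partial_tu_m^\varepsilon$ in $[Z_T(\Omega_\varepsilon)]'$, so $\{u_m^\varepsilon\}_m$ is bounded in $E_T(\Omega_\varepsilon)$.

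Fix $T>0$ and extract a subsequence with $u_m^\varepsilon\rightharpoonup u^\varepsilon$ weakly in $Z_T(\Omega_\varepsilon)$ and $\partial_tu_m^\varepsilon\rightharpoonup\partial_tu^\varepsilon$ weakly in $[Z_T(\Omega_\varepsilon)]'$, so $u^\varepsilon\in E_T(\Omega_\varepsilon)$. The compact embedding $E_T(\Omega_\varepsilon)\hookrightarrow L^2(0,T;L^2(\Omega_\varepsilon)^N)$ from Lemma~\ref{L:ET_AuLi} promotes this to strong $L^2$ convergence along a further subsequence, and hence pointwise a.e.\ convergence on $\Omega_\varepsilon\times(0,T)$. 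Combining a.e.\ convergence of $|u_m^\varepsilon|^2u_m^\varepsilon$ with the uniform $L^4$-bound and applying Lemma~\ref{L:AC_Non} yields $|u_m^\varepsilon|^2u_m^\varepsilon\rightharpoonup|u^\varepsilon|^2u^\varepsilon$ weakly in $L^{4/3}(0,T;L^{4/3}(\Omega_\varepsilon)^N)$. Passing to the limit in the projected weak form tested against $\phi(t)w_j$ for $\phi\in C_c^\infty(0,T)$ and $j\leq m$, and then invoking the density of finite linear combinations and Lemma~\ref{L:ET_Den}, recovers \eqref{E:WF_GCT} for all $\psi\in Z_T(\Omega_\varepsilon)$.

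Finally, Lemma~\ref{L:ET_Con} gives $u^\varepsilon\in C([0,T];L^2(\Omega_\varepsilon)^N)$, and a standard integration-by-parts argument with test functions of the form $\phi(t)w_j$ satisfying $\phi(T)=0$ identifies $u^\varepsilon(0)=u_0^\varepsilon$. Uniqueness from Lemma~\ref{L:GCT_Uni} forces the solutions constructed on different intervals $[0,T]$ to coincide on their overlap, producing a single global weak solution. The main obstacle is passing to the limit in the cubic term: weak $L^4$-convergence alone is not sufficient, so the whole argument hinges on upgrading weak convergence to strong (hence a.e.) convergence via Lemma~\ref{L:ET_AuLi} and then applying the weak dominated convergence theorem of Lemma~\ref{L:AC_Non}; the $\partial_t u_m^\varepsilon$ bound has to be placed in the correct dual sum space dictated by \eqref{E:Dual_ZT} in order for that compactness result to apply.
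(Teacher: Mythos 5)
Your overall blueprint (Galerkin approximation, uniform energy estimate, weak convergence, upgrade to strong $L^2$ convergence, weak dominated convergence for the cubic term) is the right skeleton, but two steps fail in this specific setting, and both are exactly the obstacles the paper is organized around circumventing.

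First, the choice of basis. You propose eigenfunctions of the Neumann Laplacian, claiming they are smooth up to the boundary by elliptic regularity and $C^1$-smoothness of $\partial\Omega_\varepsilon$. This is false: $C^1$ boundary regularity is not sufficient even for $H^2$ elliptic regularity, let alone smoothness, and the paper's $\Omega_\varepsilon$ has only a $C^1$ boundary (see Section~\ref{SS:Pr_CTD}). Moreover, even if the eigenfunctions were regular, the dimension $n$ is arbitrary, so there is no Sobolev embedding of the form $H^1\hookrightarrow L^4$ or $H^2\hookrightarrow L^4$ available to place them in $L^4$ or to control $L^4$-norms of truncations. The paper instead takes an abstract countable set in $(H^1\cap L^4)(\Omega_\varepsilon)^N$ whose span is dense in that space and applies Gram--Schmidt in $L^2$ (Lemma~\ref{L:Basis}).

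Second, and more seriously, your estimate for $\partial_tu_m^\varepsilon$ in $[Z_T(\Omega_\varepsilon)]'$ does not close. Since $u_m^\varepsilon$ lives in the span of $w_1,\dots,w_m$, testing against $\psi\in Z_T(\Omega_\varepsilon)$ reduces to testing against $P_m\psi$. To bound the result by $\|\psi\|_{Z_T(\Omega_\varepsilon)}$ uniformly in $m$ you need $\|P_m\psi\|_{H^1}\leq c\|\psi\|_{H^1}$ and $\|P_m\psi\|_{L^4}\leq c\|\psi\|_{L^4}$ with $c$ independent of $m$. The $L^2$-orthogonal projection onto a finite-dimensional subspace is not uniformly bounded on $L^4$ in general; even for the Neumann eigenbasis (where the $H^1$ bound holds by commutation) the $L^4$ bound fails. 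Without the $\partial_t$-bound in $[Z_T(\Omega_\varepsilon)]'$, Lemma~\ref{L:ET_AuLi} cannot be invoked, so the strong $L^2(0,T;L^2)$ convergence --- which your passage to the limit in the cubic term essentially depends on --- is not established. The paper explicitly flags both of these obstructions (see the discussion in Section~\ref{SS:Gal_Ap}) and replaces the Aubin--Lions step by a different compactness mechanism: uniform equicontinuity of the scalar functions $t\mapsto(u_L(t),\phi_k)_{L^2}$, Arzel\`a--Ascoli plus a diagonal argument, and a Friedrichs-type inequality (Lemma~\ref{L:Fried}) to upgrade componentwise uniform convergence to strong convergence in $L^2(0,T;L^2)$, after which Lemma~\ref{L:AC_Non} handles the cubic term as you intended.
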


\begin{proof}
  The uniqueness is due to Lemma \ref{L:GCT_Uni}.
  The existence can be shown by the Galerkin method as in the scalar-valued case described in \cite[Section 8.3]{Rob01}.
  In our case, however, the dimension of $\Omega_\varepsilon$ is arbitrary and the regularity of $\partial\Omega_\varepsilon$ is only $C^1$.
  Due to this, we cannot use the eigenfunctions of the Neumann Laplacian in $L^2(\Omega_\varepsilon)^N$ to approximate test functions of class $H^1\cap L^4$, since it requires the Sobolev embedding and the elliptic regularity theorem (see \cite[Section 8.2]{Rob01}).
  Hence, we need to take basis functions of $(H^1\cap L^4)(\Omega_\varepsilon)^N$ directly and modify some arguments.
  We give the outline of the Galerkin method in Section \ref{S:Galerkin} for the completeness.
\end{proof}

We also have the (weak) maximum principle for a weak solution to \eqref{E:GL_CTD}.

\begin{lemma} \label{L:GCT_Linf}
  Let $u^\varepsilon$ be a global weak solution to \eqref{E:GL_CTD} with $u_0^\varepsilon\in L^\infty(\Omega_\varepsilon)^N$.
  Then,
  \begin{align} \label{E:GCT_Linf}
    \|u^\varepsilon\|_{L^\infty(\Omega_\varepsilon\times(0,\infty))} \leq \max\{1,\|u_0^\varepsilon\|_{L^\infty(\Omega_\varepsilon)}\}.
  \end{align}
\end{lemma}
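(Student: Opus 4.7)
Set $C_0 = \max\{1,\|u_0^\varepsilon\|_{L^\infty(\Omega_\varepsilon)}\}$ and let $F$ be the truncation mapping from \eqref{E:Def_Trn}. The plan is to use $F(u^\varepsilon)$ as a test function in the weak formulation \eqref{E:WF_GCT} and show that the resulting energy identity forces $(|u^\varepsilon|-C_0)_+$ to vanish. First I would fix an arbitrary $T>0$ and note that, by Lemma \ref{L:Trn_Ti}, $F(u^\varepsilon)\in Z_T(\Omega_\varepsilon)$ is an admissible test function in \eqref{E:WF_GCT}, and moreover the formula \eqref{E:Trn_Dt} converts the duality pairing against $\partial_tu^\varepsilon$ into $\tfrac{1}{2}\tfrac{d}{dt}\|(|u^\varepsilon(t)|-C_0)_+\|_{L^2(\Omega_\varepsilon)}^2$ in $\mathcal{D}'(0,T)$.

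The core calculation is then to show that both remaining terms in \eqref{E:WF_GCT} are nonnegative when tested against $F(u^\varepsilon)$. For the Dirichlet form, I would apply \eqref{E:Trn_Dxi} to compute, at almost every point of $\Omega_\varepsilon$,
\begin{align*}
\nabla u^\varepsilon : \nabla F(u^\varepsilon)
&= \frac{(|u^\varepsilon|-C_0)_+}{|u^\varepsilon|}\,|\nabla u^\varepsilon|^2
+\chi_{(C_0,\infty)}(|u^\varepsilon|)\,\frac{C_0}{|u^\varepsilon|^3}\sum_{i=1}^{n}\Bigl(\sum_{j=1}^{N}u_j^\varepsilon\,\partial_iu_j^\varepsilon\Bigr)^{\!2},
\end{align*}
which is manifestly $\ge 0$. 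For the nonlinearity, the definition of $F$ gives
\begin{align*}
\bigl((|u^\varepsilon|^2-1)u^\varepsilon\bigr)\cdot F(u^\varepsilon) = (|u^\varepsilon|^2-1)(|u^\varepsilon|-C_0)_+\,|u^\varepsilon|,
\end{align*}
and on the set $\{|u^\varepsilon|>C_0\}$ we have $|u^\varepsilon|^2>C_0^2\ge 1$ by the choice of $C_0$, so the integrand is again $\ge 0$. (The whole point of taking $C_0\ge 1$ is to make this term have the right sign; without that constraint the cubic term would be potentially negative.)

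Combining these observations with \eqref{E:WF_GCT} (localized to a subinterval, using the usual density trick from Lemma \ref{L:ET_Den} to justify $\psi=F(u^\varepsilon)$) gives
\begin{align*}
\frac{1}{2}\frac{d}{dt}\bigl\|(|u^\varepsilon(t)|-C_0)_+\bigr\|_{L^2(\Omega_\varepsilon)}^2 \le 0 \quad\text{in}\quad \mathcal{D}'(0,T).
\end{align*}
Since $t\mapsto\|(|u^\varepsilon(t)|-C_0)_+\|_{L^2(\Omega_\varepsilon)}^2$ is continuous (because $u^\varepsilon\in C([0,T];L^2(\Omega_\varepsilon)^N)$ by Lemma \ref{L:ET_Con} and $a\mapsto(|a|-C_0)_+$ is $1$-Lipschitz) and vanishes at $t=0$ by the choice of $C_0$, it vanishes on $[0,T]$. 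Hence $|u^\varepsilon|\le C_0$ a.e. in $\Omega_\varepsilon\times(0,T)$; since $T>0$ was arbitrary, \eqref{E:GCT_Linf} follows.

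The only real obstacle is the rigorous handling of the chain rule for $F(u^\varepsilon)$ and the reduction of $\langle\partial_tu^\varepsilon,F(u^\varepsilon)\rangle$ to a time derivative of a squared norm, but both are black-boxed by Lemmas \ref{L:Trn_Sp} and \ref{L:Trn_Ti}; once those are in place, everything reduces to checking the two sign inequalities above.
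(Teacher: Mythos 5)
Your proof is correct and follows essentially the same route as the paper: take $\psi=F(u^\varepsilon)$ in the weak form, use Lemmas \ref{L:Trn_Sp} and \ref{L:Trn_Ti} to reduce the time-derivative pairing to $\tfrac12\tfrac{d}{dt}\|(|u^\varepsilon|-C_0)_+\|_{L^2}^2$, and check nonnegativity of the Dirichlet and cubic contributions (your rewriting of the second Dirichlet contribution as $\sum_i(\sum_j u_j^\varepsilon\partial_iu_j^\varepsilon)^2$ is the same quantity the paper writes as $\tfrac14|\nabla(|u^\varepsilon|^2)|^2$). The paper integrates the resulting inequality directly on $[0,t]$ rather than arguing through the distributional differential inequality plus continuity, but this is a cosmetic difference.
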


\begin{proof}
  We follow the argument of \cite[Lemma 3.7]{Du94} and \cite[Lemma 6]{MoShWa23}, but in the framework of weak solutions.

  Let $C_0=\max\{1,\|u_0^\varepsilon\|_{L^\infty(\Omega_\varepsilon)}\}$ and $F$ be given by \eqref{E:Def_Trn}.
  Since $u^\varepsilon \in E_T(\Omega_\varepsilon)$ for all $T>0$, it follows from Lemma \ref{L:Trn_Ti} that $F(u^\varepsilon)\in Z_T(\Omega_\varepsilon)$ and we can take $\psi=F(u^\varepsilon)$ in \eqref{E:WF_GCT} with $T$ replaced by each $t\geq0$.
  Then, we observe by \eqref{E:Trn_Dt} that
  \begin{multline*}
    \frac{1}{2}\|(|u^\varepsilon(t)|-C_0)_+\|_{L^2(\Omega_\varepsilon)}^2+\int_0^t\Bigl(\nabla u^\varepsilon(s),\nabla\bigl[F\bigl(u^\varepsilon(s)\bigr)\bigr]\Bigr)_{L^2(\Omega_\varepsilon)}\,ds \\
    +\lambda\int_0^t\Bigl((|u^\varepsilon(s)|^2-1)u^\varepsilon(s),F\bigl(u^\varepsilon(s)\bigr)\Bigr)_{L^2(\Omega_\varepsilon)}\,ds = \frac{1}{2}\|(|u_0^\varepsilon|-C_0)_+\|_{L^2(\Omega_\varepsilon)}^2.
  \end{multline*}
  In this equality, $(|u_0^\varepsilon|-C_0)_+=0$ a.e. in $\Omega_\varepsilon$ since $\|u_0^\varepsilon\|_{L^\infty(\Omega_\varepsilon)}\leq C_0$.
  Also,
  \begin{align*}
    \nabla u^\varepsilon:\nabla[F(u^\varepsilon)] &= \sum_{i=1}^n\sum_{j=1}^N\frac{\partial u^\varepsilon_j}{\partial x_i}\frac{\partial}{\partial x_i}\Bigl(F_j(u^\varepsilon)\Bigr) = J_1+J_2, \\
    J_1 &= \sum_{i=1}^n\sum_{j=1}^N\frac{(|u^\varepsilon|-C_0)_+}{|u^\varepsilon|}\left|\frac{\partial u_j^\varepsilon}{\partial x_i}\right|^2, \\
    J_2 &= \sum_{i=1}^n\sum_{j,k=1}^N\chi_{(C_0,\infty)}(|u^\varepsilon|)\frac{C_0u_j^\varepsilon u_k^\varepsilon}{|u^\varepsilon|^3}\frac{\partial u_j^\varepsilon}{\partial x_i}\frac{\partial u_k^\varepsilon}{\partial x_i}
  \end{align*}
  a.e. in $\Omega_\varepsilon\times(0,\infty)$ by \eqref{E:Trn_Dxi}.
  Clearly, $J_1\geq0$.
  Moreover,
  \begin{align*}
    J_2 &= \sum_{i=1}^n\sum_{j,k=1}^N\chi_{(C_0,\infty)}(|u^\varepsilon|)\frac{C_0}{|u^\varepsilon|^3}\cdot\frac{1}{2}\frac{\partial}{\partial x_i}\Bigl((u_j^\varepsilon)^2\Bigr)\cdot\frac{1}{2}\frac{\partial}{\partial x_i}\Bigl((u_k^\varepsilon)^2\Bigr) \\
    &= \frac{C_0}{4|u^\varepsilon|^3}\chi_{(C_0,\infty)}(|u^\varepsilon|)\Bigl|\nabla\bigl(|u^\varepsilon|^2\bigr)\Bigr|^2 \geq 0.
  \end{align*}
  Hence, $\nabla u^\varepsilon:\nabla[F(u^\varepsilon)]\geq0$ a.e. in $\Omega_\varepsilon\times(0,\infty)$.
  We further observe that
  \begin{align*}
    (|u^\varepsilon|^2-1)u^\varepsilon\cdot F(u^\varepsilon) = |u^\varepsilon|(|u^\varepsilon|^2-1)(|u^\varepsilon|-C_0)_+ \geq 0 \quad\text{a.e. in}\quad \Omega_\varepsilon\times(0,\infty),
  \end{align*}
  since $|u^\varepsilon|^2\geq1$ when $|u^\varepsilon|\geq C_0\geq1$.
  From the above results, we deduce that
  \begin{align*}
    \|(|u^\varepsilon(t)|-C_0)_+\|_{L^2(\Omega_\varepsilon)}^2 \leq 0, \quad\text{i.e.}\quad \|(|u^\varepsilon(t)|-C_0)_+\|_{L^2(\Omega_\varepsilon)}^2 = 0 \quad\text{for all}\quad t\geq0.
  \end{align*}
  Hence, $(|u^\varepsilon|-C_0)_+=0$, i.e. $|u^\varepsilon|\leq C_0$ a.e. in $\Omega_\varepsilon\times(0,\infty)$, which gives \eqref{E:GCT_Linf}.
\end{proof}

\subsection{Limit problem} \label{SS:WS_Lim}
Next we consider \eqref{E:GL_Lim}.
To give the definition of a weak solution, we take a test function $\zeta=(\zeta_1,\dots,\zeta_N)^T$ and multiply \eqref{E:GL_Lim} by $g\zeta$.
Then, since
\begin{align*}
  \mathrm{div}_\Gamma(g\nabla_\Gamma v) = \bigl(\mathrm{div}_\Gamma(g\nabla_\Gamma v_1),\dots,\mathrm{div}_\Gamma(g\nabla_\Gamma v_N)\bigr)^T, \quad \mathrm{div}_\Gamma(g\nabla_\Gamma v_j) = \sum_{i=1}^n\underline{D}_i(g\underline{D}_iv_j)
\end{align*}
for $v=(v_1,\dots,v_N)^T$ under our notations given in Section \ref{SS:Pr_Sur}, we have
\begin{align*}
  \int_\Gamma\mathrm{div}_\Gamma(g\nabla_\Gamma v)\cdot\zeta\,d\mathcal{H}^{n-1} &= \sum_{j=1}^N\int_\Gamma\{\mathrm{div}_\Gamma(g\nabla_\Gamma v_j)\}\zeta_j\,d\mathcal{H}^{n-1} \\
  &= -\sum_{j=1}^N\int_\Gamma g\nabla_\Gamma v_j\cdot(\nabla_\Gamma\zeta_j+\zeta_jH\nu)\,d\mathcal{H}^{n-1} \\
  &= -\int_\Gamma g\nabla_\Gamma v:\nabla_\Gamma\zeta\,d\mathcal{H}^{n-1}
\end{align*}
by \eqref{E:IBP_Sur} and $\nabla_\Gamma v_j\cdot\nu=0$ on $\Gamma$.
Thus, we define a weak solution to \eqref{E:GL_Lim} as follows.

\begin{definition} \label{D:WSL_Loc}
  For $T>0$ and a given $v_0\in L^2(\Gamma)^N$, we say that a function $v$ is a weak solution to \eqref{E:GL_Lim} on $[0,T)$ if $v\in E_T(\Gamma)$ and it satisfies
  \begin{multline} \label{E:WF_GLim}
    \int_0^T\langle\partial_tv(t),g\zeta(t)\rangle_{(H^1\cap L^4)(\Gamma)}\,dt+\int_0^T\bigl(g\nabla_\Gamma v(t),\nabla_\Gamma\zeta(t)\bigr)_{L^2(\Gamma)}\,dt \\
    +\lambda\int_0^T\bigl(g(|v(t)|^2-1)v(t),\zeta(t)\bigr)_{L^2(\Gamma)}\,dt = 0
  \end{multline}
  for all $\zeta\in Z_T(\Gamma)$ and $v(0)=v_0$ in $L^2(\Gamma)^N$.
\end{definition}

\begin{definition} \label{D:WSL_Glo}
  For a given $v_0\in L^2(\Gamma)^N$, we say that a function $v$ is a global weak solution to \eqref{E:GL_Lim} if it is a weak solution to \eqref{E:GL_Lim} on $[0,T)$ for all $T>0$.
\end{definition}

For a weak solution to \eqref{E:GL_Lim}, we have similar results to the ones in Section \ref{SS:WS_Thin} by the same proofs.
Here, we just give results used later and omit the proofs, but note that $g$ is independent of time and satisfies \eqref{E:G_Bdd}.
Also, a suitable basis for the Galerkin method is an orthonromal basis of $L^2(\Gamma)^N$ with respect to the weighted inner product $(g\cdot,\cdot)_{L^2(\Gamma)}$

\begin{lemma} \label{L:GLim_Ena}
  Let $v$ be a weak solution to \eqref{E:GL_Lim} on $[0,T)$ with initial data $v_0\in L^2(\Gamma)^N$.
  Then, for all $t\in[0,T]$, we have
  \begin{align} \label{E:GLim_Ena}
    \|v(t)\|_{L^2(\Gamma)}^2+\int_0^t\|\nabla_\Gamma v(s)\|_{L^2(\Gamma)}^2\,ds+\lambda\int_0^t\|v(s)\|_{L^4(\Gamma)}^4\,ds \leq ce^{c\lambda t}\|v_0\|_{L^2(\Gamma)}^2,
  \end{align}
  where $c>0$ is a constant depending only on the constants appearing in \eqref{E:G_Bdd}.
\end{lemma}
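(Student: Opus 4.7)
The plan is to mimic the proof of Lemma \ref{L:GCT_Ena}, tracking the weight $g$ through the computation. The only new point is the presence of the (time-independent, bounded, strictly positive) weight $g$ in the weak formulation \eqref{E:WF_GLim}; since multiplication by $g$ preserves the function space $Z_T(\Gamma)$ and commutes with the time derivative, the standard energy identity goes through with $g$-weighted norms in place of the usual ones, after which \eqref{E:G_Bdd} lets us pass to unweighted norms at the cost of a constant depending only on $g$.

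Concretely, I would first test \eqref{E:WF_GLim} with $\zeta = v$, restricted to the subinterval $(0,t)$ for an arbitrary $t \in [0,T]$. Since $g \in C^1(\Gamma)$ is time-independent and bounded above and below away from zero, the function $gv$ lies in $E_T(\Gamma)$, with $\partial_t(gv)$ acting as $\psi \mapsto \langle \partial_t v, g\psi\rangle$. Applying Lemma \ref{L:ET_Con}, in particular the formula \eqref{E:ET_IbP} with $u_1 = v$ and $u_2 = gv$, yields
\begin{align*}
  \frac{d}{dt}\int_\Gamma g|v|^2\,d\mathcal{H}^{n-1} = 2\langle \partial_t v, gv\rangle_{(H^1\cap L^4)(\Gamma)}
\end{align*}
in $\mathcal{D}'(0,T)$. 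Integrating the tested equation from $0$ to $t$ and using $v(0) = v_0$ therefore gives
\begin{align*}
  \frac{1}{2}\int_\Gamma g|v(t)|^2 + \int_0^t \int_\Gamma g|\nabla_\Gamma v|^2 + \lambda \int_0^t \int_\Gamma g|v|^4 = \frac{1}{2}\int_\Gamma g|v_0|^2 + \lambda \int_0^t \int_\Gamma g|v|^2.
\end{align*}

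Next, I would invoke \eqref{E:G_Bdd} to pass between $g$-weighted and unweighted norms. This yields, for a constant $c>0$ depending only on the bounds for $g$,
\begin{align*}
  \|v(t)\|_{L^2(\Gamma)}^2 + c^{-1}\int_0^t \|\nabla_\Gamma v(s)\|_{L^2(\Gamma)}^2\,ds + c^{-1}\lambda \int_0^t \|v(s)\|_{L^4(\Gamma)}^4\,ds \leq c\|v_0\|_{L^2(\Gamma)}^2 + c\lambda \int_0^t \|v(s)\|_{L^2(\Gamma)}^2\,ds.
\end{align*}
Dropping the two nonnegative integrals on the left and applying Gronwall's inequality produces $\|v(t)\|_{L^2(\Gamma)}^2 \leq c\|v_0\|_{L^2(\Gamma)}^2\, e^{c\lambda t}$. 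Substituting this pointwise bound back into the $\lambda\int_0^t \|v\|_{L^2}^2$ term and absorbing the factor $e^{c\lambda t}-1 \leq e^{c\lambda t}$ into the constant yields \eqref{E:GLim_Ena} after a final rescaling of $c$.

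There is no genuine obstacle; the only point needing minor care is the verification that $gv \in E_T(\Gamma)$ and that $\partial_t(gv) = g\,\partial_t v$ in the correct dual sense, which is immediate from the time-independence of $g$ and the structure \eqref{E:Dual_ZT} of $[Z_T(\Gamma)]'$. The rest is a direct analogue of Lemma \ref{L:GCT_Ena}, with the sign $\lambda > 0$ of the linear $|v|^2v$ contribution again being exploited only insofar as $\lambda\int g|v|^4$ is nonnegative and can be dropped (or kept) on the left-hand side.
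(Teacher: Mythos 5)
Your proposal is correct and follows the same route the paper intends: it tests the weak form~\eqref{E:WF_GLim} with $\zeta=v$, uses that $g$ is time-independent and bounded above and below (\eqref{E:G_Bdd}) to convert the $g$-weighted energy identity (as in the proof of Lemma~\ref{L:Av_Ena}) into unweighted norms up to a constant, and then applies Gronwall exactly as in the proof of Lemma~\ref{L:GCT_Ena}. The paper omits the argument, stating only that it is the ``same proof'' with the remark that $g$ is time-independent and satisfies~\eqref{E:G_Bdd}; your write-up, including the verification that $gv\in E_T(\Gamma)$ with $\langle\partial_t(gv),\psi\rangle=\langle\partial_tv,g\psi\rangle$ so that~\eqref{E:ET_IbP} may be applied to the pair $(v,gv)$, spells out precisely the points that remark is alluding to.
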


\begin{lemma} \label{L:GLim_Uni}
  For each $T>0$ and $v_0\in L^2(\Gamma)^N$, there exists at most one weak solution to \eqref{E:GL_Lim} on $[0,T)$.
\end{lemma}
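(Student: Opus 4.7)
The plan is to follow closely the proof of Lemma \ref{L:GCT_Uni}, modified only to accommodate the weight $g$ appearing throughout the weak formulation \eqref{E:WF_GLim}. Given two weak solutions $v_1,v_2\in E_T(\Gamma)$ sharing the same initial datum $v_0$, I would set $V=v_1-v_2$, take the difference of the two instances of \eqref{E:WF_GLim}, and then test the resulting identity with $\zeta=V$ (which is admissible since $V\in Z_t(\Gamma)$ for every $t\in[0,T]$). After rewriting the time-derivative term as a total derivative of the weighted $L^2$ norm, I expect to arrive at an inequality of the form
\begin{align*}
  \tfrac{1}{2}\int_\Gamma g|V(t)|^2\,d\mathcal{H}^{n-1} \leq \lambda\int_0^t\int_\Gamma g|V(s)|^2\,d\mathcal{H}^{n-1}\,ds,
\end{align*}
from which Gronwall's inequality together with the lower bound $g\geq c^{-1}>0$ in \eqref{E:G_Bdd} forces $V\equiv0$, i.e. $v_1=v_2$.

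To make each of the three terms in the subtracted weak form manageable, I would argue as follows. For the time-derivative term $\int_0^t\langle\partial_sV(s),gV(s)\rangle_{(H^1\cap L^4)(\Gamma)}\,ds$, I would use that $g\in C^1(\Gamma)$ is bounded above and below by \eqref{E:G_Bdd}, so multiplication by $g$ preserves $H^1(\Gamma)^N$, $L^4(\Gamma)^N$, and their duals via \eqref{E:Dual_Sp}; hence $gV\in E_T(\Gamma)$ with $\partial_t(gV)=g\,\partial_tV$, and applying the chain rule \eqref{E:ET_IbP} to the pair $(V,gV)$ gives
\begin{align*}
  \tfrac{d}{dt}\int_\Gamma g|V(t)|^2\,d\mathcal{H}^{n-1} = 2\langle\partial_tV(t),gV(t)\rangle_{(H^1\cap L^4)(\Gamma)} \quad\text{in}\quad \mathcal{D}'(0,T),
\end{align*}
which integrated from $0$ to $t$ (using $V(0)=0$ in $L^2(\Gamma)^N$ from Lemma \ref{L:ET_Con}) produces the left-hand side of the target inequality. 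The Dirichlet-form contribution $\int_0^t(g\nabla_\Gamma V(s),\nabla_\Gamma V(s))_{L^2(\Gamma)}\,ds$ is nonnegative and can be discarded since $g>0$. Finally, the cubic contribution $\lambda\int_0^t\int_\Gamma g\{|v_1(s)|^2v_1(s)-|v_2(s)|^2v_2(s)\}\cdot V(s)\,d\mathcal{H}^{n-1}\,ds$ is nonnegative by the pointwise monotonicity \eqref{Pf_GCU:Mono} applied with $a=v_1(s,y)$, $b=v_2(s,y)$, again combined with $g>0$; this is exactly the observation that made the analogous step in Lemma \ref{L:GCT_Uni} work.

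The only non-routine point I anticipate is the bookkeeping justification that $gV\in E_T(\Gamma)$ so that \eqref{E:ET_IbP} may be applied, including the identification $\langle\partial_sV,gV\rangle_{(H^1\cap L^4)(\Gamma)}=\langle\partial_s(gV),V\rangle_{(H^1\cap L^4)(\Gamma)}$. This reduces to showing that multiplication by the $C^1$ function $g$, bounded above and below by \eqref{E:G_Bdd}, acts as an isomorphism on each member of the chain in \eqref{E:Dual_Sp} and on $Z_T(\Gamma)$ and $[Z_T(\Gamma)]'$; once this is recorded, the rest of the argument is essentially a transcription of Lemma \ref{L:GCT_Uni} with the weighted $L^2$ norm $\bigl(\int_\Gamma g|V|^2\,d\mathcal{H}^{n-1}\bigr)^{1/2}$ in place of the unweighted one, and Gronwall together with \eqref{E:G_Bdd} closes the uniqueness proof.
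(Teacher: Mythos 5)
Your proposal is correct and is exactly the route the paper has in mind: the paper omits the proof with the remark that it follows the argument of Lemma \ref{L:GCT_Uni} using that $g$ is time-independent and satisfies \eqref{E:G_Bdd}, and indeed in its proofs of Lemma \ref{L:Av_Ena} and Theorem \ref{T:Df_Sur} it carries out precisely this weighted computation (via \eqref{E:ET_IbP}, with the weighted norm $\|g^{1/2}\cdot\|_{L^2(\Gamma)}$). Your bookkeeping observation that $gV\in E_T(\Gamma)$ with $\partial_t(gV)=g\,\partial_tV$, so that \eqref{E:ET_IbP} applied to $(V,gV)$ yields $\tfrac{d}{dt}\|g^{1/2}V\|_{L^2(\Gamma)}^2=2\langle\partial_tV,gV\rangle_{(H^1\cap L^4)(\Gamma)}$, together with dropping the nonnegative Dirichlet and monotone cubic terms and applying Gronwall, closes the proof correctly.
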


\begin{theorem} \label{T:GLim_Exi}
  For each $v_0\in L^2(\Gamma)^N$, there exists a unique global solution to \eqref{E:GL_Lim}.
\end{theorem}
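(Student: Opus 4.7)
The uniqueness is already covered by Lemma~\ref{L:GLim_Uni}, so the task is existence. The plan is to run the Galerkin scheme exactly as advertised in Section~\ref{S:Galerkin} for \eqref{E:GL_CTD}, with the role of Lebesgue measure $dx$ played by the weighted surface measure $g\,d\mathcal{H}^{n-1}$, and then to construct the global solution by concatenation (which is legitimate thanks to uniqueness).

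For the basis, I would choose a countable family $\{w_k\}_{k\in\mathbb{N}}\subset C^1(\Gamma)^N$ whose finite linear combinations are dense in $(H^1\cap L^4)(\Gamma)^N$ (using separability and the density of $C^1(\Gamma)^N$), and then apply Gram--Schmidt with respect to the weighted inner product $(g\cdot,\cdot)_{L^2(\Gamma)}$, which is equivalent to the standard $L^2(\Gamma)^N$ inner product by \eqref{E:G_Bdd}. Setting $V_m=\mathrm{span}\{w_1,\dots,w_m\}$, I seek $v_m(t)=\sum_{k=1}^m c_k^m(t)w_k$ solving, for $k=1,\dots,m$,
\begin{align*}
  (g\dot v_m,w_k)_{L^2(\Gamma)}+(g\nabla_\Gamma v_m,\nabla_\Gamma w_k)_{L^2(\Gamma)}+\lambda\bigl(g(|v_m|^2-1)v_m,w_k\bigr)_{L^2(\Gamma)}=0,
\end{align*}
with $v_m(0)$ the weighted projection of $v_0$ onto $V_m$, which converges to $v_0$ in $L^2(\Gamma)^N$. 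This is a locally Lipschitz ODE system in $c_k^m$, so Cauchy--Picard gives a local solution. Testing with $v_m$ itself (multiplying by $c_k^m$ and summing over $k$), transferring the linear $\lambda v_m$ term to the right, and applying Gronwall exactly as in Lemma~\ref{L:GLim_Ena} yields a bound uniform in $m$ and linear in $\|v_0\|_{L^2(\Gamma)}^2$ for $\{v_m\}$ in $L^\infty(0,T;L^2(\Gamma)^N)\cap L^2(0,T;H^1(\Gamma)^N)\cap L^4(0,T;L^4(\Gamma)^N)$; this a priori bound extends the ODE solution globally and gives uniform boundedness in $Z_T(\Gamma)$.

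For the time derivative I would test the Galerkin equation against an arbitrary $\zeta\in(H^1\cap L^4)(\Gamma)^N$, after replacing $\zeta$ by its weighted projection onto $V_m$ (which is bounded in $(H^1\cap L^4)(\Gamma)^N$ uniformly in $m$ because $\bigcup_m V_m$ is dense and the basis has been chosen in a regular class). Using the structure \eqref{E:Dual_ZT}, the linear terms contribute an $L^2(0,T;[H^1(\Gamma)^N]')$ piece while the cubic term $|v_m|^2v_m$ is uniformly bounded in $L^{4/3}(0,T;L^{4/3}(\Gamma)^N)$ by the $L^4$ bound above, so $\partial_t v_m$ is bounded in $[Z_T(\Gamma)]'$ and $\{v_m\}$ is bounded in $E_T(\Gamma)$. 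Extracting a subsequence, I get $v_m\rightharpoonup v$ weakly in $L^2(0,T;H^1(\Gamma)^N)\cap L^4(0,T;L^4(\Gamma)^N)$ and, by the compact embedding of Lemma~\ref{L:ET_AuLi}, strongly in $L^2(0,T;L^2(\Gamma)^N)$ and (along a further subsequence) almost everywhere on $\Gamma\times(0,T)$. Combined with the uniform $L^4(0,T;L^4(\Gamma)^N)$ bound, Lemma~\ref{L:AC_Non} delivers $|v_m|^2v_m\rightharpoonup|v|^2v$ weakly in $L^{4/3}(0,T;L^{4/3}(\Gamma)^N)$, which is exactly the ingredient needed to pass to the limit in the cubic term. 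Passing to the limit in the Galerkin identity against any fixed $\zeta\in\bigcup_m V_m$ and then extending to $\zeta\in Z_T(\Gamma)$ by the density statement of Lemma~\ref{L:ET_Den}(i) yields \eqref{E:WF_GLim}; the initial condition $v(0)=v_0$ follows from Lemma~\ref{L:ET_Con} together with the convergence of $v_m(0)$ to $v_0$ in $L^2(\Gamma)^N$.

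The main obstacle I anticipate is the passage to the limit in the cubic nonlinearity: weak convergence in $L^2(0,T;H^1(\Gamma)^N)$ alone is insufficient, and one must exploit the compactness from Aubin--Lions together with Lemma~\ref{L:AC_Non}. A secondary technical point is that $[Z_T(\Gamma)]'$ has the sum structure \eqref{E:Dual_ZT}, so the bound on $\partial_t v_m$ must be assembled from two separate pieces—an $L^2$-in-time piece from the linear terms and an $L^{4/3}$-in-time piece from the cubic term—but this is routine once the a priori estimates are in hand.
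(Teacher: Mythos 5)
Your overall plan (Galerkin with the weighted inner product $(g\cdot,\cdot)_{L^2(\Gamma)}$, energy estimate as in Lemma~\ref{L:GLim_Ena}, weak limit, Lemma~\ref{L:AC_Non} for the cubic term) matches what the paper intends, but there is one genuine gap, and it occurs at the exact step that the paper explicitly warns about in Section~\ref{SS:Gal_Ap}. You claim that the weighted $L^2$-orthogonal projection $\mathcal{P}_m$ onto $V_m=\mathrm{span}\{w_1,\dots,w_m\}$ is bounded in $(H^1\cap L^4)(\Gamma)^N$ uniformly in $m$ ``because $\bigcup_m V_m$ is dense and the basis has been chosen in a regular class.'' This inference is not valid: an $L^2$-orthogonal projection onto a finite-dimensional span of smooth functions is in general \emph{not} uniformly bounded in $H^1$ or $L^4$; density of the union and smoothness of individual basis elements do not help. (If one could use eigenfunctions of the weighted Laplace--Beltrami operator, $H^1$-boundedness of $\mathcal{P}_m$ would follow, but $L^4$-boundedness would still require Sobolev embedding restrictions on the dimension, which the paper deliberately avoids.) Without this bound, the Galerkin equation only controls $\langle\partial_tv_m,\zeta\rangle$ for $\zeta\in V_m$, and you cannot convert it into a uniform bound on $\partial_tv_m$ in $[Z_T(\Gamma)]'$. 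Consequently Aubin--Lions (Lemma~\ref{L:ET_AuLi}) is not available at this stage, and the compactness needed before invoking Lemma~\ref{L:AC_Non} is unjustified.

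The paper's route around this obstacle is different and is the whole point of Section~\ref{SS:Gal_Str}. Instead of estimating $\partial_tv_m$ in the dual space, one shows uniform boundedness and equicontinuity of $t\mapsto(v_m(t),\phi_k)_{L^2}$ for each fixed $k$ (Lemma~\ref{L:Equi}), applies Ascoli--Arzel\`a and a diagonal argument, and then upgrades the mode-by-mode uniform convergence to strong convergence of $\{v_m\}$ in $L^2(0,T;L^2)$ via the Friedrichs-type inequality of Lemma~\ref{L:Fried}. That strong $L^2$ convergence yields the a.e.\ convergence needed to apply Lemma~\ref{L:AC_Non} to the cubic term, and only \emph{after} the limit $v$ is identified does one verify $\partial_tv\in[Z_T(\Gamma)]'$, as is done for $u$ at the end of Section~\ref{SS:Gal_Cha}. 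You should replace the projection-boundedness claim and the appeal to Lemma~\ref{L:ET_AuLi} with this equicontinuity/Friedrichs argument (adapted to $\Gamma$ with the weighted inner product); the rest of your proposal then goes through.
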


\section{Weighted average operator} \label{S:Ave}
In this section, we define and study a weighted average operator.
Recall that $c$ denotes a general positive constant independent of $\varepsilon$.
Also, we write $\bar{\eta}=\eta\circ\pi$ for the constant extension of a function $\eta$ on $\Gamma$ in the normal direction of $\Gamma$.

\subsection{Definition and basic properties} \label{SS:Ave_Def}
Let $J(y,r)$ be the function given by \eqref{E:Def_J}.
For a function $\varphi$ on $\Omega_\varepsilon$, we define the weighted average of $\varphi$ in the thin direction by
\begin{align} \label{E:Def_Ave}
  \mathcal{M}_\varepsilon\varphi(y) = \frac{1}{\varepsilon g(y)}\int_{\varepsilon g_0(y)}^{\varepsilon g_1(y)}\varphi\bigl(y+r\nu(y)\bigr)J(y,r)\,dr, \quad y\in\Gamma.
\end{align}
The main advantage of taking the weighted average is that the formula
\begin{align} \label{E:Ave_Pair}
  \int_{\Omega_\varepsilon}\varphi(x)\bar{\eta}(x)\,dx = \varepsilon\int_\Gamma g(y)\mathcal{M}_\varepsilon\varphi(y)\eta(y)\,d\mathcal{H}^{n-1}(y)
\end{align}
holds by \eqref{E:Ch_Va} for $\varphi\colon\Omega_\varepsilon\to\mathbb{R}$ and $\eta\colon\Gamma\to\mathbb{R}$, which does not include any residual term.

Let us give some basic properties of $\mathcal{M}_\varepsilon$.
For a function $\varphi$ on $\Omega_\varepsilon$, we write $\partial_\nu\varphi=\bar{\nu}\cdot\nabla\varphi$ for the derivative of $\varphi$ in the normal direction of $\Gamma$.
Also, we often use the notation \eqref{E:Def_Pull} in what follows.
Hence, we write the definition of $\mathcal{M}_\varepsilon\varphi$ as
\begin{align*}
  \mathcal{M}_\varepsilon\varphi(y) = \frac{1}{\varepsilon g(y)}\int_{\varepsilon g_0(y)}^{\varepsilon g_1(y)}\varphi^\sharp(y,r)J(y,r)\,dr.
\end{align*}
Note that, under the notation \eqref{E:Def_Pull}, we have
\begin{align*}
  \partial_r\varphi^\sharp(y,r) = (\partial_\nu\varphi)^\sharp(y,r), \quad \bar{\eta}^\sharp(y,r) = \eta(y)
\end{align*}
for a function $\varphi$ on $\Omega_\varepsilon$ and a function $\eta$ on $\Gamma$.

\begin{lemma} \label{L:Ave_Lp}
  Let $p\in[1,\infty)$ and $\varphi\in L^p(\Omega_\varepsilon)$.
  Then, $\mathcal{M}_\varepsilon\varphi\in L^p(\Gamma)$ and
  \begin{align} \label{E:Ave_Lp}
    \|\mathcal{M}_\varepsilon\varphi\|_{L^p(\Gamma)} \leq c\varepsilon^{-1/p}\|\varphi\|_{L^p(\Omega_\varepsilon)}.
  \end{align}
\end{lemma}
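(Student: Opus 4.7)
The plan is to prove the bound by pointwise estimation of $\mathcal{M}_\varepsilon\varphi(y)$ via H\"older's inequality on the fiber interval, followed by integration over $\Gamma$ via the change of variables formula \eqref{E:Ch_Va}.

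First I would treat $p\in(1,\infty)$ and set $p'=p/(p-1)$. Starting from the definition \eqref{E:Def_Ave} and taking absolute values, I would apply H\"older's inequality on the interval $[\varepsilon g_0(y),\varepsilon g_1(y)]$ with respect to the measure $J(y,r)\,dr$, writing
\begin{align*}
  \int_{\varepsilon g_0(y)}^{\varepsilon g_1(y)}|\varphi^\sharp(y,r)|J(y,r)\,dr
  \leq \left(\int_{\varepsilon g_0(y)}^{\varepsilon g_1(y)}|\varphi^\sharp(y,r)|^pJ(y,r)\,dr\right)^{1/p}\!\!\left(\int_{\varepsilon g_0(y)}^{\varepsilon g_1(y)}J(y,r)\,dr\right)^{1/p'}.
\end{align*}
By \eqref{E:G_Bdd} and \eqref{E:J_Bdd}, the second factor is bounded by $(c\varepsilon g(y))^{1/p'}\leq c\varepsilon^{1/p'}$.

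Next, raising to the $p$-th power and using $g(y)\geq c^{-1}$ on $\Gamma$, I would obtain the pointwise estimate
\begin{align*}
  |\mathcal{M}_\varepsilon\varphi(y)|^p \leq \frac{1}{(\varepsilon g(y))^p}\cdot c\varepsilon^{p/p'}\int_{\varepsilon g_0(y)}^{\varepsilon g_1(y)}|\varphi^\sharp(y,r)|^pJ(y,r)\,dr \leq \frac{c}{\varepsilon}\int_{\varepsilon g_0(y)}^{\varepsilon g_1(y)}|\varphi^\sharp(y,r)|^pJ(y,r)\,dr,
\end{align*}
where I used $p/p'=p-1$ so that $\varepsilon^{p/p'}/\varepsilon^p=\varepsilon^{-1}$.

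Finally, I would integrate over $\Gamma$ and apply \eqref{E:Ch_Va} to recognize the right-hand side as $c\varepsilon^{-1}\|\varphi\|_{L^p(\Omega_\varepsilon)}^p$, then extract the $p$-th root to conclude \eqref{E:Ave_Lp}. The case $p=1$ is easier and follows directly by taking absolute values inside the integral, bounding $J$ by \eqref{E:J_Bdd}, and applying \eqref{E:Ch_Va}. There is no real obstacle here; the only point to watch is the proper pairing of the Jacobian factor $J(y,r)$ on both sides of H\"older's inequality so that the resulting integral matches the one in \eqref{E:Ch_Va} and produces the correct power $\varepsilon^{-1/p}$.
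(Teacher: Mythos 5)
Your proof is correct and takes essentially the same approach as the paper: a pointwise H\"older estimate on the fiber integral followed by integration over $\Gamma$ via the change of variables. The only cosmetic difference is that you keep the weight $J(y,r)\,dr$ as the reference measure in H\"older's inequality and then invoke \eqref{E:Ch_Va} directly, whereas the paper bounds $J$ by a constant first and then appeals to \eqref{E:CVF_Lp}; both routes yield the same bound.
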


\begin{proof}
  By \eqref{E:G_Bdd}, \eqref{E:J_Bdd}, and H\"{o}lder's inequality, we have
  \begin{align*}
    |\mathcal{M}_\varepsilon\varphi(y)| \leq c\varepsilon^{-1/p}\left(\int_{\varepsilon g_0(y)}^{\varepsilon g_1(y)}|\varphi^\sharp(y,r)|^p\,dr\right)^{1/p}, \quad y\in\Gamma.
  \end{align*}
  We integrate the $p$-th power of both sides over $\Gamma$ and use \eqref{E:CVF_Lp} to get \eqref{E:Ave_Lp}.
\end{proof}

\begin{lemma} \label{L:Ave_Dif}
  Let $\varphi\in H^1(\Omega_\varepsilon)$.
  Then,
  \begin{align} \label{E:Ave_Dif}
    \bigl\|\varphi-\overline{\mathcal{M}_\varepsilon\varphi}\bigr\|_{L^2(\Omega_\varepsilon)} \leq c\varepsilon\Bigl(\|\varphi\|_{L^2(\Omega_\varepsilon)}+\|\partial_\nu\varphi\|_{L^2(\Omega_\varepsilon)}\Bigr).
  \end{align}
\end{lemma}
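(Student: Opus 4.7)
The plan is to pass to the pull-back coordinates $(y,r)\in\Gamma\times(\varepsilon g_0(y),\varepsilon g_1(y))$ via the change of variables formula \eqref{E:Ch_Va} and, for each fixed $y$, compare $\varphi^\sharp(y,s)$ to its weighted average $\mathcal{M}_\varepsilon\varphi(y)$ by a Poincar\'e-type argument in the thin variable. To decouple the two sources of error, I would split the pointwise difference as
\begin{align*}
  \varphi^\sharp(y,s)-\mathcal{M}_\varepsilon\varphi(y) &= A(y,s)+B(y,s), \\
  A(y,s) &= \varphi^\sharp(y,s)-\frac{1}{\varepsilon g(y)}\int_{\varepsilon g_0(y)}^{\varepsilon g_1(y)}\varphi^\sharp(y,r)\,dr, \\
  B(y,s) &= \frac{1}{\varepsilon g(y)}\int_{\varepsilon g_0(y)}^{\varepsilon g_1(y)}\varphi^\sharp(y,r)\bigl(1-J(y,r)\bigr)\,dr,
\end{align*}
where $A$ is the standard deviation from the unweighted mean and $B$ captures the deviation of the Jacobian from $1$.

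For $A$, I would use the identity $\varphi^\sharp(y,s)-\varphi^\sharp(y,r)=\int_r^s(\partial_\nu\varphi)^\sharp(y,\tau)\,d\tau$, which follows from $\partial_r\varphi^\sharp=(\partial_\nu\varphi)^\sharp$, to write $A(y,s)$ as an average of such line integrals. Since $r,s$ both lie in $(\varepsilon g_0(y),\varepsilon g_1(y))$, Cauchy--Schwarz gives
\begin{align*}
  |A(y,s)|^2 \leq \varepsilon g(y)\int_{\varepsilon g_0(y)}^{\varepsilon g_1(y)}\bigl|(\partial_\nu\varphi)^\sharp(y,\tau)\bigr|^2\,d\tau,
\end{align*}
and integrating in $s$ over an interval of length $\varepsilon g(y)$ and then over $\Gamma$ (with the boundedness of $g$ from \eqref{E:G_Bdd} and \eqref{E:CVF_Lp}) produces the bound $c\varepsilon^2\|\partial_\nu\varphi\|_{L^2(\Omega_\varepsilon)}^2$ for $\iint|A|^2\,dr\,d\mathcal{H}^{n-1}$.

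For $B$, the estimate $|1-J(y,r)|\leq c\varepsilon$ from \eqref{E:J_TGr} allows us to extract a factor of $\varepsilon$ outright, after which Cauchy--Schwarz in $r$ yields $|B(y,s)|^2\leq c\varepsilon\int_{\varepsilon g_0(y)}^{\varepsilon g_1(y)}|\varphi^\sharp(y,r)|^2\,dr$, and integrating in $s$ and $y$ gives $c\varepsilon^2\|\varphi\|_{L^2(\Omega_\varepsilon)}^2$ via \eqref{E:CVF_Lp}. Finally, I would combine the two bounds, use \eqref{E:Ch_Va} and \eqref{E:J_Bdd} to relate $\int_{\Omega_\varepsilon}|\varphi-\overline{\mathcal{M}_\varepsilon\varphi}|^2\,dx$ to $\iint(A+B)^2\,J\,dr\,d\mathcal{H}^{n-1}$, and take square roots to obtain \eqref{E:Ave_Dif}. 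I do not expect a serious obstacle here; the only point requiring mild care is keeping track of why the unweighted mean (rather than $\mathcal{M}_\varepsilon\varphi$) is the right intermediate quantity, which is exactly the reason for the $B$-splitting.
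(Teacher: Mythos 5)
Your decomposition into the unweighted-mean deviation $A$ and the Jacobian error $B$ is precisely the paper's split $K_1+K_2$ (with $K_1$ giving the normal-derivative term and $K_2$ the $|1-J|\leq c\varepsilon$ term), and your Cauchy--Schwarz estimates coincide with the ones in the paper's proof. The proposal is correct and takes essentially the same approach as the paper.
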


\begin{proof}
  Let $y\in\Gamma$ and $r\in(\varepsilon g_0(y),\varepsilon g_1(y))$.
  Then,
  \begin{align*}
    \varphi^\sharp(y,r)-\mathcal{M}_\varepsilon\varphi(y) = \varphi^\sharp(y,r)-\frac{1}{\varepsilon g(y)}\int_{\varepsilon g_0(y)}^{\varepsilon g_1(y)}\varphi^\sharp(y,r_1)J(y,r_1)\,dr_1 = \frac{K_1+K_2}{\varepsilon g(y)},
  \end{align*}
  where
  \begin{align*}
    K_1 = \int_{\varepsilon g_0(y)}^{\varepsilon g_1(y)}\{\varphi^\sharp(y,r)-\varphi^\sharp(y,r_1)\}\,dr, \quad K_2 = \int_{\varepsilon g_0(y)}^{\varepsilon g_1(y)}\varphi^\sharp(y,r_1)\{1-J(y,r_1)\}\,dr_1.
  \end{align*}
  For $r,r_1\in(\varepsilon g_0(y),\varepsilon g_1(y))$, we have
  \begin{align*}
    |\varphi^\sharp(y,r)-\varphi^\sharp(y,r_1)| = \left|\int_{r_1}^r\partial_r\varphi^\sharp(y,r_2)\,dr_2\right| \leq \int_{\varepsilon g_0(y)}^{\varepsilon g_1(y)}|\partial_r\varphi^\sharp(y,r_2)|\,dr_2.
  \end{align*}
  Here, the right-hand side is independent of $r$ and $r_1$.
  Hence,
  \begin{align*}
    |K_1| \leq \varepsilon g(y)\int_{\varepsilon g_0(y)}^{\varepsilon g_1(y)}|\partial_r\varphi^\sharp(y,r_2)|\,dr_2 = \varepsilon g(y)\int_{\varepsilon g_0(y)}^{\varepsilon g_1(y)}|(\partial_\nu\varphi)^\sharp(y,r_1)|\,dr_1,
  \end{align*}
  where we used $\partial_r\varphi^\sharp=(\partial_\nu\varphi)^\sharp$ under the notation \eqref{E:Def_Pull} and replaced the variable $r_2$ by $r_1$ in the second equality.
  Also, it follows from \eqref{E:J_TGr} that
  \begin{align*}
    |K_2| \leq c\varepsilon\int_{\varepsilon g_0(y)}^{\varepsilon g_1(y)}|\varphi^\sharp(y,r_1)|\,dr_1.
  \end{align*}
  By these estimates, \eqref{E:G_Bdd}, and H\"{o}lder's inequality, we see that
  \begin{align} \label{Pf_AD:Est}
    \begin{aligned}
      |\varphi^\sharp(y,r)-\mathcal{M}_\varepsilon\varphi(y)| &\leq c\int_{\varepsilon g_0(y)}^{\varepsilon g_1(y)}\Bigl[|\varphi^\sharp|+|(\partial_\nu\varphi)^\sharp|\Bigr](y,r_1)\,dr_1 \\
      &\leq c\varepsilon^{1/2}\left(\int_{\varepsilon g_0(y)}^{\varepsilon g_1(y)}\Bigl[|\varphi^\sharp|^2+|(\partial_\nu\varphi)^\sharp|^2\Bigr](y,r_1)\,dr_1\right)^{1/2}.
    \end{aligned}
  \end{align}
  Here, we used the notation
  \begin{align*}
    \Bigl[|\varphi^\sharp|+|(\partial_\nu\varphi)^\sharp|\Bigr](y,r_1) = |\varphi^\sharp(y,r_1)|+|(\partial_\nu\varphi)^\sharp(y,r_1)|
  \end{align*}
  and a similar one for $|\varphi^\sharp|^2+|(\partial_\nu\varphi)^\sharp|^2$.
  We integrate the square of \eqref{Pf_AD:Est} with respect to $y$ and $r$.
  Then, since the right-hand side is independent of $r$, we have
  \begin{multline*}
    \int_\Gamma\int_{\varepsilon g_0(y)}^{\varepsilon g_1(y)}|\varphi^\sharp(y,r)-\mathcal{M}_\varepsilon\varphi(y)|^2\,dr\,d\mathcal{H}^{n-1}(y) \\
    \leq c\varepsilon^2\int_\Gamma\int_{\varepsilon g_0(y)}^{\varepsilon g_1(y)}\Bigl[|\varphi^\sharp|^2+|(\partial_\nu\varphi)^\sharp|^2\Bigr](y,r_1)\,dr_1\,d\mathcal{H}^{n-1}(y),
  \end{multline*}
  where we also used \eqref{E:G_Bdd}.
  By this inequality and \eqref{E:CVF_Lp}, we get \eqref{E:Ave_Dif}.
\end{proof}

\subsection{Weighted average of the square} \label{SS:Ave_Sq}
Let $\mathcal{M}_\varepsilon(|\varphi|^2)$ be the weighted average of $|\varphi|^2$.
We see that $\mathcal{M}_\varepsilon(|\varphi|^2)$ is close to $|\mathcal{M}_\varepsilon\varphi|^2$ when $\varepsilon$ is small.

\begin{lemma} \label{L:Ave_Sq}
  Let $\varphi\in (H^1\cap L^\infty)(\Omega_\varepsilon)$.
  Then,
  \begin{align} \label{E:Ave_Sq}
    \bigl\|\mathcal{M}_\varepsilon(|\varphi|^2)-|\mathcal{M}_\varepsilon\varphi|^2\bigr\|_{L^2(\Gamma)} \leq c\varepsilon^{1/2}\|\varphi\|_{L^\infty(\Omega_\varepsilon)}\Bigl(\|\varphi\|_{L^2(\Omega_\varepsilon)}+\|\partial_\nu\varphi\|_{L^2(\Omega_\varepsilon)}\Bigr).
  \end{align}
\end{lemma}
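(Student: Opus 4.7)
The plan is to mimic the structure of the proof of Lemma \ref{L:Ave_Dif}, using that identity $|\varphi^\sharp|^2 - |M|^2 = (\varphi^\sharp - M)\cdot(\varphi^\sharp + M)$ together with the $L^\infty$-bound on $\varphi$ to convert a second-order quantity into a first-order one that can be estimated by the pointwise inequality \eqref{Pf_AD:Est} already established in that earlier proof. Writing $M = \mathcal{M}_\varepsilon\varphi(y)$ and using $\int_{\varepsilon g_0(y)}^{\varepsilon g_1(y)}dr = \varepsilon g(y)$ so that $|M|^2 = \frac{1}{\varepsilon g(y)}\int_{\varepsilon g_0(y)}^{\varepsilon g_1(y)}|M|^2\,dr$, I would decompose
\begin{align*}
  \mathcal{M}_\varepsilon(|\varphi|^2)(y) - |M|^2
  &= \frac{1}{\varepsilon g(y)}\int_{\varepsilon g_0(y)}^{\varepsilon g_1(y)}\bigl(|\varphi^\sharp(y,r)|^2 - |M|^2\bigr)J(y,r)\,dr \\
  &\quad + |M|^2\bigl(\mathcal{M}_\varepsilon(1)(y) - 1\bigr),
\end{align*}
and estimate the two pieces separately.

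For the first piece, I would factor $|\varphi^\sharp|^2 - |M|^2 = (\varphi^\sharp-M)\cdot(\varphi^\sharp+M)$ and use $|M|\le c\|\varphi\|_{L^\infty(\Omega_\varepsilon)}$ (which follows from the definition of $\mathcal{M}_\varepsilon$ together with \eqref{E:G_Bdd} and \eqref{E:J_Bdd}) to bound $|\varphi^\sharp + M|\le c\|\varphi\|_{L^\infty(\Omega_\varepsilon)}$ pointwise. Since the pointwise estimate \eqref{Pf_AD:Est} from the proof of Lemma \ref{L:Ave_Dif} gives
\begin{align*}
  |\varphi^\sharp(y,r) - M| \le c\varepsilon^{1/2}\left(\int_{\varepsilon g_0(y)}^{\varepsilon g_1(y)}\bigl[|\varphi^\sharp|^2 + |(\partial_\nu\varphi)^\sharp|^2\bigr](y,r_1)\,dr_1\right)^{1/2}
\end{align*}
with a right-hand side independent of $r$, combining this with \eqref{E:G_Bdd}, \eqref{E:J_Bdd}, squaring, integrating over $\Gamma$, and applying \eqref{E:CVF_Lp} yields
\begin{align*}
  \left\|\frac{1}{\varepsilon g}\int_{\varepsilon g_0}^{\varepsilon g_1}(|\varphi^\sharp|^2 - |M|^2)J\,dr\right\|_{L^2(\Gamma)} \le c\varepsilon^{1/2}\|\varphi\|_{L^\infty(\Omega_\varepsilon)}\bigl(\|\varphi\|_{L^2(\Omega_\varepsilon)} + \|\partial_\nu\varphi\|_{L^2(\Omega_\varepsilon)}\bigr).
\end{align*}

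For the second piece, the bound $|J(y,r)-1|\le c\varepsilon$ from \eqref{E:J_TGr} immediately yields $|\mathcal{M}_\varepsilon(1)-1|\le c\varepsilon$ pointwise on $\Gamma$, while the pointwise bound $|M|\le c\|\varphi\|_{L^\infty(\Omega_\varepsilon)}$ combined with Lemma \ref{L:Ave_Lp} applied to $\mathcal{M}_\varepsilon\varphi$ gives
\begin{align*}
  \bigl\||M|^2\bigr\|_{L^2(\Gamma)}^2 \le \|\varphi\|_{L^\infty(\Omega_\varepsilon)}^2\|\mathcal{M}_\varepsilon\varphi\|_{L^2(\Gamma)}^2 \le c\varepsilon^{-1}\|\varphi\|_{L^\infty(\Omega_\varepsilon)}^2\|\varphi\|_{L^2(\Omega_\varepsilon)}^2.
\end{align*}
Multiplying through, the second piece is bounded in $L^2(\Gamma)$ by $c\varepsilon\cdot c\varepsilon^{-1/2}\|\varphi\|_{L^\infty(\Omega_\varepsilon)}\|\varphi\|_{L^2(\Omega_\varepsilon)} = c\varepsilon^{1/2}\|\varphi\|_{L^\infty(\Omega_\varepsilon)}\|\varphi\|_{L^2(\Omega_\varepsilon)}$, which fits inside the target.

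The main subtlety—where I expect things could go wrong if one is not careful—is the second piece, since a naive pointwise bound $|M|^2\le c\|\varphi\|_{L^\infty}^2$ gives only $c\varepsilon\|\varphi\|_{L^\infty}^2|\Gamma|^{1/2}$, which is weaker than the claimed estimate (the inequality $\|\varphi\|_{L^2(\Omega_\varepsilon)}\le c\varepsilon^{1/2}\|\varphi\|_{L^\infty(\Omega_\varepsilon)}$ goes the wrong way). The fix is precisely to keep one factor of $|M|$ in the $L^2(\Gamma)$ integral and exploit the gain of $\varepsilon^{-1/2}$ in Lemma \ref{L:Ave_Lp} that converts $\|\mathcal{M}_\varepsilon\varphi\|_{L^2(\Gamma)}$ into $\|\varphi\|_{L^2(\Omega_\varepsilon)}$; this is what extracts a genuine $\|\varphi\|_{L^2(\Omega_\varepsilon)}$ from the second piece rather than an $\varepsilon^{1/2}\|\varphi\|_{L^\infty(\Omega_\varepsilon)}$ proxy.
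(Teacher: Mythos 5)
Your proof is correct, and the overall strategy matches the paper's: reduce the quadratic error to the linear quantity $\varphi^\sharp - \mathcal{M}_\varepsilon\varphi$ using the $L^\infty$ bound and Cauchy--Schwarz in $r$, then invoke the content of Lemma \ref{L:Ave_Dif}. The structural difference is in the initial algebraic identity. The paper writes
\begin{align*}
  |\mathcal{M}_\varepsilon\varphi(y)|^2 = \frac{1}{\varepsilon g(y)}\int_{\varepsilon g_0(y)}^{\varepsilon g_1(y)}\varphi^\sharp(y,r)\cdot\mathcal{M}_\varepsilon\varphi(y)\,J(y,r)\,dr,
\end{align*}
which is exact because $\mathcal{M}_\varepsilon\varphi(y)$ is independent of $r$; subtracting from $\mathcal{M}_\varepsilon(|\varphi|^2)(y)$ then yields a single weighted integral of $\varphi^\sharp\cdot(\varphi^\sharp - \mathcal{M}_\varepsilon\varphi)$ with no residual. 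You instead represent $|M|^2$ via the unweighted identity $\int_{\varepsilon g_0}^{\varepsilon g_1} dr = \varepsilon g(y)$, which re-introduces the weight $J$ and produces your ``second piece'' $|M|^2(\mathcal{M}_\varepsilon(1)-1)$. That piece is genuinely harmless once one retains one factor of $|M|$ and uses Lemma \ref{L:Ave_Lp} to trade $\|\mathcal{M}_\varepsilon\varphi\|_{L^2(\Gamma)}$ for $\varepsilon^{-1/2}\|\varphi\|_{L^2(\Omega_\varepsilon)}$, exactly as you observe; the naive $L^\infty$-only bound would indeed fail, and you are right to flag it. So your argument is sound but has an extra moving part that the paper's weighted identity makes disappear. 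A further minor simplification: after the factorization and $L^\infty$ bound, you can apply Cauchy--Schwarz in $r$ and then cite Lemma \ref{L:Ave_Dif} directly (as the paper does), rather than unpacking the internal estimate \eqref{Pf_AD:Est} from its proof; this keeps the argument modular.
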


\begin{proof}
  Let $y\in\Gamma$.
  Since
  \begin{align*}
    [\mathcal{M}_\varepsilon(|\varphi|^2)](y) &= \frac{1}{\varepsilon g(y)}\int_{\varepsilon g_0(y)}^{\varepsilon g_1(y)}\varphi^\sharp(y,r)\varphi^\sharp(y,r)J(y,r)\,dr, \\
    |\mathcal{M}_\varepsilon\varphi(y)|^2 &= \frac{1}{\varepsilon g(y)}\int_{\varepsilon g_0(y)}^{\varepsilon g_1(y)}\varphi^\sharp(y,r)\mathcal{M}_\varepsilon\varphi(y)J(y,r)\,dr,
  \end{align*}
  we observe by \eqref{E:G_Bdd}, \eqref{E:J_Bdd}, and H\"{o}lder's inequality that
  \begin{align*}
    &\bigl|[\mathcal{M}_\varepsilon(|\varphi|^2)](y)-|\mathcal{M}_\varepsilon\varphi(y)|^2\bigr| \\
    &\qquad \leq \frac{1}{\varepsilon g(y)}\int_{\varepsilon g_0(y)}^{\varepsilon g_1(y)}|\varphi^\sharp(y,r)\{\varphi^\sharp(y,r)-\mathcal{M}_\varepsilon\varphi(y)\}|J(y,r)\,dr \\
    &\qquad \leq c\varepsilon^{-1}\|\varphi\|_{L^\infty(\Omega_\varepsilon)}\int_{\varepsilon g_0(y)}^{\varepsilon g_1(y)}|\varphi^\sharp(y,r)-\mathcal{M}_\varepsilon\varphi(y)|\,dr \\
    &\qquad \leq c\varepsilon^{-1/2}\|\varphi\|_{L^\infty(\Omega_\varepsilon)}\left(\int_{\varepsilon g_0(y)}^{\varepsilon g_1(y)}|\varphi^\sharp(y,r)-\mathcal{M}_\varepsilon\varphi(y)|^2\,dr\right)^{1/2}.
  \end{align*}
  We integrate the square of both sides and use \eqref{E:CVF_Lp} to get
  \begin{align*}
    \bigl\|\mathcal{M}_\varepsilon(|\varphi|^2)-|\mathcal{M}_\varepsilon\varphi|^2\bigr\|_{L^2(\Gamma)} \leq c\varepsilon^{-1/2}\|\varphi\|_{L^\infty(\Omega_\varepsilon)}\bigl\|\varphi-\overline{\mathcal{M}_\varepsilon\varphi}\bigr\|_{L^2(\Omega_\varepsilon)}.
  \end{align*}
  Applying \eqref{E:Ave_Dif} to the right-hand side, we obtain \eqref{E:Ave_Sq}.
\end{proof}

Using Lemmas \ref{L:Ave_Dif} and \ref{L:Ave_Sq}, we can compare $|\varphi|^2$ and $|\mathcal{M}_\varepsilon\varphi|^2$ in $L^2(\Omega_\varepsilon)$.

\begin{lemma} \label{L:ASq_Dif}
  Let $\varphi\in (H^1\cap L^\infty)(\Omega_\varepsilon)$.
  Then,
  \begin{align} \label{E:ASq_Dif}
    \Bigl\|\,|\varphi|^2-\bigl|\overline{\mathcal{M}_\varepsilon\varphi}\bigr|^2\,\Bigr\|_{L^2(\Omega_\varepsilon)} \leq c\varepsilon\|\varphi\|_{L^\infty(\Omega_\varepsilon)}\Bigl(\|\varphi\|_{L^2(\Omega_\varepsilon)}+\|\partial_\nu\varphi\|_{L^2(\Omega_\varepsilon)}\Bigr).
  \end{align}
\end{lemma}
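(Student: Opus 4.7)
The plan is to exploit the elementary identity $a^2 - b^2 = (a-b)(a+b)$. Writing
\[
|\varphi|^2 - \bigl|\overline{\mathcal{M}_\varepsilon\varphi}\bigr|^2 = \bigl(\varphi - \overline{\mathcal{M}_\varepsilon\varphi}\bigr)\bigl(\varphi + \overline{\mathcal{M}_\varepsilon\varphi}\bigr),
\]
H\"{o}lder's inequality gives
\[
\Bigl\||\varphi|^2 - \bigl|\overline{\mathcal{M}_\varepsilon\varphi}\bigr|^2\Bigr\|_{L^2(\Omega_\varepsilon)} \leq \bigl\|\varphi + \overline{\mathcal{M}_\varepsilon\varphi}\bigr\|_{L^\infty(\Omega_\varepsilon)}\,\bigl\|\varphi - \overline{\mathcal{M}_\varepsilon\varphi}\bigr\|_{L^2(\Omega_\varepsilon)}.
\]
The $L^2$-factor on the right-hand side is already controlled by $c\varepsilon\bigl(\|\varphi\|_{L^2(\Omega_\varepsilon)} + \|\partial_\nu\varphi\|_{L^2(\Omega_\varepsilon)}\bigr)$ directly from Lemma \ref{L:Ave_Dif}, so the only thing left is the $L^\infty$-factor.

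For this, I would observe that $\mathcal{M}_\varepsilon\varphi(y)$ is the integral of $\varphi^\sharp(y,\cdot)$ against the positive kernel $J(y,r)/(\varepsilon g(y))$, whose fibre integral over $(\varepsilon g_0(y),\varepsilon g_1(y))$ is bounded independently of $\varepsilon$ and $y$ by \eqref{E:G_Bdd} and \eqref{E:J_Bdd}. Consequently $\|\mathcal{M}_\varepsilon\varphi\|_{L^\infty(\Gamma)} \leq c\|\varphi\|_{L^\infty(\Omega_\varepsilon)}$, and this bound propagates to $\overline{\mathcal{M}_\varepsilon\varphi}$ since the constant extension preserves the sup-norm. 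The triangle inequality then yields $\|\varphi + \overline{\mathcal{M}_\varepsilon\varphi}\|_{L^\infty(\Omega_\varepsilon)} \leq c\|\varphi\|_{L^\infty(\Omega_\varepsilon)}$, and plugging back into the H\"{o}lder bound gives \eqref{E:ASq_Dif}.

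Since the route above uses only Lemma \ref{L:Ave_Dif}, a path faithful to the remark preceding the statement (which invokes both earlier lemmas) would instead split
\[
|\varphi|^2 - \bigl|\overline{\mathcal{M}_\varepsilon\varphi}\bigr|^2 = \Bigl(|\varphi|^2 - \overline{\mathcal{M}_\varepsilon(|\varphi|^2)}\Bigr) + \overline{\mathcal{M}_\varepsilon(|\varphi|^2) - |\mathcal{M}_\varepsilon\varphi|^2},
\]
using the pointwise identity $\bigl|\overline{\eta}\bigr|^2 = \overline{|\eta|^2}$. The first bracket is estimated by applying Lemma \ref{L:Ave_Dif} to $|\varphi|^2 \in H^1(\Omega_\varepsilon)$, with $\partial_\nu(|\varphi|^2) = 2\varphi\,\partial_\nu\varphi$ absorbed by $\|\varphi\|_{L^\infty(\Omega_\varepsilon)}$; the second bracket is estimated via the extension bound \eqref{E:CoEx_Lp} with $p=2$ followed by Lemma \ref{L:Ave_Sq}, so that the factor $\varepsilon^{1/2}$ from \eqref{E:CoEx_Lp} combines with the $\varepsilon^{1/2}$ from Lemma \ref{L:Ave_Sq} to yield the desired $\varepsilon$. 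I do not anticipate any genuine obstacle here: the result is essentially a corollary of the two preceding lemmas, and the only slightly delicate step is the $L^\infty$-preservation of the averaging operator, which is immediate from the positivity and uniform boundedness of its kernel.
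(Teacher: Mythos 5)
Your first route is correct and genuinely simpler than the paper's. The paper goes through the surface quantity $\mathcal{M}_\varepsilon(|\varphi|^2)$: it splits $|\varphi|^2-\bigl|\overline{\mathcal{M}_\varepsilon\varphi}\bigr|^2$ into $\bigl\{|\varphi|^2-\overline{\mathcal{M}_\varepsilon(|\varphi|^2)}\bigr\}+\bigl\{\overline{\mathcal{M}_\varepsilon(|\varphi|^2)}-\bigl|\overline{\mathcal{M}_\varepsilon\varphi}\bigr|^2\bigr\}$, estimates the first bracket by Lemma \ref{L:Ave_Dif} applied to $|\varphi|^2$ (absorbing $\partial_\nu(|\varphi|^2)=2\varphi\,\partial_\nu\varphi$ into the sup-norm) and the second by \eqref{E:CoEx_Lp} plus Lemma \ref{L:Ave_Sq} — this is exactly your second route, reproduced faithfully. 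Your first route instead factors $|\varphi|^2-\bigl|\overline{\mathcal{M}_\varepsilon\varphi}\bigr|^2=\bigl(\varphi-\overline{\mathcal{M}_\varepsilon\varphi}\bigr)\cdot\bigl(\varphi+\overline{\mathcal{M}_\varepsilon\varphi}\bigr)$, bounds the second factor in $L^\infty$ using the positivity and uniform normalization of the kernel $J/(\varepsilon g)$ (so that $\|\mathcal{M}_\varepsilon\varphi\|_{L^\infty(\Gamma)}\leq c\|\varphi\|_{L^\infty(\Omega_\varepsilon)}$ with $c$ independent of $\varepsilon$ by \eqref{E:G_Bdd} and \eqref{E:J_Bdd}), and applies Lemma \ref{L:Ave_Dif} once to the first factor. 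This buys economy: Lemma \ref{L:Ave_Sq} becomes unnecessary (it is used nowhere else in the paper), and no surface–domain transfer via \eqref{E:CoEx_Lp} is needed. The only small stylistic point is that the lemma is actually invoked for vector-valued $u$ in Lemma \ref{L:Ave_Non}, so you should write the factorization with the Euclidean inner product, $\bigl(\varphi-\overline{\mathcal{M}_\varepsilon\varphi}\bigr)\cdot\bigl(\varphi+\overline{\mathcal{M}_\varepsilon\varphi}\bigr)$, and note $|a\cdot b|\leq|a||b|$ before applying the $L^\infty$--$L^2$ H\"older estimate; with that understood, the argument is complete.
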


\begin{proof}
  Noting that $\partial_\nu(|\varphi|^2)=2\varphi\partial_\nu\varphi$, we see by \eqref{E:Ave_Dif} that
  \begin{align*}
    \Bigl\|\,|\varphi|^2-\overline{\mathcal{M}_\varepsilon(|\varphi|^2)}\,\Bigr\|_{L^2(\Omega_\varepsilon)} &\leq c\varepsilon\Bigl(\bigl\|\,|\varphi|^2\,\bigr\|_{L^2(\Omega_\varepsilon)}+\|\partial_\nu(|\varphi|^2)\|_{L^2(\Omega_\varepsilon)}\Bigr) \\
    &\leq c\varepsilon\|\varphi\|_{L^\infty(\Omega_\varepsilon)}\Bigl(\|\varphi\|_{L^2(\Omega_\varepsilon)}+\|\partial_\nu\varphi\|_{L^2(\Omega)}\Bigr).
  \end{align*}
  Also, it follows from \eqref{E:CoEx_Lp} and \eqref{E:Ave_Sq} that
  \begin{align*}
    \Bigl\|\,\overline{\mathcal{M}_\varepsilon(|\varphi|^2)}-\bigl|\overline{\mathcal{M}_\varepsilon\varphi}\bigr|^2\,\Bigr\|_{L^2(\Omega_\varepsilon)} &\leq c\varepsilon^{1/2}\bigl\|\mathcal{M}_\varepsilon(|\varphi|^2)-|\mathcal{M}_\varepsilon\varphi|^2\bigr\|_{L^2(\Gamma)} \\
    &\leq c\varepsilon\|\varphi\|_{L^\infty(\Omega_\varepsilon)}\Bigl(\|\varphi\|_{L^2(\Omega_\varepsilon)}+\|\partial_\nu\varphi\|_{L^2(\Omega_\varepsilon)}\Bigr).
  \end{align*}
  Applying these estimates to
  \begin{align*}
    |\varphi|^2-\bigl|\overline{\mathcal{M}_\varepsilon\varphi}\bigr|^2 = \Bigl\{|\varphi|^2-\overline{\mathcal{M}_\varepsilon(|\varphi|^2)}\Bigr\}+\Bigl\{\overline{\mathcal{M}_\varepsilon(|\varphi|^2)}-\bigl|\overline{\mathcal{M}_\varepsilon\varphi}\bigr|^2\Bigr\},
  \end{align*}
  we obtain \eqref{E:ASq_Dif}.
\end{proof}

We also observe that the cubic term of \eqref{E:GL_CTD} is approximated by that of \eqref{E:GL_Lim} and the weighted average in the following weak sense.

\begin{lemma} \label{L:Ave_Non}
  Let $u\in (H^1\cap L^\infty)(\Omega_\varepsilon)^N$ and $\zeta\in L^2(\Gamma)^N$.
  Then,
  \begin{multline} \label{E:Ave_Non}
    \left|\int_{\Omega_\varepsilon}|u|^2u\cdot\bar{\zeta}\,dx-\varepsilon\int_\Gamma g|\mathcal{M}_\varepsilon u|^2\mathcal{M}_\varepsilon u\cdot\zeta\,d\mathcal{H}^{n-1}\right| \\
    \leq c\varepsilon^{3/2}\|u\|_{L^\infty(\Omega_\varepsilon)}^2\Bigl(\|u\|_{L^2(\Omega_\varepsilon)}+\|\partial_\nu u\|_{L^2(\Omega_\varepsilon)}\Bigr)\|\zeta\|_{L^2(\Gamma)}.
  \end{multline}
\end{lemma}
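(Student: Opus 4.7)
The plan is to reduce the difference of the two integrals to an integral involving the deviation $|u|^2-|\overline{\mathcal{M}_\varepsilon u}|^2$, and then apply Lemma~\ref{L:ASq_Dif} to close the estimate. The key algebraic observation is that $\varepsilon g(y)\mathcal{M}_\varepsilon u(y)=\int_{\varepsilon g_0(y)}^{\varepsilon g_1(y)}u^\sharp(y,r)J(y,r)\,dr$ by the very definition~\eqref{E:Def_Ave}, so the factor $\varepsilon g$ in the surface integral is absorbed into the thin-direction integration.

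First, I would use the change of variables formula \eqref{E:Ch_Va} together with $\bar\zeta^\sharp(y,r)=\zeta(y)$ to write
\begin{align*}
  \int_{\Omega_\varepsilon}|u|^2u\cdot\bar\zeta\,dx=\int_\Gamma\zeta(y)\cdot\left(\int_{\varepsilon g_0(y)}^{\varepsilon g_1(y)}|u^\sharp(y,r)|^2u^\sharp(y,r)J(y,r)\,dr\right)d\mathcal{H}^{n-1}(y),
\end{align*}
and simultaneously
\begin{align*}
  \varepsilon\int_\Gamma g|\mathcal{M}_\varepsilon u|^2\mathcal{M}_\varepsilon u\cdot\zeta\,d\mathcal{H}^{n-1}=\int_\Gamma\zeta(y)\cdot|\mathcal{M}_\varepsilon u(y)|^2\left(\int_{\varepsilon g_0(y)}^{\varepsilon g_1(y)}u^\sharp(y,r)J(y,r)\,dr\right)d\mathcal{H}^{n-1}(y),
\end{align*}
using the identity for $\varepsilon g\mathcal{M}_\varepsilon u$ above and pulling the scalar $|\mathcal{M}_\varepsilon u(y)|^2$ inside the $r$-integral. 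Subtracting, the difference on the left-hand side of \eqref{E:Ave_Non} becomes exactly
\begin{align*}
  \int_\Gamma\zeta(y)\cdot\int_{\varepsilon g_0(y)}^{\varepsilon g_1(y)}\Bigl(|u^\sharp(y,r)|^2-|\mathcal{M}_\varepsilon u(y)|^2\Bigr)u^\sharp(y,r)J(y,r)\,dr\,d\mathcal{H}^{n-1}(y).
\end{align*}

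Next I would estimate this expression. Using the bounds $|u^\sharp|\leq\|u\|_{L^\infty(\Omega_\varepsilon)}$ and \eqref{E:J_Bdd} pointwise, then Cauchy--Schwarz in the $r$-variable over the interval of length $\varepsilon g(y)\leq c\varepsilon$, the inner integral is controlled by
\begin{align*}
  c\varepsilon^{1/2}\|u\|_{L^\infty(\Omega_\varepsilon)}\left(\int_{\varepsilon g_0(y)}^{\varepsilon g_1(y)}\bigl||u^\sharp(y,r)|^2-|\mathcal{M}_\varepsilon u(y)|^2\bigr|^2\,dr\right)^{1/2}.
\end{align*}
Then Cauchy--Schwarz in $y$ against $\zeta$, together with \eqref{E:CVF_Lp} and the pointwise identity $|\mathcal{M}_\varepsilon u(y)|^2=\bigl(|\overline{\mathcal{M}_\varepsilon u}|^2\bigr)^\sharp(y,r)$ (which holds since the constant extension is independent of $r$), yields
\begin{align*}
  \left|\int_{\Omega_\varepsilon}|u|^2u\cdot\bar\zeta\,dx-\varepsilon\int_\Gamma g|\mathcal{M}_\varepsilon u|^2\mathcal{M}_\varepsilon u\cdot\zeta\,d\mathcal{H}^{n-1}\right|\leq c\varepsilon^{1/2}\|u\|_{L^\infty(\Omega_\varepsilon)}\|\zeta\|_{L^2(\Gamma)}\Bigl\|\,|u|^2-|\overline{\mathcal{M}_\varepsilon u}|^2\,\Bigr\|_{L^2(\Omega_\varepsilon)}.
\end{align*}

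Finally, applying Lemma~\ref{L:ASq_Dif} to the remaining $L^2(\Omega_\varepsilon)$-norm produces the desired factor $c\varepsilon\|u\|_{L^\infty(\Omega_\varepsilon)}\bigl(\|u\|_{L^2(\Omega_\varepsilon)}+\|\partial_\nu u\|_{L^2(\Omega_\varepsilon)}\bigr)$, and multiplying gives \eqref{E:Ave_Non}. The main obstacle, if any, is the bookkeeping: one must resist the temptation to split $|u|^2u-|\overline{\mathcal{M}_\varepsilon u}|^2\overline{\mathcal{M}_\varepsilon u}$ directly as a sum and then pair it with $\bar\zeta$, since the residual from the change of variables $\int J\,dr\neq\varepsilon g$ would only yield $\varepsilon^2\|u\|_{L^\infty}^3\|\zeta\|_{L^2(\Gamma)}$, which cannot be absorbed into the target right-hand side. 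The cleaner route above avoids this residual entirely by rewriting $\varepsilon g\mathcal{M}_\varepsilon u$ as an $r$-integral from the outset, so that only one power of $\mathcal{M}_\varepsilon u$ needs to be compared with $u^\sharp$, and the remaining two powers are organized as $|u^\sharp|^2-|\mathcal{M}_\varepsilon u|^2$ to match Lemma~\ref{L:ASq_Dif} exactly.
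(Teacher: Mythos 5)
Your proof is correct, and the approach is essentially the same as the paper's. Both reduce the bound to the single estimate $\bigl\||u|^2-|\overline{\mathcal{M}_\varepsilon u}|^2\bigr\|_{L^2(\Omega_\varepsilon)}$ from Lemma~\ref{L:ASq_Dif}, combined with the same $\varepsilon^{1/2}$ gain from the thin direction. The only stylistic difference is how you reach the integrand $(|u|^2-|\overline{\mathcal{M}_\varepsilon u}|^2)u\cdot\bar\zeta$. The paper inserts the intermediate volume integral $\int_{\Omega_\varepsilon}\bigl|\overline{\mathcal{M}_\varepsilon u}\bigr|^2u\cdot\bar\zeta\,dx$ and notes that, by \eqref{E:Ave_Pair} applied with $\varphi=u$ and $\eta=|\mathcal{M}_\varepsilon u|^2\zeta$, it equals the surface integral exactly, so that piece drops out with no residual; then the remaining piece $K_1$ is bounded by H\"older together with $\|\bar\zeta\|_{L^2(\Omega_\varepsilon)}\leq c\varepsilon^{1/2}\|\zeta\|_{L^2(\Gamma)}$. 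You instead unfold $\varepsilon g\,\mathcal{M}_\varepsilon u$ into its defining $r$-integral and arrive at the same expression for the difference, then harvest the $\varepsilon^{1/2}$ by Cauchy--Schwarz in $r$ over the interval of length $\sim\varepsilon$. These two bookkeeping devices are equivalent; \eqref{E:Ave_Pair} is precisely the statement that the unfolding produces no leftover from $J$ when one factor is a constant extension. Your closing caveat about the naive split $|u|^2u-|\overline{\mathcal{M}_\varepsilon u}|^2\,\overline{\mathcal{M}_\varepsilon u}$ is well taken; the paper's split avoids the issue for the same reason your unfolding does, namely that the surviving factor paired against the constant extensions is $u$ itself rather than $\overline{\mathcal{M}_\varepsilon u}$, so \eqref{E:Ave_Pair} applies verbatim and the residual you worry about never appears.
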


\begin{proof}
  We split the difference as
  \begin{align*}
    &\int_{\Omega_\varepsilon}|u|^2u\cdot\bar{\zeta}\,dx-\varepsilon\int_\Gamma g|\mathcal{M}_\varepsilon u|^2\mathcal{M}_\varepsilon u\cdot\zeta\,d\mathcal{H}^{n-1} = K_1+K_2, \\
    &K_1 = \int_{\Omega_\varepsilon}|u|^2u\cdot\bar{\zeta}\,dx-\int_{\Omega_\varepsilon}\bigl|\overline{\mathcal{M}_\varepsilon u}\bigr|^2u\cdot\bar{\zeta}\,dx, \\
    &K_2 = \int_{\Omega_\varepsilon}\bigl|\overline{\mathcal{M}_\varepsilon u}\bigr|^2u\cdot\bar{\zeta}\,dx-\varepsilon\int_{\Gamma}g|\mathcal{M}_\varepsilon u|^2\mathcal{M}_\varepsilon u\cdot\zeta\,d\mathcal{H}^{n-1}.
  \end{align*}
  Since $\mathcal{M}_\varepsilon u$ and $\zeta$ are functions on $\Gamma$, we have $K_2=0$ by  \eqref{E:Ave_Pair}.
  Also,
  \begin{align*}
    |K_1| &\leq \|u\|_{L^\infty(\Omega_\varepsilon)}\Bigl\|\,|u|^2-\bigl|\overline{\mathcal{M}_\varepsilon u}\bigr|^2\,\Bigr\|_{L^2(\Omega_\varepsilon)}\|\bar{\zeta}\|_{L^2(\Omega_\varepsilon)} \\
    &\leq c\varepsilon^{3/2}\|u\|_{L^\infty(\Omega_\varepsilon)}^2\Bigl(\|u\|_{L^2(\Omega_\varepsilon)}+\|\partial_\nu u\|_{L^2(\Omega_\varepsilon)}\Bigr)\|\zeta\|_{L^2(\Gamma)}
  \end{align*}
  by \eqref{E:CoEx_Lp} and \eqref{E:ASq_Dif}.
  Hence, \eqref{E:Ave_Non} follows.
\end{proof}

\subsection{Tangential gradient of the weighted average} \label{SS:Ave_TGr}
Let us give an explicit formula for the tangential gradient of the weighted average.

\begin{lemma} \label{L:Ave_TGr}
  Let $\varphi\in C(\overline{\Omega}_\varepsilon)\cap C^1(\Omega_\varepsilon)$.
  Then,
  \begin{align} \label{E:Ave_TGr}
    \nabla_\Gamma\mathcal{M}_\varepsilon\varphi = \mathcal{M}_\varepsilon(B\nabla\varphi)+\mathcal{M}_\varepsilon\bigl((\partial_\nu\varphi+\varphi f_J)\Psi_\varepsilon\bigr)+\mathcal{M}_\varepsilon(\varphi\Psi_J) \quad\text{on}\quad \Gamma.
  \end{align}
  Here, $B\colon\mathcal{N}_\delta\to\mathbb{R}^{n\times n}$, $\Psi_\varepsilon\colon\mathcal{N}_\delta\to\mathbb{R}^n$, $f_J\colon\mathcal{N}_\delta\to\mathbb{R}$, $\Psi_J\colon\mathcal{N}_\delta\to\mathbb{R}^n$ are given by
  \begin{align*}
    B^\sharp(y,r) &= P(y)-rW(y), \\
    \Psi_\varepsilon^\sharp(y,r) &= \frac{1}{g(y)}\{(r-\varepsilon g_0(y))\nabla_\Gamma g_1(y)+(\varepsilon g_1(y)-r)\nabla_\Gamma g_0(y)\}, \\
    f_J^\sharp(y,r) &= \frac{\partial_rJ(y,r)}{J(y,r)}, \quad \Psi_J^\sharp(y,r) = \frac{\nabla_\Gamma J(y,r)}{J(y,r)}
  \end{align*}
  for $x=y+r\nu(y)\in\mathcal{N}_\delta$ with $y\in\Gamma$ and $r\in(-\delta,\delta)$ under the notation \eqref{E:Def_Pull}.
\end{lemma}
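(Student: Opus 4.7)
The plan is to differentiate the identity
$$\varepsilon g(y)\mathcal{M}_\varepsilon\varphi(y) = \int_{\varepsilon g_0(y)}^{\varepsilon g_1(y)} \varphi^\sharp(y,r)J(y,r)\,dr$$
with respect to $y\in\Gamma$ by applying the Leibniz rule for tangential gradients, and then reorganize the resulting boundary terms into a bulk integral matching the $\Psi_\varepsilon$-contribution in \eqref{E:Ave_TGr}.

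First I would use the fact that tangential differentiation, being a first-order linear operator intrinsic to $\Gamma$, obeys the usual Leibniz rule for an integral whose limits and integrand depend on the surface point. This gives
\begin{align*}
\nabla_\Gamma\bigl(\varepsilon g\mathcal{M}_\varepsilon\varphi\bigr)(y)
&= \int_{\varepsilon g_0(y)}^{\varepsilon g_1(y)} \nabla_\Gamma\bigl[\varphi^\sharp(y,r)J(y,r)\bigr]\,dr \\
&\quad + \varepsilon[\varphi^\sharp J]\bigl(y,\varepsilon g_1(y)\bigr)\nabla_\Gamma g_1(y) - \varepsilon[\varphi^\sharp J]\bigl(y,\varepsilon g_0(y)\bigr)\nabla_\Gamma g_0(y),
\end{align*}
where under the integral $\nabla_\Gamma$ acts only on the first argument. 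For the bulk contribution I would apply the product rule together with Lemma \ref{L:Pull}, which yields $\nabla_\Gamma\varphi^\sharp(y,r) = B^\sharp(y,r)(\nabla\varphi)^\sharp(y,r)$, so that $\nabla_\Gamma[\varphi^\sharp J] = JB^\sharp(\nabla\varphi)^\sharp + \varphi^\sharp\nabla_\Gamma J$.

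The key observation is that $\Psi_\varepsilon^\sharp(y,\varepsilon g_0(y)) = \varepsilon\nabla_\Gamma g_0(y)$ and $\Psi_\varepsilon^\sharp(y,\varepsilon g_1(y)) = \varepsilon\nabla_\Gamma g_1(y)$, so the two boundary terms above are precisely the endpoint values of $\Psi_\varepsilon^\sharp\varphi^\sharp J$ and can be rewritten as $\int_{\varepsilon g_0(y)}^{\varepsilon g_1(y)} \partial_r(\Psi_\varepsilon^\sharp\varphi^\sharp J)\,dr$ by the fundamental theorem of calculus. Expanding this derivative using $\partial_r\varphi^\sharp = (\partial_\nu\varphi)^\sharp$, $\partial_r J = J f_J^\sharp$, and the direct computation $\partial_r\Psi_\varepsilon^\sharp = \nabla_\Gamma g/g$ (since $g = g_1 - g_0$), I obtain
$$\partial_r\bigl(\Psi_\varepsilon^\sharp\varphi^\sharp J\bigr) = \frac{\nabla_\Gamma g}{g}\,\varphi^\sharp J + \Psi_\varepsilon^\sharp\bigl[(\partial_\nu\varphi)^\sharp + \varphi^\sharp f_J^\sharp\bigr]J.$$

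Finally, I would write $\nabla_\Gamma(\varepsilon g\mathcal{M}_\varepsilon\varphi) = \varepsilon(\nabla_\Gamma g)\mathcal{M}_\varepsilon\varphi + \varepsilon g\nabla_\Gamma\mathcal{M}_\varepsilon\varphi$, divide by $\varepsilon g$, and check that the $\frac{\nabla_\Gamma g}{g}\varphi^\sharp J$ piece produced above exactly cancels against $\frac{1}{g}(\nabla_\Gamma g)\mathcal{M}_\varepsilon\varphi = \frac{1}{\varepsilon g}\int \frac{\nabla_\Gamma g}{g}\varphi^\sharp J\,dr$. What remains, after factoring $J$ inside the integrals and using $\nabla_\Gamma J/J = \Psi_J^\sharp$, is precisely the three weighted averages in \eqref{E:Ave_TGr}. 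The only nontrivial step is spotting the $\Psi_\varepsilon$-trick of writing the Leibniz boundary terms as an $r$-antiderivative of $\Psi_\varepsilon^\sharp\varphi^\sharp J$; once that is in place, the remainder is a routine product-rule bookkeeping exercise.
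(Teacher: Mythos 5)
Your proposal is correct and is essentially the same argument as the paper's: both apply the Leibniz rule to the weighted-average integral, use Lemma \ref{L:Pull} for the bulk $\nabla_\Gamma\varphi^\sharp$ term, and exploit the identity $\Psi_\varepsilon^\sharp(y,\varepsilon g_i(y))=\varepsilon\nabla_\Gamma g_i(y)$ to convert the boundary contribution into an $r$-integral of $\partial_r(\Psi_\varepsilon^\sharp\varphi^\sharp J)$ via the fundamental theorem of calculus. The only cosmetic difference is that you differentiate the product $\varepsilon g\,\mathcal{M}_\varepsilon\varphi$ and divide afterward, whereas the paper differentiates $\mathcal{M}_\varepsilon\varphi$ directly and cancels the resulting $-\nabla_\Gamma g/(\varepsilon g^2)$ term against the $\partial_r\Psi_\varepsilon^\sharp=\nabla_\Gamma g/g$ piece in the same way.
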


\begin{proof}
  By the definition of $\mathcal{M}_\varepsilon\varphi$, we have $\nabla_\Gamma\mathcal{M}_\varepsilon\varphi=\sum_{i=1}^4K_i$, where
  \begin{align*}
    K_1 &= -\frac{\nabla_\Gamma g(y)}{\varepsilon g(y)^2}\int_{\varepsilon g_0(y)}^{\varepsilon g_1(y)}\varphi^\sharp(y,r)J(y,r)\,dr, \\
    K_2 &= \frac{1}{\varepsilon g(y)}\{[\varphi^\sharp J](y,\varepsilon g_1(y))\cdot\varepsilon\nabla_\Gamma g_1(y)-[\varphi^\sharp J](y,\varepsilon g_0(y))\cdot\varepsilon\nabla_\Gamma g_0(y)\}, \\
    K_3 &= \frac{1}{\varepsilon g(y)}\int_{\varepsilon g_0(y)}^{\varepsilon g_1(y)}\nabla_\Gamma\varphi^\sharp(y,r)\cdot J(y,r)\,dr, \\
    K_4 &= \frac{1}{\varepsilon g(y)}\int_{\varepsilon g_0(y)}^{\varepsilon g_1(y)}\varphi^\sharp(y,r)\nabla_\Gamma J(y,r)\,dr.
  \end{align*}
  Here and in what follows, we write $[\varphi^\sharp J](y,r)=\varphi^\sharp(y,r)J(y,r)$.
  We see that
  \begin{align} \label{Pf_AT:K34}
    \begin{aligned}
      K_3 &= \frac{1}{\varepsilon g(y)}\int_{\varepsilon g_0(y)}\{P(y)-rW(y)\}(\nabla\varphi)^\sharp(y,r)\cdot J(y,r)\,dr = \mathcal{M}_\varepsilon(B\nabla\varphi), \\
      K_4 &= \frac{1}{\varepsilon g(y)}\int_{\varepsilon g_0(y)}^{\varepsilon g_1(y)}\varphi^\sharp(y,r)\frac{\nabla_\Gamma J(y,r)}{J(y,r)}\cdot J(y,r)\,dr = \mathcal{M}_\varepsilon(\varphi\Psi_J)
    \end{aligned}
  \end{align}
  by \eqref{E:TG_Pull}.
  Since $\Psi_\varepsilon^\sharp(y,\varepsilon g_i(y))=\varepsilon\nabla_\Gamma g_i(y)$ for $i=0,1$,
  \begin{align*}
    K_2 = \frac{1}{\varepsilon g(y)}\Bigl[[\varphi^\sharp J\Psi_\varepsilon^\sharp](y,r)\Bigr]_{r=\varepsilon g_0(y)}^{\varepsilon g_1(y)} = \frac{1}{\varepsilon g(y)}\int_{\varepsilon g_0(y)}^{\varepsilon g_1(y)}\frac{\partial}{\partial r}\Bigl([\varphi^\sharp J\Psi_\varepsilon^\sharp](y,r)\Bigr)\,dr.
  \end{align*}
  Moreover, since $\partial_r\varphi^\sharp=(\partial_\nu\varphi)^\sharp$ under the notation \eqref{E:Def_Pull} and
  \begin{align*}
    \partial_rJ(y,r) = f_J^\sharp(y,r)J(y,r), \quad \partial_r\Psi_\varepsilon^\sharp(y,r) = \frac{\nabla_\Gamma g(y)}{g(y)},
  \end{align*}
  it follows that
  \begin{align*}
    K_2 &= \frac{1}{\varepsilon g(y)}\int_{\varepsilon g_0(y)}^{\varepsilon g_1(y)}[\{(\partial_\nu\varphi)^\sharp+\varphi^\sharp f_J^\sharp\}\Psi_\varepsilon^\sharp](y,r)J(y,r)\,dr \\
    &\qquad +\frac{\nabla_\Gamma g(y)}{\varepsilon g(y)^2}\int_{\varepsilon g_0(y)}^{\varepsilon g_1(y)}\varphi^\sharp(y,r)J(y,r)\,dr \\
    &= \mathcal{M}_\varepsilon\bigl((\partial_\nu\varphi+\varphi f_J)\Psi_\varepsilon\bigr)-K_1.
  \end{align*}
  This equality and \eqref{Pf_AT:K34} show that $\nabla_\Gamma\mathcal{M}_\varepsilon\varphi=\sum_{i=1}^4K_i$ is of the form \eqref{E:Ave_TGr}.
\end{proof}

By the above result, we see that $\nabla_\Gamma\mathcal{M}_\varepsilon\varphi$ is close to $\mathcal{M}_\varepsilon\bigl(\overline{P}\nabla\varphi\bigr)$.

\begin{lemma} \label{L:Ave_H1}
  Let $\varphi\in H^1(\Omega_\varepsilon)$.
  Then, $\mathcal{M}_\varepsilon\varphi\in H^1(\Gamma)$ and
  \begin{align}
    \|\nabla_\Gamma\mathcal{M}_\varepsilon\varphi\|_{L^2(\Gamma)} &\leq c\varepsilon^{-1/2}\|\varphi\|_{H^1(\Omega_\varepsilon)}, \label{E:Ave_H1} \\
    \bigl\|\nabla_\Gamma\mathcal{M}_\varepsilon\varphi-\mathcal{M}_\varepsilon\bigl(\overline{P}\nabla\varphi\bigr)\bigr\|_{L^2(\Gamma)} &\leq c\varepsilon^{1/2}\|\varphi\|_{H^1(\Omega_\varepsilon)}. \label{E:ATGr_L2}
  \end{align}
\end{lemma}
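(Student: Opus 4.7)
The plan is to derive both estimates directly from the explicit formula for $\nabla_\Gamma\mathcal{M}_\varepsilon\varphi$ provided by Lemma \ref{L:Ave_TGr}, combined with the $L^2$-bound on $\mathcal{M}_\varepsilon$ of Lemma \ref{L:Ave_Lp} and pointwise bounds on the coefficient fields $B$, $\Psi_\varepsilon$, $f_J$, $\Psi_J$. Since Lemma \ref{L:Ave_TGr} is stated for $\varphi\in C(\overline{\Omega}_\varepsilon)\cap C^1(\Omega_\varepsilon)$, I will first establish both inequalities for $\varphi\in C^1(\overline{\Omega}_\varepsilon)$ and then pass to $H^1(\Omega_\varepsilon)$ by density together with Lemma \ref{L:Ave_Lp}: if $\varphi_k\in C^1(\overline{\Omega}_\varepsilon)$ approximate $\varphi\in H^1(\Omega_\varepsilon)$, then $\mathcal{M}_\varepsilon\varphi_k\to\mathcal{M}_\varepsilon\varphi$ in $L^2(\Gamma)$, and applying \eqref{E:Ave_H1} to $\varphi_k-\varphi_\ell$ shows $\{\mathcal{M}_\varepsilon\varphi_k\}$ is Cauchy in $H^1(\Gamma)$, giving $\mathcal{M}_\varepsilon\varphi\in H^1(\Gamma)$ with the advertised bounds.

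For the pointwise bounds, I will collect that $|B|\leq c$ on $\mathcal{N}_\delta$ (since $P$ is a projection and $W$ is bounded with $|r|\leq\delta$), $|f_J|\leq c$ by \eqref{E:J_Bdd}, and, critically, $|\Psi_\varepsilon|\leq c\varepsilon$ on $\overline{\Omega}_\varepsilon$ (since its coefficients $r-\varepsilon g_0$ and $\varepsilon g_1-r$ are $O(\varepsilon)$ for $r\in[\varepsilon g_0,\varepsilon g_1]$) and $|\Psi_J|\leq c\varepsilon$ by \eqref{E:J_TGr}. Applying Lemma \ref{L:Ave_Lp} with $p=2$ term by term to the formula in Lemma \ref{L:Ave_TGr} then yields
\begin{align*}
  \|\mathcal{M}_\varepsilon(B\nabla\varphi)\|_{L^2(\Gamma)} &\leq c\varepsilon^{-1/2}\|\nabla\varphi\|_{L^2(\Omega_\varepsilon)}, \\
  \bigl\|\mathcal{M}_\varepsilon\bigl((\partial_\nu\varphi+\varphi f_J)\Psi_\varepsilon\bigr)\bigr\|_{L^2(\Gamma)} &\leq c\varepsilon^{1/2}\bigl(\|\partial_\nu\varphi\|_{L^2(\Omega_\varepsilon)}+\|\varphi\|_{L^2(\Omega_\varepsilon)}\bigr), \\
  \|\mathcal{M}_\varepsilon(\varphi\Psi_J)\|_{L^2(\Gamma)} &\leq c\varepsilon^{1/2}\|\varphi\|_{L^2(\Omega_\varepsilon)},
\end{align*}
so summing gives \eqref{E:Ave_H1}, where the leading $\varepsilon^{-1/2}$ comes solely from the first term.

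For \eqref{E:ATGr_L2} I rewrite $B=\overline{P}-d\,\overline{W}$ on $\overline{\Omega}_\varepsilon$ (which is the content of the formula $B^\sharp(y,r)=P(y)-rW(y)$), so $B-\overline{P}=-d\,\overline{W}$ satisfies $|B-\overline{P}|\leq c\varepsilon$ on $\overline{\Omega}_\varepsilon$ since $|d|\leq c\varepsilon$ there. Then
\begin{align*}
  \nabla_\Gamma\mathcal{M}_\varepsilon\varphi-\mathcal{M}_\varepsilon\bigl(\overline{P}\nabla\varphi\bigr)
  = \mathcal{M}_\varepsilon\bigl((B-\overline{P})\nabla\varphi\bigr)+\mathcal{M}_\varepsilon\bigl((\partial_\nu\varphi+\varphi f_J)\Psi_\varepsilon\bigr)+\mathcal{M}_\varepsilon(\varphi\Psi_J),
\end{align*}
and Lemma \ref{L:Ave_Lp} with $p=2$ bounds each piece by $c\varepsilon^{1/2}\|\varphi\|_{H^1(\Omega_\varepsilon)}$, using the extra factor of $\varepsilon$ coming from $|B-\overline{P}|$, $|\Psi_\varepsilon|$, and $|\Psi_J|$ respectively.

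The only non-routine point is the density argument at the very end, which must be done with some care because $\partial\Omega_\varepsilon$ is only $C^1$; however, this regularity suffices for $C^\infty(\overline{\Omega}_\varepsilon)$ to be dense in $H^1(\Omega_\varepsilon)$ by standard extension-and-mollification, so no genuine obstacle appears. The rest is bookkeeping: identifying the correct powers of $\varepsilon$ arising from $|r|\leq c\varepsilon$, $|d|\leq c\varepsilon$, and the factor $\varepsilon^{-1/p}$ in \eqref{E:Ave_Lp}.
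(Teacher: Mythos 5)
Your proof is correct and follows essentially the same route as the paper: use the explicit formula of Lemma~\ref{L:Ave_TGr}, the pointwise bounds on $B-\overline{P}$, $\Psi_\varepsilon$, $f_J$, $\Psi_J$, and Lemma~\ref{L:Ave_Lp}, then extend by density. The only cosmetic difference is that you apply Lemma~\ref{L:Ave_Lp} term by term, while the paper first collects a single pointwise bound $|\nabla_\Gamma\mathcal{M}_\varepsilon\varphi|\le c\,\mathcal{M}_\varepsilon(|\varphi|+|\nabla\varphi|)$ (and its $\varepsilon$-weighted analogue) and applies Lemma~\ref{L:Ave_Lp} once; you also spell out the density step that the paper leaves implicit.
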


\begin{proof}
  For $y\in\Gamma$ and $r\in(\varepsilon g_0(y),\varepsilon g_1(y))$, we have
  \begin{align*}
    |B^\sharp(y,r)-P(y)| \leq c\varepsilon, \quad |\Psi_\varepsilon^\sharp(y,r)|+|\Psi_J^\sharp(y,r)| \leq c\varepsilon, \quad |f_J^\sharp(y,r)| \leq c
  \end{align*}
  by \eqref{E:G_Bdd}, \eqref{E:J_Bdd}, \eqref{E:J_TGr}, $W\in C(\Gamma)^{n\times n}$, and $g_0,g_1\in C^1(\Gamma)$.
  Also,
  \begin{align*}
    |P|^2 = \mathrm{tr}[P^TP] = \mathrm{tr}[P] = n-1 \quad\text{on}\quad \Gamma, \quad |\partial_\nu\varphi|=|\bar{\nu}\cdot\nabla\varphi|\leq|\nabla\varphi| \quad\text{in}\quad \Omega_\varepsilon.
  \end{align*}
  By these inequalities and \eqref{E:Ave_TGr}, we get $|\nabla_\Gamma\mathcal{M}_\varepsilon\varphi|\leq c\mathcal{M}_\varepsilon(|\varphi|+|\nabla\varphi|)$ and
  \begin{align*}
    \bigl|\nabla_\Gamma\mathcal{M}_\varepsilon\varphi-\mathcal{M}_\varepsilon\bigl(\overline{P}\nabla\varphi\bigr)\bigr| \leq c\varepsilon\mathcal{M}_\varepsilon(|\varphi|+|\nabla\varphi|) \quad\text{on}\quad \Gamma.
  \end{align*}
  These inequalities and \eqref{E:Ave_Lp} show that \eqref{E:Ave_H1} and \eqref{E:ATGr_L2} are valid.
\end{proof}

Moreover, we can show that the Dirichlet form on $\Omega_\varepsilon$ is approximated by the weighted one on $\Gamma$ which involves the tangential gradient of the weighted average.

\begin{lemma} \label{L:AT_Diri}
  Let $\varphi\in H^1(\Omega_\varepsilon)$ and $\eta\in H^1(\Gamma)$.
  Then,
  \begin{align} \label{E:AT_Diri}
    \left|\int_{\Omega_\varepsilon}\nabla\varphi\cdot\nabla\bar{\eta}\,dx-\varepsilon\int_\Gamma g\nabla_\Gamma\mathcal{M}_\varepsilon\varphi\cdot\nabla_\Gamma\eta\,d\mathcal{H}^{n-1}\right| \leq c\varepsilon^{3/2}\|\varphi\|_{H^1(\Omega_\varepsilon)}\|\nabla_\Gamma\eta\|_{L^2(\Gamma)}.
  \end{align}
\end{lemma}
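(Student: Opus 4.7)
The plan is to reduce the estimate to the two tools already established: the exact pairing identity \eqref{E:Ave_Pair} (which produces no residual), and the comparison \eqref{E:ATGr_L2} between $\nabla_\Gamma\mathcal{M}_\varepsilon\varphi$ and $\mathcal{M}_\varepsilon(\overline{P}\nabla\varphi)$. The basic idea is that $\nabla\bar{\eta}$ is essentially the constant extension of the tangential gradient, so one can transfer the integration from $\Omega_\varepsilon$ to $\Gamma$ via \eqref{E:Ave_Pair} componentwise, at the cost of a few controlled errors.

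First I would split
\begin{equation*}
  \int_{\Omega_\varepsilon}\nabla\varphi\cdot\nabla\bar{\eta}\,dx = \int_{\Omega_\varepsilon}\nabla\varphi\cdot\overline{\nabla_\Gamma\eta}\,dx + \int_{\Omega_\varepsilon}\nabla\varphi\cdot\bigl(\nabla\bar{\eta}-\overline{\nabla_\Gamma\eta}\bigr)\,dx.
\end{equation*}
The second integral is bounded by Cauchy--Schwarz and the second estimate in \eqref{E:CoDe_L2}, giving a contribution of order $\varepsilon^{3/2}\|\nabla\varphi\|_{L^2(\Omega_\varepsilon)}\|\nabla_\Gamma\eta\|_{L^2(\Gamma)}$, which is already of the claimed size.

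Next, using that $P$ is symmetric and that $P\nabla_\Gamma\eta=\nabla_\Gamma\eta$ by \eqref{E:TG_Pnu}, I would rewrite $\nabla\varphi\cdot\overline{\nabla_\Gamma\eta} = \overline{P}\nabla\varphi\cdot\overline{\nabla_\Gamma\eta}$ on $\Omega_\varepsilon$. Applying \eqref{E:Ave_Pair} componentwise (one scalar identity per coordinate of the vector $\overline{P}\nabla\varphi$, paired with the corresponding component of $\nabla_\Gamma\eta$) yields the exact formula
\begin{equation*}
  \int_{\Omega_\varepsilon}\overline{P}\nabla\varphi\cdot\overline{\nabla_\Gamma\eta}\,dx = \varepsilon\int_\Gamma g\,\mathcal{M}_\varepsilon\bigl(\overline{P}\nabla\varphi\bigr)\cdot\nabla_\Gamma\eta\,d\mathcal{H}^{n-1}.
\end{equation*}

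Finally, I would compare $\mathcal{M}_\varepsilon(\overline{P}\nabla\varphi)$ with $\nabla_\Gamma\mathcal{M}_\varepsilon\varphi$ via \eqref{E:ATGr_L2}: using \eqref{E:G_Bdd}, Cauchy--Schwarz on $\Gamma$, and the factor $\varepsilon$ from the change of variables, this contributes at most
\begin{equation*}
  c\varepsilon\cdot\varepsilon^{1/2}\|\varphi\|_{H^1(\Omega_\varepsilon)}\|\nabla_\Gamma\eta\|_{L^2(\Gamma)} = c\varepsilon^{3/2}\|\varphi\|_{H^1(\Omega_\varepsilon)}\|\nabla_\Gamma\eta\|_{L^2(\Gamma)}.
\end{equation*}
Summing the three error contributions gives \eqref{E:AT_Diri}. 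There is no real obstacle here once the decomposition is set up correctly; the only mild subtlety is remembering that \eqref{E:Ave_Pair} is an \emph{exact} identity with no residual, so the whole error budget of $\varepsilon^{3/2}$ comes entirely from the two already-quantified approximations \eqref{E:CoDe_L2} and \eqref{E:ATGr_L2}.
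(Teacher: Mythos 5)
Your proposal is correct and matches the paper's proof essentially line for line: same split of the error into the $\nabla\bar\eta$ versus $\overline{\nabla_\Gamma\eta}$ piece (handled by \eqref{E:CoDe_L2}) and the averaging piece, the same use of the exact pairing identity \eqref{E:Ave_Pair} after inserting $\overline P$, and the same comparison via \eqref{E:ATGr_L2}. (The remark "three error contributions" should read "two," since the \eqref{E:Ave_Pair} step is exact, but that is only a wording slip.)
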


\begin{proof}
  This result was shown in \cite[Lemma 5.6]{Miu17} by calculations under a local coordinate of $\Gamma$ and an associated one of $\Omega_\varepsilon$.
  Here, we give another simple proof.

  We split the difference as
  \begin{align*}
    &\int_{\Omega_\varepsilon}\nabla\varphi\cdot\nabla\bar{\eta}\,dx-\varepsilon\int_\Gamma g\nabla_\Gamma\mathcal{M}_\varepsilon\varphi\cdot\nabla_\Gamma\eta\,d\mathcal{H}^{n-1} = K_1+K_2, \\
    &K_1 = \int_{\Omega_\varepsilon}\nabla\varphi\cdot\nabla\bar{\eta}\,dx-\int_{\Omega_\varepsilon}\nabla\varphi\cdot\overline{\nabla_\Gamma\eta}\,dx, \\
    &K_2 = \int_{\Omega_\varepsilon}\nabla\varphi\cdot\overline{\nabla_\Gamma\eta}\,dx-\varepsilon\int_\Gamma g\nabla_\Gamma\mathcal{M}_\varepsilon\varphi\cdot\nabla_\Gamma\eta\,d\mathcal{H}^{n-1}.
  \end{align*}
  It follows from H\"{o}lder's inequality and \eqref{E:CoDe_L2} that
  \begin{align*}
    |K_1| \leq \|\nabla\varphi\|_{L^2(\Omega_\varepsilon)}\bigl\|\nabla\bar{\eta}-\overline{\nabla_\Gamma\eta}\bigr\|_{L^2(\Omega_\varepsilon)} \leq c\varepsilon^{3/2}\|\varphi\|_{H^1(\Omega_\varepsilon)}\|\nabla_\Gamma\eta\|_{L^2(\Gamma)}.
  \end{align*}
  Next, we see by $\nu\cdot\nabla_\Gamma\eta=0$ on $\Gamma$ and \eqref{E:Ave_Pair} that
  \begin{align*}
    \int_{\Omega_\varepsilon}\nabla\varphi\cdot\overline{\nabla_\Gamma\eta}\,dx = \int_{\Omega_\varepsilon}\bigl(\overline{P}\nabla\varphi)\cdot\overline{\nabla_\Gamma\eta}\,dx = \varepsilon\int_\Gamma g\mathcal{M}_{\varepsilon}\bigl(\overline{P}\nabla\varphi\bigr)\cdot\nabla_\Gamma\eta\,d\mathcal{H}^{n-1}.
  \end{align*}
  Hence, it follows from \eqref{E:G_Bdd}, H\"{o}lder's inequality, and \eqref{E:ATGr_L2} that
  \begin{align*}
    |K_2| \leq c\varepsilon\bigl\|\nabla_\Gamma\mathcal{M}_\varepsilon\varphi-\mathcal{M}_\varepsilon\bigl(\overline{P}\nabla\varphi\bigr)\bigr\|_{L^2(\Gamma)}\|\nabla_\Gamma\eta\|_{L^2(\Gamma)} \leq c\varepsilon^{3/2}\|\varphi\|_{H^1(\Omega_\varepsilon)}\|\nabla_\Gamma\eta\|_{L^2(\Gamma)}.
  \end{align*}
  By the above results, we obtain \eqref{E:AT_Diri}.
\end{proof}

\subsection{Time derivative of the weighted average} \label{SS:Ave_Dt}
We can consider the time derivative of the weighted average in the following weak sense.
Recall that the function spaces $Z_T(S)$ and $E_T(S)$ are given by \eqref{E:Def_ZT} and \eqref{E:Def_ET}, respectively, for $S=\Omega_\varepsilon$ or $S=\Gamma$.

\begin{lemma} \label{L:Ave_Dt}
  For a fixed $T>0$, let $u\in E_T(\Omega_\varepsilon)$.
  Then, $\mathcal{M}_\varepsilon u\in E_T(\Gamma)$ and
  \begin{align} \label{E:Ave_Dt}
    \int_0^T\langle\partial_tu(t),\bar{\zeta}(t)\rangle_{(H^1\cap L^4)(\Omega_\varepsilon)}\,dt = \varepsilon\int_0^T\langle\partial_t\mathcal{M}_\varepsilon u(t),g\zeta(t)\rangle_{(H^1\cap L^4)(\Gamma)}\,dt
  \end{align}
  for all $\zeta\in Z_T(\Gamma)$.
\end{lemma}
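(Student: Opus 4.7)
The plan is to first establish the identity in the distributional sense for smooth compactly supported test functions in time using the pairing formula \eqref{E:Ave_Pair}, and from this deduce both that $\partial_t\mathcal{M}_\varepsilon u\in[Z_T(\Gamma)]'$ and that \eqref{E:Ave_Dt} holds, then pass to all $\zeta\in Z_T(\Gamma)$ by density and continuity. Before turning to the time derivative, I would check that $\mathcal{M}_\varepsilon u\in Z_T(\Gamma)$: applying Lemma \ref{L:Ave_Lp} with $p=4$ and Lemma \ref{L:Ave_H1} pointwise in $t$ and integrating gives $\mathcal{M}_\varepsilon u\in L^4(0,T;L^4(\Gamma)^N)\cap L^2(0,T;H^1(\Gamma)^N)$ with $\varepsilon$-dependent bounds in terms of $\|u\|_{Z_T(\Omega_\varepsilon)}$.

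For the main identity, fix $\zeta\in C_c^1(0,T;(H^1\cap L^4)(\Gamma)^N)$. The constant extension $\bar\zeta$ belongs to $C_c^1(0,T;(H^1\cap L^4)(\Omega_\varepsilon)^N)$ thanks to \eqref{E:CoEx_Lp} and \eqref{E:CoDe_L2}, so it is an admissible test function for $\partial_t u\in[Z_T(\Omega_\varepsilon)]'$:
\begin{align*}
  -\int_0^T\langle\partial_tu(t),\bar\zeta(t)\rangle_{(H^1\cap L^4)(\Omega_\varepsilon)}\,dt=\int_0^T\bigl(u(t),\partial_t\bar\zeta(t)\bigr)_{L^2(\Omega_\varepsilon)}\,dt.
\end{align*}
Since $\partial_t\bar\zeta=\overline{\partial_t\zeta}$, I can apply \eqref{E:Ave_Pair} pointwise in $t$ with $\varphi=u(t)$ and $\eta=\partial_t\zeta(t)$ to rewrite the right-hand side as $\varepsilon\int_0^T\bigl(\mathcal{M}_\varepsilon u(t),\partial_t(g\zeta)(t)\bigr)_{L^2(\Gamma)}\,dt$, using that $g$ is independent of time.

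Next, replacing $\zeta$ by $\zeta/g$ (permissible because $g,1/g\in C^1(\Gamma)$ are uniformly bounded, so the map is a bijection on $C_c^1(0,T;(H^1\cap L^4)(\Gamma)^N)$), the previous identity becomes
\begin{align*}
  \varepsilon\int_0^T\bigl(\mathcal{M}_\varepsilon u(t),\partial_t\zeta(t)\bigr)_{L^2(\Gamma)}\,dt=-\int_0^T\langle\partial_tu(t),\overline{\zeta/g}(t)\rangle_{(H^1\cap L^4)(\Omega_\varepsilon)}\,dt.
\end{align*}
The right-hand side is bounded in absolute value by $\|\partial_tu\|_{[Z_T(\Omega_\varepsilon)]'}\|\overline{\zeta/g}\|_{Z_T(\Omega_\varepsilon)}\le c(\varepsilon)\|\zeta\|_{Z_T(\Gamma)}$ by Lemma \ref{L:ET_Pair} together with \eqref{E:CoEx_Lp} and \eqref{E:CoDe_L2}. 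Hence $\partial_t\mathcal{M}_\varepsilon u\in[Z_T(\Gamma)]'$, so $\mathcal{M}_\varepsilon u\in E_T(\Gamma)$, and unwinding the definition of the distributional time derivative yields
\begin{align*}
  \varepsilon\int_0^T\langle\partial_t\mathcal{M}_\varepsilon u(t),g\zeta(t)\rangle_{(H^1\cap L^4)(\Gamma)}\,dt=\int_0^T\langle\partial_tu(t),\bar\zeta(t)\rangle_{(H^1\cap L^4)(\Omega_\varepsilon)}\,dt
\end{align*}
for every $\zeta\in C_c^1(0,T;(H^1\cap L^4)(\Gamma)^N)$. Both sides depend continuously on $\zeta\in Z_T(\Gamma)$ (via Lemma \ref{L:ET_Pair} applied to the pairs $(\partial_t\mathcal{M}_\varepsilon u,g\zeta)$ and $(\partial_tu,\bar\zeta)$, together with the bound $\|\bar\zeta\|_{Z_T(\Omega_\varepsilon)}\le c(\varepsilon)\|\zeta\|_{Z_T(\Gamma)}$), so Lemma \ref{L:ET_Den}(i) extends the identity to all $\zeta\in Z_T(\Gamma)$, giving \eqref{E:Ave_Dt}.

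The one delicate point is the interplay between the time derivative $\partial_tu$, which lives in the nontrivially decomposed space $[Z_T(\Omega_\varepsilon)]'=L^2(0,T;[H^1]')+L^{4/3}(0,T;L^{4/3})$ by \eqref{E:Dual_ZT}, and the substitution of constant extensions as test functions; Lemma \ref{L:ET_Pair} is tailored exactly for this and removes the potential obstruction, so the argument reduces to the clean identity \eqref{E:Ave_Pair} and the regularity of $g$.
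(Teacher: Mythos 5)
Your proof is correct and follows essentially the same route as the paper's: establish $\mathcal{M}_\varepsilon u\in Z_T(\Gamma)$ via Lemmas \ref{L:Ave_Lp} and \ref{L:Ave_H1}, obtain the pairing identity for compactly supported smooth-in-time test functions by ``transposing'' onto $u$ and applying \eqref{E:Ave_Pair} with time-independent $g$, substitute $\zeta\mapsto g^{-1}\zeta$ to recognize $\partial_t\mathcal{M}_\varepsilon u\in[Z_T(\Gamma)]'$, and finish by density. The paper just does the dual-norm bound directly rather than invoking Lemma \ref{L:ET_Pair} by name, and records the explicit $\varepsilon^{-3/4}$ constant, but these are presentational differences only.
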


\begin{proof}
  We have $\mathcal{M}_\varepsilon u\in Z_T(\Gamma)$ by Lemmas \ref{L:Ave_Lp} and \ref{L:Ave_H1}.

  Next, let $\zeta\in C_c^1(0,T;(H^1\cap L^4)(\Gamma)^N)$.
  Then,
  \begin{align*}
    \bar{\zeta} \in C_c^1(0,T;(H^1\cap L^4)(\Omega_\varepsilon)^N), \quad \|\bar{\zeta}\|_{Z_T(\Omega_\varepsilon)} \leq c\varepsilon^{1/4}\|\zeta\|_{Z_T(\Gamma)}
  \end{align*}
  by \eqref{E:CoEx_Lp}, \eqref{E:CoDe_L2}, and $\varepsilon^{1/2}\leq\varepsilon^{1/4}$ due to $\varepsilon\in(0,1)$.
  Moreover,
  \begin{align} \label{Pf_ADt:Pair}
    \begin{aligned}
      \int_0^T\langle\partial_tu(t),\bar{\zeta}(t)\rangle_{(H^1\cap L^4)(\Omega_\varepsilon)}\,dt &= -\int_0^T\bigl(u(t),\partial_t\bar{\zeta}(t)\bigr)_{L^2(\Omega_\varepsilon)}\,dt \\
      &= -\varepsilon\int_0^T\bigl(g\mathcal{M}_\varepsilon u(t),\partial_t\zeta(t)\bigr)_{L^2(\Gamma)} \\
      &= -\varepsilon\int_0^T\bigl(\mathcal{M}_\varepsilon u(t),[\partial_t(g\zeta)](t)\bigr)_{L^2(\Gamma)}\,dt,
    \end{aligned}
  \end{align}
  since \eqref{E:Ave_Pair} holds and $g$ is independent of time.
  Replacing $\zeta$ with $g^{-1}\zeta$ and using
  \begin{align*}
    \|\bar{g}^{-1}\bar{\zeta}\|_{Z_T(\Omega_\varepsilon)} \leq c\varepsilon^{1/4}\|g^{-1}\zeta\|_{Z_T(\Gamma)} \leq c\varepsilon^{1/4}\|\zeta\|_{Z_T(\Gamma)}
  \end{align*}
  by $g\in C^1(\Gamma)$ and \eqref{E:G_Bdd}, we find that
  \begin{align*}
    \left|\int_0^T\bigl(\mathcal{M}_\varepsilon u(t),\partial_t\zeta(t)\bigr)_{L^2(\Gamma)}\,dt\right| &= \varepsilon^{-1}\left|\int_0^T\langle\partial_tu(t),\bar{g}^{-1}\bar{\zeta}(t)\rangle_{(H^1\cap L^4)(\Omega_\varepsilon)}\,dt\right| \\
    &\leq c\varepsilon^{-3/4}\|\partial_tu\|_{[Z_T(\Omega_\varepsilon)]'}\|\zeta\|_{Z_T(\Gamma)}
  \end{align*}
  for all $\zeta\in C_c^1(0,T;(H^1\cap L^4)(\Gamma)^N)$.
  Thus, $\partial_t\mathcal{M}_\varepsilon u\in [Z_T(\Gamma)]'$ and $\mathcal{M}_\varepsilon u\in E_T(\Gamma)$.

  The equality \eqref{E:Ave_Dt} follows from \eqref{Pf_ADt:Pair} when $\zeta\in C_c^1(0,T;(H^1\cap L^4)(\Gamma)^N)$.
  This space is dense in $Z_T(\Gamma)$, so the same equality holds for $\zeta\in Z_T(\Gamma)$ by a density argument.
\end{proof}

\section{Thin-film limit problem} \label{S:TFL}
The purpose of this section is to establish Theorems \ref{T:Weak} and \ref{T:Df_Sur}.
Let $\mathcal{M}_\varepsilon$ be the weighted average operator given in Section \ref{S:Ave}.
We write $c$ for a general positive constant independent of $\varepsilon$ and also of the constant $\lambda$ appearing in \eqref{E:GL_CTD}.
Also, for a function $\eta$ on $\Gamma$, we denote by $\bar{\eta}=\eta\circ\pi$ its constant extension in the normal direction of $\Gamma$.

\subsection{Average of the weak form of the thin domain problem} \label{SS:TFL_Ave}
First we give explicit estimates in terms of $\varepsilon$ for a weak solution $u^\varepsilon$ to \eqref{E:GL_CTD} and then derive a weak form satisfied by the weighted average $\mathcal{M}_\varepsilon u^\varepsilon$.

\begin{lemma} \label{L:Ue_Est}
  Let $c_1\geq1$, $\alpha\in(0,1/3]$, and $\varepsilon_0\in(0,1)$ be constants and let $\varepsilon\in(0,\varepsilon_0)$.
  Suppose that $u_0^\varepsilon\in L^\infty(\Omega_\varepsilon)^N$ and
  \begin{align} \label{E:U0_Sup}
    \|u_0^\varepsilon\|_{L^\infty(\Omega_\varepsilon)} \leq c_1\varepsilon^{-1/3+\alpha},
  \end{align}
  and let $u^\varepsilon$ be the global weak solution to \eqref{E:GL_CTD} given in Theorem \ref{T:GCT_Exi}.
  Then,
  \begin{align} \label{E:Ue_L2}
    \|u^\varepsilon(t)\|_{L^2(\Omega_\varepsilon)}^2+\int_0^t\|\nabla u^\varepsilon(s)\|_{L^2(\Omega_\varepsilon)}^2\,ds+\lambda\int_0^t\|u^\varepsilon(s)\|_{L^4(\Omega_\varepsilon)}^4\,ds \leq c\varepsilon^{1/3+2\alpha}e^{2\lambda t}
  \end{align}
  for all $t\geq0$, where $c>0$ depends on $c_1$ but is independent of $\varepsilon$ and $\lambda$.
  Moreover,
  \begin{align} \label{E:Ue_Sup}
    \|u^\varepsilon\|_{L^\infty(\Omega_\varepsilon\times(0,\infty))} \leq c_1\varepsilon^{-1/3+\alpha}.
  \end{align}
\end{lemma}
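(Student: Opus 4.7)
The plan is to read off both estimates directly from the basic existence/regularity lemmas already established (Lemma \ref{L:GCT_Ena} and Lemma \ref{L:GCT_Linf}); the only work is bookkeeping of the $\varepsilon$-powers so that the volume of $\Omega_\varepsilon$ is properly accounted for.

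First I would dispose of \eqref{E:Ue_Sup}. By Lemma \ref{L:GCT_Linf}, the weak solution satisfies
\begin{align*}
  \|u^\varepsilon\|_{L^\infty(\Omega_\varepsilon\times(0,\infty))} \leq \max\{1,\|u_0^\varepsilon\|_{L^\infty(\Omega_\varepsilon)}\}.
\end{align*}
Under the standing assumptions $\varepsilon\in(0,\varepsilon_0)\subset(0,1)$, $\alpha\in(0,1/3]$, and $c_1\geq 1$, we have $-1/3+\alpha\leq 0$, so $\varepsilon^{-1/3+\alpha}\geq 1$ and consequently $c_1\varepsilon^{-1/3+\alpha}\geq 1$. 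Combined with hypothesis \eqref{E:U0_Sup} this yields $\max\{1,\|u_0^\varepsilon\|_{L^\infty(\Omega_\varepsilon)}\}=c_1\varepsilon^{-1/3+\alpha}$, giving \eqref{E:Ue_Sup}.

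Next, for \eqref{E:Ue_L2}, I would invoke Lemma \ref{L:GCT_Ena}, which already delivers
\begin{align*}
  \|u^\varepsilon(t)\|_{L^2(\Omega_\varepsilon)}^2+2\int_0^t\|\nabla u^\varepsilon\|_{L^2(\Omega_\varepsilon)}^2\,ds+2\lambda\int_0^t\|u^\varepsilon\|_{L^4(\Omega_\varepsilon)}^4\,ds \leq e^{2\lambda t}\|u_0^\varepsilon\|_{L^2(\Omega_\varepsilon)}^2,
\end{align*}
so everything reduces to bounding $\|u_0^\varepsilon\|_{L^2(\Omega_\varepsilon)}^2$ by $c\varepsilon^{1/3+2\alpha}$. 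For this I would use the volume estimate $|\Omega_\varepsilon|\leq c\varepsilon$, which follows from the change-of-variables formula \eqref{E:Ch_Va} applied with $\varphi\equiv 1$ together with \eqref{E:G_Bdd} and \eqref{E:J_Bdd} (equivalently, from \eqref{E:CoEx_Lp} applied to $\eta\equiv 1$). Combining H\"older's inequality with hypothesis \eqref{E:U0_Sup} gives
\begin{align*}
  \|u_0^\varepsilon\|_{L^2(\Omega_\varepsilon)}^2 \leq |\Omega_\varepsilon|\,\|u_0^\varepsilon\|_{L^\infty(\Omega_\varepsilon)}^2 \leq c\varepsilon\cdot c_1^2\varepsilon^{-2/3+2\alpha} = c(c_1)\,\varepsilon^{1/3+2\alpha},
\end{align*}
and inserting this into the displayed energy inequality (and absorbing the harmless factor $2$) yields \eqref{E:Ue_L2}.

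There is no real obstacle: both estimates are immediate consequences of the preceding lemmas once the volume scaling $|\Omega_\varepsilon|\leq c\varepsilon$ is combined with the assumed $L^\infty$-growth of the initial data. The only item worth emphasizing is the exponent arithmetic $1-2/3+2\alpha=1/3+2\alpha$, which is exactly what makes the threshold $\alpha\in(0,1/3]$ natural: at the borderline $\alpha=1/3$, the $L^2$ estimate is uniform in $\varepsilon$ (of order $\varepsilon$), matching the volume of the thin domain.
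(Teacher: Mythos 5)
Your proof is correct and follows exactly the same approach as the paper: bound $\|u_0^\varepsilon\|_{L^2}^2$ via H\"older and the volume estimate $|\Omega_\varepsilon|\leq c\varepsilon$, then plug into the energy estimate of Lemma \ref{L:GCT_Ena}, and deduce \eqref{E:Ue_Sup} from the maximum principle Lemma \ref{L:GCT_Linf} together with $c_1\varepsilon^{-1/3+\alpha}\geq 1$. One tiny slip: $\max\{1,\|u_0^\varepsilon\|_{L^\infty(\Omega_\varepsilon)}\}\leq c_1\varepsilon^{-1/3+\alpha}$ rather than ``$=$'' (since \eqref{E:U0_Sup} is an inequality), but this is exactly what is needed for \eqref{E:Ue_Sup} and does not affect the argument.
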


\begin{proof}
  Let $|\Omega_\varepsilon|$ be the volume of $\Omega_\varepsilon$.
  We have \eqref{E:Ue_L2} by \eqref{E:GCT_Ena} and
  \begin{align} \label{Pf_Ue:U0_L2}
    \|u_0^\varepsilon\|_{L^2(\Omega_\varepsilon)}^2 \leq \|u_0^\varepsilon\|_{L^\infty(\Omega_\varepsilon)}^2\cdot|\Omega_\varepsilon| \leq c_1^2\varepsilon^{-2/3+2\alpha}\cdot c\varepsilon = c_1^2c\varepsilon^{1/3+2\alpha}.
  \end{align}
  Also, \eqref{E:Ue_Sup} follows from \eqref{E:GCT_Linf}, \eqref{E:U0_Sup}, and $c_1\varepsilon^{-1/3+\alpha}\geq1$.
\end{proof}

\begin{lemma} \label{L:Av_Weak}
  Under the assumptions of Lemma \ref{L:Ue_Est}, we define
  \begin{align*}
    v^\varepsilon = \mathcal{M}_\varepsilon u^\varepsilon \quad\text{on}\quad \Gamma\times[0,\infty), \quad v_0^\varepsilon = \mathcal{M}_\varepsilon u_0^\varepsilon \quad\text{on}\quad \Gamma.
  \end{align*}
  Then, $v^\varepsilon\in E_T(\Gamma)$ for all $T>0$ and $v^\varepsilon(0)=v_0^\varepsilon$ in $L^2(\Gamma)^N$.
  Moreover,
  \begin{multline} \label{E:Av_Weak}
    \int_0^T\langle\partial_tv^\varepsilon(t),g\zeta(t)\rangle_{(H^1\cap L^4)(\Gamma)}\,dt+\int_0^T\bigl(g\nabla_\Gamma v^\varepsilon(t),\nabla_\Gamma\zeta(t)\bigr)_{L^2(\Gamma)}\,dt \\
    +\lambda\int_0^T\bigl(g(|v^\varepsilon(t)|^2-1)v^\varepsilon(t),\zeta(t)\bigr)_{L^2(\Gamma)}\,dt = R_\varepsilon(\zeta;T)
  \end{multline}
  for all $T>0$ and $\zeta\in Z_T(\Gamma)$.
  Here, $R_\varepsilon(\zeta;T)$ is linear in $\zeta$ and satisfies
  \begin{align} \label{E:AvW_Res}
    |R_\varepsilon(\zeta;T)| \leq c\varepsilon^{3\alpha}a_\lambda(T)\|\zeta\|_{L^2(0,T;H^1(\Gamma))},
  \end{align}
  where $a_\lambda(T)=(1+\lambda)(1+T)^{1/2}e^{\lambda T}$.
  Also, $c>0$ is independent of $\varepsilon$, $\lambda$, and $T$.
\end{lemma}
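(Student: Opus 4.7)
The plan is to start from the weak form \eqref{E:WF_GCT} for $u^\varepsilon$, insert the test function $\psi=\bar\zeta$ (the constant normal extension of an arbitrary $\zeta\in Z_T(\Gamma)$), and then dismantle each of the four terms using the structural lemmas of Section~\ref{S:Ave}, collecting all errors into $R_\varepsilon(\zeta;T)$. Admissibility of $\bar\zeta$ as a test function is immediate: $\bar\zeta\in Z_T(\Omega_\varepsilon)$ with $\|\bar\zeta\|_{Z_T(\Omega_\varepsilon)}\le c\varepsilon^{1/4}\|\zeta\|_{Z_T(\Gamma)}$ by \eqref{E:CoEx_Lp} and \eqref{E:CoDe_L2}. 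The time-derivative term transforms exactly: Lemma~\ref{L:Ave_Dt} gives
\begin{equation*}
\int_0^T\langle\partial_tu^\varepsilon,\bar\zeta\rangle_{(H^1\cap L^4)(\Omega_\varepsilon)}\,dt = \varepsilon\int_0^T\langle\partial_tv^\varepsilon,g\zeta\rangle_{(H^1\cap L^4)(\Gamma)}\,dt,
\end{equation*}
and in particular $v^\varepsilon\in E_T(\Gamma)$. Since $u^\varepsilon\in C([0,T];L^2(\Omega_\varepsilon)^N)$ by Lemma~\ref{L:ET_Con} and $\mathcal{M}_\varepsilon$ is bounded $L^2(\Omega_\varepsilon)\to L^2(\Gamma)$ by \eqref{E:Ave_Lp}, we also have $v^\varepsilon\in C([0,T];L^2(\Gamma)^N)$ with $v^\varepsilon(0)=\mathcal{M}_\varepsilon u_0^\varepsilon=v_0^\varepsilon$.

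Next, I would rewrite the remaining three terms. The linear part of the potential term is exact by \eqref{E:Ave_Pair}: $(u^\varepsilon,\bar\zeta)_{L^2(\Omega_\varepsilon)}=\varepsilon(gv^\varepsilon,\zeta)_{L^2(\Gamma)}$. The Dirichlet term is rewritten pointwise in time by Lemma~\ref{L:AT_Diri} as
\begin{equation*}
\bigl(\nabla u^\varepsilon(t),\nabla\bar\zeta(t)\bigr)_{L^2(\Omega_\varepsilon)} = \varepsilon\bigl(g\nabla_\Gamma v^\varepsilon(t),\nabla_\Gamma\zeta(t)\bigr)_{L^2(\Gamma)}+\varepsilon E_1(t),
\end{equation*}
with $|E_1(t)|\le c\varepsilon^{1/2}\|u^\varepsilon(t)\|_{H^1(\Omega_\varepsilon)}\|\nabla_\Gamma\zeta(t)\|_{L^2(\Gamma)}$, and the cubic term by Lemma~\ref{L:Ave_Non} as
\begin{equation*}
\bigl(|u^\varepsilon|^2u^\varepsilon,\bar\zeta\bigr)_{L^2(\Omega_\varepsilon)} = \varepsilon\bigl(g|v^\varepsilon|^2v^\varepsilon,\zeta\bigr)_{L^2(\Gamma)}+\varepsilon E_2(t),
\end{equation*}
with $|E_2(t)|\le c\varepsilon^{1/2}\|u^\varepsilon(t)\|_{L^\infty(\Omega_\varepsilon)}^2\|u^\varepsilon(t)\|_{H^1(\Omega_\varepsilon)}\|\zeta(t)\|_{L^2(\Gamma)}$, noting that $\|\partial_\nu u^\varepsilon\|_{L^2}\le\|\nabla u^\varepsilon\|_{L^2}$. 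Substituting everything into \eqref{E:WF_GCT} and dividing by $\varepsilon$ yields \eqref{E:Av_Weak} with $R_\varepsilon(\zeta;T)=-\int_0^T E_1\,dt-\lambda\int_0^T E_2\,dt$, which is linear in $\zeta$ by construction.

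The one technical step is the bound \eqref{E:AvW_Res}. Applying Cauchy--Schwarz in time gives
\begin{equation*}
|R_\varepsilon(\zeta;T)|\le c\varepsilon^{1/2}\|u^\varepsilon\|_{L^2(0,T;H^1(\Omega_\varepsilon))}\|\nabla_\Gamma\zeta\|_{L^2(0,T;L^2(\Gamma))}
+ c\lambda\varepsilon^{1/2}\|u^\varepsilon\|_{L^\infty}^2\|u^\varepsilon\|_{L^2(0,T;H^1(\Omega_\varepsilon))}\|\zeta\|_{L^2(0,T;L^2(\Gamma))}.
\end{equation*}
From \eqref{E:Ue_L2} we get $\|u^\varepsilon\|_{L^2(0,T;H^1(\Omega_\varepsilon))}\le c\varepsilon^{1/6+\alpha}(1+T)^{1/2}e^{\lambda T}$, and from \eqref{E:Ue_Sup} we have $\|u^\varepsilon\|_{L^\infty}^2\le c\varepsilon^{-2/3+2\alpha}$. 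The exponents then work out as $1/2+1/6+\alpha=2/3+\alpha\ge 3\alpha$ (using $\alpha\le 1/3$ and $\varepsilon<1$) for the first term, and $1/2-2/3+2\alpha+1/6+\alpha=3\alpha$ exactly for the second. Combined with $\|\nabla_\Gamma\zeta\|_{L^2(0,T;L^2(\Gamma))}\le\|\zeta\|_{L^2(0,T;H^1(\Gamma))}$, this gives the desired bound with $a_\lambda(T)=(1+\lambda)(1+T)^{1/2}e^{\lambda T}$.

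The main obstacle—and what makes the statement nontrivial—is reconciling the three competing sources of $\varepsilon$-powers in the cubic error: the $\varepsilon^{1/2}$ from the geometric estimate in Lemma~\ref{L:Ave_Non}, the $\varepsilon^{-2/3+2\alpha}$ blowup of $\|u^\varepsilon\|_{L^\infty}^2$ permitted by hypothesis (a), and the $\varepsilon^{1/6+\alpha}$ gain from the energy estimate (which inherits its $\varepsilon^{1/2}$ factor from $\|u_0^\varepsilon\|_{L^2}^2\le c\varepsilon^{-2/3+2\alpha}|\Omega_\varepsilon|$ in \eqref{Pf_Ue:U0_L2}). These balance precisely when the initial $L^\infty$-bound scales like $\varepsilon^{-1/3+\alpha}$, which is exactly why the threshold $\alpha\in(0,1/3]$ appears in condition (a) of Theorem~\ref{T:Weak}.
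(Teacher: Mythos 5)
Your proposal is correct and follows essentially the same route as the paper: substitute $\psi=\bar\zeta$ into the weak form \eqref{E:WF_GCT}, divide by $\varepsilon$, convert the time-derivative and linear potential terms exactly via Lemma~\ref{L:Ave_Dt} and \eqref{E:Ave_Pair}, and absorb the Dirichlet and cubic errors into $R_\varepsilon$ via Lemmas~\ref{L:AT_Diri} and~\ref{L:Ave_Non}, then estimate using \eqref{E:Ue_L2} and \eqref{E:Ue_Sup}. Your exponent bookkeeping is actually slightly more explicit than the paper's (which states only the Dirichlet-term exponent $\varepsilon^{2/3+\alpha}\le\varepsilon^{3\alpha}$ and lets the reader verify the cubic term); you correctly note that the cubic-error exponent balances to exactly $3\alpha$, which is a nice observation about why the threshold $\alpha\le 1/3$ appears.
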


Note that, if $v^\varepsilon\in E_T(\Gamma)$, then $v^\varepsilon\in C([0,T];L^2(\Gamma)^N)$ by Lemma \ref{L:ET_Con}.

\begin{proof}
  The regularity and initial condition of $v^\varepsilon$ follow from those of $u^\varepsilon$ and Lemma \ref{L:Ave_Dt}.

  Let $\zeta\in Z_T(\Gamma)$.
  We take $\psi=\bar{\zeta}$ in \eqref{E:WF_GCT} and divide both sides by $\varepsilon$ to get
  \begin{multline*}
    \varepsilon^{-1}\int_0^T\langle\partial_tu^\varepsilon(t),\bar{\zeta}(t)\rangle_{(H^1\cap L^4)(\Omega_\varepsilon)}\,dt+\varepsilon^{-1}\int_0^T\bigl(\nabla u^\varepsilon(t),\nabla\bar{\zeta}(t)\bigr)_{L^2(\Omega_\varepsilon)}\,dt \\
    +\varepsilon^{-1}\lambda\int_0^T\bigl((|u^\varepsilon(t)|^2-1)u^\varepsilon(t),\bar{\zeta}(t)\bigr)_{L^2(\Omega_\varepsilon)}\,dt = 0.
  \end{multline*}
  In this equality, we use \eqref{E:Ave_Dt} with $v^\varepsilon=\mathcal{M}_\varepsilon u^\varepsilon$ and
  \begin{align*}
    \varepsilon^{-1}\lambda\bigl(u^\varepsilon(t),\bar{\zeta}(t)\bigr)_{L^2(\Omega_\varepsilon)} = \lambda\bigl(gv^\varepsilon(t),\zeta(t)\bigr)_{L^2(\Gamma)}
  \end{align*}
  by \eqref{E:Ave_Pair}.
  Then, we get \eqref{E:Av_Weak}, if we define the residual term as
  \begin{multline*}
    R_\varepsilon(\zeta;T) = -\int_0^T\left\{\varepsilon^{-1}\bigl(\nabla u^\varepsilon(t),\nabla\bar{\zeta}(t)\bigr)_{L^2(\Omega_\varepsilon)}-\bigl(g\nabla_\Gamma v^\varepsilon(t),\nabla_\Gamma\zeta(t)\bigr)_{L^2(\Gamma)}\right\}\,dt \\
    -\lambda\int_0^T\left\{\varepsilon^{-1}\bigl(|u^\varepsilon(t)|^2u^\varepsilon(t),\bar{\zeta}(t)\bigr)_{L^2(\Omega_\varepsilon)}-\bigl(g|v^\varepsilon(t)|^2v^\varepsilon(t),\zeta(t)\bigr)_{L^2(\Gamma)}\right\}\,dt.
  \end{multline*}
  This is linear in $\zeta$.
  Moreover, by \eqref{E:Ave_Non}, \eqref{E:AT_Diri}, and H\"{o}lder's inequality,
  \begin{multline*}
    |R_\varepsilon(\zeta;T)| \leq c\varepsilon^{1/2}\|u^\varepsilon\|_{L^2(0,T;H^1(\Omega_\varepsilon))}\|\nabla_\Gamma\zeta\|_{L^2(0,T;L^2(\Gamma))} \\
    +c\varepsilon^{1/2}\lambda\|u^\varepsilon\|_{L^\infty(\Omega_\varepsilon\times(0,T))}^2\|u^\varepsilon\|_{L^2(0,T;H^1(\Omega_\varepsilon))}\|\zeta\|_{L^2(0,T;L^2(\Gamma))}.
  \end{multline*}
  To the right-hand side, we further apply \eqref{E:Ue_Sup} and
  \begin{align*}
    \|u^\varepsilon\|_{L^2(0,T;H^1(\Omega_\varepsilon))} \leq c\varepsilon^{1/6+\alpha}(1+T)^{1/2}e^{\lambda T}
  \end{align*}
  by \eqref{E:Ue_L2}.
  Then, we get \eqref{E:AvW_Res}, since $\varepsilon^{1/2+1/6+\alpha}=\varepsilon^{2/3+\alpha}\leq\varepsilon^{3\alpha}$ by $\alpha\in(0,1/3]$.
\end{proof}

\subsection{Estimates for the averaged weak solution} \label{SS:TFL_Est}
Using \eqref{E:Av_Weak}, we derive estimates for the averaged weak solution $v^\varepsilon=\mathcal{M}_\varepsilon u^\varepsilon$ which give the uniform boundedness of $v^\varepsilon$.

\begin{lemma} \label{L:Av_Ena}
  Under the assumptions of Lemma \ref{L:Ue_Est}, we have
  \begin{multline} \label{E:Av_Ena}
    \max_{t\in[0,T]}\|v^\varepsilon(t)\|_{L^2(\Gamma)}^2+\int_0^T\|\nabla_\Gamma v^\varepsilon(t)\|_{L^2(\Gamma)}^2\,dt+\lambda\int_0^T\|v^\varepsilon(t)\|_{L^4(\Gamma)}^4\,dt \\
    \leq ce^{c(1+\lambda)T}\Bigl\{\|v_0^\varepsilon\|_{L^2(\Gamma)}^2+\varepsilon^{6\alpha}(1+\lambda)^2\Bigr\}
  \end{multline}
  for all $T>0$.
  Here, $c>0$ is a constant independent of $\varepsilon$, $\lambda$, and $T$.
\end{lemma}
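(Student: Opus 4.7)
The plan is to test the weak form \eqref{E:Av_Weak} against $\zeta=v^\varepsilon$ itself on $[0,t]$ for each $t\in(0,T]$, convert the duality pairing with $\partial_tv^\varepsilon$ into a weighted time derivative, and then close the estimate by Gronwall after absorbing the residual with Young's inequality. To justify the test, note that $v^\varepsilon\in E_T(\Gamma)\subset Z_T(\Gamma)$ by Lemma~\ref{L:Av_Weak}, so $v^\varepsilon$ is an admissible test function on any sub-interval.

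First I would replace $T$ by $t$ in \eqref{E:Av_Weak} (valid because $\zeta$ ranges over $Z_T(\Gamma)$ and restriction is bounded) and set $\zeta=v^\varepsilon$. For the time derivative term, observe that multiplication by $g\in C^1(\Gamma)$ with $g^{-1}\in L^\infty(\Gamma)$ is a bounded linear map on both $H^1(\Gamma)$ and $L^4(\Gamma)$, so $gv^\varepsilon\in E_T(\Gamma)$ and $\partial_t(gv^\varepsilon)=g\,\partial_tv^\varepsilon$ in $[Z_T(\Gamma)]'$. Applying \eqref{E:ET_IbP} with $u_1=v^\varepsilon$ and $u_2=gv^\varepsilon$ yields
\begin{align*}
\int_0^t\langle\partial_sv^\varepsilon(s),gv^\varepsilon(s)\rangle_{(H^1\cap L^4)(\Gamma)}\,ds
=\tfrac12\bigl(gv^\varepsilon(t),v^\varepsilon(t)\bigr)_{L^2(\Gamma)}-\tfrac12\bigl(gv_0^\varepsilon,v_0^\varepsilon\bigr)_{L^2(\Gamma)},
\end{align*}
using $v^\varepsilon(0)=v_0^\varepsilon$. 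Combining with the Dirichlet and cubic terms, the identity becomes
\begin{align*}
\tfrac12\bigl\|\sqrt{g}\,v^\varepsilon(t)\bigr\|_{L^2(\Gamma)}^2
+\bigl\|\sqrt{g}\,\nabla_\Gamma v^\varepsilon\bigr\|_{L^2(0,t;L^2(\Gamma))}^2
+\lambda\bigl\|g^{1/4}v^\varepsilon\bigr\|_{L^4(0,t;L^4(\Gamma))}^4
=\tfrac12\bigl\|\sqrt{g}\,v_0^\varepsilon\bigr\|_{L^2(\Gamma)}^2
+\lambda\int_0^t(gv^\varepsilon,v^\varepsilon)_{L^2(\Gamma)}\,ds+R_\varepsilon(v^\varepsilon;t),
\end{align*}
where the $L^4(g^{1/4})$-norm is equivalent to the $L^4$-norm and similarly for $\sqrt{g}$-weighted quantities by \eqref{E:G_Bdd}.

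Next I would bound the residual using \eqref{E:AvW_Res} and Young's inequality: for any $\delta>0$,
\begin{align*}
|R_\varepsilon(v^\varepsilon;t)|
\leq c\varepsilon^{3\alpha}a_\lambda(t)\|v^\varepsilon\|_{L^2(0,t;H^1(\Gamma))}
\leq \frac{c\,\varepsilon^{6\alpha}a_\lambda(t)^2}{4\delta}+\delta\|v^\varepsilon\|_{L^2(0,t;H^1(\Gamma))}^2.
\end{align*}
Choosing $\delta$ small (depending only on the constants in \eqref{E:G_Bdd}), the $\delta\|\nabla_\Gamma v^\varepsilon\|_{L^2(0,t;L^2(\Gamma))}^2$ part is absorbed into the Dirichlet term, while the $\delta\int_0^t\|v^\varepsilon\|_{L^2(\Gamma)}^2\,ds$ part is merged with the $\lambda\int_0^t(gv^\varepsilon,v^\varepsilon)_{L^2(\Gamma)}\,ds$ term, which is bounded by $c(1+\lambda)\int_0^t\|v^\varepsilon(s)\|_{L^2(\Gamma)}^2\,ds$ via \eqref{E:G_Bdd}. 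Using \eqref{E:G_Bdd} on both sides once more, I obtain
\begin{align*}
\|v^\varepsilon(t)\|_{L^2(\Gamma)}^2
+\int_0^t\|\nabla_\Gamma v^\varepsilon\|_{L^2(\Gamma)}^2\,ds
+\lambda\int_0^t\|v^\varepsilon\|_{L^4(\Gamma)}^4\,ds
\leq c\|v_0^\varepsilon\|_{L^2(\Gamma)}^2+c\,\varepsilon^{6\alpha}a_\lambda(t)^2+c(1+\lambda)\int_0^t\|v^\varepsilon(s)\|_{L^2(\Gamma)}^2\,ds.
\end{align*}

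Finally, since $a_\lambda(t)^2=(1+\lambda)^2(1+t)e^{2\lambda t}\leq c(1+\lambda)^2e^{c(1+\lambda)T}$ for $t\in[0,T]$, Gronwall's inequality applied to $\varphi(t):=\|v^\varepsilon(t)\|_{L^2(\Gamma)}^2$ (with forcing $c\|v_0^\varepsilon\|_{L^2(\Gamma)}^2+c\varepsilon^{6\alpha}(1+\lambda)^2e^{c(1+\lambda)T}$) yields
\begin{align*}
\|v^\varepsilon(t)\|_{L^2(\Gamma)}^2\leq ce^{c(1+\lambda)T}\bigl\{\|v_0^\varepsilon\|_{L^2(\Gamma)}^2+\varepsilon^{6\alpha}(1+\lambda)^2\bigr\}
\end{align*}
for every $t\in[0,T]$, and substituting this back into the previous inequality controls the gradient and $L^4$ integrals by the same bound, proving \eqref{E:Av_Ena}. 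The only delicate step I foresee is rigorously applying the weighted integration by parts identity \eqref{E:ET_IbP} with the $g$-weight; this is handled as above by checking that $gv^\varepsilon\in E_T(\Gamma)$, after which the argument reduces to a standard energy-Gronwall routine.
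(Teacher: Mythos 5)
Your proof is correct and follows essentially the same route as the paper: test \eqref{E:Av_Weak} with $\zeta=v^\varepsilon$ on $[0,t]$, use the $g$-weighted version of \eqref{E:ET_IbP} to produce the energy identity, bound $R_\varepsilon$ via \eqref{E:AvW_Res} and Young's inequality, absorb the gradient term, and conclude by Gronwall. The only cosmetic difference is that you bound $a_\lambda(t)^2$ before invoking Gronwall while the paper does it after, which only changes the value of the generic constant $c$.
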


\begin{proof}
  Let $\zeta=v^\varepsilon$ in \eqref{E:Av_Weak} with $T$ replaced by $t\in[0,T]$.
  Then, noting that $g$ is independent of time and positive by \eqref{E:G_Bdd}, we observe by \eqref{E:ET_IbP} that
  \begin{multline*}
    \frac{1}{2}\|g^{1/2}v^\varepsilon(t)\|_{L^2(\Gamma)}^2+\int_0^t\|g^{1/2}\nabla_\Gamma v^\varepsilon(s)\|_{L^2(\Gamma)}^2\,ds+\lambda\int_0^t\|g^{1/4}v^\varepsilon(s)\|_{L^4(\Gamma)}^4\,ds \\
    = \frac{1}{2}\|g^{1/2}v_0^\varepsilon\|_{L^2(\Gamma)}^2+\lambda\int_0^t\|g^{1/2}v^\varepsilon(s)\|_{L^2(\Gamma)}^2\,ds+R_\varepsilon(v^\varepsilon;t).
  \end{multline*}
  We apply \eqref{E:G_Bdd} and \eqref{E:AvW_Res} to this equality to get
  \begin{multline} \label{Pf_AE:c1c2}
    \|v^\varepsilon(t)\|_{L^2(\Gamma)}^2+\int_0^t\|\nabla_\Gamma v^\varepsilon(s)\|_{L^2(\Gamma)}^2\,ds+\lambda\int_0^t\|v^\varepsilon(s)\|_{L^4(\Gamma)}^4\,ds \\
    \leq c_2\left(\|v_0^\varepsilon\|_{L^2(\Gamma)}^2+\lambda\int_0^t\|v^\varepsilon(s)\|_{L^2(\Gamma)}^2\,ds\right)+c_3\varepsilon^{3\alpha}a_\lambda(t)\|v^\varepsilon\|_{L^2(0,t;H^1(\Gamma))}
  \end{multline}
  with some constants $c_2,c_3>0$ independent of $\varepsilon$, $\lambda$, and $T$.
  Moreover,
  \begin{align*}
    c_3\varepsilon^{3\alpha}a_\lambda(t)\|v^\varepsilon\|_{L^2(0,t;H^1(\Gamma))} &\leq c\varepsilon^{6\alpha}a_\lambda(t)^2+\frac{1}{2}\|v^\varepsilon\|_{L^2(0,t;H^1(\Gamma))}^2 \\
    &= c\varepsilon^{6\alpha}a_\lambda(t)^2+\frac{1}{2}\int_0^t\Bigl(\|v^\varepsilon(s)\|_{L^2(\Gamma)}^2+\|\nabla_\Gamma v^\varepsilon(s)\|_{L^2(\Gamma)}^2\Bigr)\,ds
  \end{align*}
  by Young's inequality, and $a_\lambda(t)\leq a_\lambda(T)$ for $t\in[0,T]$ by the definition of $a_\lambda(t)$.
  We apply these estimates to \eqref{Pf_AE:c1c2} and make the integral of $\nabla_\Gamma v^\varepsilon$ appearing in the right-hand side absorbed into the left-hand side.
  Then, we find that
  \begin{multline} \label{Pf_AE:Ineq}
    \|v^\varepsilon(t)\|_{L^2(\Gamma)}^2+\int_0^t\|\nabla_\Gamma v^\varepsilon(s)\|_{L^2(\Gamma)}^2\,ds+\lambda\int_0^t\|v^\varepsilon(s)\|_{L^4(\Gamma)}^4\,ds \\
    \leq c\left(\|v_0^\varepsilon\|_{L^2(\Gamma)}^2+\varepsilon^{6\alpha}a_\lambda(T)^2+(1+\lambda)\int_0^t\|v^\varepsilon(s)\|_{L^2(\Gamma)}^2\,ds\right)
  \end{multline}
  for all $t\in[0,T]$ and thus, by $\lambda>0$,
  \begin{align*}
    \|v^\varepsilon(t)\|_{L^2(\Gamma)}^2 \leq c\left(\|v_0^\varepsilon\|_{L^2(\Gamma)}^2+\varepsilon^{6\alpha}a_\lambda(T)^2+(1+\lambda)\int_0^t\|v^\varepsilon(s)\|_{L^2(\Gamma)}^2\,ds\right)
  \end{align*}
  for all $t\in[0,T]$.
  Hence, Gronwall's inequality implies that
  \begin{align*}
    \|v^\varepsilon(t)\|_{L^2(\Gamma)}^2 \leq ce^{c(1+\lambda)t}\Bigl(\|v_0^\varepsilon\|_{L^2(\Gamma)}^2+\varepsilon^{6\alpha}a_\lambda(T)^2\Bigr), \quad t\in[0,T].
  \end{align*}
  Applying this to \eqref{Pf_AE:Ineq} and noting that
  \begin{align*}
    a_\lambda(T)^2 = (1+\lambda)^2(1+T)e^{2\lambda T} \leq (1+\lambda)^2\cdot e^T \cdot e^{2\lambda T} \leq (1+\lambda)^2e^{c(1+\lambda)T},
  \end{align*}
  we obtain \eqref{E:Av_Ena}.
\end{proof}

\begin{lemma} \label{L:AvEn_Dt}
  Under the assumptions of Lemma \ref{L:Ue_Est}, we have
  \begin{align} \label{E:AvEn_Dt}
    \|\partial_tv^\varepsilon\|_{[Z_T(\Gamma)]'} \leq c_{\lambda,T}\Bigl(\|v_0^\varepsilon\|_{L^2(\Gamma)}+\|v_0^\varepsilon\|_{L^2(\Gamma)}^{3/2}+1\Bigr)
  \end{align}
  for all $T>0$, where $c_{\lambda,T}>0$ is a constant depending on $\lambda$ and $T$ but independent of $\varepsilon$.
\end{lemma}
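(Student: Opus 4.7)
The plan is to start from the weak form \eqref{E:Av_Weak} and solve for $\int_0^T \langle\partial_tv^\varepsilon(t),\psi(t)\rangle_{(H^1\cap L^4)(\Gamma)}\,dt$ for a given $\psi\in Z_T(\Gamma)$ with $\|\psi\|_{Z_T(\Gamma)}\le 1$, then bound each resulting term. Since the pairing in \eqref{E:Av_Weak} is against $g\zeta$ rather than $\zeta$ itself, I will substitute $\zeta=g^{-1}\psi$. By \eqref{E:G_Bdd} and $g\in C^1(\Gamma)$, the multiplication operator $\psi\mapsto g^{-1}\psi$ is bounded on both $L^2(0,T;H^1(\Gamma)^N)$ and $L^4(0,T;L^4(\Gamma)^N)$, so $\|\zeta\|_{Z_T(\Gamma)}\le c\|\psi\|_{Z_T(\Gamma)}$.

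Next I will estimate term by term. With this choice of $\zeta$, \eqref{E:Av_Weak} rearranges to
\begin{align*}
  \int_0^T\langle\partial_tv^\varepsilon,\psi\rangle_{(H^1\cap L^4)(\Gamma)}\,dt
  &= -\int_0^T(g\nabla_\Gamma v^\varepsilon,\nabla_\Gamma\zeta)_{L^2(\Gamma)}\,dt \\
  &\quad -\lambda\int_0^T(g(|v^\varepsilon|^2-1)v^\varepsilon,\zeta)_{L^2(\Gamma)}\,dt+R_\varepsilon(\zeta;T).
\end{align*}
The Dirichlet term is handled by Cauchy--Schwarz, giving a bound by $c\|\nabla_\Gamma v^\varepsilon\|_{L^2(0,T;L^2(\Gamma))}\|\psi\|_{L^2(0,T;H^1(\Gamma))}$. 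The linear piece of the lower-order term yields $c\lambda\|v^\varepsilon\|_{L^2(0,T;L^2(\Gamma))}\|\psi\|_{L^2(0,T;L^2(\Gamma))}$. The cubic piece is the key one: by H\"older (exponents $4/3$ and $4$) it is bounded by $c\lambda\|v^\varepsilon\|_{L^4(0,T;L^4(\Gamma))}^3\|\psi\|_{L^4(0,T;L^4(\Gamma))}$. Finally, the residual is estimated by $c\varepsilon^{3\alpha}a_\lambda(T)\|\psi\|_{L^2(0,T;H^1(\Gamma))}$ using \eqref{E:AvW_Res}.

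The three norms of $v^\varepsilon$ appearing above are controlled by the energy estimate \eqref{E:Av_Ena}: the $H^1$-in-space norm and the time-integrated $L^2$-norm are bounded by $ce^{c(1+\lambda)T/2}(\|v_0^\varepsilon\|_{L^2(\Gamma)}+\varepsilon^{3\alpha}(1+\lambda))$, while the $L^4_tL^4_x$-norm to the power $3$ is bounded by
\begin{align*}
  \|v^\varepsilon\|_{L^4(0,T;L^4(\Gamma))}^3 \le c\lambda^{-3/4}e^{c(1+\lambda)T}\bigl(\|v_0^\varepsilon\|_{L^2(\Gamma)}^{3/2}+\varepsilon^{9\alpha/2}(1+\lambda)^{3/2}\bigr),
\end{align*}
which is exactly where the $\|v_0^\varepsilon\|_{L^2(\Gamma)}^{3/2}$ term in \eqref{E:AvEn_Dt} is generated (note the $\lambda\cdot\lambda^{-3/4}=\lambda^{1/4}$ prefactor is absorbed into $c_{\lambda,T}$). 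Absorbing every factor that depends only on $\lambda$ and $T$ (including $\varepsilon^{3\alpha}\le 1$ and $a_\lambda(T)$) into a single constant $c_{\lambda,T}$, and taking the supremum over unit-ball $\psi$, yields \eqref{E:AvEn_Dt}.

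I do not anticipate a serious obstacle: the only non-routine point is bookkeeping the $L^4$-$L^{4/3}$ duality so that the cube of the energy bound produces the half-power $\|v_0^\varepsilon\|_{L^2(\Gamma)}^{3/2}$ rather than a higher power, and observing that multiplication by $g^{\pm1}$ is well-behaved on $Z_T(\Gamma)$ so that testing against $g^{-1}\psi$ costs only a constant factor.
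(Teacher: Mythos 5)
Your proposal matches the paper's own argument step for step: substitute $g^{-1}\psi$ as the test function in \eqref{E:Av_Weak}, bound the Dirichlet, linear, and cubic terms by H\"older's inequality, control the resulting norms of $v^\varepsilon$ via the energy estimate \eqref{E:Av_Ena} (which is where the $\|v_0^\varepsilon\|_{L^2(\Gamma)}^{3/2}$ arises from the $L^4L^4$-norm cubed), bound $R_\varepsilon$ by \eqref{E:AvW_Res} with $\varepsilon^{3\alpha}\le1$, and absorb $\lambda$- and $T$-dependent factors into $c_{\lambda,T}$. The only cosmetic difference is that you track the $\lambda^{-3/4}$ factor explicitly, whereas the paper records \eqref{E:Av_Ena} in the intermediate form \eqref{Pf_EnDt:Ena} with $\lambda^{-1}$ already absorbed into $c_{\lambda,T}$.
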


\begin{proof}
  Throughout the proof, we write $c_{\lambda,T}$ for a general positive constant depending on $\lambda$ and $T$ but independent of $\varepsilon$.
  Then, we see by \eqref{E:Av_Ena} and $\varepsilon^{6\alpha}\leq1$ that
  \begin{align} \label{Pf_EnDt:Ena}
    \int_0^T\|v^\varepsilon(t)\|_{H^1(\Gamma)}^2\,dt+\int_0^T\|v^\varepsilon(t)\|_{L^4(\Gamma)}^4\,dt \leq c_{\lambda,T}\Bigl(\|v_0^\varepsilon\|_{L^2(\Gamma)}^2+1\Bigr).
  \end{align}
  For $\zeta\in Z_T(\Gamma)$, we substitute $g^{-1}\zeta$ for the test function of \eqref{E:Av_Weak}.
  Then, by
  \begin{align*}
    \nabla_\Gamma(g^{-1}\zeta) = \bigl(\underline{D}_i(g\zeta_j)\bigr)_{i,j} = \bigl(-g^{-2}(\underline{D}_ig)\zeta_j+g^{-1}\underline{D}_i\zeta_j\bigr)_{i,j}
  \end{align*}
  and by $g\in C^1(\Gamma)$, \eqref{E:G_Bdd}, and H\"{o}lder's inequality, we find that
  \begin{align*}
    \left|\int_0^T\langle\partial_tv^\varepsilon(t),\zeta(t)\rangle_{(H^1\cap L^4)(\Gamma)}\,dt\right| \leq cQ(v^\varepsilon,\zeta;T)+|R_\varepsilon(\zeta;T)|,
  \end{align*}
  where
  \begin{multline*}
    Q(v^\varepsilon,\zeta;T) = \|\nabla_\Gamma v^\varepsilon\|_{L^2(0,T;L^2(\Gamma))}\|\zeta\|_{L^2(0,T;H^1(\Gamma))}+\lambda\|v^\varepsilon\|_{L^4(0,T;L^4(\Gamma)}^3\|\zeta\|_{L^4(0,T;L^4(\Gamma))} \\
    +\lambda\|v^\varepsilon\|_{L^2(0,T;L^2(\Gamma))}\|\zeta\|_{L^2(0,T;L^2(\Gamma))}.
  \end{multline*}
  Since \eqref{Pf_EnDt:Ena} holds and the norms of $\zeta$ are bounded by $\|\zeta\|_{Z_T(\Gamma)}$, we have
  \begin{align*}
    Q(v^\varepsilon,\zeta;T) \leq c_{\lambda,T}\Bigl(\|v_0^\varepsilon\|_{L^2(\Gamma)}+\|v_0^\varepsilon\|_{L^2(\Gamma)}^{3/2}+1\Bigr)\|\zeta\|_{Z_T(\Gamma)}.
  \end{align*}
  Moreover, $|R_\varepsilon(\zeta;T)|\leq c_{\lambda,T}\|\zeta\|_{Z_T(\Gamma)}$ by \eqref{E:AvW_Res} and $\varepsilon^{3\alpha}\leq1$.
  Hence,
  \begin{align*}
    \left|\int_0^T\langle\partial_tv^\varepsilon(t),\zeta(t)\rangle_{(H^1\cap L^4)(\Gamma)}\,dt\right| \leq c_{\lambda,T}\Bigl(\|v_0^\varepsilon\|_{L^2(\Gamma)}+\|v_0^\varepsilon\|_{L^2(\Gamma)}^{3/2}+1\Bigr)\|\zeta\|_{Z_T(\Gamma)}
  \end{align*}
  for all $\zeta\in Z_T(\Gamma)$, which implies \eqref{E:AvEn_Dt}.
\end{proof}

\subsection{Weak convergence and characterization of the limit} \label{SS:TFL_WC}
Based on the results in the previous subsections, we prove Theorem \ref{T:Weak}.

\begin{proof}[Proof of Theorem \ref{T:Weak}]
  Suppose that $u_0^\varepsilon\in L^\infty(\Omega_\varepsilon)^N$ and the conditions (a) and (b) in Theorem \ref{T:Weak} are satisfied.
  Then, for each $\varepsilon\in(0,\varepsilon_0)$, there exists a unique global weak solution $u^\varepsilon$ to \eqref{E:GL_CTD} by Theorem \ref{T:GCT_Exi}.
  Moreover, we have \eqref{E:Ue_L2} and \eqref{E:Ue_Sup} by Lemma \ref{L:Ue_Est}, and thus we can use the results given in Sections \ref{SS:TFL_Ave}--\ref{SS:TFL_Est}.
  Let
  \begin{align*}
    v^\varepsilon = \mathcal{M}_\varepsilon u^\varepsilon \quad\text{on}\quad \Gamma\times[0,\infty), \quad v_0^\varepsilon = \mathcal{M}_\varepsilon u_0^\varepsilon \quad\text{on}\quad \Gamma.
  \end{align*}
  Then, $\{v_0^\varepsilon\}_\varepsilon$ is bounded in $L^2(\Gamma)^N$ by the condition (b).
  Hence, for each fixed $T>0$, we see by \eqref{E:Av_Ena} with $\varepsilon^{6\alpha}\leq1$ and by \eqref{E:AvEn_Dt} that $\{v^\varepsilon\}_\varepsilon$ is bounded in the space $E_T(\Gamma)$ given by \eqref{E:Def_ET} (note that $\lambda>0$ is independent of $\varepsilon$).
  By this fact, there exists a sequence $\{\varepsilon_k\}_{k=1}^\infty$ in $(0,\varepsilon_0)$ with $\varepsilon_k\to0$ and some $v_T\in E_T(\Gamma)$ such that $v^{\varepsilon_k}\to v_T$ weakly in $E_T(\Gamma)$, i.e.
  \begin{align} \label{Pf_TW:WeCo}
    \begin{alignedat}{3}
      \lim_{k\to\infty}v^{\varepsilon_k} &= v_T &\quad &\text{weakly in} &\quad &Z_T(\Gamma), \\
      \lim_{k\to\infty}\partial_tv^{\varepsilon_k} &= \partial_tv_T &\quad &\text{weakly in} &\quad &[Z_T(\Gamma)]'.
    \end{alignedat}
  \end{align}
  Moreover, since the embedding $E_T(\Gamma)\hookrightarrow L^2(0,T;L^2(\Gamma)^N)$ is compact by Lemma \ref{L:ET_AuLi}, we may assume, by taking a subsequence again, that
  \begin{align*}
    \lim_{k\to\infty}v^{\varepsilon_k} = v_T \quad\text{strongly in}\quad L^2(0,T;L^2(\Gamma)^N),
  \end{align*}
  and in particular, $v^{\varepsilon_k}\to v_T$ a.e. on $\Gamma\times(0,T)$ and thus
  \begin{align*}
    \lim_{k\to\infty}|v^{\varepsilon_k}|^2v^{\varepsilon_k} = |v_T|^2v_T \quad\text{a.e. on}\quad \Gamma\times(0,T).
  \end{align*}
  We also observe by the boundedness of $\{v^\varepsilon\}_\varepsilon$ in $E_T(\Gamma)$ that the norm
  \begin{align*}
    \|\,|v^{\varepsilon_k}|^2v^{\varepsilon_k}\|_{L^{4/3}(0,T;L^{4/3}(\Gamma))} = \|v^{\varepsilon_k}\|_{L^4(0,T;L^4(\Gamma))}^3
  \end{align*}
  is bounded uniformly with respect to $\varepsilon_k$.
  Hence, we can apply Lemma \ref{L:AC_Non} to get
  \begin{align} \label{Pf_TW:L43}
    \lim_{k\to\infty}|v^{\varepsilon_k}|^2v^{\varepsilon_k} = |v_T|^2v_T \quad\text{weakly in}\quad L^{4/3}(0,T;L^{4/3}(\Gamma)^N).
  \end{align}
  Let us show that $v_T$ satisfies \eqref{E:WF_GLim}.
  For each $\zeta\in Z_T(\Gamma)$, we have
  \begin{multline} \label{Pf_TW:WeEk}
    \int_0^T\langle\partial_tv^{\varepsilon_k}(t),g\zeta(t)\rangle_{(H^1\cap L^4)(\Gamma)}\,dt+\int_0^T\bigl(g\nabla_\Gamma v^{\varepsilon_k}(t),\nabla_\Gamma\zeta(t)\bigr)_{L^2(\Gamma)}\,dt \\
    +\lambda\int_0^T\bigl(g(|v^{\varepsilon_k}(t)|^2-1)v^{\varepsilon_k}(t),\zeta(t)\bigr)_{L^2(\Gamma)}\,dt = R_{\varepsilon_k}(\zeta;T)
  \end{multline}
  by \eqref{E:Av_Weak}.
  Let $\varepsilon_k\to0$ in this equality.
  Then,
  \begin{align*}
    \lim_{k\to\infty}\int_0^T\langle\partial_tv^{\varepsilon_k}(t),g\zeta(t)\rangle_{(H^1\cap L^4)(\Gamma)}\,dt &= \int_0^T\langle\partial_tv_T(t),g\zeta(t)\rangle_{(H^1\cap L^4)(\Gamma)}\,dt, \\
    \lim_{k\to\infty}\int_0^T\bigl(g\nabla_\Gamma v^{\varepsilon_k}(t),\nabla_\Gamma\zeta(t)\bigr)_{L^2(\Gamma)}\,dt &= \int_0^T\bigl(g\nabla_\Gamma v_T(t),\nabla_\Gamma\zeta(t)\bigr)_{L^2(\Gamma)}\,dt, \\
    \lim_{k\to\infty}\lambda\int_0^T\bigl(gv^{\varepsilon_k}(t),\zeta(t)\bigr)_{L^2(\Gamma)}\,dt &= \lambda\int_0^T\bigl(gv_T(t),\zeta(t)\bigr)_{L^2(\Gamma)}\,dt
  \end{align*}
  by \eqref{Pf_TW:WeCo}.
  Moreover, we see by \eqref{Pf_TW:L43} that
  \begin{align*}
    \lim_{k\to\infty}\lambda\int_0^T\bigl(g|v^{\varepsilon_k}(t)|^2v^{\varepsilon_k}(t),\zeta(t)\bigr)_{L^2(\Gamma)}\,dt = \lambda\int_0^T\bigl(g|v_T(t)|^2v_T(t),\zeta(t)\bigr)_{L^2(\Gamma)}\,dt.
  \end{align*}
  In the above, we also used the fact that $g$ satisfies \eqref{E:G_Bdd}.
  We also have
  \begin{align} \label{Pf_TW:Res}
    |R_{\varepsilon_k}(\zeta;T)| \leq c\varepsilon_k^{3\alpha}a_\lambda(T)\|\zeta\|_{L^2(0,T;H^1(\Gamma))} \to 0 \quad\text{as}\quad \varepsilon_k\to0
  \end{align}
  by \eqref{E:AvW_Res}.
  Hence, letting $\varepsilon_k\to0$ in \eqref{Pf_TW:WeEk}, we find that $v_T$ satisfies \eqref{E:WF_GLim}.

  Next, we verify the initial condition.
  For $\zeta_0\in C^1(\Gamma)$, let
  \begin{align*}
    \zeta(t) = (1-t/T)\zeta_0 \in C^1([0,T];(H^1\cap L^4)(\Gamma)^N) \subset E_T(\Gamma).
  \end{align*}
  We take this $\zeta$ in \eqref{Pf_TW:WeEk} and carry out integration by parts with respect to time by using \eqref{E:ET_IbP}.
  Then, by $\zeta(0)=\zeta_0$ and $\zeta(T)=0$, we have
  \begin{align*}
    -(v^{\varepsilon_k}(0),g\zeta_0)_{L^2(\Gamma)}+I(v^{\varepsilon_k}) = R_{\varepsilon_k}(\zeta;T),
  \end{align*}
  where $I(w)$ for $w\in E_T(\Gamma)$ is given by
  \begin{multline*}
    I(w) = -\int_0^T\bigl(gw(t),\partial_t\zeta(t)\bigr)_{L^2(\Gamma)}\,dt+\int_0^T\bigl(g\nabla_\Gamma w(t),\nabla_\Gamma\zeta(t)\bigr)_{L^2(\Gamma)}\,dt \\
    +\lambda\int_0^T\bigl(g(|w(t)|^2-1)w(t),\zeta(t)\bigr)_{L^2(\Gamma)}dt.
  \end{multline*}
  We send $\varepsilon_k\to0$, use $v^{\varepsilon_k}(0)=v_0^{\varepsilon_k}\to v_0$ weakly in $L^2(\Gamma)^N$ by the condition (b) of Theorem \ref{T:Weak}, and apply \eqref{Pf_TW:WeCo}, \eqref{Pf_TW:L43}, and \eqref{Pf_TW:Res}.
  Then, we find that
  \begin{align*}
    -(v_0,g\zeta_0)_{L^2(\Gamma)}+I(v_T) = 0.
  \end{align*}
  On the other hand, since $v_T$ satisfies \eqref{E:WF_GLim}, we take the above $\zeta$ and use \eqref{E:ET_IbP} to get
  \begin{align*}
    -(v_T(0),g\zeta_0)_{L^2(\Gamma)}+I(v_T) = 0.
  \end{align*}
  Comparing the above two equalities, we obtain
  \begin{align*}
    (v_0,g\zeta_0)_{L^2(\Gamma)} = (v_T(0),g\zeta_0)_{L^2(\Gamma)} \quad\text{for all}\quad \zeta_0\in C^1(\Gamma).
  \end{align*}
  This gives $v_T(0)=v_0$ in $L^2(\Gamma)^N$, since $C^1(\Gamma)$ is dense in $L^2(\Gamma)$ and $g$ satisfies \eqref{E:G_Bdd}.
  By the above results, we conclude that $v_T$ is a unique weak solution to \eqref{E:GL_Lim} on $[0,T)$ with initial data $v_0$.
  Here, the uniqueness follows from Lemma \ref{L:GLim_Uni}.

  By the above arguments, we can also prove the following statement: for any sequence $\{\varepsilon_\ell\}_{\ell=1}^\infty$ in $(0,\varepsilon_0)$ with $\varepsilon_\ell\to0$, the sequence $\{v^{\varepsilon_\ell}\}_{\ell=1}^\infty$ has a subsequence that converges to the same $v_T$ weakly in $E_T(\Gamma)$.
  This shows that the full sequence $v^\varepsilon$ converges to $v_T$ weakly in $E_T(\Gamma)$ as $\varepsilon\to0$.
  In particular, we have
  \begin{align} \label{Pf_TW:Full}
    \lim_{\varepsilon\to0}v^\varepsilon = \lim_{\varepsilon\to0}\mathcal{M}_\varepsilon u^\varepsilon = v_T \quad\text{weakly in}\quad Z_T(\Gamma)
  \end{align}
  as stated in Theorem \ref{T:Weak} (recall the definition \eqref{E:Def_ZT} of $Z_T(\Gamma)$).

  Lastly, if $T<T'$, then $v_T=v_{T'}$ by the uniqueness of a weak solution to \eqref{E:GL_Lim} on $[0,T)$.
  Thus, setting $v=v_T$ on $[0,T)$ for each $T>0$, we can define a function
  \begin{align*}
    v \in C([0,\infty);L^2(S)^N)\cap L_{loc}^2([0,T);H^1(S)^N)\cap L_{loc}^4([0,\infty);L^4(S)^N),
  \end{align*}
  and we find that \eqref{Pf_TW:Full} holds with $v_T$ replaced by $v$ for all $T>0$ and that $v$ is a unique global weak solution to \eqref{E:GL_Lim} with initial data $v_0$.
\end{proof}

\subsection{Difference estimate on the surface} \label{SS:TFL_DiS}
Using the weak forms \eqref{E:WF_GLim} and \eqref{E:Av_Weak}, we derive the difference estimate of $\mathcal{M}_\varepsilon u^\varepsilon$ and $v$ stated in Theorem \ref{T:Df_Sur}.

\begin{proof}[Proof of Theorem \ref{T:Df_Sur}]
  Under the assumptions of Theorem \ref{T:Weak}, let $u^\varepsilon$ and $v$ be unique global weak solutions to \eqref{E:GL_CTD} and \eqref{E:GL_Lim}, respectively.
  We set
  \begin{alignat*}{3}
    v^\varepsilon &= \mathcal{M}_\varepsilon u^\varepsilon, &\quad V^\varepsilon &= v^\varepsilon-v &\quad &\text{on} \quad \Gamma\times[0,\infty), \\
    v_0^\varepsilon &= \mathcal{M}_\varepsilon u_0^\varepsilon, &\quad V_0^\varepsilon &= v_0^\varepsilon-v_0 &\quad &\text{on} \quad \Gamma.
  \end{alignat*}
  Then, for each fixed $T>0$, it follows form Definition \ref{D:WSL_Loc} and Lemma \ref{L:Av_Weak} that
  \begin{align*}
    V^\varepsilon \in E_T(\Gamma) \subset C([0,T];L^2(\Gamma)^N), \quad V^\varepsilon(0) = V_0^\varepsilon \quad\text{in}\quad L^2(\Gamma)^N.
  \end{align*}
  Moreover, for each $t\in[0,T]$ and $\zeta\in Z_t(\Gamma)$, we subtract \eqref{E:WF_GLim} from \eqref{E:Av_Weak} to get
  \begin{multline*}
    \int_0^t\langle\partial_sV^\varepsilon(s),g\zeta(s)\rangle_{(H^1\cap L^4)(\Gamma)}\,ds+\int_0^t\bigl(g\nabla_\Gamma V^\varepsilon(s),\nabla_\Gamma\zeta(s)\bigr)_{L^2(\Gamma)}\,ds \\
    +\lambda\int_0^t\bigl(g\{|v^\varepsilon(s)|^2v^\varepsilon(s)-|v(s)|^2v(s)\},\zeta(s)\bigr)_{L^2(\Gamma)}\,ds \\
    = \lambda\int_0^t\bigl(gV^\varepsilon(s),\zeta(s)\bigr)_{L^2(\Gamma)}\,ds+R_\varepsilon(\zeta;t),
  \end{multline*}
  where $R_\varepsilon(\zeta;t)$ satisfies \eqref{E:AvW_Res}.
  Let $\zeta=V^\varepsilon=v^\varepsilon-v$.
  Then, by \eqref{E:ET_IbP}, \eqref{Pf_GCU:Mono}, and $\lambda>0$,
  \begin{multline*}
    \frac{1}{2}\|g^{1/2}V^\varepsilon(t)\|_{L^2(\Gamma)}^2+\int_0^t\|g^{1/2}\nabla_\Gamma V^\varepsilon(s)\|_{L^2(\Gamma)}^2\,ds \\
    \leq \frac{1}{2}\|g^{1/2}V_0^\varepsilon\|_{L^2(\Gamma)}^2+\lambda\int_0^t\|g^{1/2}V^\varepsilon(s)\|_{L^2(\Gamma)}^2\,ds+|R_\varepsilon(V^\varepsilon;t)|
  \end{multline*}
  for all $t\in[0,T]$.
  Based on this, we proceed as in the proof of Lemma \ref{L:Av_Ena} to get
  \begin{align*}
    \max_{t\in[0,T]}\|V^\varepsilon(t)\|_{L^2(\Gamma)}^2+\int_0^T\|\nabla_\Gamma V^\varepsilon(t)\|_{L^2(\Gamma)}^2\,dt \leq ce^{c(1+\lambda)T}\Bigl\{\|V_0^\varepsilon\|_{L^2(\Gamma)}^2+\varepsilon^{6\alpha}(1+\lambda)^2\Bigr\}
  \end{align*}
  which gives \eqref{E:Df_Sur} (with a different constant $c>0$).
\end{proof}

\section{Difference estimate in the thin domain} \label{S:Diff_CT}
In this section, we prove the difference estimate \eqref{E:Df_CTD} in $\Omega_\varepsilon$.
Let $\mathcal{M}_\varepsilon$ be the weighted average operator, and let $\bar{\eta}=\eta\circ\pi$ be the constant extension of a function $\eta$ on $\Gamma$ in the normal direction of $\Gamma$.
We write $c$ for a general positive constant independent of $\varepsilon$ and $\lambda$.

As mentioned in Section \ref{S:Intro}, the difference estimate \eqref{E:Df_Sur} on $\Gamma$ does not give \eqref{E:Df_CTD}, since the estimate for the difference of $u^\varepsilon$ and $\mathcal{M}_\varepsilon u^\varepsilon$ requires a higher order regularity of $u^\varepsilon$ as in \eqref{E:Ave_Dif}.
In particular, since we consider a weak solution $u^\varepsilon$ to \eqref{E:GL_CTD} which does not have the $H^2$-regularity, the estimate for the gradient in \eqref{E:Df_CTD} cannot be obtained from \eqref{E:Df_Sur}.
To overcome this difficulty, we derive a weak form in $\Omega_\varepsilon$ satisfied by the constant extension $\bar{v}$ of a weak solution $v$ to \eqref{E:GL_Lim} and apply an energy method in $\Omega_\varepsilon$ to $u^\varepsilon-\bar{v}$.

\begin{lemma} \label{L:Ext_Dt}
  For a fixed $T>0$, let $v\in E_T(\Gamma)$.
  Then, $\bar{v}\in E_T(\Omega_\varepsilon)$ and
  \begin{align} \label{E:Ext_Dt}
    \int_0^T\langle\partial_t\bar{v}(t),\psi(t)\rangle_{(H^1\cap L^4)(\Omega_\varepsilon)}\,dt = \varepsilon\int_0^T\langle\partial_tv(t),g\mathcal{M}_\varepsilon\psi(t)\rangle_{(H^1\cap L^4)(\Gamma)}\,dt.
  \end{align}
\end{lemma}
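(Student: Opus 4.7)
My plan is to mirror the proof of Lemma \ref{L:Ave_Dt} by ``adjointing'' the weighted average identity \eqref{E:Ave_Pair}, while replacing the roles of $\Omega_\varepsilon$ and $\Gamma$ in the test spaces.

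First I would establish $\bar{v}\in Z_T(\Omega_\varepsilon)$. Since $v\in Z_T(\Gamma)$, for a.e.\ $t\in(0,T)$ we have $v(t)\in H^1(\Gamma)^N\cap L^4(\Gamma)^N$, and applying \eqref{E:CoEx_Lp} to each component together with \eqref{E:CoDe_L2} to the gradient yields
\begin{align*}
  \|\bar{v}(t)\|_{H^1(\Omega_\varepsilon)} \leq c\varepsilon^{1/2}\|v(t)\|_{H^1(\Gamma)}, \qquad \|\bar{v}(t)\|_{L^4(\Omega_\varepsilon)} \leq c\varepsilon^{1/4}\|v(t)\|_{L^4(\Gamma)}.
\end{align*}
Integrating in time over $(0,T)$ gives $\|\bar{v}\|_{Z_T(\Omega_\varepsilon)}\leq c\varepsilon^{1/4}\|v\|_{Z_T(\Gamma)}$, so $\bar{v}$ lies in $Z_T(\Omega_\varepsilon)$.

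Next I would test against $\psi\in C_c^1(0,T;(H^1\cap L^4)(\Omega_\varepsilon)^N)$, use the definition of the distributional time derivative, and apply \eqref{E:Ave_Pair} with the roles reversed. Since $\mathcal{M}_\varepsilon$ is a purely spatial operator acting linearly and continuously on $L^2$, we have $\partial_t\mathcal{M}_\varepsilon\psi=\mathcal{M}_\varepsilon\partial_t\psi$ as distributions in $t$, and $\mathcal{M}_\varepsilon\psi\in C_c^1(0,T;(H^1\cap L^4)(\Gamma)^N)$ by Lemmas \ref{L:Ave_Lp} and \ref{L:Ave_H1}. Therefore
\begin{align*}
  \int_0^T\langle\partial_t\bar{v}(t),\psi(t)\rangle_{(H^1\cap L^4)(\Omega_\varepsilon)}\,dt
  &= -\int_0^T\bigl(\bar{v}(t),\partial_t\psi(t)\bigr)_{L^2(\Omega_\varepsilon)}\,dt \\
  &= -\varepsilon\int_0^T\bigl(gv(t),\partial_t\mathcal{M}_\varepsilon\psi(t)\bigr)_{L^2(\Gamma)}\,dt \\
  &= \varepsilon\int_0^T\langle\partial_tv(t),g\mathcal{M}_\varepsilon\psi(t)\rangle_{(H^1\cap L^4)(\Gamma)}\,dt,
\end{align*}
where the last step uses that $v\in E_T(\Gamma)$, $g$ is time-independent and satisfies \eqref{E:G_Bdd}, and $g\mathcal{M}_\varepsilon\psi\in C_c^1(0,T;(H^1\cap L^4)(\Gamma)^N)\subset Z_T(\Gamma)$.

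To confirm $\partial_t\bar{v}\in[Z_T(\Omega_\varepsilon)]'$ (so that $\bar{v}\in E_T(\Omega_\varepsilon)$), I would estimate the right-hand side via Lemma \ref{L:ET_Pair}: combining \eqref{E:Ave_Lp}, \eqref{E:Ave_H1}, and \eqref{E:G_Bdd} yields $\|g\mathcal{M}_\varepsilon\psi\|_{Z_T(\Gamma)}\leq c\varepsilon^{-1/2}\|\psi\|_{Z_T(\Omega_\varepsilon)}$ (the factor $\varepsilon^{-1/2}$ comes from the $H^1(\Gamma)$ control of $\mathcal{M}_\varepsilon\psi$, which dominates the weaker $L^{4/3}$-side), so
\begin{align*}
  \left|\int_0^T\langle\partial_t\bar{v}(t),\psi(t)\rangle_{(H^1\cap L^4)(\Omega_\varepsilon)}\,dt\right| \leq c\varepsilon^{1/2}\|\partial_tv\|_{[Z_T(\Gamma)]'}\|\psi\|_{Z_T(\Omega_\varepsilon)}
\end{align*}
for every $\psi\in C_c^1(0,T;(H^1\cap L^4)(\Omega_\varepsilon)^N)$. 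By the density statement Lemma \ref{L:ET_Den}(i), this extends to all $\psi\in Z_T(\Omega_\varepsilon)$ and proves both $\bar{v}\in E_T(\Omega_\varepsilon)$ and the identity \eqref{E:Ext_Dt} simultaneously. The main technical point to watch is the justification $\partial_t\mathcal{M}_\varepsilon\psi=\mathcal{M}_\varepsilon\partial_t\psi$ in the sense of vector-valued distributions; this follows from the fact that $\mathcal{M}_\varepsilon$ is a bounded linear operator from $L^2(\Omega_\varepsilon)^N$ to $L^2(\Gamma)^N$ commuting with the $(0,T)$-variable, but it is worth writing out by pairing with a scalar $\varphi\in C_c^\infty(0,T)$ and invoking Fubini.
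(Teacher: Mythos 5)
Your argument is correct and follows essentially the same route as the paper: establish $\bar{v}\in Z_T(\Omega_\varepsilon)$ via \eqref{E:CoEx_Lp} and \eqref{E:CoDe_L2}, test against $\psi\in C_c^1(0,T;(H^1\cap L^4)(\Omega_\varepsilon)^N)$, use $\partial_t\mathcal{M}_\varepsilon\psi=\mathcal{M}_\varepsilon\partial_t\psi$ and the ``unfolding'' identity \eqref{E:Ave_Pair}, bound $\|g\mathcal{M}_\varepsilon\psi\|_{Z_T(\Gamma)}\leq c\varepsilon^{-1/2}\|\psi\|_{Z_T(\Omega_\varepsilon)}$ to conclude $\partial_t\bar{v}\in[Z_T(\Omega_\varepsilon)]'$, and then pass to $\psi\in Z_T(\Omega_\varepsilon)$ by density. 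One small slip in the explanation: the exponent $-1/2$ is not competing with an ``$L^{4/3}$-side'' but with the $\varepsilon^{-1/4}$ coming from the $L^4$-estimate \eqref{E:Ave_Lp}, and $\varepsilon^{-1/4}\leq\varepsilon^{-1/2}$; also note that passing from $\|\mathcal{M}_\varepsilon\psi\|_{Z_T(\Gamma)}$ to $\|g\mathcal{M}_\varepsilon\psi\|_{Z_T(\Gamma)}$ uses $g\in C^1(\Gamma)$, not merely \eqref{E:G_Bdd}, because of the $H^1(\Gamma)$-component.
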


\begin{proof}
  It follows from \eqref{E:CoEx_Lp} and \eqref{E:CoDe_L2} that $\bar{v}\in Z_T(\Omega_\varepsilon)$.

  Let $\psi\in C_c^1(0,T;(H^1\cap L^4)(\Omega_\varepsilon)^N)$.
  By \eqref{E:Ave_Lp}, \eqref{E:Ave_H1}, and $\varepsilon^{-1/4}\leq\varepsilon^{-1/2}$, we have
  \begin{align*}
    \mathcal{M}_\varepsilon\psi \in C_c^1(0,T;(H^1\cap L^4)(\Gamma)^N), \quad \|\mathcal{M}_\varepsilon\psi\|_{Z_T(\Gamma)} \leq c\varepsilon^{-1/2}\|\psi\|_{Z_T(\Omega_\varepsilon)}.
  \end{align*}
  Moreover, $\partial_t\mathcal{M}_\varepsilon\psi=\mathcal{M}_\varepsilon(\partial_t\psi)$ since the definition \eqref{E:Def_Ave} of $\mathcal{M}_\varepsilon$ is independent of time.
  Thus, noting that $g$ is also independent of time, we see by \eqref{E:Ave_Pair} that
  \begin{align} \label{Pf_ExDt:Pair}
    \begin{aligned}
      \varepsilon\int_0^T\langle\partial_tv(t),g\mathcal{M}_\varepsilon\psi(t)\rangle_{(H^1\cap L^4)(\Gamma)}\,dt &= -\varepsilon\int_0^T\bigl(v(t),g\partial_t\mathcal{M}_\varepsilon\psi(t)\bigr)_{L^2(\Gamma)}\,dt \\
      &= -\varepsilon\int_0^T\bigl(v(t),g[\mathcal{M}_\varepsilon(\partial_t\psi)](t)\bigr)_{L^2(\Gamma)}\,dt \\
      &= -\int_0^T\bigl(\bar{v}(t),\partial_t\psi(t)\bigr)_{L^2(\Gamma)}\,dt.
    \end{aligned}
  \end{align}
  Using this equality and
  \begin{align*}
    \|g\mathcal{M}_\varepsilon\psi\|_{Z_T(\Gamma)} \leq c\|\mathcal{M}_\varepsilon\psi\|_{Z_T(\Gamma)} \leq c\varepsilon^{-1/2}\|\psi\|_{Z_T(\Omega_\varepsilon)}
  \end{align*}
  by $g\in C^1(\Gamma)$, we find that, for all $\psi\in C_c^1(0,T;(H^1\cap L^4)(\Omega_\varepsilon)^N)$,
  \begin{align*}
    \left|\int_0^T\bigl(\bar{v}(t),\psi(t)\bigr)_{L^2(\Gamma)}\,dt\right| \leq c\varepsilon^{1/2}\|\partial_tv\|_{[Z_T(\Gamma)]'}\|\psi\|_{Z_T(\Omega_\varepsilon)}.
  \end{align*}
  Hence, $\partial_t\bar{v}\in[Z_T(\Omega_\varepsilon)]'$ and $\bar{v}\in E_T(\Omega_\varepsilon)$.

  When $\psi\in C_c^1(0,T;(H^1\cap L^4)(\Omega_\varepsilon)^N)$, we have \eqref{E:Ext_Dt} by \eqref{Pf_ExDt:Pair}.
  Since this space is dense in $Z_T(\Omega_\varepsilon)$, it follows that \eqref{E:Ext_Dt} also holds for $\psi\in Z_T(\Omega_\varepsilon)$.
\end{proof}

\begin{lemma} \label{L:WF_Ext}
  For $T>0$ and $v_0\in L^2(\Gamma)^N$, let $v$ be a weak solution to \eqref{E:GL_Lim} on $[0,T)$.
  Then, $\bar{v}\in E_T(\Omega_\varepsilon)$ and $\bar{v}(0)=\bar{v}_0$ in $L^2(\Omega_\varepsilon)^N$.
  Moreover,
  \begin{multline} \label{E:WF_Ext}
    \int_0^T\langle\partial_t\bar{v},\psi(t)\rangle_{(H^1\cap L^4)(\Omega_\varepsilon)}\,dt+\int_0^T\bigl(\nabla\bar{v}(t),\nabla\psi(t)\bigr)_{L^2(\Omega_\varepsilon)}\,dt \\
    +\lambda\int_0^T\bigl((|\bar{v}(t)|^2-1)\bar{v}(t),\psi(t)\bigr)_{L^2(\Omega_\varepsilon)}\,dt = S_\varepsilon(\psi;T)
  \end{multline}
  for all $\psi\in Z_T(\Omega_\varepsilon)$.
  Here, $S_\varepsilon(\psi;T)$ is linear in $\psi$ and satisfies
  \begin{align} \label{E:WFE_Res}
    |S_\varepsilon(\psi;T)| \leq c\varepsilon^{3/2}\int_0^T\|\nabla_\Gamma v(t)\|_{L^2(\Gamma)}\|\psi(t)\|_{H^1(\Omega_\varepsilon)}\,dt,
  \end{align}
  where $c>0$ is a constant independent of $\varepsilon$, $\lambda$, and $T$.
\end{lemma}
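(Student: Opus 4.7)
The strategy is the \emph{unfolding} approach outlined in Section \ref{SS:In_Idea}: test the limit-equation weak form \eqref{E:WF_GLim} against $\zeta = \mathcal{M}_\varepsilon\psi$ and then rewrite every surface integral as an integral over $\Omega_\varepsilon$ via the pairing identity \eqref{E:Ave_Pair} together with Lemmas \ref{L:Ext_Dt} and \ref{L:AT_Diri}. The point is that, because $\mathcal{M}_\varepsilon$ appears only linearly in the nonlinear and zeroth-order terms of \eqref{E:WF_GLim}, those terms reconstruct exactly from \eqref{E:Ave_Pair} and only the Dirichlet form produces a residual.

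First I would settle the structural statements. The membership $\bar v \in E_T(\Omega_\varepsilon)$ is immediate from Lemma \ref{L:Ext_Dt}. For the initial condition, constant extension is a bounded linear map $L^2(\Gamma)^N \to L^2(\Omega_\varepsilon)^N$ by \eqref{E:CoEx_Lp}, so the composition $t \mapsto \overline{v(t)}$ inherits continuity from $t \mapsto v(t)$ into $L^2(\Gamma)^N$, and must agree with the continuous representative of $\bar v \in C([0,T]; L^2(\Omega_\varepsilon)^N)$ provided by Lemma \ref{L:ET_Con}. Evaluating at $t=0$ gives $\bar v(0) = \overline{v(0)} = \bar v_0$ in $L^2(\Omega_\varepsilon)^N$.

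For the weak form, fix $\psi \in Z_T(\Omega_\varepsilon)$. Lemmas \ref{L:Ave_Lp} (with $p=4$) and \ref{L:Ave_H1} applied componentwise yield $\mathcal{M}_\varepsilon \psi \in Z_T(\Gamma)$, so it is admissible in \eqref{E:WF_GLim}. Taking $\zeta = \mathcal{M}_\varepsilon \psi$ there and multiplying the resulting identity by $\varepsilon$, I would unfold the three terms as follows. The time-derivative term converts exactly, via Lemma \ref{L:Ext_Dt}, into $\int_0^T \langle \partial_t \bar v, \psi \rangle_{(H^1 \cap L^4)(\Omega_\varepsilon)}\,dt$. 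The cubic and linear terms convert exactly, via \eqref{E:Ave_Pair} applied componentwise, using the pointwise identity $\overline{(|v|^2 - 1)v} = (|\bar v|^2 - 1)\bar v$, into $\lambda \int_0^T \bigl((|\bar v|^2 - 1)\bar v,\psi\bigr)_{L^2(\Omega_\varepsilon)}\,dt$. Only the Dirichlet form leaves a residual, so I define
\begin{align*}
S_\varepsilon(\psi; T) = \int_0^T \bigl(\nabla \bar v(t), \nabla \psi(t)\bigr)_{L^2(\Omega_\varepsilon)}\, dt - \varepsilon \int_0^T \bigl(g \nabla_\Gamma v(t), \nabla_\Gamma \mathcal{M}_\varepsilon \psi(t)\bigr)_{L^2(\Gamma)}\, dt,
\end{align*}
which is visibly linear in $\psi$. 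Assembling the three unfolded terms produces exactly the left-hand side of \eqref{E:WF_Ext}, and applying Lemma \ref{L:AT_Diri} componentwise at each $t$ and integrating in $t$ gives the bound \eqref{E:WFE_Res} with $c$ independent of $\varepsilon$, $\lambda$, and $T$.

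I do not expect a genuine obstacle. The only bookkeeping point is the scalar-to-vector passage in Lemma \ref{L:AT_Diri} and \eqref{E:Ave_Pair}: both are stated for scalar functions and are applied here by summation over the $N$ components of $v$ and $\psi$, which is routine and affects $c$ only by a dimensional constant.
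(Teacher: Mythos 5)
Your proof is correct and follows the same unfolding strategy as the paper: substitute $\zeta=\mathcal{M}_\varepsilon\psi$ into \eqref{E:WF_GLim}, multiply by $\varepsilon$, unfold the time-derivative term with Lemma \ref{L:Ext_Dt} and the zeroth-order/cubic terms with \eqref{E:Ave_Pair}, isolate the Dirichlet-form residual as $S_\varepsilon$, and bound it with Lemma \ref{L:AT_Diri}. Your extra detail on the initial condition (boundedness of the constant-extension map combined with Lemma \ref{L:ET_Con}) and on verifying $\mathcal{M}_\varepsilon\psi\in Z_T(\Gamma)$ is sound and merely makes explicit what the paper compresses.
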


Note that $\bar{v}\in E_T(\Omega_\varepsilon)$ implies $\bar{v}\in C([0,T];L^2(\Omega_\varepsilon)^N)$ by Lemma \ref{L:ET_Con}.

\begin{proof}
  We have the regularity and initial condition of $\bar{v}$ by those of $v$ and Lemma \ref{L:Ext_Dt}.

  Let $\psi\in Z_T(\Omega_\varepsilon)$.
  We set $\zeta=\mathcal{M}_\varepsilon\psi$ in \eqref{E:WF_GLim} and multiply both sides by $\varepsilon$ to get
  \begin{multline*}
    \varepsilon\int_0^T\langle\partial_tv(t),g\mathcal{M}_\varepsilon\psi(t)\rangle_{(H^1\cap L^4)(\Gamma)}\,dt+\varepsilon\int_0^T\bigl(g\nabla_\Gamma v(t),\nabla_\Gamma\mathcal{M}_\varepsilon\psi(t)\bigr)_{L^2(\Gamma)}\,dt \\
    +\varepsilon\lambda\int_0^T\bigl(g(|v(t)|^2-1)v(t),\mathcal{M}_\varepsilon\psi(t)\bigr)_{L^2(\Gamma)}\,dt = 0.
  \end{multline*}
  To the first and third terms, we use \eqref{E:Ave_Pair} and ``unfold'' the weighted average.
  Then, we find that \eqref{E:WF_Ext} holds with residual term
  \begin{align*}
    S_\varepsilon(\psi;T) = \int_0^T\bigl(\nabla\bar{v}(t),\nabla\psi(t)\bigr)_{L^2(\Omega_\varepsilon)}\,dt-\varepsilon\int_0^T\bigl(g\nabla_\Gamma v(t),\nabla_\Gamma\mathcal{M}_\varepsilon(t)\bigr)_{L^2(\Gamma)}\,dt,
  \end{align*}
  which is linear in $\psi$.
  Moreover, \eqref{E:WFE_Res} follows from \eqref{E:AT_Diri}.
\end{proof}

\begin{remark} \label{R:WFE_Grd}
  In fact, we can also unfold the integral involving $\nabla_\Gamma\mathcal{M}_\varepsilon\psi$ by  using \eqref{E:Ave_TGr}, but we avoid to do that here since the estimate \eqref{E:WFE_Res} is enough for our purpose.
\end{remark}

\begin{remark} \label{R:WFE_Non}
  Contrary to the averaging method (see Lemma \ref{L:Ave_Non}), we can recover the cubic term of \eqref{E:WF_Ext} from that of \eqref{E:WF_GLim} without any error by the unfolding method.
  This enables us to remove the assumption $u_0^\varepsilon\in L^\infty(\Omega_\varepsilon)^N$ in Theorem \ref{T:Df_CTD}.
\end{remark}

Now, we are ready to prove the difference estimate \eqref{E:Df_CTD} in $\Omega_\varepsilon$.

\begin{proof}[Proof of Theorem \ref{T:Df_CTD}]
  For $u_0^\varepsilon\in L^2(\Omega_\varepsilon)^N$ and $v_0\in L^2(\Gamma)^N$, let $u^\varepsilon$ and $v$ be global weak solutions to \eqref{E:GL_CTD} and \eqref{E:GL_Lim}, respectively.
  We define
  \begin{align*}
    w^\varepsilon = u^\varepsilon-\bar{v} \quad\text{in}\quad \Omega_\varepsilon\times[0,\infty), \quad w_0^\varepsilon = u_0^\varepsilon-\bar{v}_0 \quad\text{in}\quad \Omega_\varepsilon.
  \end{align*}
  For each fixed $T>0$, it follows from Definition \ref{D:WSC_Loc} and Lemma \ref{L:WF_Ext} that
  \begin{align*}
    w^\varepsilon \in E_T(\Omega_\varepsilon) \subset C([0,T];L^2(\Omega_\varepsilon)^N), \quad w^\varepsilon(0) = w_0^\varepsilon \quad\text{in}\quad \Omega_\varepsilon.
  \end{align*}
  Moreover, for each $t\in[0,T]$ and $\psi\in Z_t(\Omega_\varepsilon)$, we subtract \eqref{E:WF_Ext} from \eqref{E:WF_GCT} to get
  \begin{multline*}
    \int_0^t\langle\partial_tw^\varepsilon(s),\psi(s)\rangle_{(H^1\cap L^4)(\Omega_\varepsilon)}\,ds+\int_0^t\bigl(\nabla w^\varepsilon(s),\nabla\psi(s)\bigr)_{L^2(\Omega_\varepsilon)}\,ds \\
    +\lambda\int_0^t\bigl(|u^\varepsilon(s)|^2u^\varepsilon(s)-|\bar{v}(s)|^2\bar{v}(s),\psi(s)\bigr)_{L^2(\Omega_\varepsilon)}\,ds \\
    = \lambda\int_0^t\bigl(w^\varepsilon(s),\psi(s)\bigr)_{L^2(\Omega_\varepsilon)}\,ds-S_\varepsilon(\psi;t).
  \end{multline*}
  We set $\psi=w^\varepsilon=u^\varepsilon-\bar{v}$ and use \eqref{E:ET_IbP}, \eqref{Pf_GCU:Mono}, and $\lambda>0$.
  Then,
  \begin{multline} \label{Pf_DC:Ineq}
    \frac{1}{2}\|w^\varepsilon(t)\|_{L^2(\Omega_\varepsilon)}^2+\int_0^t\|\nabla w^\varepsilon(s)\|_{L^2(\Omega_\varepsilon)}^2\,ds \\
    \leq \frac{1}{2}\|w_0^\varepsilon\|_{L^2(\Omega_\varepsilon)}^2+\lambda\int_0^t\|w^\varepsilon(s)\|_{L^2(\Omega_\varepsilon)}^2\,ds+|S_\varepsilon(\psi;t)|.
  \end{multline}
  Moreover, by \eqref{E:WFE_Res}, Young's inequality, and \eqref{E:GLim_Ena},
  \begin{align*}
    |S_\varepsilon(\psi;t)| &\leq c\varepsilon^{3/2}\int_0^t\|\nabla_\Gamma v(s)\|_{L^2(\Gamma)}\|w^\varepsilon(s)\|_{H^1(\Omega_\varepsilon)}\,ds \\
    &\leq c\varepsilon^3\int_0^t\|\nabla_\Gamma v(s)\|_{L^2(\Gamma)}^2\,ds+\frac{1}{2}\int_0^t\|w^\varepsilon(s)\|_{H^1(\Omega_\varepsilon)}^2\,ds \\
    &\leq c\varepsilon^3e^{c\lambda t}\|v_0\|_{L^2(\Gamma)}^2+\frac{1}{2}\int_0^t\Bigl(\|w^\varepsilon(s)\|_{L^2(\Omega_\varepsilon)}^2+\|\nabla w^\varepsilon(s)\|_{L^2(\Omega_\varepsilon)}^2\Bigr)\,ds.
  \end{align*}
  We apply this to \eqref{Pf_DC:Ineq}, make the integral of $\nabla w^\varepsilon$ appearing in the right-hand side absorbed into the left-hand side, and use $e^{c\lambda t}\leq e^{c\lambda T}$ for $t\in[0,T]$.
  Then, we get
  \begin{multline*}
    \|w^\varepsilon(t)\|_{L^2(\Omega_\varepsilon)}^2+\int_0^t\|\nabla w^\varepsilon(s)\|_{L^2(\Omega_\varepsilon)}^2\,ds \\
    \leq c\left(\|w_0^\varepsilon\|_{L^2(\Omega_\varepsilon)}^2+\varepsilon^3e^{c\lambda T}\|v_0\|_{L^2(\Gamma)}^2+(1+\lambda)\int_0^t\|w^\varepsilon(s)\|_{L^2(\Omega_\varepsilon)}^2\,ds\right)
  \end{multline*}
  for all $t\in[0,T]$.
  Applying Gronwall's inequality to this, we find that
  \begin{multline*}
    \max_{t\in[0,T]}\|w^\varepsilon(t)\|_{L^2(\Omega_\varepsilon)}^2+\int_0^T\|\nabla w^\varepsilon(t)\|_{L^2(\Omega_\varepsilon)}^2\,dt \\
    \leq ce^{c(1+\lambda)T}\Bigl(\|w_0^\varepsilon\|_{L^2(\Omega_\varepsilon)}^2+\varepsilon^3e^{c\lambda T}\|v_0\|_{L^2(\Gamma)}^2\Bigr).
  \end{multline*}
  Taking the square root of both sides, and using $1\leq e^{c\lambda T}\leq e^{c(1+\lambda)T}$, we get
  \begin{align*}
    \|w^\varepsilon\|_{C([0,T];L^2(\Omega_\varepsilon))}+\|\nabla w^\varepsilon\|_{L^2(0,T;L^2(\Omega_\varepsilon))} \leq ce^{c(1+\lambda)T}\Bigl(\|w_0^\varepsilon\|_{L^2(\Omega_\varepsilon)}+\varepsilon^{3/2}\|v_0\|_{L^2(\Gamma)}\Bigr).
  \end{align*}
  Hence, we obtain \eqref{E:Df_CTD} by dividing the above inequality by $\varepsilon^{1/2}$.
\end{proof}

\section{Outline of the Galerkin method} \label{S:Galerkin}
In this section, we give the outline of the proof of the existence of a global weak solution to \eqref{E:GL_CTD} by the Galerkin method.
For the sake of simplicity, we use the following notations and assumptions throughout this section:
\begin{itemize}
  \item We suppress the parameter $\varepsilon$, since the explicit dependence on $\varepsilon$ is not required.
  For example, we write $\Omega$, $u$, and $u_0$ instead of $\Omega_\varepsilon$, $u^\varepsilon$, and $u_0^\varepsilon$.
  \item We only consider the scalar-valued case ($N=1$), since the vector-valued case can be shown in the same way.
  \item We abbreviate function spaces $\mathcal{X}(\Omega)$ to $\mathcal{X}$.
\end{itemize}
For the global existence of a weak solution, it is enough to show the local existence on any finite time interval.
Indeed, if $u_T$ and $u_{T'}$ are weak solutions to \eqref{E:GL_CTD} on $[0,T)$ and $[0,T')$ with $T<T'$, then $u_T=u_{T'}$ on $[0,T)$ by the uniqueness of a weak solution (Lemma \ref{L:GCT_Uni}).
Hence, we can get a global weak solution $u$ by setting $u=u_T$ on $[0,T)$ for all $T>0$.

Let us give the outline of construction of a weak solution on $[0,T)$ with any fixed $T>0$ by the Galerkin method.

\subsection{Basis functions} \label{SS:Gal_Bas}
We first take basis functions of $H^1\cap L^4$ directly.

\begin{lemma} \label{L:Basis}
  There exists a countable subset $\{\phi_\ell\}$ of $H^1\cap L^4$ such that
  \begin{enumerate}
    \item the subspace spanned by $\{\phi_\ell\}$, i.e. the space
    \begin{align*}
      \mathcal{L}(\{\phi_\ell\}) = \{\alpha_1\phi_1+\cdots+\alpha_L\phi_L \mid L\in\mathbb{N}, \, \alpha_1,\dots,\alpha_L\in\mathbb{R}\},
    \end{align*}
    is dense in $H^1\cap L^4$, and
    \item $\{\phi_\ell\}$ forms an orthonormal basis of $L^2$.
  \end{enumerate}
\end{lemma}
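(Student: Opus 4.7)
The plan is to construct $\{\phi_\ell\}$ by applying Gram--Schmidt orthonormalization with respect to the $L^2$-inner product to a countable dense subset of $H^1\cap L^4$, and to exploit the fact that Gram--Schmidt preserves algebraic spans so that density in the stronger norm is inherited by the resulting orthonormal family.

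First I would establish separability of $H^1\cap L^4$: both $H^1$ and $L^4$ are separable Banach spaces, and the diagonal map $u\mapsto(u,u)$ embeds $H^1\cap L^4$ (with its natural maximum norm) isometrically onto a closed subspace of the separable product $H^1\times L^4$. Fix a countable dense subset $\{f_k\}_{k=1}^\infty$ of $H^1\cap L^4$. Then I would run Gram--Schmidt in $L^2$, skipping any $f_k$ that has become $L^2$-linearly dependent on the previously selected vectors: once $\phi_1,\dots,\phi_{\ell-1}$ are chosen, let $k_\ell$ be the smallest index such that $h_\ell:=f_{k_\ell}-\sum_{j<\ell}(f_{k_\ell},\phi_j)_{L^2}\phi_j$ is nonzero in $L^2$, and set $\phi_\ell:=h_\ell/\|h_\ell\|_{L^2}$. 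Each $\phi_\ell$ is then a finite linear combination of elements of $H^1\cap L^4$, hence itself lies in $H^1\cap L^4$, and $\{\phi_\ell\}$ is $L^2$-orthonormal by construction.

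The key point to verify for (i) is that $\mathcal{L}(\{\phi_\ell\})$ contains every $f_k$, including those skipped in the procedure. If the index $k$ is skipped at stage $\ell$, then $f_k=\sum_{j<\ell}(f_k,\phi_j)_{L^2}\phi_j$ holds in $L^2$; since both sides represent the same element of $H^1\cap L^4$, the identity persists there, so $f_k\in\mathcal{L}(\{\phi_j\}_{j<\ell})$. Consequently $\mathcal{L}(\{\phi_\ell\})\supset\{f_k\}_{k=1}^\infty$, which is dense in $H^1\cap L^4$. Statement (ii) then follows: $L^2$-orthonormality is built into the construction, and density of the span in $L^2$ is inherited from (i) via the continuous inclusion $H^1\cap L^4\hookrightarrow L^2$, which is itself dense (since $H^1\cap L^4$ contains $C^1(\overline{\Omega})$).

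The only step requiring any real care is the separability of $H^1\cap L^4$; once that is in place, the construction is standard linear algebra together with the elementary observation that Gram--Schmidt with skipping preserves the algebraic span of the input family.
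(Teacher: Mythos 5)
Your proof is correct and takes essentially the same route as the paper: establish separability of $H^1\cap L^4$ by embedding it into a separable product space, extract a countable family with dense span, run Gram--Schmidt in $L^2$, and observe that the algebraic span is unchanged so density carries over. The only cosmetic differences are that the paper realizes $H^1\cap L^4$ inside $L^4\times(L^2)^n$ via $u\mapsto(u,\partial_1u,\dots,\partial_nu)$ rather than diagonally in $H^1\times L^4$, and the paper begins from a linearly independent family instead of handling dependence by skipping during Gram--Schmidt.
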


\begin{proof}
  The space $H^1\cap L^4$ can be identified with $\{(u,\partial_1u,\dots,\partial_nu)^T \mid u\in H^1\cap L^4\}$, which is a subspace of the separable space $L^4\times(L^2)^n$.
  Hence, $H^1\cap L^4$ is also separable and we can take a countable set $\{\psi_\ell\}$ of linearly independent functions in $H^1\cap L^4$ such that $\mathcal{L}(\{\psi_\ell\})$ is dense in $H^1\cap L^4$.
  Then, we easily find that $\mathcal{L}(\{\psi_\ell\})$ is also dense in $L^2$, since $H^1\cap L^4$ is dense in $L^2$ and $\|u\|_{L^2}\leq\|u\|_{H^1\cap L^4}$ for $u\in H^1\cap L^4$.
  Hence, applying the Gram--Schmidt orthonormalization in $L^2$ to $\{\psi_\ell\}$, we can get a countable subset $\{\phi_\ell\}$ of $H^1\cap L^4$ satisfying (i) and (ii) (note that $\mathcal{L}(\{\psi_\ell\})=\mathcal{L}(\{\phi_\ell\})$ by the construction of $\{\phi_\ell\}$).
\end{proof}

For each $L\in\mathbb{N}$, let $\mathcal{P}_Lu=\sum_{\ell=1}^L(u,\phi_\ell)_{L^2}\phi_\ell$ be the orthogonal projection from $L^2$ onto the subspace spanned by $\{\phi_\ell\}_{\ell=1}^L$.
Note that $\mathcal{P}_Lu\to u$ in $L^2$ as $L\to\infty$, but we cannot say that the same convergence holds in $H^1\cap L^4$ even if $u\in H^1\cap L^4$, since the functions $\phi_\ell$ are not the eigenfunctions of the Neumann Laplacian.

\subsection{Approximate solutions and weak convergence} \label{SS:Gal_Ap}
For each $L\in\mathbb{N}$, we look for a function of the form $u_L(t)=\sum_{\ell=1}^L\alpha_\ell(t)\phi_\ell$ which satisfies $u_L(0)=\mathcal{P}_Lu_0$ and
\begin{align} \label{E:Gal_App}
  (\partial_tu_L(t),\phi_\ell)_{L^2}+(\nabla u_L(t),\nabla\phi_\ell)_{L^2}+\lambda\bigl((|u_L(t)|^2-1)u_L(t),\phi_\ell\bigr)_{L^2} = 0, \quad t\in(0,T)
\end{align}
for all $\ell=1,\dots,L$.
Note that the term $(|u_L(t)|^2u_L(t),\phi_\ell)_{L^2}$ makes sense, since
\begin{align*}
  |(|u_L(t)|^2u_L(t),\phi_\ell)_{L^2}| \leq \|u_L(t)\|_{L^4}^3\|\phi_\ell\|_{L^4}
\end{align*}
by H\"{o}lder's inequality and by $\phi_\ell\in L^4$ and thus $u_L(t)\in L^4$.

Since $\{\phi_\ell\}$ is orthonormal in $L^2$, this problem can be formulated as a system of ODEs for $\alpha_1(t),\dots,\alpha_L(t)$, which can be uniquely solved on some time interval by the Cauchy--Lipschitz theorem.
Moreover, multiplying \eqref{E:Gal_App} by $\alpha_\ell(t)$, summing over $\ell=1,\dots,L$, and integrating with respect to time, we can get the energy estimate
\begin{align} \label{E:Gal_Ena}
  \|u_L(t)\|_{L^2}^2+2\int_0^t\|\nabla u_L(s)\|_{L^2}^2\,ds+2\lambda\int_0^t\|u_L(s)\|_{L^4}^4\,ds \leq e^{2\lambda t}\|\mathcal{P}_Lu_0\|_{L^2}^2
\end{align}
as long as $u_L(t)$ exists, as in the proof of Lemma \ref{L:GCT_Ena}.
This gives the uniform boundedness of $\alpha_1(t),\dots,\alpha_L(t)$ on $[0,T]$, so we can extend $u_L(t)$ to the whole time interval $[0,T]$.

Let $Z_T=Z_T(\Omega)$ be the function space given by \eqref{E:Def_ZT} (recall that we suppress $\varepsilon$ of $\Omega_\varepsilon$).
We observe by \eqref{E:Gal_Ena} and $\|\mathcal{P}_Lu_0\|_{L^2}\leq\|u_0\|_{L^2}$ that
\begin{align} \label{E:Gal_ZT}
  \|u_L\|_{Z_T} = \max\{\|u_L\|_{L^2(0,T;H^1)},\|u_L\|_{L^4(0,T;L^4)}\} \leq C.
\end{align}
Here and in what follows, $C$ denotes a general positive constant depending on $\lambda$, $T$, and $\|u_0\|_{L^2}$ but independent of $L$.
Therefore, up to a subsequence, we have
\begin{align} \label{E:GaZT_We}
  \lim_{L\to\infty}u_L = u \quad\text{weakly in}\quad Z_T = L^2(0,T;H^1)\cap L^4(0,T;L^4)
\end{align}
with some $u\in Z_T$.
However, we cannot get a uniform estimate for $\partial_tu_L$ in $[Z_T]'$.
Indeed, to estimate $\partial_tu_L$, we need to take $\psi\in H^1\cap L^4$, test $\mathcal{P}_L\psi$ by using \eqref{E:Gal_App}, and estimate the resulting equality in terms of $\|\psi\|_{H^1}$ and $\|\psi\|_{L^4}$ uniformly in $L$, but we do not have
\begin{align*}
  \|\nabla\mathcal{P}_L\psi\|_{H^1} \leq c\|\psi\|_{H^1}, \quad \|\mathcal{P}_L\psi\|_{L^4} \leq c\|\psi\|_{L^4}
\end{align*}
with some constant $c>0$ independent of $L$.
Thus, Lemma \ref{L:ET_AuLi} is not available here, which was essential for the weak convergence of the cubic term in the proof of Theorem \ref{T:Weak} (see Section \ref{SS:TFL_WC}), and we need another approach to get the weak convergence of $|u_L|^2u_L$.

\subsection{Strong convergence} \label{SS:Gal_Str}
To circumvent the above difficulty, we show the strong convergence of $\{u_L\}$ by following the idea used in \cite{Mas84} and \cite[Chapter V, Theorem 6.7]{LaSoUr68}.

\begin{lemma} \label{L:Equi}
  For each fixed $k\in\mathbb{N}$, the sequence $\{(u_L(t),\phi_k)_{L^2}\}_{L=k}^\infty$ is uniformly bounded and equicontinuous on $[0,T]$.
\end{lemma}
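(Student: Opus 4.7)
The plan is to set $f_L(t) := (u_L(t),\phi_k)_{L^2}$ for $L\geq k$ and to verify the two properties separately. The key observation is that for $L\geq k$ the function $\phi_k$ is an admissible test function in \eqref{E:Gal_App}, so one may take $\ell=k$ there and obtain a pointwise identity for $f_L'(t)$. All the quantitative input will come from the energy estimate \eqref{E:Gal_Ena} and its consequence \eqref{E:Gal_ZT}, together with the fact that $\phi_k$ is a fixed function in $H^1\cap L^4$.

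The uniform bound is immediate: by Cauchy--Schwarz and $\|\phi_k\|_{L^2}=1$, one has $|f_L(t)|\leq\|u_L(t)\|_{L^2}$, and \eqref{E:Gal_Ena} combined with $\|\mathcal{P}_Lu_0\|_{L^2}\leq\|u_0\|_{L^2}$ bounds this by $e^{\lambda T}\|u_0\|_{L^2}$ uniformly in $t\in[0,T]$ and in $L\geq k$.

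For equicontinuity I will integrate the Galerkin identity with $\ell=k$ between $s$ and $t$, obtaining
\begin{align*}
  f_L(t)-f_L(s) = -\int_s^t(\nabla u_L,\nabla\phi_k)_{L^2}\,d\tau - \lambda\int_s^t\bigl((|u_L|^2-1)u_L,\phi_k\bigr)_{L^2}\,d\tau,
\end{align*}
and estimate the three contributions by H\"{o}lder's inequality in space and in time. The Dirichlet contribution is bounded by $\|\nabla\phi_k\|_{L^2}(t-s)^{1/2}\|u_L\|_{L^2(0,T;H^1)}$; the linear piece of the cubic term is bounded by $\|\phi_k\|_{L^2}(t-s)\|u_L\|_{L^\infty(0,T;L^2)}$; and the genuinely cubic piece is bounded, via the spatial H\"{o}lder inequality $|(|u_L|^2u_L,\phi_k)_{L^2}|\leq\|u_L\|_{L^4}^3\|\phi_k\|_{L^4}$ followed by H\"{o}lder in time with exponent $4/3$, by $\|\phi_k\|_{L^4}(t-s)^{1/4}\|u_L\|_{L^4(0,T;L^4)}^3$. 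Using \eqref{E:Gal_ZT} and treating $\|\phi_k\|_{H^1\cap L^4}$ as a fixed constant then yields $|f_L(t)-f_L(s)|\leq C\bigl((t-s)^{1/4}+(t-s)^{1/2}+(t-s)\bigr)$ with $C$ independent of $L$, which gives equicontinuity on $[0,T]$.

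The only step requiring care is the cubic term. Because no uniform dual-space bound for $\partial_tu_L$ is available (precisely the difficulty discussed in Section \ref{SS:Gal_Ap}), I cannot invoke an Aubin--Lions-type argument and must rely solely on the two a priori norms supplied by \eqref{E:Gal_ZT}. The observation that $|u_L|^2u_L$ lies uniformly in $L^{4/3}(0,T;L^{4/3})$ and pairs with $\phi_k\in L^4$ is exactly what is needed to produce a $(t-s)^{1/4}$ modulus of continuity; this is the weakest of the three moduli above and therefore dictates the overall continuity rate.
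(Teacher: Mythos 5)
Your proposal is correct and follows essentially the same route as the paper: Cauchy--Schwarz plus \eqref{E:Gal_Ena} for the uniform bound, and integration of \eqref{E:Gal_App} with $\ell=k$ from $s$ to $t$ followed by H\"{o}lder in space and time and \eqref{E:Gal_ZT} for equicontinuity, with the cubic term handled via $L^4_x$--$L^{4/3}_t$ pairing to get the $(t-s)^{1/4}$ modulus. The only cosmetic difference is that you bound the linear piece by $(t-s)\|u_L\|_{L^\infty_t L^2_x}$ whereas the paper bounds it by $(t-s)^{1/2}\|u_L\|_{L^2_t L^2_x}$; both are consequences of \eqref{E:Gal_Ena} and both suffice.
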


\begin{proof}
  By H\"{o}lder's inequality, $\|\phi_k\|_{L^2}=1$, and \eqref{E:Gal_Ena}, we have
  \begin{align*}
    \max_{t\in[0,T]}|(u_L(t),\phi_k)_{L^2}| \leq \max_{t\in[0,T]}\|u_L(t)\|_{L^2} \leq e^{\lambda T}\|\mathcal{P}_Lu_0\|_{L^2} \leq e^{\lambda T}\|u_0\|_{L^2}.
  \end{align*}
  Thus, $\{(u_L(t),\phi_k)_{L^2}\}_{L=k}^\infty$ is uniformly bounded.
  Also, when $L\geq k$ and $0\leq s\leq t\leq T$,
  \begin{align*}
    &(u_L(t),\phi_k)_{L^2}-(u_L(s),\phi_k)_{L^2} = \int_s^t(\partial_\tau u_L(\tau),\phi_k)_{L^2}\,d\tau \\
    &= -\int_s^t(\nabla u_L(\tau),\nabla\phi_k)_{L^2}\,d\tau-\lambda\int_s^t\bigl((|u_L(t)|^2-1)u_L(t),\phi_k\bigr)_{L^2}\,d\tau
  \end{align*}
  by \eqref{E:Gal_App}.
  Hence, by H\"{o}lder's inequality in space and time, and by \eqref{E:Gal_ZT}, we get
  \begin{multline*}
    |(u_L(t),\phi_k)_{L^2}-(u_L(s),\phi_k)_{L^2}| \\
    \leq C\Bigl\{(t-s)^{1/2}\|\nabla\phi_k\|_{L^2}+\lambda(t-s)^{1/4}\|\phi_k\|_{L^4}+\lambda(t-s)^{1/2}\|\phi_k\|_{L^2}\Bigr\},
  \end{multline*}
  which shows that $\{(u_L(t),\phi_k)_{L^2}\}_{L=k}^\infty$ is equicontinuous on $[0,T]$.
\end{proof}

Based on Lemma \ref{L:Equi}, we apply the Ascoli--Arzel\'{a} theorem and a diagonal argument to take a subsequence of $\{u_L\}$, which is again denoted by $\{u_L\}$, such that $\{(u_L(t),\phi_k)_{L^2}\}_{L=1}^\infty$ converges uniformly on $[0,T]$ for each $k\in\mathbb{N}$.
Then, we can show the strong convergence of $\{u_L\}$ by using the following Friedrichs inequality.

\begin{lemma} \label{L:Fried}
  For each $\gamma>0$, there exists an $N_\gamma\in\mathbb{N}$ such that
  \begin{align} \label{E:Fried}
    \|w\|_{L^2}^2 \leq (1+\gamma)\sum_{k=1}^{N_\gamma}|(w,\phi_k)_{L^2}|^2+\gamma\|w\|_{H^1}^2
  \end{align}
  for all $w\in H^1$ (note that $N_\gamma$ does not depend on $w$).
\end{lemma}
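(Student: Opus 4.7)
The plan is to prove the inequality by contradiction, exploiting the Rellich compactness of the embedding $H^1 \hookrightarrow L^2$ (valid since $\Omega_\varepsilon$ is bounded with $C^1$ boundary) together with the fact that $\{\phi_\ell\}$ is an orthonormal \emph{basis} of $L^2$, so Parseval's identity gives $\|w\|_{L^2}^2 = \sum_{k=1}^\infty |(w,\phi_k)_{L^2}|^2$ for every $w \in L^2$.

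Suppose, contrary to the claim, that there is $\gamma > 0$ such that for every $N \in \mathbb{N}$ there exists $w_N \in H^1$ violating \eqref{E:Fried} with $N_\gamma = N$. Homogeneity allows us to normalize $\|w_N\|_{L^2} = 1$, so that
\begin{align*}
  1 > (1+\gamma)\sum_{k=1}^{N}|(w_N,\phi_k)_{L^2}|^2 + \gamma\|w_N\|_{H^1}^2.
\end{align*}
In particular $\|w_N\|_{H^1}^2 < 1/\gamma$, so $\{w_N\}$ is bounded in $H^1$. By Rellich compactness, a subsequence (still denoted $\{w_N\}$) converges weakly in $H^1$ and strongly in $L^2$ to some limit $w \in H^1$, and strong $L^2$-convergence gives $\|w\|_{L^2} = 1$.

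Next I would fix $M \in \mathbb{N}$. For $N \geq M$ the inequality above and the non-negativity of the discarded tail imply
\begin{align*}
  \sum_{k=1}^{M}|(w_N,\phi_k)_{L^2}|^2 \;\leq\; \sum_{k=1}^{N}|(w_N,\phi_k)_{L^2}|^2 \;<\; \frac{1}{1+\gamma}.
\end{align*}
Since $w_N \to w$ strongly in $L^2$, for each fixed $k$ we have $(w_N,\phi_k)_{L^2} \to (w,\phi_k)_{L^2}$; letting $N \to \infty$ in the finite sum gives $\sum_{k=1}^{M}|(w,\phi_k)_{L^2}|^2 \leq (1+\gamma)^{-1}$. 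Now letting $M \to \infty$ and applying Parseval's identity (which applies because $\{\phi_\ell\}$ is an orthonormal basis of $L^2$ by Lemma \ref{L:Basis}, (ii)) we conclude
\begin{align*}
  1 \;=\; \|w\|_{L^2}^2 \;=\; \sum_{k=1}^{\infty}|(w,\phi_k)_{L^2}|^2 \;\leq\; \frac{1}{1+\gamma} \;<\; 1,
\end{align*}
a contradiction. Hence $N_\gamma$ exists, proving \eqref{E:Fried}.

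The only delicate point is ensuring the two ingredients are legitimately available: Rellich compactness for $H^1(\Omega_\varepsilon) \hookrightarrow L^2(\Omega_\varepsilon)$ (guaranteed by the $C^1$ regularity and boundedness of $\Omega_\varepsilon$ fixed in Section \ref{SS:Pr_CTD}) and the $L^2$-completeness of $\{\phi_\ell\}$ (built into the construction in Lemma \ref{L:Basis}). Once these are in hand, the contradiction argument is routine.
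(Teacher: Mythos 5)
Your proof is correct and follows the same contradiction-plus-compact-embedding strategy that the paper invokes by reference to \cite[Chapter II, Lemma 2.4]{LaSoUr68}: normalize, extract an $L^2$-strongly convergent subsequence via Rellich, pass to the limit in the finite partial sums, and close with Parseval's identity using the $L^2$-completeness of $\{\phi_\ell\}$ from Lemma \ref{L:Basis}. The two ingredients you flag as essential (compactness of $H^1(\Omega_\varepsilon)\hookrightarrow L^2(\Omega_\varepsilon)$ and completeness of the basis) are exactly the ones the paper relies on, so this is the intended argument spelled out in full.
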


\begin{proof}
  The lemma can be shown by a contradiction argument and the compact embedding $H^1\hookrightarrow L^2$.
  We refer to the proof of \cite[Chapter II, Lemma 2.4]{LaSoUr68} for details.
\end{proof}

For any $\gamma>0$, let $N_\gamma\in\mathbb{N}$ be given in Lemma \ref{L:Fried}.
We set $w=u_L(t)-u_M(t)$ in \eqref{E:Fried} and integrate the resulting inequality with respect to time.
Then, we get
\begin{multline*}
  \int_0^T\|u_L(t)-u_M(t)\|_{L^2}^2\,dt \\
  \leq (1+\gamma)\sum_{k=1}^{N_\gamma}\int_0^T|(u_L(t)-u_M(t),\phi_k)_{L^2}|^2\,dt+\gamma\int_0^T\|u_L(t)-u_M(t)\|_{H^1}^2\,dt.
\end{multline*}
Let $L,M\to\infty$ in this inequality.
Then, the first term on the right-hand side converges to zero, since $\{(u_L(t),\phi_k)_{L^2}\}_{L=1}^\infty$ converges uniformly on $[0,T]$ for each $k\in\mathbb{N}$ and $N_\gamma$ is independent of $L$ and $M$.
Also, the last integral is bounded by \eqref{E:Gal_ZT}.
Hence,
\begin{align*}
  \limsup_{L,M\to\infty}\int_0^T\|u_L(t)-u_M(t)\|_{L^2}^2\,dt \leq C\gamma \quad\text{for any}\quad \gamma>0,
\end{align*}
which shows that $\{u_L\}$ is Cauchy in $L^2(0,T;L^2)$ and thus converges strongly in the same space.
By this result, \eqref{E:GaZT_We}, and the uniqueness of a weak limit, we have $u_L\to u$ strongly in $L^2(0,T;L^2)$ as $L\to\infty$ and thus, up to a subsequence, $u_L\to u$ a.e. in $\Omega\times(0,T)$ (recall that we suppress $\varepsilon$).
Hence, $|u_L|^2u_L\to|u|^2u$ a.e. in $\Omega\times(0,T)$.
Moreover,
\begin{align*}
  \|\,|u_L|^2u_L\|_{L^{4/3}(0,T;L^{4/3})} = \|u_L\|_{L^4(0,T;L^4)}^3 \leq C
\end{align*}
by \eqref{E:Gal_Ena}.
Therefore, we can apply Lemma \ref{L:AC_Non} to get
\begin{align} \label{E:GaCu_We}
  \lim_{L\to\infty}|u_L|^2u_L = |u|^2u \quad\text{weakly in}\quad L^{4/3}(0,T;L^{4/3}).
\end{align}

\subsection{Characterization of the limit} \label{SS:Gal_Cha}
Let us show that $u$ is a weak solution to \eqref{E:GL_CTD} on $[0,T)$.
For each fixed $\ell\in\mathbb{N}$, let $L\geq\ell$.
We multiply \eqref{E:Gal_App} by $\theta\in C_c^1([0,T))$ and integrate the resulting equality with respect to time.
Then, by integration by parts,
\begin{multline*}
  -\int_0^T(u_L(t),\theta'(t)\phi_\ell)_{L^2}\,dt+\int_0^T(\nabla u_L(t),\theta(t)\nabla\phi_\ell)_{L^2}\,dt \\
  +\lambda\int_0^T\bigl((|u_L(t)|^2-1)u_L(t),\theta(t)\phi_\ell\bigr)_{L^2}\,dt = (\mathcal{P}_Lu_0,\theta(0)\phi_\ell)_{L^2}.
\end{multline*}
We send $L\to\infty$ and use \eqref{E:GaZT_We}, \eqref{E:GaCu_We}, and $\mathcal{P}_Lu_0\to u_0$ in $L^2$ to get
\begin{multline*}
  -\int_0^T(u(t),\theta'(t)\phi_\ell)_{L^2}\,dt+\int_0^T(\nabla u(t),\theta(t)\nabla\phi_\ell)_{L^2}\,dt \\
  +\lambda\int_0^T\bigl((|u(t)|^2-1)u(t),\theta(t)\phi_\ell\bigr)_{L^2}\,dt = (u_0,\theta(0)\phi_\ell)_{L^2}
\end{multline*}
for all $\ell\in\mathbb{N}$.
Hence, we have
\begin{multline} \label{E:Gal_Ano}
  \int_0^T\bigl(u(t),\partial_t\psi(t)\bigr)_{L^2}\,dt+\int_0^T\bigl(\nabla u(t),\nabla\psi(t)\bigr)_{L^2}\,dt \\
  +\lambda\int_0^T\bigl((|u(t)|^2-1)u(t),\psi(t)\bigr)_{L^2}\,dt = \bigl(u_0,\psi(0)\bigr)_{L^2}
\end{multline}
for all $\psi$ in the space (recall that $\mathcal{L}(\{\phi_\ell\})=\{\sum_{\ell=1}^L\alpha_\ell\phi_\ell \mid L\in\mathbb{N}, \, \alpha_\ell\in\mathbb{R}\}$)
\begin{align*}
  \mathcal{U} = \{\textstyle\sum_{k=1}^K\theta_k(t)w_k \mid K\in\mathbb{N}, \, \theta_k\in C_c^1([0,T)), \, w_k\in\mathcal{L}(\{\phi_\ell\})\}.
\end{align*}
Let $\psi\in C_c^1([0,T);H^1\cap L^4)$.
Then, since $\mathcal{L}(\{\phi_\ell\})$ is dense in $H^1\cap L^4$, we can prove, as in the proof of \cite[Lemma 2.2]{Mas84}, that there exist functions $\psi_K\in\mathcal{U}$ such that
\begin{align*}
  \lim_{K\to\infty}\|\psi-\psi_K\|_{C([0,T],H^1\cap L^4)} = \lim_{K\to\infty}\|\partial_t\psi-\partial_t\psi_K\|_{L^2(0,T;H^1\cap L^4)} = 0.
\end{align*}
Hence, \eqref{E:Gal_Ano} holds for the above $\psi$, since it is valid for each $\psi_K\in\mathcal{U}$.
In particular,
\begin{gather*}
  \left|\int_0^T\bigl(u(t),\partial_t\psi(t)\bigr)_{L^2}\,dt\right| \leq C(u,\lambda,T)\|\psi\|_{Z_T}, \\
  C(u,\lambda,T) = \|\nabla u\|_{L^2(0,T;L^2)}+\lambda\Bigl(\|u\|_{L^4(0,T;L^4)}^3+\|u\|_{L^2(0,T;L^2)}\Bigr)
\end{gather*}
for all $\psi\in C_c^1(0,T;H^1\cap L^4)$ by $\psi(0)=0$ and H\"{o}lder's inequality, which shows that
\begin{align*}
  \partial_tu\in [Z_T]', \quad u\in E_T \subset C([0,T];L^2).
\end{align*}
Here, $E_T=E_T(\Omega)$ is given by \eqref{E:Def_ET} and we used Lemma \ref{L:ET_Con} (recall that we suppress $\varepsilon$).
Moreover, when $\psi\in C_c^1(0,T;H^1\cap L^4)$, we apply \eqref{E:ET_IbP} to \eqref{E:Gal_Ano} to get
\begin{multline} \label{E:Gal_WF}
  \int_0^T\langle\partial_tu(t),\psi(t)\rangle_{H^1\cap L^4}\,dt+\int_0^T\bigl(\nabla u(t),\nabla\psi(t)\bigr)_{L^2}\,dt \\
  +\lambda\int_0^T\bigl((|u(t)|^2-1)u(t),\psi(t)\bigr)_{L^2}\,dt = 0.
\end{multline}
Since $C_c^1(0,T;H^1\cap L^4)$ is dense in $Z_T$, it follows that \eqref{E:Gal_WF} also holds for all $\psi\in Z_T$.

It remains to verify the initial condition.
Let $\theta\in C_c^1([0,T))$ satisfy $\theta(0)=1$.
For any $\psi_0\in C_c^\infty$, we take $\psi(t)=\theta(t)\psi_0\in Z_T$ in \eqref{E:Gal_WF} and use \eqref{E:ET_IbP} and $\psi(0)=\psi_0$ to get
\begin{multline*}
  -\int_0^T\bigl(u(t),\partial_t\psi(t)\bigr)_{L^2}\,dt+\int_0^T\bigl(\nabla u(t),\nabla\psi(t)\bigr)_{L^2}\,dt \\
  +\lambda\int_0^T\bigl((|u(t)|^2-1)u(t),\psi(t)\bigr)_{L^2}\,dt = (u(0),\psi_0)_{L^2}.
\end{multline*}
By this equality and \eqref{E:Gal_Ano}, we have $(u(0),\psi_0)_{L^2}=(u_0,\psi_0)_{L^2}$ for all $\psi\in C_c^\infty$.
Since $C_c^\infty$ is dense in $L^2$, we find that $u(0)=u_0$ in $L^2$ and $u$ is a weak solution to \eqref{E:GL_CTD} on $[0,T)$.

\section{Proofs of auxiliary lemmas} \label{S:Pf_Aux}
This section gives the proofs of Lemmas \ref{L:ET_Den}, \ref{L:Trn_Sp}, \ref{L:Trn_Ti}.

\begin{proof}[Proof of Lemmas \ref{L:ET_Den}]
  As mentioned in Section \ref{SS:FS_TS}, we omit the proof of (i), since it can be shown by standard cut-off and mollification arguments.

  Let us show (ii).
  In what follows, we suppress the domain $S$ and the superscript $N$ of function spaces on $S$ for the sake of simplicity.
  For example, we write $E_T$ and $H^1\cap L^4$ instead of $E_T(S)$ and $(H^1\cap L^4)(S)^N$.
  We take a function $\theta\in C^\infty([0,T])$ such that
  \begin{align*}
    0\leq \theta\leq 1 \quad\text{on}\quad [0,T], \quad \theta(t) =
    \begin{cases}
      1 &(0\leq t\leq T/3), \\
      0 &(2T/3\leq t\leq T).
    \end{cases}
  \end{align*}
  Let $u\in E_T$.
  We set $u_1=\theta u$ and $u_2=(1-\theta)u$.
  Clearly, $u_1,u_2\in Z_T$.
  Moreover,
  \begin{align*}
    \int_0^T\bigl(u_1(t),\partial_t\psi(t)\bigr)_{L^2}\,dt &= \int_0^T\bigl(u(t),\partial_t[\theta\psi](t)\bigr)_{L^2}\,dt-\int_0^T\bigl(\partial_t\theta(t)u(t),\psi(t)\bigr)_{L^2}\,dt \\
    &= -\int_0^T\langle\partial_tu(t),[\theta\psi](t)\rangle_{H^1\cap L^4}\,dt-\int_0^T\bigl(\partial_t\theta(t)u(t),\psi(t)\bigr)_{L^2}\,dt
  \end{align*}
  for each $\psi\in C_c^1(0,T;H^1\cap L^4)$, and thus, by $\theta\in C^\infty([0,T])$,
  \begin{align*}
    \left|\int_0^T\bigl(u_1(t),\partial_t\psi(t)\bigr)_{L^2}\,dt\right| \leq c\Bigl(\|\partial_tu\|_{[Z_T]'}+\|u\|_{L^2(0,T;L^2)}\Bigr)\|\psi\|_{Z_T},
  \end{align*}
  which shows $\partial_tu_1\in [Z_T]'$.
  Similarly, $\partial_tu_2\in[Z_T]'$, and we easily find that $\partial_tu=\partial_tu_1+\partial_tu_2$.
  Hence, it is sufficient to approximate $u_1$ and $u_2$ separately in $E_T$.
  In what follows, we only give the approximation of $u_1$, since $u_2$ can be approximated in the same way.
  Moreover, we rewrite $u_1$ as $u$ for the sake of simplicity.

  Now, let $u\in E_T$ satisfy $u(t)=0$ when $2T/3\leq t\leq T$.
  We extend $u$ to $(0,2T)$ by setting $u(t)=0$ for $t\geq T$.
  Clearly, $u\in Z_{2T}$.
  Moreover, we see that $\partial_tu\in[Z_{2T}]'$.
  Indeed, we take a function $\Theta\in C^\infty([0,2T])$ such that
  \begin{align*}
    0\leq \Theta \leq 1 \quad\text{on}\quad [0,2T], \quad \Theta(t) =
    \begin{cases}
      1 &(0\leq t\leq 3T/4) \\
      0 &(5T/6 \leq t\leq 2T).
    \end{cases}
  \end{align*}
  Let $\psi\in C_c^1(0,2T;H^1\cap L^4)$.
  Since $\Theta\psi\in C_c^1(0,T;H^1\cap L^4)$, and since $u(t)=0$ for $t\in[2T/3,T]$ and $\partial_t\Theta(t)=0$ for $t\in[0,2T/3]$, it follows that
  \begin{align*}
    \int_0^{2T}\bigl(u(t),\partial_t\psi(t)\bigr)_{L^2}\,dt &= \int_0^{2T/3}\bigl(u(t),\partial_t\psi(t)\bigr)_{L^2}\,dt = \int_0^{2T/3}\bigl(u(t),\partial_t[\Theta\psi](t)\bigr)_{L^2}\,dt \\
    &= \int_0^T\bigl(u(t),\partial_t[\Theta\psi](t)\bigr)_{L^2}\,dt = -\int_0^T\langle\partial_tu(t),[\Theta\psi](t)\rangle_{H^1\cap L^4}\,dt
  \end{align*}
  and thus, by $\partial_tu\in[Z_T]'$ and $\Theta\in C^\infty([0,2T])$, we have
  \begin{align*}
    \left|\int_0^{2T}\bigl(u(t),\partial_t\psi(t)\bigr)_{L^2}\,dt\right| \leq \|\partial_tu\|_{[Z_T]'}\|\Theta\psi\|_{Z_T} \leq c\|\partial_tu\|_{[Z_T]'}\|\psi\|_{Z_{2T}}.
  \end{align*}
  Therefore, $\partial_tu\in[Z_{2T}]'$, and we observe by \eqref{E:Dual_ZT} that $\partial_tu$ is of the form
  \begin{align*}
    \partial_tu = v_1+v_2, \quad v_1\in L^2(0,2T;[H^1]'), \quad v_2\in L^{4/3}(0,2T;L^{4/3}).
  \end{align*}
  Note that we do not know whether $v_1(t)=0$ and $v_2(t)=0$ for $t\geq T$.
  Although we may get $\partial_tu(t)=0$ for $t\geq T$, it may be possible that $v_1(t)\neq0$ and $v_2(t)=-v_1(t)$.
  However, this does not matter for the approximation of $\partial_tu$ on the shorter time interval $(0,T)$.

  Let $h\in (0,T/2)$.
  For $t\in(-h,2T-h)$, we set
  \begin{align*}
    u_h(t) = u(t+h), \quad v_{1,h}(t) = v_1(t+h), \quad v_{2,h}(t) = v_2(t+h).
  \end{align*}
  Then, we easily observe that
  \begin{gather*}
    u_h \in L^2(-h,2T-h;H^1)\cap L^4(-h,2T-h;L^4), \\
    v_{1,h} \in L^2(-h,2T-h;[H^1]'), \quad v_{2,h} \in L^{4/3}(-h,2T-h;L^{4/3}),
  \end{gather*}
  and $\partial_tu_h=v_{1,h}+v_{2,h}$ on $(-h,2T-h)$.
  For $\tau\in(0,h/2)$, let
  \begin{align*}
    w_{h,\tau}(t) = \int_{-\infty}^\infty\frac{1}{\tau}\rho\left(\frac{t-s}{\tau}\right)w_h(s)\,ds, \quad t\in\mathbb{R}, \quad w_h=u_h,v_{1,h},v_{2,h}
  \end{align*}
  be the mollification of $w_h$, where $\rho$ is a standard 1D mollifier and $w_h$ is extended to $\mathbb{R}$ by zero outside of $(-h,2T-h)$.
  Then, we have $u_{h,\tau}\in C^\infty(\mathbb{R};H^1\cap L^4)$ and
  \begin{align} \label{Pf_ED:uht}
    \|u-u_{h,\tau}\|_{Z_T}\to0, \, \|v_1-v_{1,h,\tau}\|_{L^2(0,T;[H^1]')}\to0, \, \|v_2-v_{2,h,\tau}\|_{L^{4/3}(0,T;L^{4/3})} \to 0
  \end{align}
  as $h,\tau\to0$ by the integrability of $u$, $v_1$, and $v_2$.
  Moreover, since $[0,T]\subset(-h,2T-h)$ by $h\in(0,T/2)$, we can show that $\partial_tu_{h,\tau}=v_{1,h,\tau}+v_{2,h,\tau}$ on $(0,T)$ in a standard manner by testing functions supported in $(0,T)$ and using Fubini's theorem.
  Hence,
  \begin{gather*}
    \partial_tu-\partial_tu_{h,\tau} = (v_1-v_{1,h,\tau})+(v_2-v_{2,h,\tau}) \quad\text{on}\quad (0,T), \\
    v_1-v_{1,h,\tau} \in L^2(0,T;[H^1]'), \quad v_2-v_{2,h,\tau} \in L^{4/3}(0,T;L^{4/3}).
  \end{gather*}
  Now, we recall that $[Z_T]'$ is of the form \eqref{E:Dual_ZT} and the norm $\|\cdot\|_{X_0+X_1}$ is given by \eqref{E:Def_X0X1} for Banach spaces $X_0$ and $X_1$.
  Hence, it follows from \eqref{Pf_ED:uht} that
  \begin{align*}
    \|\partial_tu-\partial_tu_{h,\tau}\|_{[Z_T]'} \leq \|v_1-v_{1,h,\tau}\|_{L^2(0,T;[H^1]')}+\|v_2-v_{2,h,\tau}\|_{L^{4/3}(0,T;L^{4/3})} \to 0
  \end{align*}
  as $h,\tau\to0$.
  By this result and \eqref{Pf_ED:uht}, we find that $u_{h,\tau}\to u$ in $E_T$ as $h,\tau\to0$.
  Therefore, the statement (ii) of Lemma \ref{L:ET_Den} follows.
\end{proof}

To prove Lemmas \ref{L:Trn_Sp} and \ref{L:Trn_Ti}, we prepare auxiliary functions.

\begin{lemma} \label{L:Lambda}
  Let $C_0,\gamma>0$ be constants.
  For $z\in\mathbb{R}$, we define
  \begin{align} \label{E:Def_Lam}
    \lambda_\gamma(z) =
    \begin{cases}
      0 &\text{if} \quad z\leq C_0, \\
      \sqrt{(z-C_0)^2+\gamma^2}-\gamma &\text{if}\quad z\geq C_0,
    \end{cases}
    \quad \Lambda_\gamma(z) = \frac{\lambda_\gamma(z)}{z}.
  \end{align}
  Then, $\lambda_\gamma,\Lambda_\gamma\in C^1(\mathbb{R})$ and
  \begin{align} \label{E:Lambda}
    |\lambda_\gamma(z)| \leq |z|, \quad |\Lambda_\gamma(z)| \leq 1, \quad \lim_{\gamma\to0}\lambda_\gamma(z) = (z-C_0)_+, \quad \lim_{\gamma\to0}\Lambda_\gamma(z) = \frac{(z-C_0)_+}{z}
  \end{align}
  for all $z\in\mathbb{R}$.
  Moreover, for all $z\in\mathbb{R}$,
  \begin{align} \label{E:La_De}
    \begin{alignedat}{2}
      |\lambda_\gamma'(z)| &\leq \chi_{(C_0,\infty)}(z), &\quad \lim_{\gamma\to0}\lambda_\gamma'(z) &= \chi_{(C_0,\infty)}(z), \\
      |\Lambda_\gamma'(z)| &\leq \chi_{(C_0,\infty)}(z)\frac{2}{z}, &\quad \lim_{\gamma\to0}\Lambda_\gamma'(z) &= \chi_{(C_0,\infty)}(z)\frac{C_0}{z^2}.
    \end{alignedat}
  \end{align}
  Here and in what follows, $\chi_I(z)$ denotes the characteristic function of $I\subset\mathbb{R}$.
\end{lemma}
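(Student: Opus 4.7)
The plan is to verify all assertions by a direct piecewise analysis, splitting at the threshold $z=C_0$ and checking that the two branches match in a $C^1$ sense, then establishing the bounds by elementary inequalities and passing to the limit $\gamma\to 0$ pointwise. Since $C_0>0$, the function $\lambda_\gamma$ vanishes in a neighborhood of $z=0$, so $\Lambda_\gamma$ is naturally defined by the convention $\Lambda_\gamma(z)=0$ wherever $\lambda_\gamma(z)=0$; with this understanding the computations below are elementary.

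First I would address the $C^1$ regularity of $\lambda_\gamma$. On $(-\infty,C_0)$ it vanishes, while on $(C_0,\infty)$ the map $z\mapsto \sqrt{(z-C_0)^2+\gamma^2}-\gamma$ is smooth with derivative $(z-C_0)/\sqrt{(z-C_0)^2+\gamma^2}$. Both branches evaluate to $0$ at $z=C_0$ (the right branch since $\sqrt{\gamma^2}-\gamma=0$, and its derivative since the numerator $z-C_0$ vanishes there), so $\lambda_\gamma\in C^1(\mathbb{R})$ with $\lambda_\gamma'(C_0)=0$. For $\Lambda_\gamma=\lambda_\gamma/z$, the function is identically $0$ on $(-\infty,C_0)$ and smooth on $(C_0,\infty)$; at $z=C_0>0$ the quotient rule gives a well-defined value and derivative, and since both $\lambda_\gamma(C_0)$ and $\lambda_\gamma'(C_0)$ vanish we get $\Lambda_\gamma'(C_0)=0$ from the right, matching the left, so $\Lambda_\gamma\in C^1(\mathbb{R})$.

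Next I would establish the bounds in \eqref{E:Lambda}. Using the elementary inequality $\sqrt{(z-C_0)^2+\gamma^2}\leq |z-C_0|+\gamma$ on $[C_0,\infty)$, I obtain $\lambda_\gamma(z)\leq z-C_0\leq z$, which together with nonnegativity yields $|\lambda_\gamma(z)|\leq|z|$ and hence $|\Lambda_\gamma(z)|\leq 1$. The pointwise limits follow by inspection: for $z\leq C_0$ everything is trivially zero, while for $z>C_0$ we have $\sqrt{(z-C_0)^2+\gamma^2}-\gamma\to |z-C_0|=z-C_0$ as $\gamma\to 0$, and division by $z$ gives the limit for $\Lambda_\gamma$.

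Finally, for \eqref{E:La_De}, on $(C_0,\infty)$ the explicit formula $\lambda_\gamma'(z)=(z-C_0)/\sqrt{(z-C_0)^2+\gamma^2}$ lies in $[0,1)$ and converges to $1$ as $\gamma\to 0$, giving both the bound by and limit $\chi_{(C_0,\infty)}(z)$; on $(-\infty,C_0)$ the derivative is identically zero. For $\Lambda_\gamma'(z)=\lambda_\gamma'(z)/z-\lambda_\gamma(z)/z^2$ on $(C_0,\infty)$ (where $z>0$), the estimate $|\Lambda_\gamma'(z)|\leq 1/z+z/z^2=2/z$ follows from the previous bounds, and passing $\gamma\to 0$ yields $1/z-(z-C_0)/z^2=C_0/z^2$. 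No step presents a genuine obstacle; the only mildly delicate point is confirming the $C^1$ matching at $z=C_0$, but this is precisely what the smoothing parameter $\gamma$ is designed to achieve by rounding the corner of $(z-C_0)_+$.
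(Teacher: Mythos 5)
Your proof is correct and takes essentially the same route as the paper: explicit computation of $\lambda_\gamma'$ and $\Lambda_\gamma'$ via the piecewise formula, the elementary inequality $\sqrt{\xi^2+\gamma^2}-\gamma\le|\xi|$, and pointwise passage to the limit, with the $C^1$ matching at $z=C_0$ checked directly. You spell out a few points the paper leaves implicit (the convention $\Lambda_\gamma(0)=0$ since $\lambda_\gamma$ vanishes near the origin, and the explicit left/right matching of first derivatives at $C_0$), but these are only expository refinements of the same argument.
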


\begin{proof}
  We have \eqref{E:Lambda} by direct calculations, $z-C_0\leq z$, and
  \begin{align} \label{Pf_La:Rat}
    0 \leq \sqrt{\xi^2+\gamma^2}-\gamma = \frac{\xi^2}{\sqrt{\xi^2+\gamma^2}+\gamma} \leq |\xi|, \quad \xi\in\mathbb{R}.
  \end{align}
  Also, it is easy to observe (with a few discussions when $z=C_0$) that
  \begin{align*}
    \lambda_\gamma'(z) = \chi_{(C_0,\infty)}(z)\frac{z-C_0}{\sqrt{(z-C_0)^2+\gamma^2}}, \quad \Lambda'(z) = \frac{\lambda_\gamma'(z)}{z}-\frac{\lambda_\gamma(z)}{z^2}
  \end{align*}
  for all $z\in\mathbb{R}$.
  Hence, $\lambda_\gamma,\Lambda_\gamma\in C^1(\mathbb{R})$ and \eqref{E:La_De} follows.
\end{proof}

\begin{lemma} \label{L:Sig_F}
  Let $C_0,\gamma>0$ be constants such that $2\gamma\leq C_0$.
  For $a\in\mathbb{R}^N$, we define
  \begin{align} \label{E:Def_SFg}
    \sigma_\gamma(a) = \sqrt{|a|^2+\gamma^2}-\gamma, \quad F_\gamma(a) = \Lambda_\gamma\bigl(\sigma_\gamma(a)\bigr)a.
  \end{align}
  where $\Lambda_\gamma$ is given by \eqref{E:Def_Lam}.
  Then, $\sigma_\gamma\in C^1(\mathbb{R}^N)$, $F_\gamma\in C^1(\mathbb{R}^N)^N$, and
  \begin{align} \label{E:Sig_Fg}
    |\sigma_\gamma(a)| \leq |a|, \quad |F_\gamma(a)| \leq |a|, \quad \lim_{\gamma\to0}\sigma_\gamma(a) = |a|, \quad \lim_{\gamma\to0}F_{\gamma}(a) = \frac{(|a|-C_0)_+}{|a|}\,a
  \end{align}
  for all $a\in\mathbb{R}^N$.
  Moreover, for all $a\in\mathbb{R}^N$ and $k=1,\dots,N$,
  \begin{align} \label{E:Sig_De}
    \left|\frac{\partial\sigma_\gamma}{\partial a_k}(a)\right| \leq 1, \quad \lim_{\gamma\to0}\frac{\partial\sigma_\gamma}{\partial a_k}(a) =
    \begin{cases}
      a_k/|a| &\text{if} \quad a\neq0, \\
      0 &\text{if} \quad a=0.
    \end{cases}
  \end{align}
  Also, the $j$-th component $F_{\gamma,j}(a)=\Lambda_\gamma\bigl(\sigma_\gamma(a)\bigr)a_j$ satisfies
  \begin{align} \label{E:Fg_De}
     \left|\frac{\partial F_{\gamma,j}}{\partial a_k}(a)\right| \leq 5, \quad \lim_{\gamma\to0}\frac{\partial F_{\gamma,j}}{\partial a_k}(a) &= \frac{(|a|-C_0)_+}{|a|}\,\delta_{jk}+\chi_{(C_0,\infty)}(|a|)\frac{C_0a_ja_k}{|a|^3}
  \end{align}
  for all $a\in\mathbb{R}^N$ and $j,k=1,\dots,N$, where $\delta_{jk}$ is the Kronecker delta.
\end{lemma}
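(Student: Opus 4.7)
The plan is to verify each claim by direct differentiation, invoking Lemma \ref{L:Lambda} for everything we need about $\Lambda_\gamma$. First, because $\gamma>0$ the function $|a|^2+\gamma^2$ is smooth and bounded below by $\gamma^2$, so $\sigma_\gamma(a)=\sqrt{|a|^2+\gamma^2}-\gamma$ is in fact $C^\infty(\mathbb{R}^N)$. The bound $\sigma_\gamma(a)\leq|a|$ follows from \eqref{Pf_La:Rat} with $\xi=|a|$, and $\sigma_\gamma(a)\to|a|$ is immediate. The formula
\[
\frac{\partial\sigma_\gamma}{\partial a_k}(a)=\frac{a_k}{\sqrt{|a|^2+\gamma^2}}
\]
gives $|\partial\sigma_\gamma/\partial a_k|\leq|a|/\sqrt{|a|^2+\gamma^2}\leq 1$ and the pointwise limit in \eqref{E:Sig_De}; the case $a=0$ is handled by observing that $\partial\sigma_\gamma/\partial a_k(0)=0$ for every $\gamma$.

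Since $\Lambda_\gamma\in C^1(\mathbb{R})$ by Lemma \ref{L:Lambda} and $\sigma_\gamma\in C^\infty(\mathbb{R}^N)$, the composition $F_\gamma(a)=\Lambda_\gamma(\sigma_\gamma(a))\,a$ lies in $C^1(\mathbb{R}^N)^N$, and the product/chain rule yields
\[
\frac{\partial F_{\gamma,j}}{\partial a_k}(a)=\Lambda_\gamma'\bigl(\sigma_\gamma(a)\bigr)\,\frac{\partial\sigma_\gamma}{\partial a_k}(a)\,a_j+\Lambda_\gamma\bigl(\sigma_\gamma(a)\bigr)\,\delta_{jk}.
\]
The estimate $|F_\gamma(a)|\leq|a|$ is immediate from $|\Lambda_\gamma|\leq 1$ in \eqref{E:Lambda}, and the limits in \eqref{E:Sig_Fg} and \eqref{E:Fg_De} follow by inserting the known limits $\sigma_\gamma(a)\to|a|$, $\Lambda_\gamma(z)\to(z-C_0)_+/z$, $\Lambda_\gamma'(z)\to\chi_{(C_0,\infty)}(z)C_0/z^2$, and $\partial\sigma_\gamma/\partial a_k(a)\to a_k/|a|$ (for $a\neq 0$) into the two displays above; the degenerate cases $a=0$ and $|a|=C_0$ are checked directly by noticing that both sides vanish.

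The main obstacle is the uniform bound $|\partial F_{\gamma,j}/\partial a_k|\leq 5$, and it is precisely here that the hypothesis $2\gamma\leq C_0$ is needed. The second term above contributes at most $1$ by \eqref{E:Lambda}. For the first term, \eqref{E:La_De} gives $|\Lambda_\gamma'(\sigma_\gamma(a))|\leq 2/\sigma_\gamma(a)$ on $\{\sigma_\gamma(a)>C_0\}$ and zero elsewhere, so we must control $|a|/\sigma_\gamma(a)$ on the former set. From $\sigma_\gamma(a)+\gamma=\sqrt{|a|^2+\gamma^2}\geq|a|$ we have $|a|\leq\sigma_\gamma(a)+\gamma$, and combining $\sigma_\gamma(a)>C_0\geq 2\gamma$ with this yields $|a|\leq\tfrac{3}{2}\sigma_\gamma(a)$. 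Hence the first term is bounded by $(2/\sigma_\gamma(a))\cdot 1\cdot\tfrac{3}{2}\sigma_\gamma(a)=3$, and the total derivative is at most $4\leq 5$, completing the proof.
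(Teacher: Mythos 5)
Your proof is correct and follows essentially the same approach as the paper: differentiate directly, invoke Lemma \ref{L:Lambda}, and use the hypothesis $2\gamma\leq C_0$ to control $|a|/\sigma_\gamma(a)$ on the set where $\chi_{(C_0,\infty)}(\sigma_\gamma(a))$ is nonzero. The only cosmetic difference is that you bound $|a|\leq\tfrac32\sigma_\gamma(a)$ directly on that set (giving the slightly sharper $4$), whereas the paper passes through $\sigma_\gamma(a)\geq|a|/2$ when $|a|\geq 2\gamma$ to arrive at $5$; either uniform constant serves the later argument equally well.
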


\begin{proof}
  We have \eqref{E:Sig_Fg} by direct calculations, \eqref{E:Lambda}, and \eqref{Pf_La:Rat}.
  Also,
  \begin{align*}
    \frac{\partial\sigma_\gamma}{\partial a_k}(a) = \frac{a_k}{\sqrt{|a|^2+\gamma^2}}, \quad \frac{\partial F_{\gamma,j}}{\partial a_k}(a) = \Lambda_\gamma\bigl(\sigma_\gamma(a)\bigr)\delta_{jk}+\Lambda_\gamma'\bigl(\sigma_\gamma(a)\bigr)\frac{\partial\sigma_\gamma}{\partial a_k}(a)a_j.
  \end{align*}
  By these expressions and \eqref{E:Lambda}, \eqref{E:La_De}, and \eqref{E:Sig_Fg}, we find that \eqref{E:Sig_De} and the second relation of \eqref{E:Fg_De} are valid.
  To get the first relation of \eqref{E:Fg_De}, we see that
  \begin{align} \label{Pf_SFg:FD}
    \left|\frac{\partial F_{\gamma,j}}{\partial a_k}(a)\right| \leq 1+\chi_{(C_0,\infty)}\bigl(\sigma_\gamma(a)\bigr)\frac{2}{\sigma_\gamma(a)}\,|a_j|
  \end{align}
  by \eqref{E:Lambda}, \eqref{E:La_De}, and \eqref{E:Sig_De}.
  Moreover, since $\sigma_\gamma(a)\leq|a|$ and $2\gamma\leq C_0$,
  \begin{align*}
    \chi_{(C_0,\infty)}\bigl(\sigma_\gamma(a)\bigr) \leq \chi_{(C_0,\infty)}(|a|) \leq \chi_{(2\gamma,\infty)}(|a|).
  \end{align*}
  We also observe that $\sigma_\gamma(a)\geq|a|-\gamma\geq|a|/2$ if $|a|\geq2\gamma$.
  Hence,
  \begin{align*}
    \chi_{(C_0,\infty)}\bigl(\sigma_\gamma(a)\bigr)\frac{1}{\sigma_\gamma(a)} \leq \chi_{(2\gamma,\infty)}(|a|)\frac{1}{\sigma_\gamma(a)} \leq \chi_{(2\gamma,\infty)}(|a|)\frac{2}{|a|}.
  \end{align*}
  We apply this to \eqref{Pf_SFg:FD} and use $|a_j|/|a|\leq1$ to get the first relation of \eqref{E:Fg_De}.
\end{proof}

Now, let us give the proofs of Lemmas \ref{L:Trn_Sp} and \ref{L:Trn_Ti}.

\begin{proof}[Proof of Lemma \ref{L:Trn_Sp}]
  Let $C_0>0$ be a constant.
  We take a small $\gamma>0$ such that $2\gamma\leq C_0$, and define the mappings $F$ and $F_\gamma$ by \eqref{E:Def_Trn} and \eqref{E:Def_SFg}, respectively.

  Let $u\in H^1(\Omega_\varepsilon)^N$.
  For $i=1,\dots,n$ and $j=1,\dots,N$, we set
  \begin{align*}
    G_{i,j}(u) = \frac{(|u|-C_0)_+}{|u|}\frac{\partial u_j}{\partial x_i}+\sum_{k=1}^N\chi_{(C_0,\infty)}(|u|)\frac{C_0u_ju_k}{|u|^3}\frac{\partial u_k}{\partial x_i} \quad\text{in}\quad \Omega_\varepsilon.
  \end{align*}
  Note that $G_{i,j}(u)\in L^2(\Omega_\varepsilon)$ by $|G_{i,j}(u)|\leq c|\nabla u|$ with some constant $c>0$.

  Since $F_\gamma\in C^1(\mathbb{R}^N)^N$ and the inequalities in \eqref{E:Sig_Fg} and \eqref{E:Fg_De} hold, we can show that
  \begin{align*}
    F_\gamma(u) \in H^1(\Omega_\varepsilon)^N, \quad \frac{\partial}{\partial x_i}\Bigl(F_{\gamma,j}(u)\Bigr) = \sum_{k=1}^N\frac{\partial F_{\gamma,j}}{\partial a_k}(u)\frac{\partial u_k}{\partial x_i} \quad\text{a.e. in}\quad \Omega_\varepsilon
  \end{align*}
  as in the proof of \cite[Lemma 7.5]{GilTru01}.
  Moreover, by \eqref{E:Sig_Fg} and \eqref{E:Fg_De}, we have
  \begin{align*}
    \lim_{\gamma\to0}F_\gamma(u) = F(u), \quad \lim_{\gamma\to0}\frac{\partial}{\partial x_i}\Bigl(F_{\gamma,j}(u)\Bigr) = G_{i,j}(u) \quad\text{a.e. in}\quad \Omega_\varepsilon.
  \end{align*}
  It also follows from \eqref{E:Sig_Fg}, \eqref{E:Fg_De}, and direct calculations that
  \begin{align*}
    |F_\gamma(u)-F(u)| &\leq |F_\gamma(u)|+|F(u)| \leq 2|u|, \\
    \left|\frac{\partial}{\partial x_i}\Bigl(F_{\gamma,j}(u)\Bigr)-G_{i,j}(u)\right| &\leq \left|\frac{\partial}{\partial x_i}\Bigl(F_{\gamma,j}(u)\Bigr)\right|+|G_{i,j}(u)| \leq c|\nabla u|
  \end{align*}
  a.e. in $\Omega_\varepsilon$, where $c>0$ is a constant independent of $\gamma$.
  Since $|u|,|\nabla u|\in L^2(\Omega_\varepsilon)$, we can apply the dominated convergence theorem to deduce that
  \begin{align*}
    \lim_{\gamma\to0}\|F_\gamma(u)-F(u)\|_{L^2(\Omega_\varepsilon)} = \lim_{\gamma\to0}\left\|\frac{\partial}{\partial x_i}\Bigl(F_{\gamma,j}(u)\Bigr)-G_{i,j}(u)\right\|_{L^2(\Omega_\varepsilon)} = 0.
  \end{align*}
  Therefore, letting $\gamma\to0$ in
  \begin{align*}
    \int_{\Omega_\varepsilon}F_{\gamma,j}(u)\frac{\partial\varphi}{\partial x_i}\,dx = -\int_{\Omega_\varepsilon}\varphi\,\frac{\partial}{\partial x_i}\Bigl(F_{\gamma,j}(u)\Bigr)\,dx, \quad \varphi\in C_c^\infty(\Omega_\varepsilon),
  \end{align*}
  we find that $F(u)\in H^1(\Omega_\varepsilon)^N$ and \eqref{E:Trn_Dxi} holds.
\end{proof}

\begin{proof}[Proof of Lemma \ref{L:Trn_Ti}]
  Let $C_0>0$ be a constant and $F$ be given by \eqref{E:Def_Trn}.
  Also, let $F_j$ be the $j$-th component of $F$ for $j=1,\dots,N$.
  If $u\in Z_T(\Omega_\varepsilon)$, then $F(u)\in Z_T(\Omega_\varepsilon)$ since
  \begin{align*}
    |F(u)| \leq |u|, \quad \left|\frac{\partial}{\partial x_i}\Bigl(F_j(u)\Bigr)\right| \leq c|\nabla u| \quad\text{a.e. in}\quad \Omega_\varepsilon\times(0,T)
  \end{align*}
  for $i=1,\dots,n$ and $j=1,\dots,N$ by \eqref{E:Trn_Dxi}.

  Next, let $u\in E_T(\Omega_\varepsilon)$.
  To get \eqref{E:Trn_Dt}, it is sufficient to prove that
  \begin{align} \label{Pf_TT:Goal}
    -\frac{1}{2}\int_0^T\theta'(t)\|(|u(t)|-C_0)_+\|_{L^2(\Omega_\varepsilon)}^2\,dt = \int_0^T\theta(t)\bigl\langle\partial_tu(t), F\bigl(u(t)\bigr)\bigr\rangle_{(H^1\cap L^4)(\Omega_\varepsilon)}\,dt
  \end{align}
  for all $\theta\in C_c^\infty(0,T)$.
  Let us show \eqref{Pf_TT:Goal} in three steps.
  We write
  \begin{align*}
    Q_{\varepsilon,T} = \Omega_\varepsilon\times(0,T), \quad \overline{Q}_{\varepsilon,T} = \overline{\Omega}_\varepsilon\times[0,T]
  \end{align*}
  in the rest of the proof for the sake of simplicity.

  \textbf{Step 1:} for $u\in C^\infty(\overline{Q}_{\varepsilon,T})^N$, we show that
  \begin{align} \label{Pf_TT:Smth}
    -\frac{1}{2}\int_0^T\theta'(t)\|(|u(t)|-C_0)_+\|_{L^2(\Omega_\varepsilon)}^2\,dt = \int_0^T\theta(t)\Bigl(\partial_tu(t),F\bigl(u(t)\bigr)\Bigr)_{L^2(\Omega_\varepsilon)}\,dt.
  \end{align}
  Let $\gamma>0$ satisfy $2\gamma\leq C_0$, and let $\lambda_\gamma$ and $\sigma_\gamma$ be given by \eqref{E:Def_Lam} and \eqref{E:Def_SFg}, respectively.
  Since $\lambda_\gamma\circ\sigma_\gamma\circ u$ is of class $C^1$, we can differentiate it pointwisely to get
  \begin{align*}
    \frac{1}{2}\frac{\partial}{\partial t}\Bigl(\bigl\{[\lambda_\gamma\circ\sigma_\gamma]\bigl(u(t)\bigr)\bigr\}^2\Bigr) &= [\lambda_\gamma\circ\sigma_\gamma]\bigl(u(t)\bigr)[\lambda_\gamma'\circ\sigma_\gamma]\bigl(u(t)\bigr)\sum_{k=1}^N\frac{\partial\sigma_\gamma}{\partial a_k}\bigl(u(t)\bigr)\frac{\partial u_k}{\partial t}(t) \\
    &= \frac{[\lambda_\gamma\circ\sigma_\gamma]\bigl(u(t)\bigr)[\lambda_\gamma'\circ\sigma_\gamma]\bigl(u(t)\bigr)}{\sqrt{|u(t)|^2+\gamma^2}}\,u(t)\cdot\frac{\partial u}{\partial t}(t) \\
    &= \widetilde{F}_\gamma\bigl(u(t)\bigr)\cdot\frac{\partial u}{\partial t}(t)
  \end{align*}
  in $\overline{Q}_{\varepsilon,T}$, where
  \begin{align*}
    \widetilde{F}_\gamma(a) = \frac{[\lambda_\gamma\circ\sigma_\gamma](a)[\lambda_\gamma'\circ\sigma_\gamma](a)}{\sqrt{|a|^2+\gamma^2}}\,a, \quad a\in\mathbb{R}^N.
  \end{align*}
  We multiply the above equality by $\theta\in C_c^\infty(0,T)$ and integrate both sides over $Q_{\varepsilon,T}$.
  Then, after integration by parts with respect to time, we have
  \begin{align} \label{Pf_TT:Gam}
    -\frac{1}{2}\int_0^T\theta'(t)\bigl\|[\lambda_\gamma\circ\sigma_\gamma]\bigl(u(t)\bigr)\bigr\|_{L^2(\Omega_\varepsilon)}^2\,dt = \int_0^T\theta(t)\Bigl(\partial_tu(t),\widetilde{F}_\gamma\bigl(u(t)\bigr)\Bigr)_{L^2(\Omega_\varepsilon)}\,dt.
  \end{align}
  Let $\gamma\to0$ in this equality.
  Then, by \eqref{E:Lambda}, \eqref{E:La_De}, and \eqref{E:Sig_Fg}, we have
  \begin{align*}
    \lim_{\gamma\to0}[\lambda_\gamma\circ\sigma_\gamma](u) = (|u|-C_0)_+, \quad \lim_{\gamma\to0}\widetilde{F}_\gamma(u) &= F(u),
  \end{align*}
  and $|[\lambda_\gamma\circ\sigma_\gamma](u)|\leq|u|$ and $|\widetilde{F}_\gamma(u)|\leq|u|$ in $Q_{\varepsilon,T}$, where we also used
  \begin{align*}
    \chi_{(C_0,\infty)}(z)(z-C_0)_+ = (z-C_0)_+, \quad 0 \leq \chi_{(C_0,\infty)}(z) \leq 1, \quad z\in\mathbb{R}.
  \end{align*}
  Hence, noting that $u\in C^\infty(\overline{Q}_{\varepsilon,T})^N$ and $\theta\in C_c^\infty(0,T)$, we can apply the dominated convergence theorem to \eqref{Pf_TT:Gam} to find that \eqref{Pf_TT:Smth} holds.

  \textbf{Step 2:} for $u\in C^\infty([0,T];(H^1\cap L^4)(\Omega_\varepsilon)^N)$, we prove \eqref{Pf_TT:Goal}.
  Since $u$ is of class $H^1$ on the space-time domain $Q_{\varepsilon,T}$ with Lipschitz boundary, there exist functions
  \begin{align} \label{Pf_TT:H1Co}
    u_k \in C^\infty(\overline{Q}_{\varepsilon,T})^N \quad\text{such that}\quad \lim_{k\to\infty}\|u-u_k\|_{H^1(Q_{\varepsilon,T})} = 0.
  \end{align}
  Then, since $u_k\to u$ strongly in $L^2(Q_{\varepsilon,T})^N$, we can take a subsequence of $\{u_k\}$, which is again denoted by $\{u_k\}$, and a function $f\in L^2(Q_{\varepsilon,T})$ such that
  \begin{align*}
    |u_k| \leq f, \quad \lim_{k\to\infty}u_k = u \quad\text{a.e. in}\quad Q_{\varepsilon,T},
  \end{align*}
  as in the proof of the completeness of $L^2$.
  Hence,
  \begin{align*}
    \lim_{k\to\infty}(|u_k|-C_0)_+ = (|u|-C_0)_+, \quad \lim_{k\to\infty}F(u_k) = F(u) \quad\text{a.e. in}\quad Q_{\varepsilon,T}.
  \end{align*}
  Moreover, since $0\leq(z-C_0)_+\leq|z|$ for $z\in\mathbb{R}$, we have
  \begin{align*}
    &|(|u_k|-C_0)_+-(|u|-C_0)_+| \leq (|u_k|-C_0)_++(|u|-C_0)_+ \leq |u_k|+|u| \leq f+|u|, \\
    &|F(u_k)-F(u)| \leq |F(u_k)|+|F(u)| \leq |u_k|+|u| \leq f+|u| \quad\text{a.e. in}\quad Q_{\varepsilon,T},
  \end{align*}
  where $f,|u|\in L^2(Q_{\varepsilon,T})$.
  Hence, by the dominated convergence theorem,
  \begin{align} \label{Pf_TT:L2St}
    \begin{alignedat}{2}
      \lim_{k\to\infty}(|u_k|-C_0)_+ &= (|u|-C_0)_+ &\quad &\text{strongly in} \quad L^2(Q_{\varepsilon,T}), \\
      \lim_{k\to\infty}F(u_k) &= F(u) &\quad &\text{strongly in} \quad L^2(Q_{\varepsilon,T})^N.
    \end{alignedat}
  \end{align}
  Also, each $u_k$ satisfies \eqref{Pf_TT:Smth} by Step 1.
  Hence, we send $k\to\infty$ in that equality and apply \eqref{Pf_TT:H1Co} and \eqref{Pf_TT:L2St} to find that \eqref{Pf_TT:Smth} also holds for $u$ (note that \eqref{Pf_TT:H1Co} includes the strong convergence of $\partial_tu_k$).
  Moreover, since
  \begin{align*}
    \Bigl(\partial_tu(t),F\bigl(u(t)\bigr)\Bigr)_{L^2(\Omega_\varepsilon)} = \bigl\langle\partial_tu(t), F\bigl(u(t)\bigr)\bigr\rangle_{(H^1\cap L^4)(\Omega_\varepsilon)}, \quad t\in[0,T]
  \end{align*}
  by $u\in C^\infty([0,T];(H^1\cap L^4)(\Omega_\varepsilon)^N)$, it follows that \eqref{Pf_TT:Goal} holds for this $u$.

  \textbf{Step 3:} let $u\in E_T(\Omega_\varepsilon)$.
  By Lemma \ref{L:ET_Den}, (ii), there exist functions
  \begin{align} \label{Pf_TT:ET_Ap}
    w_k\in C^\infty([0,T];(H^1\cap L^4)(\Omega_\varepsilon)^N) \quad\text{such that}\quad \lim_{k\to\infty}\|u-w_k\|_{E_T(\Omega_\varepsilon)} = 0.
  \end{align}
  Recall that $\|u\|_{E_T(\Omega_\varepsilon)}=\|u\|_{Z_T(\Omega_\varepsilon)}+\|\partial_tu\|_{[Z_T(\Omega_\varepsilon)]'}$ and
  \begin{align*}
    Z_T(\Omega_\varepsilon) = L^2(0,T;H^1(\Omega_\varepsilon)^N)\cap L^4(0,T;L^4(\Omega_\varepsilon)^N).
  \end{align*}
  Hence, by a diagonal argument and discussions as in the proof of the completeness of $L^p$, we can take a subsequence of $\{w_k\}$, which is again denoted by $\{w_k\}$, and functions
  \begin{align*}
    f_1\in L^4(Q_{\varepsilon,T}) \subset L^2(Q_{\varepsilon,T}), \quad f_2 \in L^2(Q_{\varepsilon,T})
  \end{align*}
  (note that $Q_{\varepsilon,T}=\Omega_\varepsilon\times(0,T)$ is bounded) such that
  \begin{align} \label{Pf_TT:wkae}
    |w_k| \leq f_1, \quad |\nabla w_k| \leq f_2, \quad \lim_{k\to\infty}w_k = u, \quad \lim_{k\to\infty}\frac{\partial w_k}{\partial x_i} = \frac{\partial u}{\partial x_i} \quad\text{a.e. in}\quad Q_{\varepsilon,T}
  \end{align}
  for $i=1,\dots,n$.
  Using these relations, we can show that
  \begin{align} \label{Pf_TT:wkL4}
    \begin{alignedat}{2}
      \lim_{k\to\infty}(|w_k|-C_0)_+ &= (|u|-C_0)_+ &\quad &\text{strongly in} \quad L^2(Q_{\varepsilon,T}), \\
      \lim_{k\to\infty}F(w_k) &= F(u) &\quad &\text{strongly in} \quad L^4(Q_{\varepsilon,T})^N
    \end{alignedat}
  \end{align}
  as in Step 2.
  Let $i=1,\dots,n$ and $j=1,\dots,N$.
  We would like to show
  \begin{align*}
    \lim_{k\to\infty}\frac{\partial}{\partial x_i}\Bigl(F_j(w_k)\Bigr) = \frac{\partial}{\partial x_i}\Bigl(F_j(u)\Bigr) \quad\text{strongly in}\quad L^2(Q_{\varepsilon,T}),
  \end{align*}
  but this is not possible since the convergence a.e. in $Q_{\varepsilon,T}$ does not necessarily hold because of the discontinuity of the characteristic function $\chi_{(C_0,\infty)}$ appearing in \eqref{E:Trn_Dxi}.
  Instead, we can get the weak convergence.
  Indeed, by the second convergence of \eqref{Pf_TT:wkL4} and integration by parts, we see that
  \begin{align} \label{Pf_TT:WeCi}
    \lim_{k\to\infty}\left(\frac{\partial}{\partial x_i}\Bigl(F_j(w_k)\Bigr),\varphi\right)_{L^2(Q_{\varepsilon,T})} = \left(\frac{\partial}{\partial x_i}\Bigl(F_j(u)\Bigr),\varphi\right)_{L^2(Q_{\varepsilon,T})}
  \end{align}
  for all $\varphi\in C_c^\infty(Q_{\varepsilon,T})$.
  Note that $Q_{\varepsilon,T}=\Omega_\varepsilon\times(0,T)$ and that \eqref{E:Trn_Dxi} is not used here.
  Since $C_c^\infty(Q_{\varepsilon,T})$ is dense in $L^2(Q_{\varepsilon,T})$, and since
  \begin{align*}
    \left\|\frac{\partial}{\partial x_i}\Bigl(F_j(w_k)\Bigr)\right\|_{L^2(Q_{\varepsilon,T})} \leq c\|\nabla w_k\|_{L^2(Q_{\varepsilon,T})} \leq c\|f_2\|_{L^2(Q_{\varepsilon,T})}
  \end{align*}
  by \eqref{E:Trn_Dxi} and \eqref{Pf_TT:wkae}, where the last term is independent of $k$, we can get
  \begin{align*}
    \lim_{k\to\infty}\frac{\partial}{\partial x_i}\Bigl(F_j(w_k)\Bigr) = \frac{\partial}{\partial x_i}\Bigl(F_j(u)\Bigr) \quad\text{weakly in}\quad L^2(Q_{\varepsilon,T})
  \end{align*}
  by \eqref{Pf_TT:WeCi} for $\varphi\in C_c^\infty(Q_{\varepsilon,T})$ and a density argument.
  By this result, the second convergence of \eqref{Pf_TT:wkL4}, and $L^4(Q_{\varepsilon,T})\hookrightarrow L^2(Q_{\varepsilon,T})$, we find that
  \begin{align} \label{Pf_TT:FwkWe}
    \lim_{k\to\infty}F(w_k) = F(u) \quad\text{weakly in}\quad Z_T(\Omega_\varepsilon).
  \end{align}
  Now, since each $w_k$ satisfies \eqref{Pf_TT:Goal} by Step 2, we send $k\to\infty$ in that equality and apply \eqref{Pf_TT:ET_Ap}, \eqref{Pf_TT:wkL4}, and \eqref{Pf_TT:FwkWe}.
  Then, we find that \eqref{Pf_TT:Goal} holds for $u\in E_T(\Omega_\varepsilon)$ (note that \eqref{Pf_TT:ET_Ap} includes the strong convergence of $\partial_tw_k$).
\end{proof}

\section*{Acknowledgments}
The work of the author was supported by JSPS KAKENHI Grant Number 23K12993.

\bibliographystyle{abbrv}
\bibliography{GLCTD_Neu_Ref}

\end{document}